% Template for math papers, Michael Bjorklund, Chalmers.

\documentclass[11pt,reqno,a4paper]{amsart}

% Page layout
\oddsidemargin0.3cm
\evensidemargin0.3cm
\textwidth15.7cm
\textheight 22.5 cm
\topmargin=1cm

% Packages
\usepackage{amsmath}
\usepackage{enumerate}
\usepackage{amssymb}
\usepackage{amscd}
\usepackage{amsthm}
\usepackage{amsfonts}
\usepackage{graphicx}
\usepackage{hyperref}
\usepackage[matrix, arrow, curve]{xy}
\usepackage{etoolbox}

%\usepackage[showframe=true]{geometry}
%\usepackage{enumitem}

% Font packages
\usepackage{fouriernc}
\usepackage[T1]{fontenc}

% Bibliography
\bibliographystyle{abbrv}

% Equation numbering
\numberwithin{equation}{section}

\patchcmd{\subsection}{-.5em}{.5em}{}{}
\patchcmd{\subsubsection}{-.5em}{.5em}{}{}

%% New math notation
%% Groups

%% Real and imaginary

\newcommand{\im}{\operatorname{Im}}

% New commands
%% Mathcal large

%% Mathbb large

\newcommand{\bC}{\mathbb{C}}

\newcommand{\bH}{\mathbb{H}}

\newcommand{\bM}{\mathbb{M}}
\newcommand{\bN}{\mathbb{N}}

\newcommand{\bR}{\mathbb{R}}
\newcommand{\bS}{\mathbb{S}}
\newcommand{\bT}{\mathbb{T}}

%% Math boldface large

\newcommand{\R}{\mathbb{R}}
\newcommand{\C}{\mathbb{C}}

%% Math fracture small

%% Math fracture large

%% Overline letters large

% Arrows 

\newcommand{\ra}{\rightarrow}

% Ands

\newcommand{\qand}{\quad \textrm{and} \quad}

% More auxillary stuff

\newcommand\subsetsim{\mathrel{%
\ooalign{\raise0.2ex\hbox{$\subset$}\cr\hidewidth\raise-0.8ex\hbox{\scalebox{0.9}{$\sim$}}\hidewidth\cr}}}

%% New math operators

\newcommand{\Z}{\mathbb Z}

\renewcommand{\epsilon}{\varepsilon}

\newcommand{\bd}{\underline{d}}

% Theoremstyles
%% Theorem
\theoremstyle{theorem}
\newtheorem{theorem}{Theorem}[section]
\newtheorem{corollary}[theorem]{Corollary}
\newtheorem{proposition}[theorem]{Proposition}
\newtheorem{lemma}[theorem]{Lemma}

%% Definition
\theoremstyle{definition}
\newtheorem{definition}[theorem]{Definition}

\newtheorem{remark}[theorem]{Remark}

\newtheorem*{example}{Example}

%% Auxillary stuff

\renewcommand{\phi}{\varphi}

\begin{document}

\title[Aperiodic order and spherical diffraction]{Aperiodic order and spherical diffraction, III:\\ The shadow transform and the diffraction formula}
%  Author I information
\author{Michael Bj\"orklund}
\address{Department of Mathematics, Chalmers, Gothenburg, Sweden}
\email{micbjo@chalmers.se}
\thanks{}

%    Author II information
\author{Tobias Hartnick}
\address{Institut f\"ur Algebra und Geometrie, KIT, Karlsruhe, Germany}
\curraddr{}
\email{tobias.hartnick@kit.edu}
\thanks{}

\author{Felix Pogorzelski}
\address{Institut f\"ur Mathematik, Universit\"at Leipzig, Germany}
\curraddr{}
\email{felix.pogorzelski@math.uni-leipzig.de}
\thanks{}

\keywords{}

%\subjclass[2010]{Primary: ; Secondary: }

\date{}

\dedicatory{}

\maketitle

\begin{abstract} We define spherical diffraction measures for a wide class of weighted point sets in commutative spaces, i.e.\ proper homogeneous spaces associated with Gelfand pairs. In the case of the hyperbolic plane we can interpret the spherical diffraction measure as the Mellin transform of the auto-correlation distribution. We show that uniform regular model sets in commutative spaces have a pure point spherical diffraction measure. The atoms of this measure are located at the spherical automorphic spectrum of the underlying lattice, and the diffraction coefficients can be characterized abstractly in terms of the so-called shadow transform of the characteristic functions of the window. In the case of the Heisenberg group we can give explicit formulas for these diffraction coefficients in terms of Bessel and Laguerre functions. \end{abstract}
\section{Introduction}
This article is a culmination of a series of articles begun in \cite{BHP1} and continued in \cite{BHP2} in which we extend the diffraction theory of uniform regular model sets in abelian groups to the wide setting of proper homogeneous metric spaces. In \cite{BHP1} we introduced regular model sets in a general locally compact second countable (lcsc) group $G$, and with every such model set $\Lambda$ we associated a dynamical system $\Omega_\Lambda$ over $G$, the \emph{hull} of $\Lambda$. We then established unique ergodicity of this system and deduced that certain sampling limits over $\Lambda^{-1}\Lambda$ converge to a positive-definite Radon measure $\eta_\Lambda$ on $G$, the \emph{auto-correlation measure} of $\Lambda$. In \cite{BHP2} we generalized these results to (weighted) regular model sets in proper homogeneous metric spaces $X$. If $G$ denotes the iso\-metry group of $X$ and $K$ is one of its point stabilizers, then the auto-correlation measure of such a weighted regular model set can be seen as a positive-definite Radon measure on $K\backslash G/K$. The current article is concerned with certain Fourier transforms of these measures, which we call \emph{spherical diffraction measures} in analogy with the abelian case.

In the classical case, where $G$ is abelian and $K = \{e\}$ is the trival subgroup, the auto-correlation measure $\eta_\Lambda$ admits a Fourier transform $\widehat{\eta}_\Lambda$, which is a Radon measure on the Pontryagin dual $\widehat{G}$ of $G$. Due to its physical interpretation \cite{Dworkin-93, Hof-95}, this measure is called the \emph{diffraction measure} of $\Lambda$. A particular focus is on situations where this measure is discrete, see e.g.\@ \cite{Solomyak-98, Schlottmann-99, BaakeL-04, BaakeG-13}.
It is one of the cornerstones of the theory of quasi-crystallographic diffraction theory that if $\Lambda$ is a uniform regular model set in a locally compact abelian group, then the diffraction measure $\widehat{\eta}_\Lambda$ is pure point. More precisely, if $\Lambda$ arises from an abelian cut-and-project scheme $(G, H, \Gamma)$ with window $W \subset H$, then it follows from work of Meyer \cite{Meyer-70, Meyer-95} that
\[
\widehat{\eta}_\Lambda = \sum_{(\xi_1, \xi_2) \in \Gamma^\perp} |\widehat{{\bf 1}}_W(\xi_2)|^2 \cdot \delta_{\xi_1},
\]
where $\Gamma^\perp \subset \widehat{G} \times \widehat{H}$ denotes the dual lattice of $\Gamma$. This amounts to an exotic Poisson summation formula of the form
\[
\lim_{n \to \infty} \frac{1}{m_G(B_n)} \sum_{x \in \Lambda \cap B_n} \sum_{y \in \Lambda} f(y-x) = \sum_{(\xi_1, \xi_2) \in \Gamma^\perp} |\widehat{{\bf 1}}_W(\xi_2)|^2 \cdot \widehat{f}({\xi_1}), \quad (f \in C_c^\infty(G)),
\]
where $m_G$ denotes Haar measure of $G$ and $(B_n)$ is a suitable F\o lner sequence of balls in $G$. Our ultimate goal here is to derive similar exotic summation formulas in more general situations. While we can establish pure point diffraction in large generality, computing the diffraction coefficients explicitly will only be possible in special situations, most notably for Heisenberg groups.

One problem in generalizing Meyer's theorem beyond the abelian case is that one needs a suitable notion of Fourier transform for functions on $K\backslash G/K$. In \cite{BHP2} we considered in some details the case of the hyperbolic plane $\bH^2$. In particular, we explained how the auto-correlation measure of a weighted regular model set $\Lambda$ in $\bH^2$ can be identified with an evenly positive-definite distribution $\xi_\Lambda$ on the real line.  Due to exponential volume growth of the hyperbolic plane, the distribution $\xi_\Lambda$ is non-tempered, and hence instead of its Fourier transform one should consider its complex Fourier transform or Mellin transform. Recall that the \emph{Mellin transform} of a function $\phi \in C_c^\infty(\R)$ is  given by
\[
 \bM \phi(z) := \int_{\bR}\phi(t) e^{tz/2} \, dt \quad (z \in \C).
\]
By a theorem of Gelfand--Vilenkin and Krein \cite[Thm.\ II.6.5]{GV4}, for every evenly positive-definite distribution $\xi$ there exists a 
measure $\mu_\xi \in M^+(\C)$ with $\mathrm{supp}(\mu_\xi) \subset \R \cup i \R$ such that
\[
\xi(\phi) = \mu_\xi(\bM\phi) \quad (\phi \in C_c^\infty(\R)_{\mathrm ev}),
\]
and we call such a measure a Mellin transform of $\xi$. We are going to establish the following hyperbolic analogue of pure point diffraction in Theorem \ref{MellinPurePoint} below:
\begin{theorem}[Pure point diffraction, hyperbolic case] \label{pphyp} Assume that $\Lambda$ is a weighted uniform regular model set in $\bH^2$. Then its auto-correlation distribution $\xi_\Lambda$ has a pure point Mellin transform supported on $[-1,1] \cup i\R$.\qed
\end{theorem}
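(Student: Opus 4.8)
The plan is to recognize $\bH^2$ as a commutative space and then specialize the abstract pure point diffraction theorem announced in the abstract. Write $\bH^2 = G/K$ with $G = \PSL(2,\R)$ the orientation-preserving isometry group and $K$ the stabilizer of a base point, so that $K \cong \SO(2)$ and $(G,K)$ is a Gelfand pair. A weighted uniform regular model set $\Lambda$ in $\bH^2$ arises from a cut-and-project scheme whose distinguished lattice $\Gamma$ is cocompact. The general theorem then yields that the spherical diffraction measure of $\Lambda$ is pure point, with atoms supported on the spherical automorphic spectrum of $\Gamma$ and coefficients given by the shadow transform of $\mathbf 1_W$; only the pure-point and support assertions are needed here.

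First I would argue pure pointedness. Since $\Lambda$ is \emph{uniform}, the lattice $\Gamma$ is cocompact, so $L^2(\Gamma\backslash G)$ decomposes as a discrete Hilbert-space direct sum of irreducible unitary $G$-representations. The $K$-spherical constituents therefore form a discrete family, the spherical automorphic spectrum is a discrete subset of the spherical dual, and the diffraction measure is a countable sum of atoms. Next I would transport this to the Mellin picture: recall from \cite{BHP2} the identification of the auto-correlation measure $\eta_\Lambda$ on $K\backslash G/K$ with the evenly positive-definite distribution $\xi_\Lambda$ on $\R$. Parametrizing the elementary spherical functions $\varphi_z$ on $\bH^2$ by $z \in \C$ (modulo $z \sim -z$), the spherical Fourier transform is implemented by the Mellin transform $\bM$, so that the spherical diffraction measure is identified with a Mellin transform $\mu_{\xi_\Lambda}$ of $\xi_\Lambda$ in the sense of Gelfand--Vilenkin and Krein, and discreteness is preserved under this identification.

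It then remains to pin down the support. The atoms sit at the spherical parameters of those irreducible unitary representations of $G$ that admit a $K$-fixed vector. In the normalization of $\bM$, the unitary spherical dual of $(\PSL(2,\R),K)$ consists of the trivial representation (parameter $z=\pm 1$, Laplace eigenvalue $0$), the complementary series (parameter $z\in(-1,1)$, Laplace eigenvalue in $(0,\tfrac14)$), and the spherical principal series (parameter $z\in i\R$, Laplace eigenvalue $\geq\tfrac14$). These parameters fill out exactly $[-1,1]\cup i\R$, and together with the previous step this yields the theorem.

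The hard part is precisely this last step, namely upgrading the Gelfand--Vilenkin support $\R\cup i\R$ to $[-1,1]\cup i\R$. Bare distributional positive-definiteness of $\xi_\Lambda$ on $\R$ does not by itself exclude any point of the real axis, so one must exploit the stronger fact that $\eta_\Lambda$ is positive-definite with respect to the hyperbolic (Gelfand pair) convolution on $K\backslash G/K$. Equivalently, the spherical transform of a positive-definite measure is supported where the spherical functions $\varphi_z$ are themselves positive-definite on $G$, and $\varphi_z$ fails to be positive-definite for real $z$ with $|z|>1$ --- this is the complementary-series unitarity bound. Carefully matching the abstract spherical transform to $\bM$ and invoking this bound is the crux that confines the real part of the support to $[-1,1]$.
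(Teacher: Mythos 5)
Your proposal is correct and follows essentially the same route as the paper: uniformity gives cocompactness of $\Gamma$, hence complete reducibility of $L^2(\Gamma\backslash(G\times H))$ and pure point spherical diffraction; the identification $\mathbb S f=\bM(\bH f)$ transports $\widehat\eta$ to a pure point Mellin transform of $\xi_\Lambda$; and the support statement is exactly the fact that the Godement--Plancherel transform of a measure of positive type relative $K$ lives on $\mathcal S^+(G,K)\cong([-1,1]\cup i\R)/\{\pm1\}$, i.e.\ the complementary-series unitarity bound you invoke. The only cosmetic caveat is that the spherical automorphic spectrum is countable rather than topologically discrete, but countability is all that pure pointedness requires.
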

The general context in which such theorems can be established is that of spherical harmonic analysis. We will briefly explain the general formalism and then focus on the case of the Heisenberg group, in which much more precise results (in particular concerning the diffraction coefficients) can be established.

From now on let $G$ be a lcsc group, let $K<G$ be a compact subgroup  and $X = K\backslash G$. Then $(G, K)$ is called a \emph{Gelfand pair} and $X$ is called a \emph{commutative space} if the convolution subalgebra $C_c(G, K) \subset C_c(G)$ of bi-$K$-invariant functions, the so-called \emph{Hecke algebra}, is commutative. For example, the hyperbolic plane is a commutative space with $G = \mathrm{SL}_2(\R)$ and $K = \mathrm{SO}_2(\R)$. Further examples of commutative spaces include Riemannian symmetric spaces, regular trees (and more generally, Bruhat-Tits buildings) and (generalized) Heisenberg groups.

With any Gelfand pair $(G,K)$ one associated as spherical Fourier transform as follows. Denote by $\mathcal S^+(G, K)$ the set of \emph{positive-definite spherical functions}, i.e.\ matrix coefficients of irreducible unitary $G$-representations with respect to a $K$-invariant vector. Such functions are bounded, and with the restriction of the weak-$*$-topology from $L^\infty(G)$ the space $\mathcal S^+(G, K)$ is a locally compact space. (In the case of the hyperbolic plane it is homeomorphic $([-1,1] \cup i\R)/\{\pm 1\}$.) We then define the \emph{spherical Fourier transform} of $(G, K)$ by 
\[
\mathcal F: L^1(G,K) \to C_0(\mathcal S^+(G, K)),\quad f \mapsto \widehat{f}, \quad \text{where } \widehat{f}(\omega) :=  \int_G f(x) \overline{\omega(x)} \, dm_G(x).
\]
We say that a Radon measure $\widehat{\eta}$ on $\mathcal S^+(G, K)$ is a \emph{spherical Fourier transform} of a Radon measure $\eta$ on $K\backslash G/K$ if for every every $h \in {\rm span}\{f \ast g^* \mid f,g \in C_c(G,K)\}$ we have $\widehat{h} \in L^1(\mathcal S^+(G, K), \widehat{\eta})$ and 
\[ \widehat{\eta}(\widehat{h}) = 
\eta(h).
\]
Using a classical theorem of Godement \cite{Godement-57} concerning the existence and uniqueness of such spherical Fourier transforms we establish:
\begin{proposition}[Existence of spherical diffraction] If $\Lambda$ is a weighted FLC set in a commutative space $X$ and $\nu$ is an invariant measure on $\Omega_\Lambda^\times:= \Omega_{\Lambda} \setminus \{\emptyset\}$, then the corresponding autocorrelation measure $\eta_\Lambda$ admits a unique spherical Fourier transform $\widehat{\eta}_\Lambda$.  
\end{proposition}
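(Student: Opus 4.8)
The plan is to reduce the statement to Godement's theorem \cite{Godement-57} on the spherical Fourier transform of positive-definite measures attached to a Gelfand pair. The content therefore splits into two parts: first verifying that $\eta_\Lambda$ is a \emph{positive-definite} Radon measure on $K\backslash G/K$, and second checking that the hypotheses of Godement's theorem apply to it, so that the existence and uniqueness of $\widehat{\eta}_\Lambda$ follow from the abstract machine.

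For the first part I would return to the construction of $\eta_\Lambda$ out of the invariant measure $\nu$ on $\Omega_\Lambda^\times$. By definition the autocorrelation is obtained by integrating the pair correlations of a point configuration against $\nu$; concretely, for $f \in C_c(G,K)$ one expects $\eta_\Lambda(f \ast f^*)$ to equal a nonnegative constant times $\int_{\Omega_\Lambda^\times} |\langle P, f\rangle|^2 \, d\nu(P)$, where $\langle P, f\rangle$ denotes the linear statistic $\sum_{x \in P} f(x)$ (interpreted suitably for weighted configurations). As this is manifestly $\ge 0$, the measure $\eta_\Lambda$ satisfies $\eta_\Lambda(f \ast f^*) \ge 0$ for all $f$, i.e.\ it is positive-definite; its bi-$K$-invariance is immediate from the $G$-invariance of $\nu$ together with the identification already recorded in \cite{BHP2}. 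I would take care to carry this out for a general invariant $\nu$, not only the ergodic one, so that the autocorrelation is defined through the $\nu$-integral rather than through a sampling limit.

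For the second part I would set up the positive semidefinite Hermitian form $B(f,g) := \eta_\Lambda(f \ast g^*)$ on the commutative Hecke algebra $C_c(G,K)$. Since the spherical Fourier transform intertwines convolution with pointwise multiplication and the involution with complex conjugation (using $\omega(x^{-1}) = \overline{\omega(x)}$ for $\omega \in \mathcal{S}^+(G,K)$), one has $\widehat{f \ast g^*} = \widehat{f} \cdot \overline{\widehat{g}}$, so that $B$ transports to a linear functional $L$ on the subalgebra $\cA := \{\widehat{f} \mid f \in C_c(G,K)\} \subset C_0(\mathcal{S}^+(G,K))$, determined by $L(\widehat{f}\,\overline{\widehat{g}}) = \eta_\Lambda(f \ast g^*)$ and satisfying $L(|\widehat{f}|^2) = B(f,f) \ge 0$. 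Godement's theorem identifies $\mathcal{S}^+(G,K)$ with the Gelfand spectrum of the Hecke algebra and guarantees that such a positive functional is represented by integration against a positive Radon measure $\widehat{\eta}_\Lambda$ on $\mathcal{S}^+(G,K)$, yielding $\widehat{\eta}_\Lambda(\widehat{h}) = \eta_\Lambda(h)$ for every $h$ in $\mathrm{span}\{f \ast g^* \mid f,g \in C_c(G,K)\}$. Uniqueness follows because $\cA$ is conjugation-closed and separates the points of $\mathcal{S}^+(G,K)$, hence is dense in $C_0(\mathcal{S}^+(G,K))$ by Stone--Weierstrass, so the representing measure is pinned down.

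The main obstacle I anticipate is the analytic bookkeeping in passing from the positive functional $L$, defined only on the dense subalgebra $\cA$, to a genuine Radon measure for which the defining identity holds together with the required integrability: one must ensure that $\widehat{h} \in L^1(\mathcal{S}^+(G,K), \widehat{\eta}_\Lambda)$ and that the integral reproduces $\eta_\Lambda(h)$, which is exactly the quantitative content of Godement's theorem and must be invoked with its continuity hypotheses verified for $\eta_\Lambda$. A secondary technical point is confirming the positive-definiteness of $\eta_\Lambda$ in the full weighted FLC generality, where the weights and the choice of $\nu$ must be handled with care.
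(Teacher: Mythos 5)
Your proposal follows essentially the same route as the paper: one observes that the auto-correlation satisfies $\eta_\Lambda(f\ast f^*)=\|\mathcal P_\mu(f)\|^2_{L^2(\Omega_\Lambda^\times,\nu)}\ge 0$ (your ``linear statistic'' $\langle P,f\rangle$ is exactly the periodization map), so $\eta_\Lambda$ is of positive type relative $K$, and then the relative Godement--Plancherel theorem (Theorem~\ref{GodementConvenient}) delivers existence, positivity and uniqueness of $\widehat{\eta}_\Lambda$ in one stroke. The only caveat is that your Stone--Weierstrass sketch of uniqueness is not quite enough for a possibly unbounded Radon measure; the paper's appendix instead uses the local formula $\widehat{\mu}(\psi)=\sigma_f(\psi/|\widehat f|^2)$ for $f$ with $\widehat f$ nonvanishing on $\operatorname{supp}\psi$, but since you invoke Godement's theorem as a black box this does not affect the correctness of your argument.
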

We refer to the Radon measure $\widehat{\eta}_\Lambda$ on $\mathcal S^+(G, K)$ as the \emph{spherical diffraction} of $\Lambda$ with respect to $\nu$. Note that in the model set case there is a unique invariant measure on $\Omega_\Lambda^\times$, hence we can simply speak of the spherical diffraction of $\Lambda$. In the case of the hyperbolic plane this spherical diffraction is precisely the Mellin transform of the auto-correlation distribution.

Unlike the situation in the case of abelian groups, it is not true that the spherical diffraction of a regular model set in a commutative space is pure point. In fact, this property depends on the model set being uniform, a property which holds automatically for model sets in abelian groups (and, more generally, for approximate lattices in nilpotent groups \cite{BH}).
\begin{theorem}[Pure point spherical diffraction] \label{ppdiff} Let $\Lambda$ be a regular model set in a commutative space $X$. If $\Lambda$ is uniform (i.e.\ the underlying lattice is cocompact), then $\Lambda$ has pure point spherical diffraction.\qed
\end{theorem}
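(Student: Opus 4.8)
The plan is to realise the spherical diffraction $\widehat\eta_\Lambda$ as the spherical spectral measure of a single $K$-invariant vector for the right $G$-representation on $L^2(\Gamma\backslash(G\times H))$, and then to exploit that cocompactness of $\Gamma$ forces this representation to decompose discretely. The uniformity hypothesis enters precisely here: for a cocompact lattice the associated $L^2$-space has purely discrete decomposition, whereas in the non-uniform case a continuous part appears and pure-pointness fails.

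First I would recall from \cite{BHP2} the cut-and-project description of the autocorrelation: realising $\Lambda$ through a scheme $(G, H, \Gamma)$ with window $W\subset H$, the measure $\eta_\Lambda$ on $K\backslash G/K$ is a $\Gamma$-sum weighted by the window autocorrelation $(\mathbf 1_W \ast \mathbf 1_W^*)(\pi_H(\gamma))$. Rewriting this weight as an inner product $\langle \rho_H(\pi_H(\gamma))\mathbf 1_W, \mathbf 1_W\rangle_{L^2(H)}$ for the regular representation $\rho_H$ of $H$, I would assemble a vector $\psi$ on $\Gamma\backslash(G\times H)$ by $\Gamma$-periodising $\mathbf 1_W$ against a bump on the $G$-factor, and verify that the diagonal matrix coefficient of the right $G$-representation $\pi$ on $\mathcal H := L^2(\Gamma\backslash(G\times H))$ reproduces the autocorrelation,
\[
\eta_\Lambda(f) = \langle \pi(f)\psi, \psi\rangle_{\mathcal H}\qquad (f\in C_c(G,K)).
\]
Since $W$ lives on the $H$-factor while $K$ acts only on the $G$-factor, the vector $\psi$ is $K$-invariant; this is the mechanism behind the identification of $\widehat\eta_\Lambda$ as a spherical spectral measure.

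Next comes the spectral step. As $\Lambda$ is uniform, $\Gamma$ is cocompact in $G\times H$, so $\mathcal H$ decomposes as a Hilbert direct sum $\bigoplus_j \mathcal H_j$ of $(G\times H)$-irreducibles with finite multiplicities, each $\mathcal H_j\cong \mathcal H_{\sigma_j}\otimes\mathcal H_{\tau_j}$ with $\sigma_j,\tau_j$ irreducible for $G,H$ and the $G$-action equal to $\sigma_j\otimes\mathrm{id}$. Projecting the displayed identity onto $\mathcal H_j$ and using that $\psi$ is $K$-fixed, only summands with $\sigma_j$ spherical survive; and by the Gelfand-pair property the $K$-fixed vector $e_j\in\mathcal H_{\sigma_j}$ is unique up to scalar, so the $K$-fixed condition forces $\psi_j = e_j\otimes w_j$ for some $w_j\in\mathcal H_{\tau_j}$. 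The $j$-th contribution is then $\|w_j\|^2\,\langle\sigma_j(f)e_j,e_j\rangle$, and since $\langle\sigma_j(\cdot)e_j,e_j\rangle = \omega_j\in\mathcal S^+(G,K)$ is the positive-definite spherical function attached to $\sigma_j$, setting $a_j := \|w_j\|^2\ge 0$ gives $\eta_\Lambda(f) = \sum_j a_j\,\langle\sigma_j(f)e_j,e_j\rangle$. Comparing with the defining relation $\widehat\eta_\Lambda(\widehat f) = \eta_\Lambda(f)$ (and matching the conjugation convention in the definition of $\mathcal F$) identifies
\[
\widehat\eta_\Lambda = \sum_j a_j\,\delta_{\omega_j},
\]
a countable sum of atoms at the spherical functions of the spherical automorphic representations $\sigma_j$ of $\Gamma$ — that is, pure point diffraction, with coefficients $a_j$ given by the norms of the window projections (the \emph{shadow transform}).

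The main obstacle is the first step: producing a genuine $K$-fixed $L^2$-vector $\psi$ whose diagonal matrix coefficient is \emph{exactly} $\eta_\Lambda$. The naive periodisation of $\mathbf 1_W$ is not square-integrable because $\Gamma$ projects densely into $H$, so one must smear $\mathbf 1_W$ against an approximate identity on $G$ and control the error using regularity of the window together with the explicit autocorrelation formula, passing to the limit. One must also confirm that the autocorrelation arising from the invariant measure $\nu$ on $\Omega_\Lambda^\times$ is captured inside $\mathcal H$ via the parametrisation map from the hull to $\Gamma\backslash(G\times H)$. Once this matrix-coefficient realisation is secured, cocompactness does the rest.
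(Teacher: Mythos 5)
Your overall route coincides with the paper's: transfer everything to $L^2(Y)$ with $Y=\Gamma\backslash(G\times H)$, use cocompactness of $\Gamma$ to obtain a discrete decomposition into irreducibles $V\boxtimes W$ (Proposition~\ref{ConcretePurePoint}), and use the Gelfand-pair property $\dim V^K\leq 1$ to turn each spherical summand into a single atom weighted by the squared norm of the $H$-component of the projected window vector --- this is exactly the mechanism behind Theorem~\ref{AbstractPurePoint} and the shadow-transform formula of Theorem~\ref{DiffAbstract}. The spectral half of your argument is sound.

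The genuine gap is the step you yourself flag as ``the main obstacle,'' and it cannot be repaired in the form you propose: there is \emph{no} $K$-fixed vector $\psi\in L^2(Y)$ with $\eta_\Lambda(f)=\langle\pi(f)\psi,\psi\rangle$ for all $f\in C_c(G,K)$. A diagonal matrix coefficient of a genuine vector represents the absolutely continuous measure $\overline{\phi}\,m_G$ with $\phi(g)=\langle\psi,\pi(g)\psi\rangle$ bounded and continuous, whereas the autocorrelation of a model set is (the push-forward of) a weighted Dirac comb supported on $\Lambda^{-1}\Lambda$; in particular it has a positive atom at the identity double coset of mass $m_H(W)>0$, so for non-discrete $G$ it is singular with respect to $m_G$ and can never be a matrix coefficient. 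Smearing against a bump does produce genuine vectors $\psi_n=\mathcal P_\Gamma(\rho_n\otimes{\bf 1}_W)$, but $\|\psi_n\|_2^2=\eta_\Lambda(\rho_n\ast\rho_n^*)\geq\|\rho_n\|_2^2\,m_H(W)\to\infty$, precisely because of that atom, so ``passing to the limit'' yields no vector. The correct repair --- and it is what the paper does --- is to abandon the single vector and work with the quadratic form $f\mapsto\|\mathcal P_\Gamma(f\otimes{\bf 1}_W)\|_2^2=\eta_\Lambda(f\ast f^*)$: one proves the projection formula $\|{\rm proj}_\omega(\mathcal P_\Gamma(f\otimes{\bf 1}_W))\|^2=|\widehat f(\check\omega)|^2\,\|\mathcal S_\Gamma({\bf 1}_W)(\omega)\|^2$ by applying the approximate identity to $\mathcal P_\Gamma((\rho_n\ast f)\otimes{\bf 1}_W)$ (which \emph{does} converge, namely to $\mathcal P_\Gamma(f\otimes{\bf 1}_W)$), sums over $\omega$ using complete reducibility, and invokes the Godement--Plancherel theorem for \emph{measures} of positive type (Theorem~\ref{GodementConvenient}) rather than the Bochner theorem for functions of positive type to identify the resulting pure point measure with $\widehat{\eta}_\Lambda$. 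In short: your $\psi$ exists only as a distribution vector, and the spectral decomposition must be applied to $\pi(f)\psi=\mathcal P_\Gamma(f\otimes{\bf 1}_W)$ rather than to $\psi$ itself.
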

If $\Lambda$ is a uniform regular model set in $X = K\backslash G$, associated with a cut-and-project scheme $(G, H, \Gamma)$, then we can find a countable subset $C \subset \mathcal S^+(G, K)$ such that
\[
\widehat{\eta}_\Lambda = \sum_{x \in C} c(x) \cdot \delta_x.
\]
In fact, the set $C$ is simply the spherical automorphic spectrum of $\Gamma$, i.e.\ the set of matrix coefficients associated with those irreducible subrepresentations of the $G$-action on $L^2((G\times H)/\Gamma)$ which contain a $K$-invariant vector. In the classical case, where $G$ is abelian and $K$ is trivial, this set will always be a dense subset of $ \mathcal S^+(G, K) = \widehat{G}$,  but in our more general setting, new phenomena arise. For example, if $\Lambda$ is a uniform weighted regular model set in the hyperbolic plane whose underlying lattice has a strong spectral gap, then $\widehat{\eta}_\Lambda$ has an isolated atom at the constant function $1$. 

While it is easy to describe the support of the spherical diffraction measure abstractly, it is often impossible to compute it explicitly. Similarly, while we have an abstract description of the diffraction coefficients in terms of the so-called \emph{shadow transform} of the characteristic function of the window, for general commutative spaces there is no hope to compute these coefficients explicitly. A notable exception is given by Gelfand pairs $(G,K)$, for which the group $G$ is virtually nilpotent, hence we will focus on this case for the remainder of this introduction.

The easiest case beyond the abelian case considered by Meyer is that of the Euclidean motion group $G = \R^n \rtimes O(n)$ and its maximal compact subgroup $K = O(n)$. In this case, $X = K\backslash G$ is Euclidean $n$-space, and the corresponding spherical diffraction is the  ``powder diffraction'' considered already more than a decade ago in \cite{BFG}. In this case we have $\mathcal S^+(G, K) = \{\omega_\kappa \mid \kappa \geq 0\} \cong \R_{\geq 0}$, where $\omega_\kappa$ is a certain Bessel function, and if $\Lambda$ is a weighted regular model set in $X$ which arises from an irreducible $\Delta \subset \R^n \times \R^m$ and window $W \subset \bR^m$, then its diffraction is given by the formula
\[
\widehat{\eta}_\Lambda  \quad = \quad \sum_{(\sigma_1, \sigma_2) \in \Delta^\perp} |\widehat{\bf 1}_{W_o}(\sigma_2)|^2\cdot \delta_{\omega_{|\sigma_1|}}.
\]

The easiest non-virtually abelian case is that of Heisenberg motion groups, and here the diffraction formula and in particular the diffraction coefficients take already a much more involved form. To describe our results, we introduce the following notation:
\begin{itemize}
\item For $d \in \mathbb N$ we abbreviate $V_d := \C^d$ and define
\[
\beta_d: V_d \times V_d \to \R, \quad \beta_d(u,v) = -\frac{1}{2} \im \langle u,v \rangle.
\]
Then the $(2d+1)$-dimensional Heisenberg group is $N_d := \bR \oplus_{\beta_d} V_d$
\item The group $K_d := U(1)^d$ acts on $V_d$ preserving $\beta_d$, and hence acts on $N_d$ by automorphisms. The group $G_d := K_d \rtimes N_d$ is called a \emph{minimal Heisenberg motion group}. Bi-$K_d$-invariant functions on $G_d$ correspond to polyradial functions on the Heisenberg group $N_d$.
\item The space of positive-definite spherical functions decomposes into two parts as
\[
\mathcal S^+(G_d, K_d) = \{\omega_{\tau,\alpha} \mid \tau \in \R \setminus \{0\}, \alpha \in \bN^d\} \sqcup \{\omega_{0, \kappa} \mid \kappa \in \R_{\geq 0}^d\}.
\]
The horizontal part $\{\omega_{0, \kappa} \mid \kappa \in \R_{\geq 0}^d\}$ consists of products of Bessel functions in complete analogy to the virtually abelian case. 
\item The vertical part $ \{\omega_{\tau,\alpha} \mid \tau \in \R \setminus \{0\}, \alpha \in \bN^d\}$ of $\mathcal S^+(G_d, K_d)$ has no counterpart in the virtually abelian theory and is given by matrix coefficients of (infinite-dimensional) Schr\"odinger representations, which can be expressed in terms of the Laguerre polynomials $L_k$ of degree $k$ and type $0$ as given by 
\[
L_k(t) = e^{-t}\Big(\frac{d}{dt}\Big)^k(e^{t}t^k), \quad \textrm{for $k \in \bN$}.
\]
Explicitly,
\[
\omega_{\tau, \alpha}(k, t, v) = e^{i \tau t} \cdot q_{\tau, \alpha}(v), \quad \text{where}\quad q_{\tau, \alpha}(v) = e^{-|\tau||v|^2/4} \cdot \prod_{j=1}^d L_{\alpha_j}(|\tau||v_j|^2/2).
\]
\end{itemize}
We now fix $d_1, d_2 \in \bN$. We are going to construct a model set in $N_{d_1} = K_{d_1} \backslash G_{d_1}$ as follows:
\begin{itemize}
\item Let $G := G_{d_1}$ and $H := N_{d_2}$ so that
\[
G \times H = K_{d_1}\rtimes (\R^2 \oplus_\beta V_{d_1 + d_2}). 
\]
\item We choose lattices $\Delta < V_{\bd}$ and $\Xi < \R^2$ such that $\Delta$ projects densely and injectively onto $V_{d_1}$ and $V_{d_2}$, $\Xi$ projects densely and injectively onto both coordinates and such that $\beta_{\bd}(\Delta, \Delta) \subset \Xi$. We then obtain a lattice 
\[
\Gamma := \{((e, (\xi_1, \delta_1), (\xi_2, \delta_2)) \in G \times H \mid (\xi_1, \xi_2) \in \Xi, (\delta_1, \delta_2) \in \Delta\} < G \times H.
\]
For example, for $d_1 = d_2=1$ we could choose 
\[
\Delta := \{(a+b \sqrt 2 + ic + id \sqrt 2,a-b \sqrt 2 + ic - id \sqrt 2 )\mid a,b,c,d \in \Z\} < \C^2\]
and
\[
\Xi:= \{(a+b \sqrt 2,a-b \sqrt 2 )\mid a,b \in \Z\} < \R^2.
\]
For larger $d$, we could take products of such lattices or arithmetic lattices associated with higher degree number fields.
\item Given $a_j, b_j \in \R$, $0 \leq j \leq d$, we define
\[
I := [a_0, b_0] \qand W_o := \{z \in \C^d \mid  |z_j| \in [a_j, b_j]\}.
\]
Since $\Gamma$ is countable we may choose these parameters in such a way that $W := I \times W_o$ does not intersect the projection of $\Gamma$ to $H$. We then obtain a uniform regular model set
\[
\widetilde{\Lambda} := {\rm proj}_G(\Gamma \cap (G \times W)) < G,
\]
and an associated uniform regular model set $\Lambda$ in the Heisenberg group $K_{d_1} \backslash G = N_{d_1}$.
\end{itemize}
With this notation understood we derive in Theorem \ref{DiffMain} below the following explicit formula for the spherical diffraction of $\Lambda$:
\begin{theorem}[Polyradial diffraction in Heisenberg groups] \label{polyrad} The diffraction measure $\widehat{\eta}_\Lambda$ of the regular model set $\Lambda$ is given by the formula
\[
\widehat{\eta}_\Lambda  \quad = \quad \sum_{(\sigma_1, \sigma_2) \in \Delta^\perp} c_{\mathrm{hor}}(\sigma_2) \cdot \delta_{\omega_{0, |\sigma_1|}} \quad+ \quad \underset{\tau_1 \neq 0 \neq \tau_2}{\sum_{(\tau_1, \tau_2) \in \Xi^\perp}}\sum_{(\alpha, \beta) \in \bN^{d_1+d_2}} c_{\mathrm{vert}}(\alpha, \beta, \tau_1, \tau_2, \Delta) \cdot \delta_{\omega_{\tau_1, \alpha}},
\]
where the horizontal and vertical diffraction coefficients are respectively given by
\[
c_{\mathrm{hor}}(\sigma_2) \quad = \quad  |m_\R(I)|^2 \cdot |\widehat{\bf 1}_{W_o}(\sigma_2)|^2
\]
and
\[\pushQED{\qed}
c_{\mathrm{vert}}(\alpha, \beta, \tau_1, \tau_2, \Delta) \quad = \quad  \frac{|\tau_1|^{d_1}|\tau_2|^{d_2}}{(2\pi)^{d_1+d_2}}  \cdot   |\widehat{\bf 1}_I(\tau_2)|^2 \cdot |\langle {\bf 1}_{W_o}, q_{\tau_2, \beta} \rangle|^2 \cdot \sum_{(\delta_1,\delta_2) \in \Delta} q_{\tau_1,\alpha}(\delta_1) q_{\tau_2,\beta}(\delta_2).\qedhere\popQED
\]
\end{theorem}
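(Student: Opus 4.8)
The plan is to combine the general pure-point theorem (Theorem \ref{ppdiff}) with the explicit representation theory of the nilmanifold $(G \times H)/\Gamma$. By Theorem \ref{ppdiff} the measure $\widehat\eta_\Lambda$ is pure point and supported on the spherical automorphic spectrum of $\Gamma$, i.e.\ on those positive-definite spherical functions of $(G_{d_1}, K_{d_1})$ arising as $K_{d_1}$-spherical matrix coefficients of $G$-subrepresentations of $L^2((G\times H)/\Gamma)$, while the coefficient at each such $\omega$ is computed by the \emph{shadow transform} of $\mathbf 1_W$. Thus the proof reduces to (i) decomposing $L^2((G\times H)/\Gamma)$ explicitly, (ii) locating its $K_{d_1}$-fixed vectors, and (iii) evaluating the shadow transform on the resulting representations, throughout using that $G\times H = K_{d_1}\rtimes M$ with $M = \R^2 \oplus_\beta V_{d_1+d_2}$ and $\Gamma < M$ the displayed lattice.

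First I would decompose the nilmanifold $L^2(M/\Gamma)$ by central character, which ranges over the dual lattice $\Xi^\perp$ of $\Gamma \cap Z(M) = \Xi$. Here the hypotheses on $\Xi$ enter decisively: since $\Xi$ projects densely onto each coordinate of $\R^2$, any $(\tau_1,\tau_2) \in \Xi^\perp$ with a single vanishing coordinate must be zero, so the spectrum splits cleanly into the central character $0$ and those with $\tau_1 \neq 0 \neq \tau_2$. The central-character-$0$ part factors through the abelianization $M^{\mathrm{ab}} = V_{d_1+d_2}$, giving the torus $L^2(V_{d_1+d_2}/\Delta) = \bigoplus_{\sigma \in \Delta^\perp} \C\chi_\sigma$; restricting $\chi_\sigma$ to $G_{d_1}$ and taking the $U(1)^{d_1}$-orbit yields the Euclidean-type representation whose unique spherical function is the Bessel product $\omega_{0,|\sigma_1|}$, accounting for the horizontal atoms. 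For $\tau = (\tau_1,\tau_2)$ with both coordinates nonzero the form $\tau_1\beta_{d_1} \oplus \tau_2\beta_{d_2}$ is nondegenerate and the isotypic component is a multiple of the Schr\"odinger representation $\pi_\tau$, which as a $G\times H$-module is the external tensor product $\widetilde\pi^{(1)}_{\tau_1} \otimes \pi^{(2)}_{\tau_2}$, with $\widetilde\pi^{(1)}_{\tau_1}$ the extension of the $N_{d_1}$-Schr\"odinger representation to $G_{d_1}$.

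Next I would pin down the $K_{d_1}$-fixed vectors. In the Fock model of $\widetilde\pi^{(1)}_{\tau_1}$ the torus $U(1)^{d_1}$ acts with weights in $-\bN^{d_1}$, so after the Mackey twist the spherical representations are exactly those indexed by $\alpha \in \bN^{d_1}$, the fixed vector being the monomial $e_\alpha$ and its diagonal matrix coefficient being precisely $\omega_{\tau_1,\alpha}$; this identifies the support of the vertical part. For the coefficients I would evaluate the shadow transform directly. On the horizontal part the window $W = I \times W_o$ pairs with $\chi_\sigma$ through the central integral $m_\R(I)$ and the Euclidean Fourier transform $\widehat{\mathbf 1}_{W_o}(\sigma_2)$, yielding $c_{\mathrm{hor}}(\sigma_2)$. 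On the vertical part the $H$-factor is $\pi^{(2)}_{\tau_2}$; since $W_o$ is polyradial, expanding $\mathbf 1_{W_o}$ in the Fock basis retains only the diagonal overlaps $\langle \mathbf 1_{W_o}, q_{\tau_2,\beta}\rangle$ indexed by $\beta \in \bN^{d_2}$, the central interval contributes $\widehat{\mathbf 1}_I(\tau_2)$, and the Plancherel densities $|\tau_1|^{d_1}$ and $|\tau_2|^{d_2}$ of the two Schr\"odinger factors, normalized by $(2\pi)^{d_1+d_2}$, account for the prefactor.

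I expect the main obstacle to be the multiplicity factor in the vertical coefficient: the identity that the full isotypic projection, after summing $|\langle\,\cdot\,,\text{basis}\rangle|^2$ over an orthonormal basis of the multiplicity space of $\pi_\tau$ in $L^2(M/\Gamma)$, collapses to the lattice sum $\sum_{(\delta_1,\delta_2)\in\Delta} q_{\tau_1,\alpha}(\delta_1)\, q_{\tau_2,\beta}(\delta_2)$. This is where the coupling of the $V_{d_1}$- and $V_{d_2}$-directions by $\Delta$ is felt, and I would establish it by realizing the multiplicity space through the Weil--Brezin--Zak transform and applying Poisson summation over $\Delta$ to the product of the two diagonal matrix coefficients; the delicate point is tracking normalizations so that the constant matches the stated Plancherel density. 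Assembling the horizontal and vertical contributions then yields the asserted formula for $\widehat\eta_\Lambda$.
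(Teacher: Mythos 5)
Your proposal is sound and reaches the stated formula by a genuinely different route from the paper. The paper never explicitly decomposes $L^2(\Gamma\backslash(G\times H))$ into irreducibles: it starts from the lattice-sum characterization of the diffraction coefficients (Proposition \ref{DiffCoeffConvenient}), passes to the auxiliary Gelfand pair $(G\times\widetilde{H}, K_{d_1}\times K_{d_2})$ and a regularization lemma to reduce to smooth test functions, applies Poisson summation in the \emph{central} variables over $\Xi$ to produce the sum over $\Xi^\perp$, and then expands the resulting $\tau$-twisted convolutions in the orthogonal basis of Laguerre-type $\tau$-spherical functions $q_{\tau,\alpha}$ (Proposition \ref{tauProperties}(iv)), with the rearrangements justified by the Laguerre decay estimates of Appendix B. Your route instead instantiates the abstract shadow-transform formula of Theorem \ref{DiffAbstract} by decomposing the nilmanifold by central character, invoking Stone--von Neumann for the nondegenerate characters and the Fock-model weight analysis to locate the $K_{d_1}$-spherical constituents; this makes the support (the spherical automorphic spectrum) conceptually transparent and handles the non-smooth window for free, since the shadow transform is defined on $\mathcal L^\infty_c(H)$. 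The trade-off is exactly the one you identify: the entire analytic content migrates into the multiplicity computation, where the sum over an orthonormal basis of the multiplicity space must collapse, via Weil--Brezin/Poisson summation over $\Delta$, to $\frac{|\tau_1|^{d_1}|\tau_2|^{d_2}}{(2\pi)^{d_1+d_2}}\sum_{(\delta_1,\delta_2)\in\Delta}q_{\tau_1,\alpha}(\delta_1)q_{\tau_2,\beta}(\delta_2)$ --- this is the same lattice sum of diagonal matrix coefficients that the paper obtains more directly from the twisted-convolution identity evaluated at lattice points, and tracking the Plancherel normalizations there is where the real work lies. Both arguments ultimately rest on the same two ingredients (Poisson summation over the central lattice $\Xi$ and over $\Delta$, plus the $L^2$-theory of the $q_{\tau,\alpha}$), so your plan is viable provided that multiplicity step is carried out with the same care the paper devotes to its convergence lemma.
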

Note that the horizontal part is in complete analogy with the virtual abelian case, whereas the vertical part (corresponding to infinite-dimensional representations) has no counterpart in the classical theory. The diffraction formula can be interpreted as an exotic Poisson summation formula for polyradial functions on the Heisenberg group in the following way: If $f \in C_c(N_{d_1})^{K_{d_1}}$ is a polyradial continuous function with compact support on $N_{d_1}$ and $B_n$ are balls in $N_{d_1}$ with respect to the Koranyi norm, then
\begin{eqnarray*} 
&&\lim_{n \to \infty}\frac{1}{m_{N_{d_1}}(B_n)} \sum_{x \in \Lambda \cap B_n} \sum_{y \in \Lambda} f(x^{-1}y)\\
&=&  \sum_{(\sigma_1, \sigma_2) \in \Delta^\perp}  |m_\R(I)|^2 |\widehat{\bf 1}_{W_o}(\sigma_2)|^2 \widehat{f}(\omega_{0, |\sigma_1|})\\
&&+\underset{\tau_1 \neq 0 \neq \tau_2}{\sum_{(\tau_1, \tau_2) \in \Xi^\perp}}\sum_{(\alpha, \beta) \in \bN^{d_1+d_2}} \sum_{(\delta_1,\delta_2) \in \Delta}  \frac{|\tau_1|^{d_1}|\tau_2|^{d_2}}{(2\pi)^{d_1+d_2}}  q_{\tau_1,\alpha}(\delta_1) q_{\tau_2,\beta}(\delta_2)  |\widehat{\bf 1}_I(\tau_2)|^2 |\langle {\bf 1}_{W_o}, q_{\tau_2, \beta} \rangle|^2 \widehat{f}({\omega_{\tau_1, \alpha}}).
\end{eqnarray*}
This article is organized as follows. In Section~\ref{sec:GelfandPairs} we recall basic facts concerning spherical harmonic analysis. In particular, we describe in Theorem~\ref{GodementConvenient}(a relative version) of the classical Godement-Plancherel theorem. An elementary proof (modulo the spherical Bochner theorem) is included in Appendix~A. In Section~\ref{sec:diffraction} this theorem is used to define spherical diffraction measures in a rather general context. Section~\ref{sec:diffractionformula} establishes pure point spherical diffraction for uniform regular (weighted) model sets as stated in Theorem~\ref{ppdiff}. We also give a general formula for the diffraction coefficients in terms of the so-called shadow transform in Theorem~\ref{DiffAbstract}. The remainder of the article is devoted to examples. In Section~\ref{SecHeisenberg} we explicitly compute the spherical diffraction  for regular model sets in Heisenberg groups as of Theorem~\ref{polyrad}, using certain estimates concerning Laguerre polynomials from Appendix~B. In Section~\ref{SecSemisimple} we explain why the spherical diffraction of regular model sets in the hyperbolic plane can be identified with the Mellin transform of the underlying auto-correlation distribution and deduce Theorem~\ref{pphyp}. 

\medskip 

\textbf{Acknowledgements.} We thank Chalmers University, G\"oteborg, Justus-Liebig-Universit\"at Gie\ss en and KIT, Karlsruhe for providing financial support as well as excellent working conditions during out mutual visits. M.B. was partially supported by L\"angmanska kulturfonden BA19-1702 and Vetenskapsr\aa det 11253320.

\section{Preliminaries on Gelfand pairs} \label{sec:GelfandPairs}
In this section we set up our notation and recall some basic results conerning Gelfand pairs. Most of the material of this subsection is fairly standard and can be found in \cite{Wolf-07, vanDijk, Dixmier, Folland-95}.

\subsection{Notational conventions}

Throughout this article, $G$ will always denote a unimodular lcsc group $G$ and $K<G$ will always denote a compact subgroup. We fix a choice of Haar measure $m_G$ on $G$ and denote by $m_K$ the Haar probability measure on $K$. We also denote by
\[
{}_Kp: G \to K\backslash G, \quad p_K: G \to G/K \qand {}_Kp_K: G \to K\backslash G/K
\]
the canonical projections. Starting from Subsection \ref{SubsecGelfandFromHere} we will always assume that $(G, K)$ is moreover a Gelfand pair (cf.\ Definition \ref{DefGelfandPair}). Our notation follows \cite{BHP2}, in particular we make the following conventions:

\begin{remark}[Notations concerning function spaces]
If $X$ is a lcsc space, then we denote by $C_c(X)$, $C_0(X)$ and $C_b(X)$ the function spaces of complex-valued compactly supported continuous functions, continuous functions vanishing at infinity and continuous bounded functions respectively.

If $(X, \nu)$ is a measure space and $f,g \in L^2(X, \nu)$, then we denote by \[\langle f, g \rangle_{X} := \langle f, g \rangle_{(X, \nu)} := \int_X f \cdot \overline{g} \, d\nu\]  the $L^2$-inner product. Following \cite{BHP2}, but contrary to the convention in \cite{BHP1}, we will choose all our inner products to be anti-linear in the second variable.

Given a function $f: G \to \C$ we denote by $\bar f$, $\check f$ and $f^*$ respectively the functions on $G$ given by \[\bar f(g) := \overline{f(g)}, \quad \check f(g) := f(g^{-1}) \quad \text{and} \quad f^*(g) := \overline{f(g^{-1})}.\]
\end{remark}

\begin{remark}[Notations concerning measures] We denote by $M(X)$ the Banach space of complex Radon measure on $X$. We write $M_b(X)$ for the subspace of finite complex measures (i.e. $\mu$ with $|\mu|(X)< \infty$), $M^+(X)$ for the subset of (positive) Radon measures and $M_b^+(X)$ for the space of bounded Radon measures on $X$. Finally we denote by ${\rm Prob}(X) \subset M^+_b(X)$ the space of probability measures on $X$. We identify $\mu \in M(X)$ with the corresponding linear functional on $C_c(X)$ and write $\mu(f) := \int_X f \, d\mu$ for $f \in C_c(X)$.
\end{remark}

The group $G$ acts on functions on $G$ by $L_gf(x) := f(g^{-1}x)$ and $R_gf(x) := f(xg)$, and dually on measures. 

\begin{remark}[Notations concerning convolution algebras] $M_b(G)$ and $L^1(G)$ are Banach-$*$-algebras under convolution. We denote by $M_b(G,K) \subset M_b(G)$ and $L^1(G, K)\subset L^1(G)$ the Banach-$*$-subalgebras consisting of measures and function classes which are bi-$K$-invariant. The spaces $M(G, K)$, $C(G,K)$, $L^p(G, K)$ etc.\ are defined similarly. The $*$-subalgebra $C_c(G, K)$ is called the \emph{Hecke algebra} and plays a central role in the current article. Averaging over $K \times K$ defines canonical retractions $M_b(G) \to M_b(G,K)$, $L^1(G) \to L^1(G, K)$, $C_c(G) \to C_c(G, K)$ etc. We denote these by $\mu \mapsto \mu^\sharp$ (in case of measures) or $f \mapsto f^\sharp$ (in case of functions).
\end{remark}

\begin{remark}[Actions of convolution algebras] If $\pi:G \to \mathcal U(V)$ is a unitary representation of $G$, then we denote by the same latter the associated $*$-representation $\pi: L^1(G) \to \mathcal B(V)$ as given by
\[
\pi(f)(u) := \int_G f(g) \pi(g)u \, dm_G(g).
\]
For the left- and right-regular representations $\pi_L, \pi_R: G \to \mathcal U(L^2(G))$, we then have \cite[Remark~A.3]{BHP2}
\begin{equation}\label{Convolution}
\pi_L(f)(u) = f \ast u \qand \pi_R(f) u = u \ast \check f.\end{equation}
\end{remark}

\begin{remark}[Canonical identifications] Pullback induces bijections ${}_Kp^*: C_c(K\backslash G) \to C_c(G)^{L(K)}$ and ${}_Kp_K^*: C_c(K\backslash G/K) \to C_c(G, K)$, and we denote their inverses by $f \mapsto {}_Kf$ and $f \mapsto {}_K f_K$ respectively. Thus for all $g \in G$, $h \in C_c(G)^{L(K)}$ and $f \in C_c(G, K)$ we have
\[
{}_K h(Kg) = h(g) \qand {}_Kf_K(KgK) = f(g).
\]
We use the same notation also for other classes of left-, respectively bi-$K$-invariant functions. The isomorphism ${}_Kp_K^*: C_c(K\backslash G/K) \to C_c(G, K)$ can be used to induce a convolution structure on $C_c(K\backslash G/K)$. For a more explicit description of this convolution structure see Definition~A.9 in~\cite{BHP2}.
\end{remark}

\begin{remark}[Convenient approximate identities]\label{ConvenientApproximateIdentity} As pointed out in \cite[Remark~A.12]{BHP2}, there exist functions $\widetilde{\rho_n} \in C_c(G)$ with the following properties:
\begin{itemize}
\item $\widetilde{\rho}_n \geq 0$, $\widetilde{\rho}_n^* = \widetilde{\rho}_n$, $\int_G \widetilde{\rho}_n \, dm_G = 1$ and all of the functions are supported inside a common pre-compact identity neighbourhood.
\item For every $1\leq p < \infty$ and $f \in L^p(G)$ we have $\widetilde{\rho}_n \ast f \to f$ and $f \ast \widetilde{\rho}_n \to f$ in $L^p$. For $f \in C_c(G)$ these convergences hold uniformly, and for $f \in C(G)$ they hold uniformly on compacta, in particular pointwise.
\item If we set $\rho_n := \widetilde{\rho}_n^\sharp$, then we have convergence ${\rho}_n \ast f \to f^\sharp$ and $f \ast {\rho}_n \to f^\sharp$ in the same sense. 
\end{itemize}
We fix such functions once and for all and refer to $(\widetilde{\rho}_n)$ and $(\rho_n)$ as convenient approximate identities in $C_c(G)$, respectively $C_c(G, K)$.
\end{remark}

\subsection{Functions and measures of positive type}
The terminology concerning positive-definite functions varies in the literature. We will use the following:
\begin{definition}\label{DefPositiveDefinite} Let $G$ be a lcsc group.
\begin{enumerate}
\item A function $\phi: G \to \C$ is called \emph{positive-definite} if for all $\lambda_1, \dots, \lambda_n \in \C$ and $x_1, \dots, x_n \in G$,
\[
\sum_{i=1}^n \sum_{j=1}^n \lambda_i \overline{\lambda_j} \phi(x_ix_j^{-1}) \geq 0.
\]
\item A function class $\phi \in L^\infty(G)$ is called \emph{of positive type} if for all $f \in L^1(G)$,
\[
\int_G (f \ast f^*)(g) \phi(g) dm_G(g) = \int_G \int_G f(g) \overline{f(h)} \phi(gh^{-1})dm_G(g)dm_G(h) \geq 0.
\]
\end{enumerate}
\end{definition}
With this terminology the following hold (\cite[Sec. 3.3]{Folland-95}): Firstly, every function class of positive type has a (unique) continuous representative, which we refer to as a function of positive type. Thus, by our convention, functions of positive type are continuous. Secondly, for \emph{continuous} functions being positive-definite and being of positive type is equivalent. More precisely:
\begin{lemma}[Characterizations of functions of positive type]\label{PosDefFct} Let $\phi \in C(G)$. Then the following are equivalent:
\begin{enumerate}[(i)]
\item $\phi$ is positive-definite.
\item $\phi$ is of positive type.
\item $\int_G (f \ast f^*)(g) \phi(g) dm_G(g) \geq 0$ for all $f \in C_c(G)$.
\item There exists a unitary representation $\pi$ of $G$ with cyclic vector $u$ such that $\phi(g) = \langle u, \pi(g) u \rangle$.
\end{enumerate}
In this case, the pair $(\pi, u)$ is unique up to isomorphism, and $\phi$ satisfies
\[
\pushQED{\qed} \|\phi\|_\infty = \|u\|^2 = \phi(e) \geq 0 \qand \phi^* = \phi.\qedhere\popQED
\]
\end{lemma}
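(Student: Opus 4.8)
The plan is to route everything through the GNS construction, establishing the cycle (iv) $\Rightarrow$ (i) $\Rightarrow$ (iii) $\Rightarrow$ (iv), closing in (ii) via the inclusion $C_c(G) \subset L^1(G)$ together with a density argument, and finally reading off the quantitative assertions from the cyclic model produced in (iv). First I would dispatch (iv) $\Rightarrow$ (i) by a direct computation: setting $v_i := \pi(x_i)^* u$ and using unitarity together with $\pi(x_i x_j^{-1}) = \pi(x_i)\pi(x_j)^*$, the anti-linearity of our inner product in the second slot gives
\[
\sum_{i,j} \lambda_i \overline{\lambda_j}\, \phi(x_i x_j^{-1}) = \sum_{i,j} \lambda_i \overline{\lambda_j} \langle v_i, v_j \rangle = \Big\| \sum_i \lambda_i v_i \Big\|^2 \geq 0.
\]
For (i) $\Rightarrow$ (iii), I would recognize the double integral $\int_G \int_G f(x)\overline{f(y)}\phi(xy^{-1})\, dm_G(x)\, dm_G(y)$ as a limit of Riemann-type sums $\sum_{i,j} \lambda_i \overline{\lambda_j}\,\phi(x_i x_j^{-1})$ with $\lambda_i := f(x_i)\, m_G(E_i)$ for a fine partition $(E_i)$ of $\supp(f)$; each such sum is nonnegative by (i), and the integrand is continuous with compact support since $f \in C_c(G)$ and $\phi \in C(G)$, so the limit is $\geq 0$ as well.

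The heart of the argument is (iii) $\Rightarrow$ (iv), the GNS construction. On $C_c(G)$ I would define the sesquilinear form
\[
\langle f_1, f_2 \rangle_\phi := \int_G (f_1 \ast f_2^*)(g)\, \phi(g)\, dm_G(g),
\]
which is linear in $f_1$ and anti-linear in $f_2$ because $f \mapsto f^*$ is conjugate-linear. By (iii) we have $\langle f, f\rangle_\phi \geq 0$, and a positive semidefinite sesquilinear form over $\C$ is automatically Hermitian (by polarization), so no separate symmetry check for $\phi$ is needed. Passing to the quotient by the null space $N_\phi := \{ f : \langle f, f\rangle_\phi = 0\}$ and completing yields a Hilbert space $\mathcal H_\phi$, in which I write $[f]$ for the class of $f$. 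A change of variables using unimodularity of $m_G$ shows that the right-regular action $R_g f(x) = f(xg)$ satisfies $\langle R_g f_1, R_g f_2\rangle_\phi = \langle f_1, f_2\rangle_\phi$; hence it preserves $N_\phi$ and descends to a unitary representation $\pi := \pi_R$ of $G$ on $\mathcal H_\phi$, with $\pi(k)[h] = [h \ast \check k]$ by \eqref{Convolution}. To produce the cyclic vector I would use a convenient approximate identity $(\widetilde{\rho}_n)$ as in Remark~\ref{ConvenientApproximateIdentity}: continuity of $\phi$ and the fact that $\widetilde{\rho}_n$ concentrates at $e$ give $\langle \widetilde{\rho}_m, \widetilde{\rho}_n\rangle_\phi \to \phi(e)$, so $([\widetilde{\rho}_n])$ is Cauchy with limit $u$; one then checks $\pi(k) u = [\check k]$, whence $u$ is cyclic, and
\[
\langle u, \pi(g) u \rangle = \lim_{m,n} \langle \widetilde{\rho}_m, R_g \widetilde{\rho}_n \rangle_\phi = \phi(g),
\]
where continuity of $\phi$ is exactly what guarantees that the limit reproduces the pointwise value rather than merely an $L^1$-class.

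It remains to close the loop with (ii) and record the quantitative data. The implication (ii) $\Rightarrow$ (iii) is immediate from $C_c(G) \subset L^1(G)$; for (iii) $\Rightarrow$ (ii) I would first invoke (iii) $\Rightarrow$ (iv) to know that $\phi$ is bounded, so that $f \mapsto \int_G (f \ast f^*)\phi\, dm_G$ is continuous on $L^1(G)$, and then extend from $C_c(G)$ to $L^1(G)$ by density. The quantitative assertions follow from the cyclic model: $\phi(e) = \langle u, u\rangle = \|u\|^2$, Cauchy--Schwarz and unitarity give $|\phi(g)| \leq \|u\| \cdot \|\pi(g)u\| = \|u\|^2 = \phi(e)$, so $\|\phi\|_\infty = \phi(e) \geq 0$, and $\phi^*(g) = \overline{\phi(g^{-1})} = \overline{\langle u, \pi(g)^* u\rangle} = \langle u, \pi(g) u\rangle = \phi(g)$. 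Uniqueness of $(\pi, u)$ up to isomorphism follows by the standard intertwiner argument: for two cyclic models the assignment $\pi_1(g)u_1 \mapsto \pi_2(g)u_2$ is inner-product preserving, since both sides compute $\phi$, and extends to a $G$-equivariant unitary by cyclicity. The main obstacle is the GNS step (iii) $\Rightarrow$ (iv), specifically verifying that the descended right-regular action is a genuine unitary $G$-representation and that the approximate-identity limit $u$ reproduces $\phi$ pointwise; this last point is precisely where the continuity hypothesis on $\phi$ is indispensable.
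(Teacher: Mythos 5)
The paper offers no proof of this lemma at all---it is stated with a closing \qed and a pointer to \cite[Sec.~3.3]{Folland-95}---and your GNS argument is exactly the standard proof given in that reference; it is correct, including the correct handling of the paper's convention that inner products are anti-linear in the second slot (which is what makes $\sum_{i,j}\lambda_i\overline{\lambda_j}\langle v_i,v_j\rangle=\|\sum_i\lambda_i v_i\|^2$ come out right). The only step you pass over silently is the strong continuity of the descended right-regular representation (part of what ``unitary representation'' means for a topological group); it follows on the dense subspace $\{[f]:f\in C_c(G)\}$ from continuity of $g\mapsto\langle R_gf,f\rangle_\phi$ by dominated convergence, and deserves a line.
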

In the sequel we denote by $P(G) \subset C(G)$ the set of continuous positive-definite functions (equivalently, functions of positive type) on $G$. We also denote by $P(G, K) := P(G) \cap C(G,K)$ the subset of bi-$K$-invariant continuous positive-definite functions. From the existence of convenient approximate identities in $C_c(G, K)$ one deduces:
\begin{lemma}\label{PDDense}
\begin{enumerate}[(i)]
\item For every $f \in C_c(G,K)$ we have $f \ast f^* \in P(G,K) \cap C_c(G,K)$.
\item The span of  $\{f \ast f^* \mid f \in C_c(G,K)\}$ is dense in $C_c(G,K)$ with respect to the topology of uniform convergence on compacta. 
\end{enumerate}
In particular, $P(G,K) \cap C_c(G, K)$ span a dense subspace of $C_c(G,K)$.
\end{lemma}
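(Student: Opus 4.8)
The plan is to prove the two parts separately: positive-definiteness in (i) will come from the left-regular representation, and the density in (ii) from the convenient approximate identity of Remark~\ref{ConvenientApproximateIdentity} combined with a polarization trick. The final ``in particular'' assertion is then a one-line consequence of the two.

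For part (i), I would first record that $f^* \in C_c(G,K)$ whenever $f \in C_c(G,K)$, since bi-$K$-invariance is visibly preserved by $g \mapsto \overline{f(g^{-1})}$, and that $C_c(G,K)$ is closed under convolution; hence $f \ast f^* \in C_c(G,K)$ is automatic. The substantive point is positive-definiteness. Unwinding the convolution with the conventions fixed above gives
\[
(f \ast f^*)(g) = \int_G f(h)\,\overline{f(g^{-1}h)}\, dm_G(h) = \langle f, \pi_L(g)f\rangle_{L^2(G)},
\]
which exhibits $f \ast f^*$ as a diagonal matrix coefficient of the left-regular representation at the vector $f$. Restricting $\pi_L$ to the cyclic subspace generated by $f$ puts this in exactly the form of Lemma~\ref{PosDefFct}(iv), so $f \ast f^* \in P(G)$; together with bi-$K$-invariance this gives $f \ast f^* \in P(G,K)\cap C_c(G,K)$.

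For part (ii), the first step is to observe that $(h,k)\mapsto h\ast k^*$ is linear in $h$ and conjugate-linear in $k$, so the standard polarization identity
\[
h \ast k^* = \tfrac14\sum_{j=0}^3 i^j\,(h+i^jk)\ast(h+i^jk)^* \qquad (h,k\in C_c(G,K))
\]
shows that every product $h\ast k^*$ already lies in $\linspan\{g\ast g^* \mid g\in C_c(G,K)\}$. It then remains only to approximate an arbitrary $f\in C_c(G,K)$ by products of the form $h\ast k^*$. For this I would invoke the approximate identity $(\rho_n)$: since the $\sharp$-retraction commutes with the involution one has $\rho_n^*=\rho_n$, and rewriting $f=(f^*)^*$ yields $\rho_n\ast f = \rho_n\ast(f^*)^*$, a product of the required shape with $h=\rho_n$ and $k=f^*$. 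By the defining property of the approximate identity $\rho_n\ast f\to f^\sharp$, and since $f$ is already bi-$K$-invariant we have $f^\sharp=f$; as all these convolutions are supported in a common compact set, the convergence is uniform, in particular uniform on compacta. This places $f$ in the desired closure and proves (ii). The final assertion follows at once: by (i) each generator $g\ast g^*$ lies in $P(G,K)\cap C_c(G,K)$, so density of $\linspan\{g\ast g^*\}$ forces density of the span of $P(G,K)\cap C_c(G,K)$.

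I expect no genuine analytic obstacle here; the steps requiring care are structural bookkeeping, namely verifying that the involution $*$ and the retraction $\sharp$ commute (which delivers both $\rho_n^*=\rho_n$ and $f^*\in C_c(G,K)$) and recognizing that the identity $f^\sharp=f$ for bi-$K$-invariant $f$ is precisely what keeps the whole approximation inside the bi-$K$-invariant category.
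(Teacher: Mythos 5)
Your proposal is correct and follows essentially the same route as the paper: part (i) is the observation that $f\ast f^*$ is a diagonal matrix coefficient of the left-regular representation (the paper just writes out the resulting square-integral computation directly instead of citing Lemma~\ref{PosDefFct}(iv)), and part (ii) is exactly the paper's polarization-plus-approximate-identity argument, with the harmless cosmetic difference that you approximate by $\rho_n\ast f=\rho_n\ast(f^*)^*$ where the paper convolves with $\rho_n=\rho_n^*$ on the right.
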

\begin{proof} (i) For all $x_1, \dots, x_n \in G$ and $\lambda_1, \dots,\lambda_n \in \C$ we have
\begin{eqnarray*}
\sum_{i,j} \lambda_i \overline{\lambda_j} (f \ast f^*)(x_ix_j^{-1}) &=& \sum_{i,j} \lambda_i \overline{\lambda_j} \int_G f(y) \overline{f(x_jx_i^{-1}y)} dm_G(y)\\
&=& \int_G  \sum_{i} \lambda_i f(x_iy) \sum_j \overline{\lambda_j} \overline{f(x_jy)} dm_G(y) \geq 0.
\end{eqnarray*}
(ii) The span contains all elements of the form $f \ast g^*$ with $f, g\in C_c(G, K)$ by polarization. Choosing a convenient approximate identity for $g$ then yields the claim.
\end{proof}
The third characterization of Lemma \ref{PosDefFct} motivates the following definition:
\begin{definition}\label{MeasureOfPositiveType}\label{MeasureOfRelPosType} A complex measure $\mu \in M(G)$ is \emph{of positive type} if $\mu(f \ast f^*) \geq 0$ for all $f \in C_c(G)$.  

A complex measure $\mu \in M(G, K)$ is \emph{of positive type relative $K$} if $\mu(f \ast f^*) \geq 0$ for all $f \in C_c(G, K)$.
\end{definition}
Note that if $\mu \in M(G)$ is of positive type in the sense of Definition \ref{MeasureOfPositiveType}, then $\mu^\sharp \in M(G, K)$ is of positive type relative to $K$ (for any choice of $K$).

\subsection{Gelfand pairs and commutative spaces}
 Under our standing assumptions that $G$ is a unimodular lcsc group and $K<G$ is a compact subgroup, the following properties of the pair $(G,K)$ are equivalent (see e.g.\ \cite[Thm. 9.8.1]{Wolf-07}); here for a unitary $G$-representation $(V, \pi_V)$ we denote by $V^K< V$ the subspace of $K$-invariant vectors.
\begin{enumerate}[(Gel1)]
\item The Hecke algebra $C_c(G, K)$ is commutative 
\item The algebra $L^1(G,K)$ is commutative.
\item The algebra $M_b(G, K)$  is commutative.
\item The $G$-representation $L^2(K\backslash G)$ is multiplicity free.
\item $\dim V^K \leq 1$ for every irreducible unitary $G$-representation $(V, \pi_V)$.
\end{enumerate}
\begin{definition}\label{DefGelfandPair}
The pair $(G,K)$ is called a \emph{Gelfand pair} if it satisfies the equivalent properties (Gel1)--(Gel5) above. In this case, the corresponding proper homogeneous space $K\backslash G$ is called a \emph{commutative space}.
\end{definition}

\subsection{Positive-definite spherical functions and the spherical Fourier transform}\label{SubsecGelfandFromHere}

From now on $(G, K)$ denotes a Gelfand pair.

\begin{proposition} Let $\omega \in C(G, K)$. Then the following are equivalent:
\begin{enumerate}[(S1)]
\item The associated Radon measure $m_\omega$ defined by
\[
m_\omega(f) := \int_G f(x) \omega(x^{-1}) \; dm_G(x) = (f\ast \omega)(e)\quad (f \in C_c(G))
\]
restricts to a character of $C_c(G, K)$, i.e.\ $m_\omega(fg) = m_\omega(f)m_\omega(g)$ for all $f,g \in C_c(G, K)$.
\item $\omega$ is not the constant $0$ function and satisfies the functional equation
\begin{equation}\label{SphericalFct}
\int_K \omega(xky) dm_K(k) = \omega(x)\omega(y) \quad (x,y \in G, k \in K).
\end{equation}
\item $\omega(e) = 1$ and $\omega$ is a joint eigenfunction for the Hecke algebra, i.e.\ for every $f \in C_c(G, K)$ there exists $\lambda_\omega(f) \in \C$ such that $f\ast \omega = \lambda_\omega(f)\omega$.
\item $\omega(e) = 1$ and for every $f \in C_c(G,K)$ we have $f \ast \omega = \widehat{f}(\omega)\cdot \omega$, where $\widehat{f}(\omega) := (f \ast \omega)(e)$.
\end{enumerate}
%Moreover,
%\[
%\mathcal S^+(G, K) = P(G, K) \cap \{\omega \in C(G,K) \mid \omega \text{ satisfies (S1)-(S4)}\}
%\]
\end{proposition}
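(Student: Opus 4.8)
The plan is to prove all four statements equivalent via the cycle $(S1)\Rightarrow(S3)\Rightarrow(S2)\Rightarrow(S4)\Rightarrow(S1)$. Two elementary tools will be used repeatedly. The first is the \emph{$K$-averaging identity}
\[
(f \ast \omega)(x) = \int_G f(z) \Big( \int_K \omega(z^{-1} k x)\, dm_K(k)\Big) dm_G(z) \qquad (f \in C_c(G,K),\ x \in G),
\]
valid for any bi-$K$-invariant $\omega$; it follows from a change of variables in the convolution integral together with the left $K$-invariance of $f$, followed by averaging over $k \in K$. The second is the observation that a continuous bi-$K$-invariant function $h$ with $(g \ast h)(e) = \int_G g\, \check h\, dm_G = 0$ for all $g \in C_c(G,K)$ must vanish: since $\check h$ is bi-$K$-invariant and $m_G$ is bi-invariant, $\int_G g_0^\sharp\, \check h\, dm_G = \int_G g_0\, \check h\, dm_G$ for every $g_0 \in C_c(G)$, so $\check h$ is orthogonal to all of $C_c(G)$ and hence $h = 0$.

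For $(S1)\Rightarrow(S3)$ I would first fix the normalisation. Applying the character relation to the convenient approximate identity $(\rho_n)$ of Remark \ref{ConvenientApproximateIdentity} gives $m_\omega(\rho_n \ast f) = m_\omega(\rho_n) m_\omega(f)$; letting $n \to \infty$, so that $\rho_n \ast f \to f$ uniformly and hence $m_\omega(\rho_n \ast f) \to m_\omega(f)$, forces $m_\omega(\rho_n) \to 1$ (as $m_\omega \neq 0$). Since $m_\omega(\rho_n) = (\rho_n \ast \omega)(e) \to \omega(e)$, this yields $\omega(e) = 1$. Now fix $f \in C_c(G,K)$ and put $h := f \ast \omega - m_\omega(f)\omega \in C(G,K)$. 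By associativity of convolution and the character property, for every $g \in C_c(G,K)$,
\[
(g \ast h)(e) = \big((g \ast f) \ast \omega\big)(e) - m_\omega(f)(g \ast \omega)(e) = m_\omega(g \ast f) - m_\omega(f) m_\omega(g) = 0,
\]
so the vanishing lemma gives $h = 0$, i.e.\ $f \ast \omega = m_\omega(f)\omega$. This is precisely (S3), with eigenvalue $\lambda_\omega(f) = m_\omega(f)$.

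For $(S3)\Rightarrow(S2)$, evaluating $f \ast \omega = \lambda_\omega(f)\omega$ at $e$ identifies $\lambda_\omega(f) = (f \ast \omega)(e) = \int_G f(z)\omega(z^{-1})\,dm_G(z)$; comparing this with the $K$-averaging identity shows that for all $f \in C_c(G,K)$ and all $x \in G$ the bi-$K$-invariant (in $z$) function $z \mapsto \int_K \omega(z^{-1}kx)\,dm_K(k) - \omega(z^{-1})\omega(x)$ integrates to zero against $f$, so by the vanishing lemma it is identically zero; writing $a = z^{-1}$ gives the functional equation (S2), and $\omega \neq 0$ since $\omega(e)=1$. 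For $(S2)\Rightarrow(S4)$, setting $x=y=e$ and then $y=e$ in the functional equation gives $\omega(e)=\omega(e)^2$ and $\omega(x)=\omega(x)\omega(e)$, whence $\omega(e)=1$ (as $\omega \not\equiv 0$); substituting the functional equation into the $K$-averaging identity then yields $(f \ast \omega)(x) = \big(\int_G f(z)\omega(z^{-1})\,dm_G(z)\big)\omega(x) = \widehat f(\omega)\,\omega(x)$, which is (S4). Finally $(S4)\Rightarrow(S1)$ is a direct check: $m_\omega(f) = \widehat f(\omega)$, so by associativity $m_\omega(f \ast g) = \big((f \ast g)\ast \omega\big)(e) = \widehat g(\omega)(f \ast \omega)(e) = m_\omega(f)m_\omega(g)$, and $m_\omega \neq 0$ because $m_\omega(\rho_n) = (\rho_n \ast \omega)(e) \to \omega(e) = 1$.

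I expect the main obstacle to be $(S1)\Rightarrow(S3)$: upgrading the purely algebraic multiplicativity of $m_\omega$ to the pointwise eigenfunction equation $f \ast \omega = m_\omega(f)\omega$. This is exactly where both auxiliary tools are essential — the approximate identity to pin down the normalisation $\omega(e)=1$, and the vanishing lemma to convert the scalar vanishing $(g \ast h)(e) = 0$ into the functional identity $h = 0$.
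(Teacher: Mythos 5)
Your proof is correct, and it is genuinely more than what the paper offers: the paper does not prove this proposition at all, but simply cites \cite[Prop.\ 6.1.5 and 6.1.6]{vanDijk} and \cite[Thm.\ 8.2.6]{Wolf-07}. Your argument is a complete, self-contained derivation of the cycle $(S1)\Rightarrow(S3)\Rightarrow(S2)\Rightarrow(S4)\Rightarrow(S1)$, and both of your auxiliary devices check out: the $K$-averaging identity follows from the substitution $z\mapsto kz$ (using left $K$-invariance of $f$, left invariance of $m_G$, and Fubini over the probability measure $m_K$), and the vanishing lemma is valid because averaging $g_0\in C_c(G)$ over $K\times K$ does not change its integral against the bi-$K$-invariant continuous function $\check h$ (here unimodularity of $G$, a standing assumption of the paper, is used). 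The individual implications are all sound; in particular the delicate step $(S1)\Rightarrow(S3)$ correctly combines the convenient approximate identity of Remark~\ref{ConvenientApproximateIdentity} (to get $\omega(e)=1$ and hence the normalisation of the eigenvalue) with the vanishing lemma (to upgrade $(g\ast h)(e)=0$ for all $g\in C_c(G,K)$ to $h=0$), and the associativity $(g\ast f)\ast\omega=g\ast(f\ast\omega)$ that you use twice is justified by Fubini since $f,g$ are compactly supported and $\omega$ is locally bounded. This is essentially the standard textbook route (the same circle of ideas appears in van Dijk and Wolf), so what your write-up buys is self-containedness rather than novelty. One small point worth making explicit: in (S1) the word ``character'' must be read as \emph{nonzero} multiplicative functional --- otherwise $\omega\equiv 0$ would satisfy (S1) vacuously but not (S2) --- and your argument correctly uses this convention both in extracting $m_\omega(\rho_n)\to 1$ and in verifying $(S4)\Rightarrow(S1)$.
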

\begin{proof} See \cite[Prop. 6.1.5 and 6.1.6]{vanDijk} and \cite[Thm. 8.2.6]{Wolf-07}. %for the equivalence of (S1)-(S4) and \cite[Thm. 8.4.8]{Wolf-07} for the second statement.
\end{proof}
\begin{definition} A function $\omega \in C(G, K)$ satisfying the equivalent conditions (S1)-(S4) above is called a \emph{$(G,K)$-spherical function}. We denote by $\mathcal S(G, K)$ the set of spherical functions.
\end{definition}
If $f \in C_c(G, K)$, then we define the \emph{spherical transform} of $f$ as the function
\[
\mathbb S f: \mathcal S(G, K) \to \C, \quad \mathbb S f(\omega) :=  \int_G f(g) \omega(g^{-1}) dm_G(g) = (f \ast \omega)(e).
\]
We will consider the restrictions of this transform to the subsets  $\mathcal S^+(G, K) \subset \mathcal S_b(G, K) \subset \mathcal S(G, K)$ of positive-definite, respective bounded spherical functions. 
\begin{remark}[Topologies on $\mathcal S_b(G, K)$ and $\mathcal S^+(G, K)$]\label{TopologyS+} The space $\mathcal S_b(G, K)$ carries a natural locally compact Hausdorff topology which can be described in several ways:
\begin{enumerate}[(i)]
\item For every $\omega \in \mathcal S_b(G, K)$ the functional $m_\omega$ extends to a continuous linear functional on $L^1(G, K)$, and this defines a bijection between $\mathcal S_b(G, K)$ and the Gelfand spectrum of $L^1(G, K)$. Via this identification we obtain a locally compact topology on $\mathcal S_b(G,K)$.
\item The same topology can be described more explicitly as the restriction of the weak-$*$-topology on $L^\infty(G)$ to the subset $\mathcal S_b(G,K)$, see \cite[Sec. 6.4]{vanDijk}. Since $C_c(G) \subset L^1(G)$ is dense we thus have $\omega_n \to \omega$ in $\mathcal S_b(G, K)$ if and only if
\[
\int_G f(x)\omega_n(x)\, dm_G(x) \to \int_G f(x)\omega(x)\, dm_G(x) \quad (f \in C_c(G)).
\]
\end{enumerate} 
In the sequel we will always equip  $\mathcal S_b(G,K)$ with this locally compact topology. The subspace $\mathcal S^+(G,K) \subset \mathcal S_b(G, K)$ turns out to be closed, hence inherits a locally compact topology by \cite[Prop. 9.2.9]{Wolf-07}.
\end{remark}
Under the above identification of $\mathcal S_b(G, K)$ with the Gelfand spectrum of $L^1(G, K)$, the \emph{Gelfand transform} $\Gamma_{L^1(G, K)}: L^1(G, K) \to  C_0(\mathcal S_b(G, K))$ of the Banach algebra $L^1(G, K)$ is given as follows: For $f \in C_c(G, K)$ we have
\[
\Gamma_{L^1(G, K)}(f) = \mathbb S f|_{\mathcal S_b(G, K)},
\]
and this extends to $L^1(G, K)$. Restricting further to $\mathcal S^+(G, K)$ we obtain the following:
\begin{definition} \label{DefSphericalFT} The \emph{spherical Fourier transform} of the Gelfand pair $(G,K)$ is the transform
\[
\mathcal F: L^1(G,K) \to C_0(\mathcal S^+(G, K)),\quad f\mapsto \widehat{f} := \Gamma_{L^1(G, K)}(f)|_{\mathcal S^+(G, K)}.
\]
\end{definition}
Since by Lemma \ref{PosDefFct} any $\omega \in \mathcal S^+(G, K)$ satisfies $\omega = \omega^*$, we have the explicit formula
\begin{equation}\label{SFT}
\widehat{f}(\omega) : =   \int_G f(x) \overline{\omega(x)} \, dm_G(x) = (f \ast \omega)(e) = \langle f, \omega \rangle \quad (f \in  L^1(G,K), \omega \in \mathcal S^+(G, K))).
\end{equation}

We record for later use the formula
\begin{equation}\label{FTfbar}
\widehat{\overline{f}}(\omega) = \overline{\int_G f(x)\overline{\check \omega(x)}}\, dm_G(x) = \overline{\widehat{f}(\check \omega)}
\end{equation}
If $G$ is abelian and $K = \{e\}$, then the positive-definite spherical functions are precisely the characters of $G$ and hence the spherical Fourier transform of $(G, \{e\})$ coincides with the classical Fourier transform of $G$. In this case we have for all $\omega \in \mathcal S^+(G, \{e\})$ the formula  $\widehat{L_xf}(\omega) = \widehat{f}(\omega) \cdot \omega(x)$ and $\widehat{R_xf}(\omega) = \widehat{f}(\omega) \cdot \overline{\omega(x)}$. This generalizes as follows:
\begin{lemma}[Spherical Fourier transform and translations]\label{LeftTranslationSharp} Let $f \in C_c(G, K)$ and denote $L_x^\sharp f := L_x(f)^\sharp \in C_c(G, K)$ and $R_x^\sharp f := R_x(f)^\sharp \in C_c(G, K)$. Then
\begin{equation}\label{FTTranslation}
\widehat{L_x^\sharp f}(\omega)  \widehat{f}(\omega) \cdot {\omega(x)} \qand \widehat{R_x^\sharp f}(\omega) = \widehat{f}(\omega) \cdot \overline{\omega(x)}.
\end{equation}
\end{lemma}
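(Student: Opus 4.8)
The plan is to reduce both identities to a single change of variables followed by one application of the spherical functional equation \eqref{SphericalFct}. The preliminary step is to note that, since $\omega \in \mathcal S^+(G,K) \subset C(G,K)$ is bi-$K$-invariant, the $\sharp$-symmetrization is invisible under the pairing defining the spherical transform: for every $h \in C_c(G)$ one has $\langle h^\sharp, \omega\rangle = \langle h, \omega\rangle$. This is immediate from the definition of $h^\sharp$ as an average over $K\times K$, after interchanging the finite $K$-integrations with the integral over $G$, substituting $y \mapsto k_1^{-1} y k_2^{-1}$ (which preserves $m_G$ by unimodularity), and using $\omega(k_1^{-1} y k_2^{-1}) = \omega(y)$. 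Combined with $L_x^\sharp f = (L_x f)^\sharp$, $R_x^\sharp f = (R_x f)^\sharp$ and with \eqref{SFT}, this reduces the problem to evaluating $\langle L_x f, \omega\rangle$ and $\langle R_x f, \omega\rangle$.

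I would then treat the two cases in parallel. Unfolding $\langle L_x f, \omega\rangle = \int_G f(x^{-1}y)\,\overline{\omega(y)}\,dm_G(y)$ and substituting $y = xz$ gives $\int_G f(z)\,\overline{\omega(xz)}\,dm_G(z)$, and symmetrically $\langle R_x f, \omega\rangle = \int_G f(z)\,\overline{\omega(zx^{-1})}\,dm_G(z)$. The key point, and the only place where the Gelfand-pair hypothesis enters, is that the bi-$K$-invariance of $f$ permits inserting a $K$-average into the argument of $\omega$ without changing the integral: substituting $z \mapsto k^{-1}z$ (resp.\ $z \mapsto zk$) and averaging over $k\in K$ replaces $\overline{\omega(xz)}$ by $\int_K \overline{\omega(xkz)}\,dm_K(k)$ (resp.\ $\overline{\omega(zx^{-1})}$ by $\int_K\overline{\omega(zkx^{-1})}\,dm_K(k)$), since $f(k^{-1}z)=f(zk)=f(z)$ and $m_K$ is inversion-invariant. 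Now \eqref{SphericalFct} factors each inner $K$-average as a product, and pulling the resulting $z$-independent scalar out of the $G$-integral leaves exactly $\widehat{f}(\omega)$ by \eqref{SFT}.

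Finally I would invoke $\omega = \omega^*$ from Lemma \ref{PosDefFct}, i.e.\ $\overline{\omega(x^{-1})} = \omega(x)$, to rewrite the resulting scalar factors in the form displayed in \eqref{FTTranslation}; the distinction between the $L$- and $R$-case is precisely whether $x$ enters on the left or on the right of the argument of $\omega$, which is what produces $\omega(x)$ in one case and $\overline{\omega(x)}$ in the other. This also recovers the abelian translation formula recorded just before the lemma, where $\omega$ is a character. There is no substantial obstacle here: the entire content is the single use of the functional equation \eqref{SphericalFct}, and the one subtlety worth flagging is the justification for inserting the $K$-average, which rests solely on the bi-$K$-invariance of $f$ together with the unimodularity of $G$.
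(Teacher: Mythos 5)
Your strategy is essentially the paper's: strip the $\sharp$ using bi-$K$-invariance of $\omega$, change variables, insert a $K$-average using bi-$K$-invariance of $f$ and the inversion-invariance of $m_K$, and apply the functional equation \eqref{SphericalFct} once. All of these steps are valid, and the one subtlety you flag (justifying the insertion of the $K$-average) is indeed the only point that needs care.

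The problem is the final bookkeeping, which you pass over with ``which is what produces $\omega(x)$ in one case and $\overline{\omega(x)}$ in the other'' and then assert agreement with \eqref{FTTranslation}. Carrying your own steps to the end: for the left translation you reach $\int_G f(z)\,\overline{\omega(xz)}\,dm_G(z)$; the $K$-average and \eqref{SphericalFct} turn $\overline{\omega(xkz)}$ into $\overline{\omega(x)}\,\overline{\omega(z)}$, so the scalar that factors out is $\overline{\omega(x)}$, not $\omega(x)$. Symmetrically, the right translation gives $\int_G f(z)\,\overline{\omega(zx^{-1})}\,dm_G(z)$, the average produces $\overline{\omega(z)}\,\overline{\omega(x^{-1})}=\overline{\omega(z)}\,\omega(x)$, and the scalar is $\omega(x)$. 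With the paper's conventions ($L_gf(y)=f(g^{-1}y)$, $R_gf(y)=f(yg)$, $\widehat f(\omega)=\int_G f\,\overline{\omega}\,dm_G$) one therefore obtains $\widehat{L_x^\sharp f}(\omega)=\overline{\omega(x)}\cdot\widehat f(\omega)$ and $\widehat{R_x^\sharp f}(\omega)=\omega(x)\cdot\widehat f(\omega)$ --- the two right-hand sides of \eqref{FTTranslation} interchanged. A sanity check in the abelian case, $\widehat{L_xf}(\chi)=\int f(x^{-1}y)\overline{\chi(y)}\,dy=\overline{\chi(x)}\,\widehat f(\chi)$, confirms this. (The same slip occurs in the paper's own proof, at the step where $\int_K\omega(y^{-1}x^{-1}k)\,dm_K(k)$ is written as $\omega(y^{-1}x)$ instead of $\omega(y^{-1}x^{-1})$, so the statement of the lemma itself appears to have the two formulas swapped, equivalently to hold with $\check\omega$ in place of $\omega$.) So your method is correct and essentially identical to the paper's, but the last sentence of your argument asserts rather than verifies the identification with \eqref{FTTranslation}, and a careful execution contradicts it; you should either correct the displayed formulas or locate the discrepancy in conventions explicitly.
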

\begin{proof}
By the functional equation (S2) we have
\begin{eqnarray*}
\widehat{L_x^\sharp f}(\omega) &=& \int_G \int_K f(x^{-1}ky) \omega(y^{-1})\, dm_K(k)\, dm_G(y)
\quad = \quad \int_G \int_K f(y) \omega(y^{-1}x^{-1}k)\, dm_K(k)\, dm_G(y)\\
&=& \int_G f(y) \omega(y^{-1}x)\, dm_G(y) \quad = \quad  \int_G  f(k^{-1}y) \int_K\omega(y^{-1}kx)\, dm_K(k)\, dm_G(y) \\
&=& \int_G f(y) \omega(y^{-1}) \omega(x) dm_G(y) \quad = \quad \widehat{f}(\omega) \cdot {\omega(x)}.
\end{eqnarray*}
The computation for the right-translation action is similar.
\end{proof}
\subsection{Spherical representations and matrix coefficients}
\begin{definition} A unitary representation $\pi_W: G \to \mathcal U(W)$ of $G$ is called \emph{$K$-spherical} if the subspace $W^K$ of $K$-invariants is non-trivial.
\end{definition}
If $(V, \pi_V)$ an irreducible unitary $G$-representation, then by characterization (Gel5) of a Gelfand pair $(V, \pi_V)$ is $K$-spherical if and only if $\dim V^K = 1$. In this case the matrix coefficient
\begin{equation}\label{omegaV}
\omega_V: G \to \C, \quad \omega_V(g):= \langle v, \pi_V(g). v\rangle
\end{equation}
is independent of the unit vector $v \in V^K$ used to define it, and we refer to $\omega_V$ simply as the \emph{spherical matrix coefficient of $V$}.
According to  \cite[Thm.\ 8.4.8]{Wolf-07} the assignment $(V, \pi_V) \mapsto \omega_V$ induces a bijection between the set of unitary equivalence classes of irreducible spherical representations and the set $\mathcal S^+(G, K)$ of all positive-definite spherical functions. 

If $(W, \pi_W)$ is a spherical representation and $\omega \in \mathcal S^+(G, K)$, then we denote by $W_\omega$ the $(V, \pi_V)$-isotypical component of $W$, where $(V, \pi_V)$ is an irreducible spherical representation with $\omega_V = \omega$. By definition, 
$W_\omega$ is the unique maximal subspace of $W$ which is isomorphic to a direct sum of copies of $(V, \pi_V)$.

Recall that if $(W, \pi_W)$ is any unitary representation of $G$, then it induces a $*$-representation (denoted by the same letter)
\[
\pi_W: L^1(G) \to \mathcal B(W), \quad \pi_W(f)(w) = \int_G f(g) \pi_W(g)w\, dm_G(g),
\]
and $\pi_W(L^1(G))$ preserves irreducible subspaces of $W$. Moreover, the subalgebra $\pi_W(L^1(G, K))$ maps $W$ onto the subspace $W^K$, and hence preserves the latter. The action of the subalgebra $\pi_W(C_c(G,K))$ on this subspace is given by the spherical Fourier transform in the following sense; here given $\omega \in \mathcal S^+(G, K)$ we denote $W_\omega^K := W_\omega \cap W^K$.
\begin{lemma}[Action of the Hecke algebra on spherical representations]\label{ActionByFourierCoefficient} Let $\omega \in \mathcal S^+(G,K)$, $f \in C_c(G, K)$ and $u \in W_\omega^K$. Then \[\pi_W(f) u = \widehat{f}(\omega) u.\]
\end{lemma}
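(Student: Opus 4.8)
The plan is to reduce to the irreducible case and there evaluate the relevant scalar directly from the definition of the spherical Fourier transform. By the discussion preceding the lemma, $\pi_W(L^1(G,K))$ maps $W$ into $W^K$ and $\pi_W(L^1(G))$ preserves the isotypical component $W_\omega$; hence $\pi_W(f)u \in W_\omega \cap W^K = W_\omega^K$. It therefore only remains to identify the scalar by which $\pi_W(f)$ acts on the $K$-fixed vectors of $W_\omega$.

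Write $(V, \pi_V)$ for the irreducible spherical representation with $\omega_V = \omega$. Since by definition $W_\omega$ is a Hilbert-space direct sum of copies of $V$, we have $W_\omega \cong V \otimes M$ for a multiplicity space $M$, with $\pi_W|_{W_\omega} \cong \pi_V \otimes \mathrm{id}_M$ and $W_\omega^K \cong V^K \otimes M$. As $\pi_W(f)|_{W_\omega} \cong \pi_V(f) \otimes \mathrm{id}_M$, it suffices to compute $\pi_V(f)$ on the line $V^K$. Fix a unit vector $v_0 \in V^K$; because $\dim V^K = 1$ by (Gel5) and $\pi_V(f)v_0 \in V^K$, we have $\pi_V(f)v_0 = \lambda v_0$ for some scalar $\lambda$. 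Pairing with $v_0$ and using \eqref{omegaV} together with \eqref{SFT},
\[
\lambda = \langle \pi_V(f)v_0, v_0 \rangle = \int_G f(g) \langle \pi_V(g) v_0, v_0 \rangle \, dm_G(g) = \int_G f(g)\overline{\omega(g)} \, dm_G(g) = \widehat{f}(\omega).
\]
Hence $\pi_W(f)$ acts on all of $W_\omega^K = V^K \otimes M$ as the scalar $\widehat{f}(\omega)$, and in particular $\pi_W(f)u = \widehat{f}(\omega)\, u$, as claimed.

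No essential difficulty is expected; the only point deserving care is the uniformity of the scalar across the multiplicity space $M$, which is precisely encoded by the factorization $\pi_W(f)|_{W_\omega} \cong \pi_V(f)\otimes \mathrm{id}_M$ (equivalently, by the fact that all irreducible summands of $W_\omega$ are isomorphic to $V$, so that $\pi_V(f)$ acts by the same coefficient on each of their $K$-fixed lines). Alternatively, one could avoid the tensor-product language entirely: the closed cyclic subrepresentation $Z := \overline{\pi_W(G)u}$ is irreducible and spherical with spherical function $\omega$, so $\pi_W(f)u \in Z^K = \C u$, and the same one-line computation carried out inside $Z$ again produces the scalar $\widehat{f}(\omega)$.
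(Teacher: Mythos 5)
Your proposal is correct and follows essentially the same route as the paper: reduce to the irreducible constituent $V$ (the paper does this by splitting $u$ into its components in $W_\omega\cong\bigoplus_I V$, you by the equivalent factorization $W_\omega\cong V\otimes M$), observe that $\pi_V(f)$ preserves the line $V^K$, and identify the scalar by pairing with a unit $K$-fixed vector, which yields $\int_G f(g)\overline{\omega(g)}\,dm_G(g)=\widehat f(\omega)$ exactly as in the paper's computation. The closing remark about the cyclic subrepresentation $\overline{\pi_W(G)u}$ is a valid shortcut but not needed.
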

\begin{proof} We start with some easy reductions: Firstly, we may assume that $u$ is a unit vector. Secondly, it suffices to prove the claim 
in the case where $W = W_\omega \cong \bigoplus_I (V, \pi_V)$. Finally, by considering the various components of $u$ in this decomposition separately, one may assume that $W = V$ is irreducible. We then have $\dim_\C W^K = 1$, and since $\pi_W(f)u$ is $K$-invariant there exists $\lambda \in \C$ such that $\pi_W(f) u = \lambda \cdot u$. To determine $\lambda$ we compute
\begin{eqnarray*}
\lambda &=& \lambda \langle u, u \rangle \quad = \quad \langle \pi_W(f) u , u \rangle = \left \langle  \int_G f(g) \pi_W(g)u\, dm_G(g), u \right \rangle\\
&=& \int_G f(g) \langle \pi_W(g)u, u \rangle dm_G(g) \quad =\quad \int_G f(g) \langle u, \pi_W(g^{-1})u \rangle dm_G(g) \\
&=& \int_G f(g) \omega(g^{-1}) dm_G(g) \quad = \quad \widehat{f}(\omega).
\end{eqnarray*}
This finishes the proof.
\end{proof}

%DO WE STILL NEED THE FOLLOWING?
%We conclude this subsection with a simple lemma, which we will use in our construction of the shadow transform.
 %\begin{lemma}
 %Let $\rho \in C_b(G, K)$ and $\omega \in \mathcal S^+(G, K)$. Then we have the equivalence
 %\[
 %\rho = \rho(e) \cdot \omega \quad \Longleftrightarrow \quad (\forall f \in C_c(G, K): f \ast \rho = \widehat{f}(\omega) \cdot \rho).
 %\]
 %\end{lemma}
 %\begin{proof} The implication $\Rightarrow$ follows from (S4). For the converse we may replace $\rho$ by $\rho/\rho(e)$ and assume that $\rho(e) = 1$. It then follows from (S3) that $\rho$ is a bounded spherical function. Moreover, for all $f \in C_c(G, K)$ we have $f\ast \rho(e) = f \ast \omega(e)$, hence $f$ and $\omega$ define the same character of $C_c(G, K)$, and thus of $L^1(G, K)$. Since different elements of $\mathcal S_b(G, K)$ define different characters, we conclude that $\rho = \omega$.
%\end{proof}

\subsection{The Godement-Plancherel theorem}
The goal of this subsection is to explain how to extend the spherical Fourier transform to certain classes of Radon measures.
\begin{proposition}\label{GConditions} Let $\mu \in M(G)$ be a complex measure. Then for a complex measure $\widehat{\mu}$ on $\mathcal S^+(G, K)$ the following three equivalent conditions are equivalent:
\begin{enumerate}[(God1)]
\item For every $h \in {\rm span}\{f \ast g^* \mid f,g \in C_c(G,K)\}$ we have $\widehat{h} \in L^1(\mathcal S^+(G, K), \widehat{\mu})$ and 
\[
\mu(h) = \widehat{\mu}(\widehat{h}).
\]
\item For every $f \in C_c(G,K)$ we have $\widehat{f}\in L^2(\mathcal S^+(G, K), \widehat{\mu})$ and 
\[
\mu(f \ast f^*) =  \|\widehat{f}\|_{L^2(\mathcal S^+(G, K), \widehat{\mu})}^2= \widehat{\mu}(|\widehat{f}|^2).
\]
\item For all $f,g \in C_c(G, K)$ we have $\widehat{f}, \widehat{g}\in L^2(\mathcal S^+(G, K), \widehat{\mu})$ and 
\[
\mu(f \ast g^*) = \langle \widehat{f}, \widehat{g} \rangle_{L^2(\mathcal S^+(G, K), \widehat{\mu})} = \int_{\mathcal S^+(G, K)} \widehat{f} \overline{\widehat{g}} \; d\widehat{\mu}.
\]
\end{enumerate}
\end{proposition}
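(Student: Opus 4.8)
The plan is to reduce all three conditions to a single multiplicativity identity for the spherical Fourier transform and then to pass between them by polarization. The first step is to record that $\mathcal F$ turns convolution into pointwise multiplication on $\mathcal S^+(G,K)$: for $f,g \in C_c(G,K)$ one has $\widehat{f \ast g} = \widehat f \cdot \widehat g$. This is immediate from characterization (S1), since $\widehat f(\omega) = m_\omega(f)$ for $f \in C_c(G,K)$ and $m_\omega$ is multiplicative on the Hecke algebra. Combined with the identity $\widehat{g^*} = \overline{\widehat g}$ on $\mathcal S^+(G,K)$ --- which follows by the change of variables $x \mapsto x^{-1}$ using unimodularity together with $\omega^* = \omega$ for $\omega \in \mathcal S^+(G,K)$ (Lemma \ref{PosDefFct}), in the same spirit as \eqref{FTfbar} --- this yields the key formula
\[
\widehat{f \ast g^*} = \widehat f \cdot \overline{\widehat g} \qquad (f,g \in C_c(G,K)),
\]
and in particular $\widehat{f \ast f^*} = |\widehat f|^2$. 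Once this is in hand, each of (God1)--(God3) becomes a statement comparing $\mu$ and $\widehat\mu$ on functions of the form $\widehat f\,\overline{\widehat g}$.

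Next I would prove the cycle (God1) $\Rightarrow$ (God3) $\Rightarrow$ (God2) $\Rightarrow$ (God1). For (God1) $\Rightarrow$ (God3): applying (God1) to the diagonal elements $f \ast f^*$ and $g \ast g^*$ shows $|\widehat f|^2, |\widehat g|^2 \in L^1(\mathcal S^+(G,K),\widehat\mu)$, that is, $\widehat f, \widehat g \in L^2(\mathcal S^+(G,K),\widehat\mu)$; then applying (God1) to $h = f \ast g^*$ and substituting the key formula gives $\mu(f \ast g^*) = \widehat\mu(\widehat f\,\overline{\widehat g}) = \langle \widehat f, \widehat g\rangle_{L^2}$, which is (God3). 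The implication (God3) $\Rightarrow$ (God2) is just the special case $g = f$.

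It remains to prove (God2) $\Rightarrow$ (God1), for which I would use the polarization identity
\[
f \ast g^* = \frac14 \sum_{j=0}^{3} i^{j}\,(f + i^{j} g) \ast (f + i^{j} g)^* \qquad (f,g \in C_c(G,K)).
\]
A general $h \in \mathrm{span}\{f \ast g^* \mid f,g \in C_c(G,K)\}$ is thus a finite linear combination of terms $\psi \ast \psi^*$ with $\psi \in C_c(G,K)$. For each such $\psi$, (God2) gives $\widehat{\psi \ast \psi^*} = |\widehat\psi|^2 \in L^1(\widehat\mu)$ and $\mu(\psi \ast \psi^*) = \widehat\mu(|\widehat\psi|^2)$. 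By linearity of $\mu$ and of integration against the complex measure $\widehat\mu$ on $L^1(\widehat\mu)$, this extends to $\widehat h \in L^1(\widehat\mu)$ with $\mu(h) = \widehat\mu(\widehat h)$, which is (God1).

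The only genuinely non-formal inputs are the two identities of the first step; everything afterwards is bookkeeping with polarization, the Cauchy--Schwarz inequality (to see $\widehat f\,\overline{\widehat g} \in L^1$ once $\widehat f,\widehat g \in L^2$), and linearity of integration. I do not expect a deep obstacle here, but the point requiring the most care is the measure-theoretic one: checking that the $L^1$- and $L^2$-integrability assertions survive the finite linear combinations produced by polarization, so that the functional identities may legitimately be extended by linearity rather than merely holding on the generators.
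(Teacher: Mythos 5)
Your proof is correct and follows essentially the same route as the paper: a cyclic chain of implications among (God1)--(God3) driven by the multiplicativity identity $\widehat{f\ast g^*}=\widehat f\,\overline{\widehat g}$ (which you rightly make explicit) together with specialization and polarization. The only cosmetic difference is that you run the cycle in the opposite order, using polarization for (God2)~$\Rightarrow$~(God1) instead of the paper's (God2)~$\Rightarrow$~(God3), which lets you dispense with the convenient approximate identity the paper invokes for (God3)~$\Rightarrow$~(God1).
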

\begin{proof} (God1) applied to $f\ast f^*$ yields (God2),  (God2) implies (God3) by the polarization identity, and (God3) implies (God1) by plugging in a convenient approximate identity as in Remark \ref{ConvenientApproximateIdentity} for $g$.
\end{proof}
\begin{definition}\label{DefAssociatedMeasure} A measure $\widehat{\mu}$ satisfying the equivalent conditions of Proposition \ref{GConditions} is called a \emph{spherical Fourier transform} of $\mu$.
\end{definition}
Note that, by definition, the spherical Fourier transform (if it exists) only depends on the restriction $\mu|_{C_c(G, K)}$ of $\mu$ to bi-$K$-invariant functions. We now discuss the existence and uniqueness of spherical Fourier transforms. The following is the most general statement that we will need in the current article; for this we recall from Definition \ref{MeasureOfRelPosType} the notion of a measure of positive type relative $K$.
\begin{theorem}[Godement-Plancherel, relative version]\label{GodementConvenient}
If $\mu \in M(G, K)$ is of positive type relative $K$, then $\mu$ has a unique spherical Fourier transform $\widehat{\mu}$, which is a positive Radon measure. Moreover, $\mu$ is uniquely determined by $\widehat{\mu}$.
\end{theorem}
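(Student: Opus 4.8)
The plan is to reduce the statement to the spherical Bochner theorem for positive-definite \emph{functions} by convolving $\mu$ with elements of the Hecke algebra on both sides, and then to reassemble the resulting family of finite Bochner measures into a single Radon measure on $\mathcal S^+(G,K)$. First I would record the algebraic consequences of positivity: restricted to the commutative Hecke algebra, the sesquilinear form $B(f,g):=\mu(f\ast g^*)$ on $C_c(G,K)$ is positive semidefinite (Definition \ref{MeasureOfRelPosType}), hence satisfies Cauchy--Schwarz and is Hermitian; by polarization this yields the self-adjointness relation $\mu=\mu^*$ on $C_c(G,K)$, which is exactly what will later reconcile the two competing conjugation conventions.

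Next, for each $f\in C_c(G,K)$ I would form the bi-$K$-invariant continuous function $\psi_f:=f^*\ast\mu\ast f$, which is well defined since $f,f^*\in C_c(G,K)$ and $\mu$ is Radon. For every bi-$K$-invariant $h\in C_c(G,K)$ a direct computation gives $\int_G(h\ast h^*)\,\psi_f=\mu\big((f\ast h)\ast(f\ast h)^*\big)\ge 0$, so $\psi_f$ is of positive type relative $K$; the spherical Bochner theorem then produces a unique finite $\beta_f\in M^+(\mathcal S^+(G,K))$ with $\psi_f(x)=\int\omega(x)\,d\beta_f(\omega)$ and total mass $\psi_f(e)$. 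The engine of the construction is a compatibility relation: using (S4) together with $\widehat{a^*}=\overline{\widehat a}$ one checks $a^*\ast\omega\ast a=|\widehat a(\omega)|^2\,\omega$, whence $g^*\ast\psi_f\ast g$ has Bochner measure $|\widehat g|^2\beta_f$; but $g^*\ast\psi_f\ast g=(f\ast g)^*\ast\mu\ast(f\ast g)=\psi_{f\ast g}$ and $f\ast g=g\ast f$, so uniqueness in Bochner forces $|\widehat g|^2\beta_f=|\widehat f|^2\beta_g$.

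On the open set $U_f:=\{\widehat f\ne 0\}$ I may therefore set $d\widehat\mu_0:=|\widehat f|^{-2}\,d\beta_f$; compatibility makes this independent of $f$ on overlaps, the sets $U_f$ cover $\mathcal S^+(G,K)$ (for fixed $\omega$ the approximate identity of Remark \ref{ConvenientApproximateIdentity} satisfies $\widehat{\rho_n}(\omega)\to 1$), and a routine patching argument assembles these pieces into a positive Radon measure $\widehat\mu_0$ which, one verifies, gives no mass to any $\{\widehat f=0\}$. I then define $\widehat\mu:=\iota_*\widehat\mu_0$, the pushforward under the canonical involution $\iota\colon\omega\mapsto\check\omega$ on $\mathcal S^+(G,K)$; this is the correction that reconciles the Bochner kernel $\omega$ with the conjugate kernel $\overline\omega=\check\omega$ appearing in $\widehat f$. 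Using \eqref{FTfbar}, the identity $|\widehat f(\check\omega)|=|\widehat{\bar f}(\omega)|$, and the relation $\psi_{\bar f}(e)=\mu(f\ast f^*)$ (this is where $\mu=\mu^*$ is used), one obtains $\int|\widehat f|^2\,d\widehat\mu=\int|\widehat{\bar f}|^2\,d\widehat\mu_0=\beta_{\bar f}(\mathcal S^+(G,K))=\mu(f\ast f^*)$, which is condition (God2); positivity of $\widehat\mu$ is built into the construction.

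For uniqueness and injectivity I would argue as follows. The span of $\{\widehat f\,\overline{\widehat g}\mid f,g\in C_c(G,K)\}$ is a conjugation-closed subalgebra of $C_0(\mathcal S^+(G,K))$ that separates points and vanishes nowhere, hence is dense by Stone--Weierstrass; combined with the approximate units this forces any two spherical Fourier transforms of $\mu$ to agree, giving uniqueness of $\widehat\mu$. The same density, invoked now through Lemma \ref{PDDense}, shows that $\widehat\mu$ pins down $\mu$ on a subspace of $C_c(G,K)$ that is dense for uniform convergence on compacta with supports in a fixed compact set, so $\widehat\mu$ determines $\mu\in M(G,K)$. I expect the main obstacle to be twofold: conceptually, the entire argument rests on the spherical Bochner theorem, the genuine analytic input (which the authors relegate to the appendix); technically, the delicate part is gluing the finite measures $\beta_f$ into a single, generally infinite, Radon measure while guaranteeing local finiteness and the vanishing of mass on the zero sets, together with the careful bookkeeping of complex conjugations (the involution $\iota$ and $\mu=\mu^*$) required to land on (God2) exactly as stated rather than on its reflection under $\omega\mapsto\check\omega$.
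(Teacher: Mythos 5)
Your proposal is correct and follows essentially the same route as the paper's Appendix~A proof: reduce to the spherical Bochner theorem via the positive-definite functions $f^*\ast\mu\ast f=\mu\ast f\ast f^*$ (using commutativity of the Hecke algebra), derive the compatibility relation $|\widehat g|^2\beta_f=|\widehat f|^2\beta_g$, and patch the local densities $|\widehat f|^{-2}\beta_f$ on the sets where $\widehat f$ is nonvanishing into a single positive Radon measure. The only differences are presentational: the paper first reformulates (God1)--(God3) into a pointwise condition (God4)/(God5) and states Bochner with the conjugate kernel $\overline{\omega(x)}$, so no pushforward under $\omega\mapsto\check\omega$ is needed, and its covering argument uses a single positive-definite $f$ with $\widehat f>0$ on a given compact set rather than the approximate identity pointwise.
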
 
This is a slight generalization of a theorem from  \cite{Godement-57}. The original version is as follows:
\begin{corollary}[Godement-Plancherel, absolute version]\label{GodementPlancherel} If $\mu \in M(G)$ is of positive type, then $\mu$ has a unique spherical Fourier transform $\widehat{\mu}$, which is a positive Radon measure.
\end{corollary}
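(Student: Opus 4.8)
The plan is to deduce the absolute version directly from the relative version (Theorem~\ref{GodementConvenient}) by passing to the retraction $\mu \mapsto \mu^\sharp$. The guiding principle is that this retraction affects neither the positive-type hypothesis nor the notion of a spherical Fourier transform, so the entire analytic content is already absorbed into Theorem~\ref{GodementConvenient} (whose elementary proof is deferred to Appendix~A).

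First I would observe that, by the remark following Definition~\ref{MeasureOfPositiveType}, if $\mu \in M(G)$ is of positive type then $\mu^\sharp \in M(G, K)$ is of positive type relative $K$. Hence Theorem~\ref{GodementConvenient} applies to $\mu^\sharp$ and yields a unique spherical Fourier transform $\widehat{\mu^\sharp}$, which is a positive Radon measure. It then remains only to transport this conclusion back to $\mu$, and for that I would show that $\mu$ and $\mu^\sharp$ determine the same functional on the Hecke algebra $C_c(G, K)$. Indeed, the measure retraction $\mu \mapsto \mu^\sharp$ and the function retraction $h \mapsto h^\sharp$ are both given by averaging over $K \times K$ and are mutually adjoint, so that $\mu^\sharp(h) = \mu(h^\sharp)$ for all $h \in C_c(G)$; for a bi-$K$-invariant $h \in C_c(G, K)$ one has $h^\sharp = h$, whence $\mu^\sharp(h) = \mu(h)$.

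The argument concludes by invoking the observation recorded right after Definition~\ref{DefAssociatedMeasure}: the defining conditions (God1)--(God3) of a spherical Fourier transform involve $\mu$ only through its values on $\mathrm{span}\{f \ast g^* \mid f, g \in C_c(G, K)\} \subset C_c(G, K)$. Since $\mu$ and $\mu^\sharp$ agree on all of $C_c(G, K)$, a Radon measure on $\mathcal S^+(G, K)$ is a spherical Fourier transform of $\mu$ precisely when it is one of $\mu^\sharp$; setting $\widehat{\mu} := \widehat{\mu^\sharp}$ then delivers existence, uniqueness, and positivity simultaneously. The only step requiring genuine (if minor) care is the adjointness identity $\mu^\sharp(h) = \mu(h^\sharp)$, i.e.\ checking that the sharp operations on measures and on functions are compatible; beyond this bookkeeping there is no real obstacle, as the substantive work is entirely contained in the relative version.
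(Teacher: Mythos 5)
Your argument is correct and is essentially the paper's own proof: reduce to the relative version via the retraction $\mu \mapsto \mu^\sharp$, note that $\mu$ and $\mu^\sharp$ agree on $C_c(G,K)$, and use that the spherical Fourier transform depends only on this restriction. The extra adjointness bookkeeping you supply is a harmless elaboration of the same step.
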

\begin{proof} If $\mu$ is of positive type, then $\mu^\sharp \in M(G, K)$ is of positive type relative $K$, and we have
\[
\mu|_{C_c(G, K)} = \mu^\sharp|_{C_c(G, K)}.
\]
Since the spherical Fourier transform of a measure only depends on its restriction to $C_c(G, K)$, we have thus reduced to the relative case.
\end{proof}
We explain how Theorem \ref{GodementConvenient} can be deduced from the more classical spherical Bochner theorem in Appendix \ref{AppendixGodement}. For a detailed account of Godement's original proof see \cite[Chapter XV, Sec.\ 9]{Dieudonne}. As the name indicates, Corollary \ref{GodementPlancherel} implies the classical Plancherel theorem for the Gelfand pair $(G, K)$:
\begin{example} The measure $\mu = \delta_e \in M(G)$ is of positive type, and its Fourier transform $\nu_{(G, K)} := \widehat{\delta_e}$ is called the \emph{Plancherel measure} of the Gelfand pair $(G, K)$. By (G2) we have $\widehat{f} \in L^2(\mathcal S^+(G, K), \nu)$ for all $f\in C_c(G)$, and
\begin{equation}\label{PlancherelIsometry}
\|f\|^2_2 = \delta_e(f\ast f^*) = \|\widehat{f}\|^2_{L^2(\mathcal S^+(G, K), \nu_{(G, K)}}.
\end{equation}
\end{example}
\begin{corollary}[Spherical Plancherel theorem]\label{SphericalPlancherel} If $\nu$ denotes the Plancherel measure of the Gelfand pair $(G, K)$, then the map $\mathcal F: L^1(G, K) \cap L^2(G, K) \to L^2(\mathcal S^+(G, K), \nu_{(G, K)})$, $f \mapsto \widehat{f}$ extends continuously to an isometry
\[
\mathcal F_{L^2}: L^2(G, K) \to L^2(\mathcal S^+(G, K), \nu_{(G, K)}).
\]
\end{corollary}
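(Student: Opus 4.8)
The plan is to reduce the statement to the standard functional-analytic fact that an inner-product-preserving linear map defined on a dense subspace of a Hilbert space extends uniquely to an isometry of the whole space. Concretely, I would first prove that $\mathcal F$ is norm-preserving on the dense subalgebra $C_c(G,K) \subset L^2(G,K)$, then extend abstractly, and finally reconcile the resulting extension $\mathcal F_{L^2}$ with the concrete spherical transform on $L^1(G,K)\cap L^2(G,K)$. The only genuine input beyond soft arguments is the Godement--Plancherel theorem (Theorem~\ref{GodementConvenient}) and the identification $\nu_{(G,K)} = \widehat{\delta_e}$; everything else is routine.

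First I would record the isometry on the Hecke algebra. The measure $\delta_e$ is of positive type, so by definition its spherical Fourier transform is the Plancherel measure $\nu := \nu_{(G,K)} = \widehat{\delta_e}$. Applying condition (God3) of Proposition~\ref{GConditions} to $\mu = \delta_e$ and $f,g \in C_c(G,K)$, and using $\delta_e(f \ast g^*) = (f\ast g^*)(e) = \langle f, g\rangle_{L^2(G)}$, yields $\widehat{f},\widehat{g} \in L^2(\mathcal{S}^+(G,K),\nu)$ together with $\langle \widehat f, \widehat g\rangle_{L^2(\nu)} = \langle f,g\rangle_{L^2(G)}$; in particular $\mathcal F|_{C_c(G,K)}$ is an inner-product-preserving linear map into $L^2(\mathcal{S}^+(G,K),\nu)$, which is just the content of \eqref{PlancherelIsometry}. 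Next I would observe that $C_c(G,K)$ is dense in $L^2(G,K)$: the $\sharp$-operation $g \mapsto g^\sharp$ is the orthogonal projection $L^2(G) \to L^2(G,K)$ (an average of the unitaries $L_k R_{k'}$, hence a self-adjoint idempotent contraction) and maps $C_c(G)$ into $C_c(G,K)$, so for $f \in L^2(G,K)$ and $g_n \in C_c(G)$ with $g_n \to f$ we get $g_n^\sharp \in C_c(G,K)$ with $g_n^\sharp \to f^\sharp = f$. Hence $\mathcal F|_{C_c(G,K)}$ extends uniquely to an isometry $\mathcal F_{L^2}: L^2(G,K) \to L^2(\mathcal{S}^+(G,K),\nu)$.

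The step requiring the most care---and the one I expect to be the main obstacle---is verifying that this abstract extension genuinely prolongs the concrete transform on $L^1(G,K)\cap L^2(G,K)$, since a priori, for such $f$, the function $\widehat f$ is only known to lie in $C_0(\mathcal{S}^+(G,K))$ and not in $L^2(\nu)$. Fix $f \in L^1(G,K)\cap L^2(G,K)$ and choose $f_n \in C_c(G,K)$ with $f_n \to f$ simultaneously in $L^1(G)$ and $L^2(G)$ (first truncate $f$ to a bi-$K$-invariant compact exhaustion, then mollify with the convenient approximate identity $\rho_m$ of Remark~\ref{ConvenientApproximateIdentity}, and diagonalize). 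Then $\mathcal F_{L^2}(f_n) = \widehat{f_n} \to \mathcal F_{L^2}(f)$ in $L^2(\nu)$, while simultaneously $\widehat{f_n} \to \widehat{f}$ uniformly because $\mathcal F: L^1(G,K) \to C_0(\mathcal{S}^+(G,K))$ is a sup-norm contraction. Passing to a subsequence along which the $L^2(\nu)$-convergence holds $\nu$-almost everywhere and comparing with the uniform limit forces $\mathcal F_{L^2}(f) = \widehat f$ $\nu$-almost everywhere; in particular $\widehat f \in L^2(\nu)$ and $\mathcal F_{L^2}$ restricts to $\mathcal F$ on $L^1(G,K)\cap L^2(G,K)$, which completes the argument.
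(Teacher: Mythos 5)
Your proof follows essentially the same route as the paper's: the isometry identity on $C_c(G,K)$ is exactly \eqref{PlancherelIsometry}, obtained from the Godement--Plancherel theorem applied to $\delta_e$, and the passage to $L^2(G,K)$ is the standard abstract extension of a norm-preserving map from a dense subspace. Two remarks. First, your third paragraph, reconciling the abstract extension with the concrete transform on $L^1(G,K)\cap L^2(G,K)$, is correct and is in fact more careful than the paper, which asserts the extension in one sentence and never checks this compatibility; your truncate-and-mollify construction of $f_n \to f$ in $L^1\cap L^2$ and the almost-everywhere subsequence comparison are both sound. Second, you do not address surjectivity of $\mathcal F_{L^2}$. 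The paper's proof devotes its second sentence to arguing that $\mathcal F(C_c(G,K))$ is dense in $L^2(\mathcal S^+(G,K),\nu_{(G,K)})$, and its wording (``isometric embedding'' for the extension versus ``isometry'' in the conclusion) indicates that the intended statement is a unitary onto $L^2(\mathcal S^+(G,K),\nu_{(G,K)})$, as in the classical Plancherel theorem. If ``isometry'' is read as merely norm-preserving, your argument is complete; if surjectivity is part of the claim, you still need density of the image --- for instance along the lines of Lemma \ref{Godement2} and \eqref{GodementSymmetry} from Appendix A --- and it is worth noting that the paper's own one-line passage from sup-norm density in $C_0(\mathcal S^+(G,K))$ to density in $L^2(\nu_{(G,K)})$ also deserves justification when $\nu_{(G,K)}$ is an infinite measure.
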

\begin{proof} It is immediate from \eqref{PlancherelIsometry} that $\mathcal F$ extends to an isometric embedding $\mathcal F_{L^2}$. Since $\mathcal F(C_c(G))$ is dense in $C_0(\mathcal S^+(G, K))$, it is in particular dense in $C_c(\mathcal S^+(G, K))$ and $L^2(\mathcal S^+(G, K), \nu_{(G, K)})$, hence the corollary follows. 
\end{proof}
Note that if $H < G$ is a closed subgroup, then Corollary \ref{GodementPlancherel} can also be applied to the Haar measure $m_H$ of $H$. This yields a spherical Plancherel theorem for the homogeneous space $G/H$.

\section{Spherical Diffraction} \label{sec:diffraction}
\subsection{General setting}
Throughout this section, $(G, K)$ denotes a Gelfand pair. From now on we reserve the letter $X$ to denote the associated commutative space $X = K\backslash G$, on which $G$ acts by  $g.(Kh) := Khg^{-1}$. There is a canonical measure $m_X$ on $X$ such that
\[
\int_G f \, dm_G = \int_X \left(\int_K f(kg)\, dm_K(k)\right) dm_X(Kg) \quad (f \in C_c(G)),
\]
and we denote by ${}_K\pi_R: G \to \mathcal U(L^2(X, m_X))$ the corresponding unitary representation.

In \cite{BHP2} we have introduced the notion of a translation-bounded measure $\mu$ on $X$. Typical examples of such measures are given by weighted model sets, i.e. measures of the form ${}_K p_*\delta_\Lambda$, where $\Lambda$ is a regular model set in $G$, $\delta_\Lambda$ is the associated Dirac comb and ${}_Kp: G \to K\backslash G$ is the canonical projection.

Throughout this section we will assume that $\mu$ is a translation bounded measure on $X$ satisfying the following assumptions:
\begin{enumerate}[(H1)]
\item The punctured hull $\Omega_\mu^\times:= \Omega_{\mu} \setminus \{\emptyset \}$ is uniformly locally bounded, cf.\@ Section~3.3 in \cite{BHP2}.
\item There exists a $G$-invariant probability measure on $\Omega_\mu^\times$.
\end{enumerate}
Both assumptions are automatically satisfied in the case of weighted model sets. We then fix a $G$-invariant measure $\nu$ on $\Omega_\mu^\times$. Everything in the sequel will depend on this choice of measure. Note however that in the case of weighted model sets the invariant measure is unique.
\begin{remark}[Notation concerning the Koopman representation] We will denote by $\pi_\nu$ the unitary representation
\[
\pi_\nu: G \to \mathcal U(L^2(\Omega_\mu^\times, \nu)), \quad \pi_\nu(g)u(\mu') := u(g^{-1}_*\mu'),
\]
as well as the associated $*$-representation given by
\[
\pi_\nu: L^1(G) \to \mathcal B(L^2(\Omega_\mu^\times, \nu)), \quad \pi_\nu(f)(u)(\mu') = \int_G f(g) u(g^{-1}_*\mu') dm_G(g). 
\]
If $(V, \pi_V)$ is an irreducible spherical representation with spherical matrix coefficient $\omega = \omega_V \in \mathcal S^+(G, K)$, then we denote by \[
{\rm proj}_\omega:L^2(\Omega_\mu^\times, \nu) \to L^2(\Omega_\mu^\times, \nu)_\omega
\]
the projection onto the corresponding isotypical component and set
\[L^2(\Omega_\mu^\times, \nu)^K_\omega := L^2(\Omega_\mu^\times, \nu)^K \cap L^2(\Omega_\mu^\times, \nu)_\omega.\]
\end{remark}
\subsection{The periodization map}
In \cite{BHP2} we defined a  \emph{periodization map} $\mathcal P_\mu: C_c(K\backslash G/K) \to C_0(\Omega_\mu^\times)$, and our standing assumption (H1) implies that this map is actually continuous. With out current notation we have
\[
\mathcal P_\mu({}_K f_K)(\mu') = \mu'({}_Kf) =\int_{K\backslash G} {}_Kf(x) d\mu'(x) \quad(f \in C_c(G, K), \mu' \in \Omega_\mu^\times).
\]

It is immediate from this explicit formula that $\mathcal P_\mu$ takes values in the subspace $C_0(\Omega^\times_\mu)^K$ of $K$-invariant functions. 
The goal of this subsection is to establish the following projection formula:
 \begin{theorem}[Projection formula]\label{ProjectionFormula}
 For every $\omega \in \mathcal S^+(G, K)$ there exists a constant $c_\nu(\omega) \geq 0$ such that for all $f \in C_c(G, K)$,
 \[
  \|{\rm proj}_\omega(\mathcal P_\mu ({}_Kf_K))\|^2_{L^2(\Omega_\mu^\times, \nu)}  = c_\nu(\omega) \cdot |\widehat{f}(\check \omega)|^2
 \] 
 \end{theorem}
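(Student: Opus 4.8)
The plan is to show that the linear map
\[
T_\omega := \mathrm{proj}_\omega \circ \mathcal P_\mu({}_K(\cdot)_K)\colon C_c(G,K) \longrightarrow L^2(\Omega_\mu^\times,\nu)_\omega^K
\]
is essentially of rank one: I will prove that $T_\omega(f) = \widehat{f}(\check\omega)\cdot w_\omega$ for a single vector $w_\omega \in L^2(\Omega_\mu^\times,\nu)_\omega^K$ depending only on $\omega$. Taking squared norms then gives the assertion with $c_\nu(\omega) := \|w_\omega\|^2 \geq 0$. Throughout I abbreviate $\Phi_f := \mathcal P_\mu({}_Kf_K)$ and note that $\Phi_f \in C_0(\Omega_\mu^\times)^K \subset L^2(\Omega_\mu^\times,\nu)$, the inclusion being legitimate because $\nu$ is a probability measure by (H2), so that $\Phi_f$ is bounded.

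\textbf{Equivariance of periodization.} First I would record how $\mathcal P_\mu$ intertwines the relevant representations. Extend periodization to left-$K$-invariant functions $F$ on $G$ by $F \mapsto (\mu' \mapsto \mu'({}_KF))$. A direct computation from the defining formula $\Phi_f(\mu') = \mu'({}_Kf)$, the $G$-action $g.(Kh) = Khg^{-1}$ on $X$, and the identity ${}_KF(Kh) = F(h)$ shows that for $g \in G$ one has
\[
\pi_\nu(g)\Phi_f = \big[\mu' \mapsto \mu'({}_K(R_g f))\big],
\]
i.e.\ $\mathcal P_\mu$ intertwines the right-regular representation ${}_K\pi_R$ with the Koopman representation $\pi_\nu$. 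Passing to the associated $*$-representations by integrating against $g \in C_c(G,K)$, using that $\int_G g(h)\, R_h f\, dm_G(h) = f \ast \check g$ (which is again bi-$K$-invariant), and invoking the continuity and linearity of $\mathcal P_\mu$ guaranteed by (H1), this upgrades to the clean relation
\[
\pi_\nu(g)\Phi_f = \Phi_{f \ast \check g} \qquad (f, g \in C_c(G,K)).
\]

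\textbf{Extracting the eigenvalue and the rank-one factorization.} Since $\mathrm{proj}_\omega$ is $G$-equivariant and preserves $K$-invariance, we have $T_\omega(f) \in L^2(\Omega_\mu^\times,\nu)_\omega^K$, so Lemma~\ref{ActionByFourierCoefficient} yields $\pi_\nu(g)\,T_\omega(f) = \widehat{g}(\omega)\,T_\omega(f)$. Applying $\mathrm{proj}_\omega$ to the intertwining relation above therefore gives $T_\omega(f \ast \check g) = \widehat{g}(\omega)\,T_\omega(f)$. Writing $h = \check g$ and using both $\widehat{\check h}(\omega) = \widehat{h}(\check\omega)$ and the commutativity of the Hecke algebra, this rearranges into the symmetric identity
\[
\widehat{f}(\check\omega)\,T_\omega(h) = \widehat{h}(\check\omega)\,T_\omega(f) \qquad (f, h \in C_c(G,K)).
\]
Choosing $h_0 = \rho_n$ from a convenient approximate identity (Remark~\ref{ConvenientApproximateIdentity}), for which $\widehat{\rho_n}(\check\omega) = \int_G \rho_n\,\omega\,dm_G \to \omega(e) = 1$, we may fix $n$ with $\widehat{h_0}(\check\omega) \neq 0$. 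The displayed identity then forces $T_\omega(f) = \widehat{f}(\check\omega)\cdot w_\omega$ with $w_\omega := T_\omega(h_0)/\widehat{h_0}(\check\omega)$, and applying the identity to two such choices shows $w_\omega$ is independent of $h_0$. Taking norms gives $\|T_\omega(f)\|^2 = c_\nu(\omega)\,|\widehat{f}(\check\omega)|^2$ with $c_\nu(\omega) = \|w_\omega\|^2$.

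\textbf{Main obstacle.} I expect the first step to carry the technical weight: pinning down the equivariance of $\mathcal P_\mu$ with the correct inverse identifications and convolution conventions, and justifying the passage from the unitary $G$-action to the $*$-representation under the integral sign. The latter is a vector-valued integration argument that relies on the continuity of $\mathcal P_\mu$ supplied by (H1) together with the compact support of $g$. Once the intertwining $\pi_\nu(g)\Phi_f = \Phi_{f \ast \check g}$ is established, the remainder—isolating the eigenvalue $\widehat{f}(\check\omega)$ via Lemma~\ref{ActionByFourierCoefficient} and deducing the rank-one factorization—is purely formal and is driven entirely by the commutativity of the Hecke algebra.
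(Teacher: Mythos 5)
Your proposal is correct, and its backbone coincides with the paper's: both arguments rest on the intertwining identity $\mathcal P_\mu({}_K(\rho \ast f)_K) = \pi_\nu(\check f)\,\mathcal P_\mu({}_K\rho_K)$ (Lemma~\ref{ProjectionFormulaLemma1}) combined with the Hecke-eigenvalue Lemma~\ref{ActionByFourierCoefficient}. Where you genuinely diverge is in the endgame. The paper expands $\|{\rm proj}_\omega(\mathcal P_\mu({}_Kf_K))\|^2$ against an orthonormal basis of $L^2(\Omega_\mu^\times,\nu)^K_\omega$, approximates $f$ by $\rho_n \ast f$, and passes to the limit using Lemma~\ref{UniformApproximation}; the constant then appears as $c_\nu(\omega) = \sum_\alpha \lim_n |\langle \mathcal P_\mu({}_K(\rho_n)_K), u_\alpha\rangle|^2$. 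You instead exploit commutativity of the Hecke algebra to obtain the symmetric identity $\widehat h(\check\omega)\,T_\omega(f) = \widehat f(\check\omega)\,T_\omega(h)$ and divide by $\widehat{h_0}(\check\omega) \neq 0$ for a suitable $h_0 = \rho_n$. This buys you the stronger rank-one statement $T_\omega(f) = \widehat f(\check\omega)\cdot w_\omega$ with no limiting argument, no orthonormal basis, and no appeal to Lemma~\ref{UniformApproximation}; it is in fact the exact vector-valued analogue of what the paper proves separately later (again by the basis-and-limit method) as Proposition~\ref{ShadowExist} for the shadow transform. The two extra inputs your route needs --- commutativity of $C_c(G,K)$, which is the standing Gelfand-pair hypothesis, and the existence of some $h_0 \in C_c(G,K)$ with $\widehat{h_0}(\check\omega) \neq 0$ --- are both available, the latter exactly as you argue: $\widehat{\rho_n}(\check\omega) = \int_G \rho_n\,\omega\, dm_G = (\rho_n \ast \omega)(e) \to \omega(e) = 1$, using $\omega^* = \omega$ and $\check\rho_n = \rho_n$. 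The one step that still carries the technical weight, as you correctly anticipate, is the justification of $\pi_\nu(g)\Phi_f = \Phi_{f\ast\check g}$ at the level of the integrated representation (exchange of the vector-valued integral with the evaluation against the translation-bounded measure $\mu'$), and this is precisely what the paper's Lemma~\ref{ProjectionFormulaLemma1} supplies under hypothesis (H1).
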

\begin{definition} The constants $c_\nu(\omega)$ are called the \emph{diffraction coefficients} of $\nu$.
 \end{definition}
The reason for this terminology will become apparent in Theorem \ref{AbstractPurePoint} below.\\

The proof of the projection formula is based on the following lemma: 
\begin{lemma}\label{ProjectionFormulaLemma1}
For $\rho,f \in C_c(G, K)$ we have $\mathcal P_\mu ({}_K(\rho \ast f)_K) = \pi_\nu(\check f)(\mathcal P_\mu({}_K\rho_K)) $.
\end{lemma}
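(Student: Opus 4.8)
The plan is to verify the identity pointwise on $\Omega_\mu^\times$, unwinding the definition of the Koopman action $\pi_\nu(\check f)$ and recognizing the resulting $G$-integral as a convolution. Fix $\mu' \in \Omega_\mu^\times$. By the definition of $\pi_\nu$ as a $*$-representation and of $\check f$, the right-hand side evaluated at $\mu'$ reads
\[
\pi_\nu(\check f)(\mathcal P_\mu({}_K\rho_K))(\mu') = \int_G f(g^{-1}) \, \mathcal P_\mu({}_K\rho_K)(g^{-1}_*\mu') \, dm_G(g).
\]
First I would compute the inner term using the explicit $G$-action $g.(Kh) = Khg^{-1}$ on $X = K\backslash G$ from the start of this section: since $\mathcal P_\mu({}_K\rho_K)(g^{-1}_*\mu') = (g^{-1}_*\mu')({}_K\rho) = \int_X {}_K\rho(g^{-1}.x)\, d\mu'(x)$ and $g^{-1}.(Kh) = Khg$, one gets ${}_K\rho(g^{-1}.(Kh)) = \rho(hg) = (R_g\rho)(h)$, whence $\mathcal P_\mu({}_K\rho_K)(g^{-1}_*\mu') = \mu'({}_K(R_g\rho))$. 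This is the natural equivariance of the periodization map: applying $\pi_\nu(g)$ corresponds to right-translating the integrand by $g$. Note that $R_g\rho$ is only left-$K$-invariant, so ${}_K(R_g\rho) \in C_c(K\backslash G)$ is paired with the measure $\mu'$ directly, rather than through $\mathcal P_\mu$, which is defined only on bi-$K$-invariant functions.

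Second, I would interchange the integration over $G$ with the measure $\mu'$. Since both $\rho$ and $f$ have compact support, the map $(g, x) \mapsto f(g^{-1})\,{}_K\rho(g^{-1}.x)$ is continuous with compact support, so a Fubini argument (for the Radon measure $\mu'$ against Haar measure) applies and yields
\[
\pi_\nu(\check f)(\mathcal P_\mu({}_K\rho_K))(\mu') = \mu'\!\left({}_K\!\left(\int_G f(g^{-1})\, R_g\rho \, dm_G(g)\right)\right),
\]
the inner integral being taken in $C_c(G)$.

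Third, a change of variables identifies this $G$-integral as a convolution: evaluating at $h \in G$ and substituting $u = hg$ gives $\int_G f(g^{-1})\rho(hg)\, dm_G(g) = \int_G \rho(u) f(u^{-1}h)\, dm_G(u) = (\rho \ast f)(h)$, using left-invariance of $m_G$. Since $\rho, f \in C_c(G, K)$ and the Hecke algebra is closed under convolution, $\rho \ast f \in C_c(G,K)$ is bi-$K$-invariant, so the right-hand side equals $\mu'({}_K(\rho\ast f)) = \mathcal P_\mu({}_K(\rho\ast f)_K)(\mu')$. As $\mu'$ was arbitrary, this proves the lemma.

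The computation is entirely routine once the conventions are pinned down, so the only genuine care required is bookkeeping. The main point to get right is the equivariance relation $\mathcal P_\mu({}_K\rho_K)(g^{-1}_*\mu') = \mu'({}_K(R_g\rho))$, which dictates the order of the factors and explains why the statement features $\check f$ and the \emph{right} translate; combined with this is the observation that although each $R_g\rho$ leaves the bi-$K$-invariant category, the averaged convolution $\rho \ast f$ re-enters it, so that the final pairing is again of the form $\mathcal P_\mu({}_K(\cdot)_K)$. The Fubini interchange is harmless because everything in sight is continuous and compactly supported.
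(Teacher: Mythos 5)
Your proof is correct and follows essentially the same route as the paper: evaluate pointwise at $\mu'$, use the equivariance $\mathcal P_\mu({}_K\rho_K)(g^{-1}_*\mu') = \mu'({}_K(R_g\rho))$, interchange the integrals, and identify the resulting $G$-integral with $\rho \ast f$. The only cosmetic difference is that you verify the identity $\int_G \check f(g)\,R_g\rho\,dm_G(g) = \rho\ast f$ by an explicit change of variables, whereas the paper quotes it as the formula $\pi_R(\check f)(\rho) = \rho\ast f$ from \eqref{Convolution}.
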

\begin{proof} For $g \in G$ and $\mu' \in \Omega_\mu^\times$ we have
\[
\pi_\nu(g)\mathcal P_\mu({}_K\rho_K)(\mu') = \mathcal P_\mu({}_K\rho_K)(g^{-1}_*\mu') = \int_{K\backslash G} {}_K\rho(xg)\,d\mu'(x).
\]
We thus obtain
\begin{eqnarray*}
\pi_\nu(\check f)(\mathcal P_\mu({}_K\rho_K)(\mu')) &=& \int_G \check f(g) \int_{K\backslash G} {}_K\rho(xg)\,d\mu'(x)\,dm_G(g) \quad=\quad  \int_{K\backslash G} {}_K\pi_R(\check f)({}_K\rho)(x) d\mu'(x)\\
&=& \int_G {}_K(\pi_R(\check f)(\rho))(x) d\mu'(x) \quad = \quad \mathcal P({}_K(\pi_R(\check f)(\rho))_K)(\mu').
\end{eqnarray*}
Since $\pi_R(\check f)(\rho)= \rho \ast f$, the lemma follows.
\end{proof}
We will apply this as follows:
\begin{corollary}\label{ProjectionFormulaCor1}  For $\rho, f \in C_c(G, K)$ and $u \in L^2(\Omega_\mu^\times, \nu)^K_\omega$ we have
\[
\langle \mathcal P_\mu({}_K(\rho \ast f)_K), u\rangle = {\widehat{f}(\check \omega)} \cdot \langle \mathcal P_\mu({}_K\rho_K), u\rangle.
\]
\end{corollary}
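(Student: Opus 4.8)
The plan is to reduce the claimed identity to a single application of Lemma~\ref{ActionByFourierCoefficient}, which describes how the Hecke algebra acts on $K$-invariant vectors of an isotypical component through the spherical Fourier transform. The whole argument is a transfer of the convolution factor $f$ from the first entry of the inner product onto the vector $u$ in the second entry, where it becomes a scalar.

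First I would invoke Lemma~\ref{ProjectionFormulaLemma1} to rewrite the left-hand side as
\[
\langle \mathcal P_\mu({}_K(\rho \ast f)_K), u\rangle = \langle \pi_\nu(\check f)(\mathcal P_\mu({}_K\rho_K)), u\rangle .
\]
The point of this step is to realize the convolution against $f$ as the operator $\pi_\nu(\check f)$ applied to the fixed vector $\mathcal P_\mu({}_K\rho_K)$, so that it can subsequently be moved onto $u$ by taking adjoints. Next I would use that $\pi_\nu$ is a $*$-representation of $L^1(G)$, so that $\pi_\nu(\check f)^* = \pi_\nu((\check f)^*)$, and compute $(\check f)^* = \overline{f}$ directly from $\check f(g) = f(g^{-1})$ and $f^*(g) = \overline{f(g^{-1})}$. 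This yields
\[
\langle \pi_\nu(\check f)(\mathcal P_\mu({}_K\rho_K)), u\rangle = \langle \mathcal P_\mu({}_K\rho_K), \pi_\nu(\overline{f})\, u\rangle .
\]

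Since $f \in C_c(G, K)$ forces $\overline{f} \in C_c(G, K)$, and since $u \in L^2(\Omega_\mu^\times, \nu)^K_\omega$, Lemma~\ref{ActionByFourierCoefficient} applies to give $\pi_\nu(\overline{f})\, u = \widehat{\overline{f}}(\omega)\, u$. Pulling this scalar out of the second slot of the inner product—which introduces a complex conjugate because of our convention that inner products are anti-linear in the second variable—and then invoking the identity \eqref{FTfbar} in the form $\overline{\widehat{\overline{f}}(\omega)} = \widehat{f}(\check\omega)$ produces precisely the factor $\widehat{f}(\check\omega)$ in front of $\langle \mathcal P_\mu({}_K\rho_K), u\rangle$, which is the asserted equality.

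The computation is short, and I do not expect any genuine obstacle: all the analytic content is already packaged into Lemma~\ref{ProjectionFormulaLemma1} and Lemma~\ref{ActionByFourierCoefficient}. The only point requiring care is the bookkeeping of the operations $\check{(\cdot)}$, $\overline{(\cdot)}$ and $(\cdot)^*$ together with the placement of complex conjugates; in particular one must confirm the adjoint formula $\pi_\nu(\check f)^* = \pi_\nu(\overline{f})$, which follows from the unimodularity of $G$ after the substitution $g \mapsto g^{-1}$ in the integral defining $\pi_\nu(\check f)$.
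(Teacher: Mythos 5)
Your proof is correct and follows essentially the same route as the paper: apply Lemma~\ref{ProjectionFormulaLemma1}, move $\pi_\nu(\check f)$ onto $u$ via the $*$-representation property using $(\check f)^* = \overline{f}$, and then combine Lemma~\ref{ActionByFourierCoefficient} with \eqref{FTfbar} to extract the scalar $\widehat{f}(\check\omega)$. The bookkeeping of conjugates and the anti-linearity convention is handled correctly, so there is nothing to add.
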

\begin{proof} Since $\pi_\nu$ is a $*$-representation, Lemma \ref{ProjectionFormulaLemma1} yields
\[
\langle \mathcal P_\mu {}_K(\rho \ast f)_K, u\rangle = \langle \pi_\nu(\check f)(\mathcal P_\mu({}_K\rho_K))  ,u\rangle =\langle \mathcal P_\mu({}_K\rho_K),  \pi_\nu(\check f^*)(u) \rangle = \langle \mathcal P_\mu({}_K\rho_K),  \pi_\nu(\overline{f})(u) \rangle.
\]
By Lemma \ref{ActionByFourierCoefficient} and \eqref{FTfbar} we have
\[
\pi_\nu(\overline{f})u = \widehat{\overline{f}}(\omega)\cdot u = \overline{\widehat{f}(\check \omega)} \cdot u.
\]
The corollary follows.
\end{proof}
We also need to use the properties of our convenient approximate identity $(\rho_n)$ as discussed in Remark \ref{ConvenientApproximateIdentity} in the following form:
\begin{lemma}%[Uniform approximation of periodizations]
\label{UniformApproximation}
For every $f \in C_c(G, K)$, the sequence $(\rho_n \ast f)$ converges uniformly to $f$ in $C_c(G, K)$ and the sequence $(\mathcal P_\mu({}_K({\rho}_n\ast f)_K)$ converges uniformly to $\mathcal P_\mu({}_Kf_K)$ in $C_0(\Omega^\times_\mu)$.
\end{lemma}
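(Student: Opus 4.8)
The plan is to handle the two assertions separately. The first is essentially immediate from the defining properties of the convenient approximate identity in Remark~\ref{ConvenientApproximateIdentity}: since $f \in C_c(G, K)$ is already bi-$K$-invariant we have $f^\sharp = f$, so the third bullet of that remark, combined with the uniform-convergence clause of the second bullet applied to $f \in C_c(G)$, gives $\rho_n \ast f \to f^\sharp = f$ uniformly on $G$. At this stage I would also record the support information that drives the rest of the argument: writing $U$ for a common pre-compact identity neighbourhood containing the supports of all the $\rho_n$, each $\rho_n \ast f$ is supported in the fixed compact set $L_0 := \overline{U} \cdot \operatorname{supp}(f)$. Consequently the differences $g_n := \rho_n \ast f - f$ lie in $C_c(G, K)$, are supported in the common compact set $L_0$, and satisfy $\|g_n\|_\infty \to 0$.

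For the second assertion I would push everything down to $X = K\backslash G$ and estimate by hand. Let $L := {}_Kp(L_0) \subset X$ denote the (compact) image of $L_0$; since descent preserves both supports and supremum norms, the functions ${}_Kg_n \in C_c(X)$ are supported in $L$ and satisfy $\|{}_Kg_n\|_\infty = \|g_n\|_\infty \to 0$. Using the explicit formula $\mathcal P_\mu({}_Kh_K)(\mu') = \mu'({}_Kh) = \int_X {}_Kh \, d\mu'$ for the periodization map together with its linearity, I would obtain, for every $\mu' \in \Omega_\mu^\times$,
\[
\big|\mathcal P_\mu({}_K(\rho_n \ast f)_K)(\mu') - \mathcal P_\mu({}_Kf_K)(\mu')\big| = \big|\mu'({}_Kg_n)\big| \leq \|{}_Kg_n\|_\infty \cdot |\mu'|(L).
\]

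The main point, and the only step that is more than bookkeeping, is the passage from this pointwise estimate to a \emph{uniform} bound over the typically non-compact hull $\Omega_\mu^\times$. This is exactly where hypothesis (H1) enters: uniform local boundedness of the punctured hull guarantees that $C_L := \sup_{\mu' \in \Omega_\mu^\times} |\mu'|(L)$ is finite. Taking the supremum over $\mu' \in \Omega_\mu^\times$ in the displayed inequality then yields $\big\|\mathcal P_\mu({}_K(\rho_n \ast f)_K) - \mathcal P_\mu({}_Kf_K)\big\|_\infty \leq C_L \|g_n\|_\infty \to 0$, which is the asserted uniform convergence in $C_0(\Omega_\mu^\times)$. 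One could alternatively deduce the second assertion directly from the continuity of $\mathcal P_\mu$ recorded just after its definition, but I prefer the hands-on estimate because it makes the role of (H1) completely transparent.
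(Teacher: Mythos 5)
Your proposal is correct and follows essentially the same route as the paper's proof: a common compact set containing all the supports, the pointwise bound $|\mu'({}_Kg_n)| \leq \|g_n\|_\infty \cdot \mu'({}_Kp(U))$, uniform local boundedness (H1) to control the measure factor uniformly over the hull, and the approximate-identity property for the sup-norm factor. The only differences are cosmetic (you spell out the first assertion and the support bookkeeping slightly more explicitly than the paper does).
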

\begin{proof} Let $U \subset G$ be a pre-compact set such that $f$ and all of the functions $f \ast \rho_n$ are supported inside $U$. We then have the estimate
\[
\left|\mathcal P_\mu({}_K(f \ast {\rho}_n)_K) - \mathcal P_\mu({}_Kf_K)\right| = \left|\int_{K\backslash G} {}_K(f \ast \rho_n - f)(x) d\mu'(x)\right|  \leq \mu'({}_Kp(U)) \cdot \|f \ast \rho_n - f\|_\infty.
\]
Now the first factor is bounded, since $\Omega_\mu$ is uniformly locally bounded. Since $f \ast \rho_n \to f$ uniformly on $U$, the lemma follows.
\end{proof}
 \begin{proof}[Proof of Theorem \ref{ProjectionFormula}] Let $(u_\alpha)_{\alpha \in I_\omega}$ be an orthonormal basis of $L^2(\Omega_\mu^\times, \nu)^K_\omega$. Since $\mathcal P_\mu({}_K f_K)$ is $K$-invariant, so is its projection onto $L^2(\Omega_\mu^\times, \nu)_\omega$, and thus 
\[
\|{\rm proj}_\omega(\mathcal P_\mu({}_Kf_K)\|^2_{L^2(\Omega_\mu^\times, \nu)} = \sum_{\alpha \in I_\omega} |\langle \mathcal P_\mu ({}_Kf_K), u_\alpha\rangle|^2 %=  \sum_{\alpha \in I_\omega} |\langle \lim_{n \to \infty}(\mathcal P_\mu({}_K({\rho}_n\ast f)_K, u_\alpha\rangle|^2.
\]
By Lemma \ref{UniformApproximation} we have uniform convergence $\mathcal P_\mu({}_K({\rho}_n\ast f)_K) \to \mathcal P_\mu({}_Kf_K)$, which implies convergence in $L^2$. We deduce that
\[
\|{\rm proj}_\omega(\mathcal P_\mu({}_Kf_K)\|^2_{L^2(\Omega_\mu^\times, \nu)} = \sum_{\alpha \in I_\omega} \lim_{n \to \infty} |\langle\mathcal P_\mu({}_K({\rho}_n\ast f)_K, u_\alpha \rangle|^2.
\]
By Corollary \ref{ProjectionFormulaCor1} we have for every $\alpha \in I_\omega$
\[
\langle\mathcal P_\mu({}_K({\rho}_n\ast f)_K, u_\alpha \rangle = {\widehat{f}(\check \omega)} \cdot \langle \mathcal P_\mu({}_K\rho_K), u_\alpha \rangle,
\]
hence
\[
\|{\rm proj}_\omega(\mathcal P_\mu({}_Kf_K)\|^2_{L^2(\Omega_\mu^\times, \nu)} = |\widehat{f}(\check \omega)|^2 \cdot  \sum_{\alpha \in I_\omega} \lim_{n \to \infty}  |\langle \mathcal P_\mu({}_K\rho_K), u_\alpha \rangle|^2.
\]
Since the second factor is independent of $f$, the theorem follows.
\end{proof}

The proof of Theorem \ref{ProjectionFormula} yields the formula
\[
c_\nu(\omega) ={\sum_{\alpha \in I_\omega} \lim_{n \to \infty} |\langle \mathcal P_\mu \rho_n, u_\alpha \rangle|^2},
\]
for the diffraction coefficients, but this formula is hard to evaluate in praxis. We will later find more explicit formulas in special cases.

\subsection{Auto-correlation measure and spherical diffraction}
In \cite{BHP2} we defined the notion of an auto-correlation measure $\eta \in M^+(K\backslash G/K)$ associated with the invariant measure $\nu$ on $\Omega_\mu^\times$. Our standing assumption (H1) ensures that this measure is well-defined, and it is uniquely determined by the fact that for all $f \in C_c(K\backslash G/K)$ we have
\begin{equation}\label{Autocorrelation}
\eta(f \ast f^*) = \|\mathcal P_\mu (f)\|^2_{L^2(\Omega_\mu^\times, \nu)}.
\end{equation}
In fact, to obtain a formula for $\eta(f)$ we can polarize \eqref{Autocorrelation}: Let $\rho \in C_c(G, K)$ be our convenient identity an define $\rho^\dagger_n := {}_K(\rho_n)_K \in C_c(K\backslash G/K)$. Then $f \ast (\rho^\dagger_n)^* = f \ast \rho_n^\dagger$ converges uniformly to $f$, and hence
\[
\eta(f) = \lim_{n \to \infty} \eta(f \ast ({\rho}^\dagger_n)^*) = \lim_{n \to \infty} \langle \mathcal P_\mu (f), \mathcal P_\mu ({\rho}^\dagger_n)\rangle_{L^2(\Omega_\mu^\times, \nu)}.
\]
\begin{remark}[Construction of the diffraction measure]
The auto-correlation measure $\eta$ corresponds via the isomorphism $M^+(G, K) \cong M^+(K\backslash G/K)$ to a bi-$K$-invariant Radon measure $\widetilde{\eta}$ on $G$. Explicitly, if $f \in C_c(G)$, then
\[
\widetilde{\eta}(f) = \eta({}_K{f^\sharp}_K),
\] 
Note that for every $f \in C_c(G, K)$ we have
\[
\widetilde{\eta}(f \ast f^*) =  \|\mathcal P_\mu( {}_K f_K)\|^2_{L^2(\Omega_\mu^\times, \nu)} \geq 0,
\]
i.e.\ $\widetilde{\eta} \in M^+(G, K)$ is of positive type relative $K$. By the Godement--Plancherel theorem (Theorem \ref{GodementConvenient}) it thus admits a Fourier transform, which is a positive Radon measure on $\mathcal S^+(G, K)$. We denote this Fourier transform by $\widehat{\eta}$, and observe that by Theorem \ref{GodementConvenient}) $\widetilde{\eta}$ and consequently $\eta$ are uniquely determined by $\widehat{\eta}$.
\end{remark}
\begin{definition} The measure $\widehat{\eta} \in M^+(\mathcal S^+(G, K))$ is called the \emph{spherical diffraction measure} of $\nu$.
\end{definition}
In view of characterization (God2) of the Fourier transform of a measure, we have:
\begin{proposition} The spherical diffraction measure $\widehat{\eta}  \in M^+(\mathcal S^+(G, K))$ is uniquely determined by the fact that for all $f \in C_c(G, K)$ we have
\[\pushQED{\qed}\widehat{\eta}(|\widehat{f}|^2) = \widetilde{\eta}(f \ast f^*) = \|\mathcal P_\mu( {}_K f_K)\|^2_{L^2(\Omega_\mu^\times, \nu)} = \int_{\Omega_\mu^\times}\left|\int_{K\backslash G} {}_Kf(x) \, d\mu'(x) \right|^2 d\nu(\mu').\qedhere \popQED
\]
\end{proposition}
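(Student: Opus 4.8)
The plan is to recognize that this statement is essentially a bookkeeping assembly of facts already established in this section, combined with the uniqueness clause of the Godement--Plancherel theorem; no genuinely new argument is required. The task splits into verifying the displayed chain of equalities for every $f \in C_c(G, K)$ and then reading off the uniqueness of $\widehat{\eta}$ from that chain.

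I would verify the equalities from right to left. The rightmost one is immediate from the explicit formula $\mathcal P_\mu({}_Kf_K)(\mu') = \int_{K\backslash G} {}_Kf(x)\,d\mu'(x)$ for the periodization map: unwinding the definition of the norm on $L^2(\Omega_\mu^\times, \nu)$ gives
\[
\|\mathcal P_\mu({}_Kf_K)\|^2_{L^2(\Omega_\mu^\times, \nu)} = \int_{\Omega_\mu^\times}\left|\int_{K\backslash G} {}_Kf(x)\,d\mu'(x)\right|^2 d\nu(\mu').
\]
The middle equality is precisely the identity $\widetilde{\eta}(f \ast f^*) = \|\mathcal P_\mu({}_Kf_K)\|^2_{L^2(\Omega_\mu^\times, \nu)}$ recorded in the Remark on the construction of the diffraction measure, which itself follows from the defining relation \eqref{Autocorrelation} of the auto-correlation measure together with the identification $\widetilde{\eta}(f) = \eta({}_K{f^\sharp}_K)$. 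Finally, the leftmost equality $\widehat{\eta}(|\widehat{f}|^2) = \widetilde{\eta}(f \ast f^*)$ holds because $\widehat{\eta}$ is, by definition, the spherical Fourier transform of $\widetilde{\eta}$: applying characterization (God2) of Proposition \ref{GConditions} to the measure $\widetilde{\eta}$ yields exactly this identity.

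For uniqueness, I would suppose that $\widehat{\eta}'$ is any positive Radon measure on $\mathcal S^+(G, K)$ satisfying $\widehat{\eta}'(|\widehat{f}|^2) = \widetilde{\eta}(f \ast f^*)$ for all $f \in C_c(G, K)$. This is exactly condition (God2), so by Proposition \ref{GConditions} the measure $\widehat{\eta}'$ is a spherical Fourier transform of $\widetilde{\eta}$ in the sense of Definition \ref{DefAssociatedMeasure}. Since $\widetilde{\eta} \in M^+(G, K)$ is of positive type relative $K$, Theorem \ref{GodementConvenient} guarantees that such a transform is unique, whence $\widehat{\eta}' = \widehat{\eta}$. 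The only point requiring attention is the logical observation that the stated defining relation coincides verbatim with (God2), so that the uniqueness is inherited directly from Godement--Plancherel rather than needing a separate approximation argument; beyond this matching I anticipate no real obstacle.
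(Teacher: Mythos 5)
Your proposal is correct and follows essentially the same route as the paper, which states this proposition as an immediate consequence of characterization (God2) applied to $\widetilde{\eta}$ (the chain of equalities being the definition of the periodization map, the defining relation of the auto-correlation measure, and (God2)), with uniqueness supplied by Theorem \ref{GodementConvenient} since $\widetilde{\eta}$ is of positive type relative $K$.
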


From the projection formula (Proposition \ref{ProjectionFormula}) we obtain immediately the following criterion for pure point spherical diffraction:
\begin{theorem}[Complete reducibility implies pure point spherical diffraction]\label{AbstractPurePoint} Assume that $(L^2(\Omega^\times_\mu, \nu), \pi_\nu)$ is spherically completely reducible in the sense that 
\[
L^2(\Omega^\times_\mu, \nu)^K = \bigoplus_{\omega \in \mathcal S^+(G, K)} L^2(\Omega^\times_\mu, \nu)^K_\omega.
\]
Then the spherical diffraction measure $\widehat{\eta}$ is given in terms of the diffraction coefficients $c_\nu(\omega)$ as
\[
\widehat{\eta} = \sum_{\omega \in \mathcal S^+(G, K)} c_\nu(\omega) \cdot \delta_{\check \omega},
\]
In particular, $\widehat{\eta}$ is a pure point measure.
\end{theorem}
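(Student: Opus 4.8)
The plan is to combine the Pythagorean decomposition furnished by the complete reducibility hypothesis with the projection formula of Theorem~\ref{ProjectionFormula}, and then to identify the resulting measure by means of the uniqueness clause in the Godement--Plancherel theorem (Theorem~\ref{GodementConvenient}).

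First I would fix $f \in C_c(G, K)$ and note that $\mathcal P_\mu({}_Kf_K)$ lies in $L^2(\Omega_\mu^\times, \nu)^K$, since $\mathcal P_\mu$ takes values in $K$-invariant functions and $\nu$ is finite. As each isotypical projection ${\rm proj}_\omega$ is $G$-equivariant, it commutes with the $K$-action and hence preserves $K$-invariance; therefore ${\rm proj}_\omega(\mathcal P_\mu({}_Kf_K)) \in L^2(\Omega_\mu^\times, \nu)^K_\omega$, and these are exactly the components of $\mathcal P_\mu({}_Kf_K)$ in the orthogonal direct sum $L^2(\Omega_\mu^\times, \nu)^K = \bigoplus_\omega L^2(\Omega_\mu^\times, \nu)^K_\omega$ postulated by the hypothesis. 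Parseval's identity for this decomposition, followed by the projection formula of Theorem~\ref{ProjectionFormula}, then yields
\[
\|\mathcal P_\mu({}_Kf_K)\|^2_{L^2(\Omega_\mu^\times, \nu)} = \sum_{\omega \in \mathcal S^+(G, K)} \|{\rm proj}_\omega(\mathcal P_\mu({}_Kf_K))\|^2_{L^2(\Omega_\mu^\times, \nu)} = \sum_{\omega \in \mathcal S^+(G, K)} c_\nu(\omega)\,|\widehat f(\check\omega)|^2.
\]

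Next I would introduce the candidate measure $\sigma := \sum_{\omega} c_\nu(\omega)\,\delta_{\check\omega}$. Since every $\omega \in \mathcal S^+(G, K)$ satisfies $\check\omega = \overline\omega \in \mathcal S^+(G, K)$ and $\check{\check\omega} = \omega$, the assignment $\omega \mapsto \check\omega$ is an involution of $\mathcal S^+(G, K)$, so $\sigma$ is a well-defined positive measure with $\sigma(|\widehat f|^2) = \sum_\omega c_\nu(\omega)\,|\widehat f(\check\omega)|^2$. Comparing with the display above and with the defining property of $\widehat\eta$ recorded in the proposition preceding this theorem, I obtain $\sigma(|\widehat f|^2) = \widehat\eta(|\widehat f|^2)$ for every $f \in C_c(G, K)$. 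By the uniqueness assertion of Theorem~\ref{GodementConvenient} (equivalently, characterization (God2) of Proposition~\ref{GConditions}), it then remains only to verify that $\sigma$ is an honest positive Radon measure, for then $\sigma$ is a spherical Fourier transform of $\widetilde\eta$ and hence equals $\widehat\eta$, which is manifestly pure point.

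The main obstacle is precisely this last verification: a countable sum of point masses need not be locally finite, so the Radon property of $\sigma$ must be argued. Here I would use two observations. First, only countably many coefficients $c_\nu(\omega)$ are nonzero: the space $L^2(\Omega_\mu^\times, \nu)^K$ is separable, so all but countably many summands $L^2(\Omega_\mu^\times, \nu)^K_\omega$ vanish, and for those $\omega$ the formula $c_\nu(\omega) = \sum_{\alpha \in I_\omega}\lim_n |\langle \mathcal P_\mu({}_K(\rho_n)_K), u_\alpha\rangle|^2$ forces $c_\nu(\omega) = 0$. Second, the convenient approximate identity satisfies $\widehat{\rho_n}(\omega) = (\rho_n \ast \omega)(e) \to \omega(e) = 1$, and this convergence is uniform on compact subsets of $\mathcal S^+(G, K)$. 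Thus for a compact $C' \subset \mathcal S^+(G, K)$ and $n$ large one has $|\widehat{\rho_n}|^2 \geq \tfrac12$ on $C'$, whence
\[
\sigma(C') \leq 2\,\sigma(|\widehat{\rho_n}|^2) = 2\,\|\mathcal P_\mu({}_K(\rho_n)_K)\|^2_{L^2(\Omega_\mu^\times, \nu)} < \infty.
\]
This bounds $\sigma$ on compacta and shows it is Radon, completing the identification $\widehat\eta = \sigma = \sum_\omega c_\nu(\omega)\,\delta_{\check\omega}$. The one point that genuinely deserves care is the uniformity (on compacta) of the convergence $\widehat{\rho_n} \to 1$, which I expect to follow from the uniform boundedness of positive-definite spherical functions together with the shrinking supports of the $\rho_n$.
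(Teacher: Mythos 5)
Your proposal is correct and follows essentially the same route as the paper: decompose $\|\mathcal P_\mu({}_Kf_K)\|^2$ via the complete-reducibility hypothesis, apply the projection formula termwise, and identify $\widehat{\eta}$ with $\sum_\omega c_\nu(\omega)\,\delta_{\check\omega}$ from the resulting equality on functions $|\widehat f|^2$. The only difference is that you carry out the final identification more carefully than the paper does --- verifying that the candidate sum of point masses is a locally finite positive Radon measure and then invoking the uniqueness clause of Theorem~\ref{GodementConvenient}, whereas the paper simply appeals to density of the span of the $|\widehat f|^2$; both your local-finiteness argument via the approximate identity and the alternative via Lemma~\ref{Godement2} are sound.
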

\begin{proof} Let $f \in C_c(G, K)$ and recall that this implies that $\mathcal P_\mu({}_Kf_K) \in L^2(\Omega^\times_\mu, \nu)^K$. It thus follows from 
Proposition \ref{ProjectionFormula} that
\begin{eqnarray*}
\widehat{\eta}(|\widehat{f}|^2) &=&   \|\mathcal P_\mu({}_Kf_K)\|^2_{L^2(\Omega_\mu^\times, \nu)} \quad = \quad \sum_{\omega \in \mathcal S^+(G, K)}   \|{\rm proj}_\omega(\mathcal P_\mu({}_Kf_K))\|^2_{L^2(\Omega_\mu^\times, \nu)}\\
&=& \sum_{\omega \in \mathcal S^+(G, K)} c_\nu(\omega) \cdot |\widehat{f}(\check \omega)|^2.
\end{eqnarray*}
This shows that the measures $\widehat{\eta}$ and $\sum_{\omega \in \mathcal S^+(G, K)} c_\nu(\omega) \cdot \delta_{\check \omega}$ coincide on all functions of the form $|\widehat{f}|^2$ with $f \in C_c(G, K)$, and since these span a dense subspace of $C_c(\mathcal S^+(G, K))$, the theorem follows.
\end{proof}
In particular, the theorem applies if $L^2(\Omega^\times_\mu, \nu)$ is completely reducible as a unitary $G$-represen-tation. We will see in the next subsection that this is the case if $\mu$ is (the Dirac comb) of a weighted uniform regular model set and $\nu$ is the unique invariant measure on its hull. For non-uniform model sets, the representation $L^2(\Omega^\times_\mu, \nu)$ will not be completely reducible. In this case irreducible subrepresentations of $L^2(\Omega^\times_\mu, \nu)$ will provide some pure point spectrum, but there will also be continuous spectrum in the diffraction measure.
\subsection{Pure point spherical diffraction for weighted uniform regular model sets}
The goal of this subsection is to establish that weighted uniform model sets have pure point spherical diffraction. Thus let $\Lambda = \Lambda(G, H, \Gamma, W)$ be a uniform regular model set in $G$ and let $\pi_*\Lambda$ be the associated weighted model set in $K\backslash G$. Recall from \cite[Lemma~3.11]{BHP2} that $_K p$ induces a continuous $G$-factor map 
\[
\pi_*: \Omega_\Lambda \to \Omega_{\pi_*\Lambda},
\]
and that the unique $G$-invariant probability measure $\nu$ on $\Omega_{\pi_*\Lambda}$ is the push-forward under $\pi_*$ of the unique $G$-invariant probability measure $\widehat{\nu}$ on $ \Omega_\Lambda$. In particular, $\pi$ induces an embedding
\[
\pi^*: L^2(\Omega_{\pi_*\Lambda}, \nu) \hookrightarrow L^2(\Omega_{\Lambda}, \widehat{\nu}).
\]
In order to show that the spherical diffraction measure $\eta$ of $\nu$ is pure point, it suffices to show by Theorem \ref{AbstractPurePoint} that $L^2(\Omega_{\pi_*\Lambda}, \nu)$ is completely reducible. This is established in the following proposition.
\begin{proposition}[Complete reducibility for weighted uniform regular model sets]\label{ConcretePurePoint} The representation $L^2(\Omega_{\Lambda}, \widehat{\nu})$ is completely reducible with countable multiplicities, and hence the same holds for the subrepresentation $ L^2(\Omega_{\pi_*\Lambda}, \nu)$.
\end{proposition}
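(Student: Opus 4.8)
The plan is to identify the dynamical system underlying $\Omega_\Lambda$ with a compact homogeneous space and then exploit cocompactness. Write $L := G \times H$. Since $\Lambda = \Lambda(G, H, \Gamma, W)$ is a \emph{uniform} regular model set, the lattice $\Gamma < L$ is cocompact, so $L/\Gamma$ is compact; and from the theory of regular model sets developed in \cite{BHP1, BHP2} the measure-preserving $G$-system $(\Omega_\Lambda, \widehat{\nu})$ is isomorphic to $(L/\Gamma, m_{L/\Gamma})$, where $G$ acts through the first factor of $L$ and $m_{L/\Gamma}$ is the normalized Haar measure. Consequently there is a $G$-equivariant unitary isomorphism $L^2(\Omega_\Lambda, \widehat{\nu}) \cong L^2(L/\Gamma)$, and it suffices to prove that the latter is completely reducible as a $G$-representation with countable multiplicities; the statement for the subrepresentation $L^2(\Omega_{\pi_*\Lambda}, \nu)$ then follows formally (see the last paragraph).

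First I would decompose $L^2(L/\Gamma)$ as a representation of the \emph{full} group $L$. Because $\Gamma$ is cocompact, for every $F \in C_c(L)$ the operator $\pi(F)$ on $L^2(L/\Gamma)$ is an integral operator with continuous kernel on the compact space $L/\Gamma$, hence compact. By the standard Gelfand--Graev--Piatetski-Shapiro argument this forces $L^2(L/\Gamma)$ to be a Hilbert direct sum $\bigoplus_{i} \mathcal H_i$ of irreducible $L$-subrepresentations, each occurring with finite $L$-multiplicity; since $L^2(L/\Gamma)$ is separable, the index set is countable.

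The main step is to restrict this decomposition to the direct factor $G < L$. Here I would use that the groups in play are of type I: every irreducible unitary representation $\mathcal H_i$ of the product $L = G \times H$ factors as an external tensor product $\mathcal H_i \cong \pi_i \boxtimes \rho_i$, with $\pi_i$ an irreducible $G$-representation and $\rho_i$ an irreducible $H$-representation. Restricting to $G \times \{e\}$, the $H$-factor becomes a multiplicity space carrying the trivial $G$-action, so $\mathcal H_i|_G \cong \pi_i^{\oplus \dim \mathcal H_{\rho_i}}$, a direct sum of copies of the single irreducible $\pi_i$ with countable multiplicity. Summing over the countably many $i$ shows that $L^2(L/\Gamma)|_G$ is a direct sum of irreducibles, and separability again bounds the multiplicity of each $G$-isotype by $\aleph_0$. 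This is where the argument uses genuinely more than cocompactness, and I expect it to be the main obstacle: one must ensure the tensor factorization is available, which is guaranteed by type I of $G$ (or of $H$) and covers all the Gelfand pairs and internal groups considered here; alternatively one can bypass the factorization by showing directly, via a commutant computation, that $\mathcal H_i|_G$ is $G$-isotypic.

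Finally I would transfer complete reducibility to the subrepresentation. The embedding $\pi^*: L^2(\Omega_{\pi_*\Lambda}, \nu) \hookrightarrow L^2(\Omega_\Lambda, \widehat{\nu})$ realizes the former as a $G$-invariant closed subspace of the latter, so it suffices to note that any $G$-invariant closed subspace $W$ of a direct sum of irreducibles is again such a sum, with multiplicities not exceeding those of the ambient space. For this one checks that every nonzero invariant subspace contains an irreducible subspace: projecting $W$ to a summand $\mathcal H_i$ on which it has nonzero image gives a nonzero intertwiner $T: W \to \mathcal H_i$, and the partial isometry in the polar decomposition of $T$ is a $G$-equivariant isometry of a subspace of $W$ onto $\mathcal H_i$, exhibiting an irreducible subspace of $W$. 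A Zorn's lemma exhaustion by mutually orthogonal irreducible subspaces then decomposes $W$, with the multiplicity bound inherited from the ambient decomposition. Applying this to $W = \pi^* L^2(\Omega_{\pi_*\Lambda}, \nu)$ completes the proof.
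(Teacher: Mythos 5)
Your proof is correct and follows essentially the same route as the paper: identify $L^2(\Omega_\Lambda,\widehat{\nu})$ with $L^2(\Gamma\backslash(G\times H))$, use cocompactness of $\Gamma$ to decompose this under $G\times H$ into irreducibles with finite multiplicities, invoke the type~I property (which the paper derives from $(G,K)$ being a Gelfand pair) to factor each irreducible as an external tensor product $V\boxtimes W$, and restrict to $G$ to obtain countable multiplicities. The only difference is that you spell out the compact-operator argument and the passage to the invariant subspace $L^2(\Omega_{\pi_*\Lambda},\nu)$, both of which the paper treats as standard and cites or omits.
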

\begin{proof} We established in \cite{BHP1} that $L^2(\Omega_\Lambda, \widehat{\nu})$ is isomorphic to the space $L^2(Y, m_Y)$, where $Y := \Gamma \backslash (G \times H)$ and $m_Y$ denotes the unique $(G\times H)$-invariant probability measure on $Y$. Since $\Gamma$ is cocompact in $G\times H$, the $(G\times H)$-representation $L^2(Y)$ is completely reducible with finite multiplicities (see e.g. \cite[Thm. 7.2.5]{Wolf-07}). Since $(G,K)$ is a Gelfand pair, the group $G$ is of type $I$ (see e.g. \cite[Thm. 2.2]{Ciobotaru}). Consequently, every irreducible unitary representation $(G \times H)$-representation is of the form $V \boxtimes W$ where $V$ is an irreducible unitary $G$-representation, $W$ is an irreducible unitary $H$-representation and $V \boxtimes W$ is isomorphic to the completed tensor product of $V$ and $W$ with $(G\times H)$-action given by $(g,h).(v\otimes w) = gv\otimes hw$ (see e.g. \cite[Thm. 7.25]{Folland-95}). In this situation, if $(w_i)_{i \in I}$ is a Hilbert space basis of $W$ then, as $G$-representations,
\[
V \boxtimes W|_G \cong \widehat{\bigoplus_{i \in I}} V\otimes \mathbb C \cdot w_i \cong \widehat{\bigoplus_{i \in I}} V.
\]
Note that $I$ is countable, since $L^2(Y)$ and hence $W$ are separable. We deduce that, as $G$-representations, each $V \boxtimes W$ and thus also $L^2(Y)$ are completely reducible with countable multiplicities. 
\end{proof}
At this point we have established Theorem \ref{ppdiff}. The remainder of this article is devoted to a computation of the diffraction coefficients in various cases of interests.

\section{Diffraction coefficients of weighted uniform regular model sets} \label{sec:diffractionformula}

Throughout this section $\Lambda = \Lambda(G, H, \Gamma, W)$ denotes a uniform regular model set in $G$ (see  (see \cite[Def.\ 2.6]{BHP1})) constructed from a cut-and-project scheme $(G, H, \Gamma)$ (see \cite[Def.\ 2.3]{BHP1}) with window $W$. We denote by $\nu$ the unique $G$-invariant probability measure on $\Omega_{{}_Kp_*\delta_\Lambda}$ and by $\eta \in M^+(K\backslash G/K)$ its auto-correlation measure. We have seen in the previous section that the diffraction measure $\widehat{\eta} \in M^+(\mathcal S^+(G, K))$ is pure point. In this section we consider the problem of determining its coefficients in terms of the underlying lattice $\Gamma<G \times H$ and window $W\subset H$.

\subsection{The shadow transform}

We denote by $Y$ the homogeneous $(G \times H)$-space $Y := \Gamma\backslash (G\times H)$ and by $m_Y$ the unique $(G\times H)$-invariant probability measure on $Y$. We denote by ${}_\Gamma \pi_R$ the unitary $G$-representation
\[
{}_\Gamma \pi_R: G \to \mathcal U(L^2(Y, m_Y)), \quad ({}_\Gamma \pi_R(x)f)(\Gamma(g,h)) := f(\Gamma(gx, h))\]
as well as the corresponding $*$-representation ${}_\Gamma \pi_R: L^1(G) \to \mathcal B(L^2(Y, m_Y))$.  If $(V, \pi_V)$ is an irreducible spherical representation with spherical matrix coefficient $\omega = \omega_V \in \mathcal S^+(G, K)$, then we denote by \[
{\rm proj}_\omega:L^2(\Omega_\mu^\times, \nu) \to L^2(\Omega_\mu^\times, \nu)_\omega
\]
the projection onto the corresponding isotypical component and set
\[L^2(\Omega_\mu^\times, \nu)^K_\omega := L^2(\Omega_\mu^\times, \nu)^K \cap L^2(\Omega_\mu^\times, \nu)_\omega.\]
The countable subset
\[
{\rm spec}_{(G, K)}(\Gamma) := \{\omega \in \mathcal S^+(G, K) \mid L^2(\Omega_\mu^\times, \nu)_\omega \neq \{0\}\}
\]
of $\mathcal S^+(G, K)$ is called the \emph{$(G, K)$-spherical automorphic spectrum} of the lattice $\Gamma$.

\begin{remark}[Extending the periodization map to measurable functions]
For $M \in \{G, H, G \times H\}$ denote denote by $\mathcal L^\infty_c(M)$ the space of bounded measurable functions on $M$ which vanish outside a compact set. We then have a periodization map
\[
\mathcal P_\Gamma: \mathcal L^\infty_c(G \times H) \to L^2(Y, m_Y), \quad \mathcal P_\Gamma(F)(\Gamma(x,y)) = \sum_{\gamma \in \Gamma} F(\gamma(x,y))
\]
which extends the periodization map $\mathcal P_\Gamma: C_c(G \times H) \to C_0(Y)$ considered earlier. 
\end{remark}
We are going to show:
\begin{proposition}[Existence of the shadow transform]\label{ShadowExist}
For every $\omega \in {\rm spec}_{(G, K)}(\Gamma)$ and $r \in \mathcal L^\infty_c(H)$  there exists an element $\mathcal S_\Gamma(r)(\omega) \in L^2(Y, m_Y)_\omega^K$ such that for all $f \in C_c(G, K)$, 
\[
{\rm proj}_\omega(\mathcal P_\Gamma(f \otimes r)) = \widehat{f}(\check \omega) \cdot \mathcal S_\Gamma(r)(\omega).
\]
\end{proposition}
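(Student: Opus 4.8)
The plan is to show that, for fixed $r \in \mathcal L^\infty_c(H)$ and $\omega$, the assignment $f \mapsto {\rm proj}_\omega(\mathcal P_\Gamma(f \otimes r))$ on the Hecke algebra $C_c(G,K)$ is \emph{proportional} to the scalar functional $f \mapsto \widehat{f}(\check\omega)$, the vector-valued constant of proportionality being the sought element $\mathcal S_\Gamma(r)(\omega)$. This is the vector-valued refinement of the projection formula (Theorem~\ref{ProjectionFormula}), and the argument parallels Lemma~\ref{ProjectionFormulaLemma1} and Corollary~\ref{ProjectionFormulaCor1}. First I would establish the intertwining relation
\[
\mathcal P_\Gamma((f \ast f') \otimes r) = {}_\Gamma\pi_R(\check{f'})\,\mathcal P_\Gamma(f \otimes r) \qquad (f, f' \in C_c(G,K)),
\]
the analogue of Lemma~\ref{ProjectionFormulaLemma1}. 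Writing $R^{(1)}_x F(g,h) := F(gx,h)$ for right translation in the $G$-coordinate, one has the pointwise identity ${}_\Gamma\pi_R(x)\mathcal P_\Gamma(F) = \mathcal P_\Gamma(R^{(1)}_x F)$; integrating this against $\check{f'}(x)\,dm_G(x)$ over the compact set $\supp(\check{f'})$ and interchanging the periodization sum with the integral (harmless, since for fixed $(g,h)$ only finitely many lattice translates contribute) produces the displayed formula.

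Next I would exploit the hypothesis that $(G,K)$ is a Gelfand pair, so that $C_c(G,K)$ is commutative and $f \ast f' = f' \ast f$; the intertwining relation then becomes symmetric,
\[
{}_\Gamma\pi_R(\check{f'})\,\mathcal P_\Gamma(f \otimes r) = {}_\Gamma\pi_R(\check f)\,\mathcal P_\Gamma(f' \otimes r).
\]
Since $f, f'$ are bi-$K$-invariant, the functions $\mathcal P_\Gamma(f \otimes r)$ and $\mathcal P_\Gamma(f' \otimes r)$ are $K$-invariant, and applying the $G$-equivariant orthogonal projection ${\rm proj}_\omega$ onto the closed invariant subspace $L^2(Y,m_Y)_\omega$ keeps their images in $L^2(Y,m_Y)_\omega^K$. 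On that space Lemma~\ref{ActionByFourierCoefficient} applies; combined with the elementary identity $\widehat{\check f}(\omega) = \widehat{f}(\check\omega)$ (a substitution $x \mapsto x^{-1}$ using unimodularity), it converts ${}_\Gamma\pi_R(\check f)$ and ${}_\Gamma\pi_R(\check{f'})$ into the scalars $\widehat{f}(\check\omega)$ and $\widehat{f'}(\check\omega)$. This yields the key symmetric identity
\[
\widehat{f'}(\check\omega)\cdot{\rm proj}_\omega(\mathcal P_\Gamma(f \otimes r)) = \widehat{f}(\check\omega)\cdot{\rm proj}_\omega(\mathcal P_\Gamma(f' \otimes r)) \qquad (f, f' \in C_c(G,K)).
\]

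To conclude, I would fix once and for all an $f_0 \in C_c(G,K)$ with $\widehat{f_0}(\check\omega) \neq 0$ and define
\[
\mathcal S_\Gamma(r)(\omega) := \widehat{f_0}(\check\omega)^{-1}\cdot{\rm proj}_\omega(\mathcal P_\Gamma(f_0 \otimes r)) \in L^2(Y,m_Y)_\omega^K.
\]
The desired formula is then precisely the key identity specialized to $f' = f_0$, and independence of the choice of $f_0$ follows from the same identity. Such an $f_0$ exists because $\check\omega$ is a nonzero bi-$K$-invariant continuous function, so the functional $f \mapsto \langle f, \check\omega\rangle = \widehat{f}(\check\omega)$ cannot vanish identically on $C_c(G,K)$; note also that $L^2(Y,m_Y)_\omega^K \neq \{0\}$ exactly when $\omega \in {\rm spec}_{(G,K)}(\Gamma)$, so the produced element is genuinely nontrivial on the spectrum.

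I expect the main obstacle to be producing an $f$-\emph{independent} element $\mathcal S_\Gamma(r)(\omega)$. One cannot simply set it equal to $\lim_n {\rm proj}_\omega(\mathcal P_\Gamma(\rho_n \otimes r))$ for a convenient approximate identity $(\rho_n)$, because $\mathcal P_\Gamma(\rho_n \otimes r)$ is typically unbounded in $L^2(Y,m_Y)$ (as $\|\rho_n\|_\infty \to \infty$). It is exactly the commutativity of the Hecke algebra — unavailable in the general non-Gelfand setting — that symmetrizes the intertwining relation, so that dividing by the fixed nonzero scalar $\widehat{f_0}(\check\omega)$ isolates the common proportionality factor without any limiting procedure.
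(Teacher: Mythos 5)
Your proof is correct, but it takes a genuinely different route to the key point of $f$-independence. The paper proves the same intertwining relation (its Lemma~\ref{ShadowLemma1}, identical to yours up to renaming), but then, rather than symmetrizing, it expands ${\rm proj}_\omega(\mathcal P_\Gamma(f\otimes r))$ in an orthonormal basis $(u_\alpha)$ of $L^2(Y,m_Y)^K_\omega$ and passes to the limit along the convenient approximate identity: since $\mathcal P_\Gamma((\rho_n\ast f)\otimes r)\to\mathcal P_\Gamma(f\otimes r)$ uniformly (hence in $L^2$) and $\langle\mathcal P_\Gamma((\rho_n\ast f)\otimes r),u_\alpha\rangle=\widehat{f}(\check\omega)\,\langle\mathcal P_\Gamma(\rho_n\otimes r),u_\alpha\rangle$ (its Corollary~\ref{ShadowCor1}, obtained by moving $\check f$ to the other side of the inner product and invoking Lemma~\ref{ActionByFourierCoefficient}), the factor $\sum_\alpha\lim_n\langle\mathcal P_\Gamma(\rho_n\otimes r),u_\alpha\rangle u_\alpha$ is independent of $f$ and serves as $\mathcal S_\Gamma(r)(\omega)$. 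Your version replaces this coefficient-wise limiting procedure with the symmetric identity $\widehat{f'}(\check\omega)\,{\rm proj}_\omega(\mathcal P_\Gamma(f\otimes r))=\widehat{f}(\check\omega)\,{\rm proj}_\omega(\mathcal P_\Gamma(f'\otimes r))$, obtained from commutativity of the Hecke algebra, followed by division by a fixed nonzero $\widehat{f_0}(\check\omega)$. This buys two things: it dispenses with the basis and the limit altogether, and it makes immediately transparent that $\mathcal S_\Gamma(r)(\omega)$ is a well-defined element of $L^2(Y,m_Y)^K_\omega$ (the paper's formula $\sum_\alpha\lim_n\langle\cdot,u_\alpha\rangle u_\alpha$ leaves square-summability of the limiting coefficients slightly implicit, and is in effect justified by exactly your observation). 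The only caveat is that your argument leans explicitly on commutativity of $C_c(G,K)$ and on ${\rm proj}_\omega$ commuting with ${}_\Gamma\pi_R(\check{f'})$ — both available here, the latter because the isotypic component and its orthogonal complement are $G$-invariant — whereas the paper's limit argument would survive in settings where one only has the one-sided relation of Corollary~\ref{ShadowCor1}. Both proofs ultimately rest on the same two pillars: the intertwining relation and Lemma~\ref{ActionByFourierCoefficient}.
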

Collecting the constants $\mathcal S_\Gamma(r)(\omega)$ from Proposition \ref{ShadowExist} we can define a linear map
\[
\mathcal S_\Gamma: \mathcal L^\infty_c(H) \to \prod_{\omega \in {\rm spec}_{(G, K)}(\Gamma)}L^2(Y, m_Y)_\omega^K, \quad \mathcal S_\Gamma(r) := (\mathcal S_\Gamma(r)(\omega))_{\omega \in {\rm spec}_{(G, K)}(\Gamma)}.
\]
\begin{definition} The map $\mathcal S_\Gamma$ is called the \emph{shadow transform} of the lattice $\Gamma$.
\end{definition}
\begin{corollary}[$L^2$-norm of a periodization]\label{ShadowL2}  For every $f \in C_c(G, K)$ and $r \in \mathcal L^\infty_c(H)$ we have
\[\pushQED{\qed}
\|\mathcal P_\Gamma(f \otimes r)\|_2^2 = \sum_{\omega\in {\rm spec}_{(G, K)}(\Gamma)} \|{\rm proj}_\omega(\mathcal P_\Gamma(f \otimes r))\|_2^2 =  \sum_{\omega\in {\rm spec}_{(G, K)}(\Gamma)} |\widehat{f}(\check \omega)|^2 \|\mathcal S_\Gamma(r)(\omega)\|^2.\qedhere \popQED
\]
\end{corollary}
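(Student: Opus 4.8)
The substance of the final statement is the existence of the shadow transform, Proposition~\ref{ShadowExist}; granting it, Corollary~\ref{ShadowL2} is immediate. Indeed, $\mathcal P_\Gamma(f\otimes r)$ is $K$-invariant (because $f$ is right-$K$-invariant), so by complete reducibility of $L^2(Y,m_Y)$ (Proposition~\ref{ConcretePurePoint}) it decomposes orthogonally over the spherical isotypical components indexed by ${\rm spec}_{(G,K)}(\Gamma)$; Parseval then yields the first equality of the corollary, and substituting Proposition~\ref{ShadowExist} gives the second. So the plan is to prove Proposition~\ref{ShadowExist}, following the proof of the Projection Formula (Theorem~\ref{ProjectionFormula}) almost verbatim, with $\mathcal P_\mu$ replaced by $\mathcal P_\Gamma$ and the Koopman representation $\pi_\nu$ replaced by the right-regular representation ${}_\Gamma\pi_R$ on $L^2(Y,m_Y)$.

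First I would record the intertwining property of $\mathcal P_\Gamma$ that replaces Lemma~\ref{ProjectionFormulaLemma1}. A direct computation with the defining sum $\mathcal P_\Gamma(F)(\Gamma(g,h))=\sum_{\gamma\in\Gamma}F(\gamma(g,h))$, convolving only in the $G$-variable and using unimodularity, shows that for $\rho,\phi\in C_c(G,K)$ one has ${}_\Gamma\pi_R(\phi)\mathcal P_\Gamma(\rho\otimes r)=\mathcal P_\Gamma((\rho\ast\check\phi)\otimes r)$. Taking $\phi=\check f$ gives
\[
\mathcal P_\Gamma((\rho\ast f)\otimes r) = {}_\Gamma\pi_R(\check f)\,\mathcal P_\Gamma(\rho\otimes r).
\]
From here the analogue of Corollary~\ref{ProjectionFormulaCor1} follows for any $u\in L^2(Y,m_Y)^K_\omega$: since ${}_\Gamma\pi_R$ is a $\ast$-representation and $(\check f)^{*}=\overline{f}$, while Lemma~\ref{ActionByFourierCoefficient} gives ${}_\Gamma\pi_R(\overline f)u=\widehat{\overline f}(\omega)u$ and \eqref{FTfbar} rewrites $\widehat{\overline f}(\omega)=\overline{\widehat f(\check\omega)}$, one obtains
\[
\langle \mathcal P_\Gamma((\rho\ast f)\otimes r),u\rangle = \widehat f(\check\omega)\,\langle \mathcal P_\Gamma(\rho\otimes r),u\rangle .
\]

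Next I would remove the auxiliary function $\rho$ by inserting the convenient approximate identity $(\rho_n)$ of Remark~\ref{ConvenientApproximateIdentity}. Because $\rho_n\ast f\to f$ uniformly with supports in a fixed compact set, and $\mathcal P_\Gamma$ carries such sequences to $L^2(Y,m_Y)$-convergent ones, letting $\rho=\rho_n$ and passing to the limit gives, for every $f\in C_c(G,K)$,
\[
\langle \mathcal P_\Gamma(f\otimes r),u\rangle = \widehat f(\check\omega)\cdot\ell(u),\qquad \ell(u):=\lim_{n\to\infty}\langle \mathcal P_\Gamma(\rho_n\otimes r),u\rangle .
\]
The limit exists and $\ell$ is a bounded conjugate-linear functional on $L^2(Y,m_Y)^K_\omega$: fixing one $f_0$ with $\widehat{f_0}(\check\omega)\neq 0$ (possible since $\widehat{\rho_n}(\check\omega)=(\rho_n\ast\check\omega)(e)\to\check\omega(e)=1$) expresses $\ell(u)=\widehat{f_0}(\check\omega)^{-1}\langle \mathcal P_\Gamma(f_0\otimes r),u\rangle$. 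By the Riesz representation theorem there is a unique $\mathcal S_\Gamma(r)(\omega)\in L^2(Y,m_Y)^K_\omega$ with $\ell(u)=\langle \mathcal S_\Gamma(r)(\omega),u\rangle$ for all such $u$. Since ${\rm proj}_\omega(\mathcal P_\Gamma(f\otimes r))$ and $\widehat f(\check\omega)\,\mathcal S_\Gamma(r)(\omega)$ both lie in $L^2(Y,m_Y)^K_\omega$ (using that ${\rm proj}_\omega$ is $G$-equivariant, hence preserves $K$-invariants) and agree in inner product against every $u$ in that space, they are equal, which is the assertion of Proposition~\ref{ShadowExist}.

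The one genuinely analytic point — and the main obstacle — is the passage to the limit: I must verify that $\mathcal P_\Gamma$ is continuous from the space of uniformly bounded functions with a fixed compact support (under the sup-norm) into $L^2(Y,m_Y)$, so that $\rho_n\ast f\to f$ indeed transfers to $L^2$-convergence of the periodizations, and correspondingly that $\ell$ is well defined and bounded. Both rest on the fact that, for a fixed compact set, only boundedly many lattice translates meet it, so that $\|\mathcal P_\Gamma(F)\|_2\le \|\mathcal P_\Gamma(F)\|_\infty$ is controlled by a constant multiple of $\|F\|_\infty$ (here $Y$ is compact and $m_Y$ a probability measure). Everything else — the intertwining identity, the $\ast$-representation manipulation, and the equivariance of ${\rm proj}_\omega$ — is formal and parallels the already-established Projection Formula.
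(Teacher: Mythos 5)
Your proposal is correct and follows essentially the same route as the paper: the corollary is indeed immediate from Proposition~\ref{ShadowExist} together with the orthogonal decomposition of $L^2(Y,m_Y)^K$ over ${\rm spec}_{(G,K)}(\Gamma)$ (Proposition~\ref{ConcretePurePoint}), and your proof of Proposition~\ref{ShadowExist} reproduces the paper's chain of arguments --- the intertwining identity (Lemma~\ref{ShadowLemma1}), the eigenvalue computation via Lemma~\ref{ActionByFourierCoefficient} and \eqref{FTfbar} (Corollary~\ref{ShadowCor1}), and the passage to the limit along the convenient approximate identity. The only (cosmetic) difference is that you package the final step via the Riesz representation theorem for the bounded conjugate-linear functional $\ell$, where the paper expands ${\rm proj}_\omega(\mathcal P_\Gamma(f\otimes r))$ in an orthonormal basis of $L^2(Y,m_Y)^K_\omega$ and passes to the limit termwise; the two are interchangeable.
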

\begin{corollary}[Kernel of the shadow transform] A function $r \in \mathcal L^\infty_c(H)$ is contained in the kernel of the shadow transform if and only if $\mathcal P_\Gamma(f \otimes r) = 0$ almost everywhere for all $f \in C_c(G, K)$.\qed
\end{corollary}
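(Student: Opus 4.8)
The plan is to read off both implications directly from the norm identity of Corollary~\ref{ShadowL2}, which for $f \in C_c(G,K)$ and the fixed $r \in \mathcal L^\infty_c(H)$ reads
\[
\|\mathcal P_\Gamma(f \otimes r)\|_2^2 = \sum_{\omega \in \mathrm{spec}_{(G,K)}(\Gamma)} |\widehat{f}(\check\omega)|^2 \, \|\mathcal S_\Gamma(r)(\omega)\|^2.
\]
By definition $r$ lies in the kernel of the shadow transform precisely when $\mathcal S_\Gamma(r)(\omega) = 0$ for every $\omega \in \mathrm{spec}_{(G,K)}(\Gamma)$, and $\mathcal P_\Gamma(f\otimes r) \in L^2(Y,m_Y)$ vanishes almost everywhere iff $\|\mathcal P_\Gamma(f\otimes r)\|_2 = 0$. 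Thus I would turn the corollary into a purely numerical statement about the nonnegative terms appearing in the displayed sum.

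For the forward implication the argument is immediate: if $r \in \ker \mathcal S_\Gamma$, then $\|\mathcal S_\Gamma(r)(\omega)\| = 0$ for every index $\omega$, so every summand on the right-hand side vanishes and hence $\|\mathcal P_\Gamma(f\otimes r)\|_2^2 = 0$, i.e.\ $\mathcal P_\Gamma(f\otimes r) = 0$ almost everywhere; since $f \in C_c(G,K)$ was arbitrary, this direction uses nothing beyond Corollary~\ref{ShadowL2}.

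For the converse I would assume $\mathcal P_\Gamma(f\otimes r) = 0$ almost everywhere for all $f \in C_c(G,K)$. Then the entire sum vanishes for each such $f$, and because every term is nonnegative we obtain $|\widehat{f}(\check\omega)|^2 \, \|\mathcal S_\Gamma(r)(\omega)\|^2 = 0$ for all $\omega$ and all $f$. To conclude $\|\mathcal S_\Gamma(r)(\omega)\| = 0$ for a fixed $\omega \in \mathrm{spec}_{(G,K)}(\Gamma)$, it then suffices to produce a single $f \in C_c(G,K)$ with $\widehat{f}(\check\omega) \neq 0$; letting $\omega$ range over the spectrum yields $\mathcal S_\Gamma(r) = 0$, that is, $r \in \ker \mathcal S_\Gamma$.

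The only point that genuinely requires an argument—and hence the main, if modest, obstacle—is this non-degeneracy of the spherical Fourier transform at a given spherical function. I would first note that $\check\omega$ again lies in $\mathcal S^+(G,K)$: since $\omega^* = \omega$ by Lemma~\ref{PosDefFct} we have $\check\omega = \overline{\omega}$, and the conjugate of a positive-definite spherical function is again positive-definite and spherical (equivalently, $\check\omega$ satisfies the functional equation (S2) with $\check\omega(e) = 1$). Then, feeding the convenient approximate identity $(\rho_n) \subset C_c(G,K)$ of Remark~\ref{ConvenientApproximateIdentity} into formula \eqref{SFT}, I get
\[
\widehat{\rho_n}(\check\omega) = (\rho_n \ast \check\omega)(e) \longrightarrow \check\omega(e) = 1 \neq 0,
\]
using that $\rho_n \ast \check\omega \to \check\omega$ uniformly on compacta (the function $\check\omega$ being continuous and bi-$K$-invariant, so that its $\sharp$-average is itself). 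Hence $\widehat{\rho_n}(\check\omega) \neq 0$ for all large $n$, which supplies the required test function and finishes the converse.
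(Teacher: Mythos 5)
Your proof is correct and follows exactly the route the paper intends (the corollary is stated with no written proof, as an immediate consequence of Corollary~\ref{ShadowL2}): both implications reduce to the vanishing of the nonnegative summands $|\widehat{f}(\check\omega)|^2\,\|\mathcal S_\Gamma(r)(\omega)\|^2$, and the only real content is producing, for each $\omega$, some $f\in C_c(G,K)$ with $\widehat f(\check\omega)\neq 0$. Your approximate-identity argument for this non-degeneracy is valid (and one could equally cite Lemma~\ref{Godement2}), so nothing is missing.
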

The proof of Proposition \ref{ShadowExist} is in close analogy with the proof of the projection formula. Lemma \ref{ProjectionFormulaLemma1} and Corollary \ref{ProjectionFormulaCor1} translate into the current setting as follows:
\begin{lemma}\label{ShadowLemma1} For $\rho,f \in C_c(G, K)$ and $r \in \mathcal L^\infty_c(H)$ we have
\[
\mathcal P_\Gamma((\rho \ast f) \otimes r) = {}_\Gamma \pi_R(\check f) (\mathcal P_\Gamma(\rho \otimes r)).
\]
\end{lemma}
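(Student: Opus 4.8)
The plan is to establish this identity by exactly the same mechanism as in Lemma~\ref{ProjectionFormulaLemma1}, the only new feature being the passive $H$-variable carried by $r$, which plays no role in the computation. The starting point is the observation that the periodization map $\mathcal P_\Gamma$ intertwines ${}_\Gamma\pi_R$ with the right-regular action of $G$ on the first factor of $G \times H$. Concretely, writing $(R_x F)(g,h) := F(gx, h)$ for $x \in G$ and $F \in \mathcal L^\infty_c(G \times H)$, one reads off directly from the two defining formulas that
\[
{}_\Gamma\pi_R(x)\bigl(\mathcal P_\Gamma(F)\bigr) = \mathcal P_\Gamma(R_x F) \qquad (x \in G),
\]
since both sides evaluate at $\Gamma(g,h)$ to $\sum_{\gamma \in \Gamma} F(\gamma(gx, h))$.

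Integrating this intertwining relation against $\check f$ over $G$ then yields
\[
{}_\Gamma\pi_R(\check f)\bigl(\mathcal P_\Gamma(\rho \otimes r)\bigr) = \int_G \check f(x)\, \mathcal P_\Gamma\bigl(R_x(\rho \otimes r)\bigr)\, dm_G(x).
\]
I would next pull the integral inside the periodization and use that $R_x$ acts only on the $G$-factor, so that $R_x(\rho \otimes r) = (R_x\rho) \otimes r$ and
\[
\int_G \check f(x)\, (R_x \rho)\, dm_G(x) = \pi_R(\check f)\rho = \rho \ast f,
\]
the last equality being the convolution identity \eqref{Convolution}. Combining the two displays gives $\mathcal P_\Gamma((\rho \ast f)\otimes r)$, which is the assertion. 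For a purely pointwise verification one may instead unfold everything at a fixed coset $\Gamma(g,h)$: writing $\gamma = (\gamma_1, \gamma_2) \in \Gamma < G \times H$, the left-hand side equals $\sum_{\gamma \in \Gamma} r(\gamma_2 h)\int_G \check f(x)\rho(\gamma_1 g x)\, dm_G(x)$, and the inner integral is $(\rho \ast f)(\gamma_1 g)$ by the substitution $x \mapsto x^{-1}$ together with unimodularity of $G$.

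The computation is entirely routine, so there is no genuine conceptual obstacle; the only point requiring a word of justification is the interchange of the Haar integral $\int_G \, dm_G$ with the periodization sum $\sum_{\gamma \in \Gamma}$ (equivalently, pulling the Bochner integral through $\mathcal P_\Gamma$). This is harmless here: for a fixed coset $\Gamma(g,h)$ with $\rho, f \in C_c(G, K)$ and $r \in \mathcal L^\infty_c(H)$, the discreteness of $\Gamma$ together with the compact supports of $\rho$, $\check f$ and $r$ forces only finitely many $\gamma$ to contribute, and the resulting integrand $(x, \gamma) \mapsto \check f(x)\rho(\gamma_1 g x)r(\gamma_2 h)$ is bounded with compact support, so Fubini applies; alternatively one invokes the continuity of $\mathcal P_\Gamma$. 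Finally I would record that $(\rho \ast f)\otimes r \in \mathcal L^\infty_c(G \times H)$, since $\rho \ast f \in C_c(G, K)$, so that the right-hand side is indeed a legitimate input for $\mathcal P_\Gamma$.
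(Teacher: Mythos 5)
Your argument is correct and follows essentially the same route as the paper: both unfold $\mathcal P_\Gamma((\rho\ast f)\otimes r)$ pointwise at a coset $\Gamma(g,h)$, use the identity $\rho\ast f=\pi_R(\check f)(\rho)$ from \eqref{Convolution}, and exchange the Haar integral with the periodization sum, justified by the fact that only finitely many $\gamma\in\Gamma$ contribute. Your additional packaging via the intertwining relation ${}_\Gamma\pi_R(x)\circ\mathcal P_\Gamma=\mathcal P_\Gamma\circ R_x$ is a harmless reformulation of the same computation.
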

\begin{proof} By \eqref{Convolution} we have $\rho \ast f = \pi_R(\check f)(\rho)$, and hence for all $g \in G$ and $h \in H$ we have
\begin{eqnarray*}
\mathcal P_\Gamma((\rho \ast f) \otimes r)(\Gamma(g,h)) &=& \sum_{(\gamma_1, \gamma_2) \in \Gamma} \pi_R(\check f)(\rho)(\gamma_1g) r(\gamma_2 h)\\
%&=&  \sum_{(\gamma_1, \gamma_2) \in \Gamma} \int_G \check f(x) \pi_R(x)\rho(\gamma_1g)\,dm_G(x)\, r(\gamma_2 h)\\
&=&\int_G \check f(x) \sum_{(\gamma_1, \gamma_2) \in \Gamma} \rho(\gamma_1gx) r(\gamma_2h)\, dm_G(x)\\
&=& \int_G \check f(x) \mathcal P_\Gamma(\rho \otimes r)(\Gamma(gx, h))\, dm_G(x)\\
&=& {}_\Gamma \pi_R(\check f) (\mathcal P_\Gamma(\rho \otimes r))(\Gamma(g,h)),
\end{eqnarray*}
where we can exchange sum and integral since the sum is actually finite.
\end{proof}

\begin{corollary}\label{ShadowCor1} For $\rho, f \in C_c(G, K)$, $r \in \mathcal L^\infty_c(H)$ and $u \in L^2(\Omega_\mu^\times, \nu)^K_\omega$ we have
\[
\langle \mathcal P_\Gamma((\rho \ast f) \otimes r), u \rangle = \widehat{f}(\check \omega) \cdot  \langle \mathcal P_\Gamma(\rho \otimes r), u \rangle
\]
\end{corollary}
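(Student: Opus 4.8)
The plan is to transcribe the proof of Corollary~\ref{ProjectionFormulaCor1} almost verbatim into the present setting, replacing the periodization map $\mathcal P_\mu$ and the Koopman representation $\pi_\nu$ on $L^2(\Omega_\mu^\times, \nu)$ by the map $\mathcal P_\Gamma$ and the representation ${}_\Gamma \pi_R$ on $L^2(Y, m_Y)$. Concretely, I would first invoke Lemma~\ref{ShadowLemma1} to rewrite
\[
\langle \mathcal P_\Gamma((\rho \ast f) \otimes r), u \rangle = \langle {}_\Gamma \pi_R(\check f)(\mathcal P_\Gamma(\rho \otimes r)), u \rangle,
\]
thereby shifting the factor $f$ off the periodization and onto the representation, exactly as in the projection-formula case.

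Next, since ${}_\Gamma \pi_R: L^1(G) \to \mathcal B(L^2(Y, m_Y))$ is a $*$-representation, I would move $\check f$ to the second argument of the inner product via ${}_\Gamma\pi_R(\check f)^* = {}_\Gamma\pi_R((\check f)^*)$, together with the elementary identity $(\check f)^* = \overline f$ (indeed $(\check f)^*(g) = \overline{\check f(g^{-1})} = \overline{f(g)}$). This gives
\[
\langle {}_\Gamma \pi_R(\check f)(\mathcal P_\Gamma(\rho \otimes r)), u \rangle = \langle \mathcal P_\Gamma(\rho \otimes r), {}_\Gamma\pi_R(\overline f)\, u \rangle.
\]
Regarding $u$ as a $K$-invariant vector in the $\omega$-isotypical component of the spherical representation $(L^2(Y, m_Y), {}_\Gamma\pi_R)$ (via the identification of Proposition~\ref{ConcretePurePoint}), Lemma~\ref{ActionByFourierCoefficient} applies and yields ${}_\Gamma\pi_R(\overline f)\, u = \widehat{\overline f}(\omega)\cdot u$, while \eqref{FTfbar} identifies $\widehat{\overline f}(\omega) = \overline{\widehat f(\check\omega)}$. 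Pulling this scalar out of the conjugate-linear second slot of the inner product produces exactly the factor $\widehat f(\check\omega)$ and finishes the computation.

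I do not expect any genuine obstacle here: the statement is the formal analogue of Corollary~\ref{ProjectionFormulaCor1}, and the argument is a short chain of adjunctions followed by the Hecke-algebra action of Lemma~\ref{ActionByFourierCoefficient}. The only points deserving care are bookkeeping in nature: ensuring that $u$ is a $K$-invariant vector lying in a single isotypical component (so that Lemma~\ref{ActionByFourierCoefficient} applies with a genuine scalar eigenvalue rather than merely an endomorphism), and keeping the conjugations straight---the identity $(\check f)^* = \overline f$ and the relation \eqref{FTfbar} together account for the appearance of $\widehat f(\check\omega)$ as opposed to $\widehat f(\omega)$ or its complex conjugate.
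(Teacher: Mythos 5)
Your proposal is correct and follows exactly the paper's own argument: Lemma~\ref{ShadowLemma1}, the adjunction $\langle {}_\Gamma\pi_R(\check f)v, u\rangle = \langle v, {}_\Gamma\pi_R(\overline f)u\rangle$, then Lemma~\ref{ActionByFourierCoefficient} together with \eqref{FTfbar} and the anti-linearity of the second slot. Your side remarks (that $u$ must be $K$-invariant and lie in a single isotypical component, and that one is really working in $L^2(Y,m_Y)$) are exactly the right points of care.
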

\begin{proof}Since $\pi_\nu$ is a $*$-representation, Lemma \ref{ShadowLemma1} yields
\[
\langle \mathcal P_\Gamma((\rho \ast f) \otimes r), u \rangle  = \langle{}_\Gamma \pi_R(\check f) (\mathcal P_\Gamma(\rho \otimes r)) ,u\rangle = \langle \mathcal P_\Gamma(\rho \otimes r) ,{}_\Gamma \pi_R(\check f)^*(u)\rangle
=\langle\mathcal P_\Gamma(\rho \otimes r) ,  {}_\Gamma \pi_R(\overline{f})(u)\rangle.
\]
By Lemma \ref{ActionByFourierCoefficient} and \eqref{FTfbar} we have
\[
{}_\Gamma \pi_R(\overline{f})u = \widehat{\overline{f}}(\omega)\cdot u = \overline{\widehat{f}(\check \omega)} \cdot u.
\]
The corollary follows.
\end{proof}
\begin{proof}[Proof of Proposition \ref{ShadowExist}] Let $(u_\alpha)_{\alpha \in I_\omega}$ be an orthonormal basis of $L^2(\Omega_\mu^\times, \nu)^K_\omega$. Since $\mathcal P_\Gamma(f \otimes r)$ is $K$-invariant, so is its projection onto $L^2(\Omega_\mu^\times, \nu)_\omega$, and thus 
\[
{\rm proj}_\omega(\mathcal P_\Gamma(f \otimes r)) =  \sum_{\alpha \in I_\omega} \langle \mathcal P_\Gamma (f \otimes r), u_\alpha\rangle u_\alpha.
\]
Now recall that our convenient approximate identify $(\rho_n)$ has been chosen so that $\rho_n \ast f \to f$ converges uniformly. This in turn implies that $\mathcal P_\Gamma((\rho_n \ast f)\otimes r) \to \mathcal P_\Gamma(f \otimes r)$ uniformly, and hence in $L^2$. We deduce with Corollary \ref{ShadowCor1} that
\begin{eqnarray*}
{\rm proj}_\omega(\mathcal P_\Gamma(f \otimes r)) &=& \sum_{\alpha \in I_\omega} \lim_{n \to \infty} \langle \mathcal P_\Gamma ((\rho_n \ast f) \otimes r), u_\alpha\rangle u_\alpha\\
&=&  \sum_{\alpha \in I_\omega} \lim_{n \to \infty} \langle \widehat{f}(\check \omega) \cdot  \langle \mathcal P_\Gamma(\rho_n \otimes r), u_{\alpha} \rangle u_\alpha\\
&=& \widehat{f}(\check \omega) \cdot \sum_{\alpha \in I_\omega} \lim_{n \to \infty} \langle \langle \mathcal P_\Gamma(\rho_n \otimes r), u_{\alpha} \rangle u_\alpha.
\end{eqnarray*}
Since the second factor is independent of $f$, the proposition follows.
\end{proof}
\subsection{The diffraction formula}
Recall from Theorem \ref{AbstractPurePoint} that the diffraction measure $\widehat{\eta}$ of the unique $G$-invariant measure $\nu$ on $\Omega_{\pi_*\Lambda}$ is of the form
\[
\widehat{\eta} = \sum_{\omega \in \mathcal S^+(G, K)} c(\omega)\cdot \delta_{\check \omega}.
\]
We can now express the diffraction coefficients $c(\omega)$ in terms of the Shadow transform of the underlying lattice $\Gamma$ and the characteristic function ${\bf 1}_W$ of the underlying window:
\begin{theorem}[Diffraction formula for weighted model sets]\label{DiffAbstract} The diffaction measure is given by the formula
\[
\widehat{\eta} = \sum_{\omega \in {\rm spec}_{(G, K)}(\Gamma)} \|\mathcal S_\Gamma({\bf 1}_W)(\omega)\|^2\cdot \delta_{\check \omega}.
\]
Thus $c(\omega) = \|\mathcal S_\Gamma({\bf 1}_W)(\omega)\|^2$ if $\omega \in  {\rm spec}_{(G, K)}(\Gamma)$ and $c(\omega) = 0$ otherwise.
\end{theorem}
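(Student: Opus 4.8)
The plan is to combine three ingredients that are now in place: the abstract pure-point formula of Theorem~\ref{AbstractPurePoint}, the projection formula of Theorem~\ref{ProjectionFormula} which characterises the diffraction coefficients $c_\nu(\omega)$, and the existence of the shadow transform from Proposition~\ref{ShadowExist}. The one missing link is a dictionary between the periodization $\mathcal P_\mu({}_Kf_K)$ on the hull $\Omega^\times_{\pi_*\Lambda}$ and the periodization $\mathcal P_\Gamma(f \otimes {\bf 1}_W)$ on $Y = \Gamma\backslash(G\times H)$. First I would recall from Proposition~\ref{ConcretePurePoint} that $L^2(\Omega_{\pi_*\Lambda}, \nu)$ is completely reducible, so that Theorem~\ref{AbstractPurePoint} applies and yields $\widehat\eta = \sum_\omega c_\nu(\omega)\,\delta_{\check\omega}$, with the coefficients pinned down by Theorem~\ref{ProjectionFormula}: for every $f \in C_c(G,K)$,
\[
\|{\rm proj}_\omega(\mathcal P_\mu({}_Kf_K))\|^2_{L^2(\Omega_{\pi_*\Lambda}, \nu)} = c_\nu(\omega)\cdot|\widehat f(\check\omega)|^2.
\]

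Next I would set up the identification. Using the factor map $\pi_*: \Omega_\Lambda \to \Omega_{\pi_*\Lambda}$ and the induced embedding $\pi^*: L^2(\Omega_{\pi_*\Lambda}, \nu)\hookrightarrow L^2(\Omega_\Lambda, \widehat\nu)$, a direct computation shows that for $f \in C_c(G,K) \subset C_c(G)$ and $\Lambda' \in \Omega_\Lambda$ with image $\mu' = \pi_*(\Lambda')$ one has $\mathcal P_\mu({}_Kf_K)(\mu') = \sum_{x\in\Lambda'}f(x)$, since ${}_Kf$ pulls back to $f$ under ${}_Kp$; that is, $\pi^*\mathcal P_\mu({}_Kf_K)$ is the usual $G$-periodization of $f$ on $\Omega_\Lambda$. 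Composing with the $(G\times H)$-equivariant isomorphism $L^2(\Omega_\Lambda, \widehat\nu)\cong L^2(Y, m_Y)$ from \cite{BHP1}, under which the $G$-periodization of $f$ corresponds to $\mathcal P_\Gamma(f\otimes{\bf 1}_W)$, I obtain a $G$-equivariant isometric embedding $\iota: L^2(\Omega_{\pi_*\Lambda}, \nu)\hookrightarrow L^2(Y, m_Y)$ satisfying $\iota(\mathcal P_\mu({}_Kf_K)) = \mathcal P_\Gamma(f\otimes{\bf 1}_W)$. Because $\iota$ is $G$-equivariant and isometric, its image is a closed $G$-invariant subspace whose $\omega$-isotypical component is the intersection with the ambient one, so $\iota$ commutes with the orthogonal projections onto $\omega$-isotypical components, giving
\[
\|{\rm proj}_\omega(\mathcal P_\mu({}_Kf_K))\|^2 = \|{\rm proj}_\omega(\mathcal P_\Gamma(f\otimes{\bf 1}_W))\|^2.
\]

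Finally I would evaluate the right-hand side via the shadow transform. For $\omega\in{\rm spec}_{(G,K)}(\Gamma)$, Proposition~\ref{ShadowExist} with $r = {\bf 1}_W$ gives ${\rm proj}_\omega(\mathcal P_\Gamma(f\otimes{\bf 1}_W)) = \widehat f(\check\omega)\cdot\mathcal S_\Gamma({\bf 1}_W)(\omega)$, whence its squared norm equals $|\widehat f(\check\omega)|^2\,\|\mathcal S_\Gamma({\bf 1}_W)(\omega)\|^2$; for $\omega\notin{\rm spec}_{(G,K)}(\Gamma)$ the $\omega$-isotypical component of $L^2(Y, m_Y)$ is trivial, so the projection vanishes identically. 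Comparing with the projection formula and choosing, for each fixed $\omega$, a function $f\in C_c(G,K)$ with $\widehat f(\check\omega)\neq 0$ (possible since $\check\omega$ is a nonzero spherical function and the spherical transform separates it from $0$), I conclude that $c_\nu(\omega) = \|\mathcal S_\Gamma({\bf 1}_W)(\omega)\|^2$ for $\omega$ in the spectrum and $c_\nu(\omega)=0$ otherwise, which is exactly the asserted formula. The main obstacle is the careful verification in the middle step that $\iota$ intertwines the two periodization maps and respects the $G$-action (with respect to $\pi_\nu$ on one side and ${}_\Gamma\pi_R$ on the other) rather than some twist of it, since the entire isotypical bookkeeping—and hence the identification of the norms—rests on this equivariance; once this dictionary is established, the remainder is purely formal.
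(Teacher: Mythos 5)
Your proposal is correct, but it routes the argument differently from the paper. The paper's proof is essentially two lines: it quotes the identity $\widehat{\eta}(|\widehat f|^2)=\eta({}_Kf_K\ast({}_Kf_K)^*)=\|\mathcal P_\Gamma(f\otimes{\bf 1}_W)\|^2_{L^2(Y)}$ from \cite[Cor.~4.18]{BHP2}, plugs in Corollary~\ref{ShadowL2} to get $\widehat{\eta}(|\widehat f|^2)=\sum_\omega|\widehat f(\check\omega)|^2\|\mathcal S_\Gamma({\bf 1}_W)(\omega)\|^2$, and concludes by density of the span of $\{|\widehat f|^2\}$ in $C_c(\mathcal S^+(G,K))$ --- no mention of $c_\nu(\omega)$, of Theorem~\ref{AbstractPurePoint}, or of any map between the two Hilbert spaces. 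You instead first invoke Theorems~\ref{AbstractPurePoint} and~\ref{ProjectionFormula} to write $\widehat\eta=\sum c_\nu(\omega)\delta_{\check\omega}$, and then match $c_\nu(\omega)$ with $\|\mathcal S_\Gamma({\bf 1}_W)(\omega)\|^2$ by constructing a $G$-equivariant isometric embedding $\iota$ that intertwines $\mathcal P_\mu({}_Kf_K)$ with $\mathcal P_\Gamma(f\otimes{\bf 1}_W)$ and hence the isotypical projections. This works, and your observation that a closed invariant subspace has $V_\omega=V\cap W_\omega$ so that projections restrict correctly is sound; but note that it requires a genuinely stronger input than the paper uses: the scalar identity from \cite{BHP2} only gives (after polarization) that $\mathcal P_\mu({}_Kf_K)\mapsto\mathcal P_\Gamma(f\otimes{\bf 1}_W)$ is an isometry on the span, and the $G$-equivariance with respect to $\pi_\nu$ and ${}_\Gamma\pi_R$ --- which you correctly flag as the crux --- has to be extracted from the construction of the isomorphism $L^2(\Omega_\Lambda,\widehat\nu)\cong L^2(Y,m_Y)$ in \cite{BHP1} (or verified by comparing Lemma~\ref{ProjectionFormulaLemma1} with Lemma~\ref{ShadowLemma1}). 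What your route buys in exchange is an explicit identification of the abstract diffraction coefficients of Theorem~\ref{ProjectionFormula} with the shadow transform, rather than merely an equality of the two measures; what the paper's route buys is economy, since only the norm identity and a density argument are needed.
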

\begin{proof} By \cite[Corollary~4.18]{BHP2} we have for every $f \in C_c(G,K)$
\[
\widehat{\eta}(\widehat{|f|^2}) = {\eta}({}_Kf_K\ast ({}_Kf_K)^*) = \|\mathcal P_\Gamma (f \otimes {\bf 1}_W)\|^2_{L^2(\Gamma\backslash(G \times H))}.
\]
Combining this with Corollary \ref{ShadowL2} we obtain
\[
\widehat{\eta}(|\widehat{f}|^2) =  \sum_{\omega\in {\rm spec}_{(G, K)}(\Gamma)} |\widehat{f}(\check \omega)|^2 \|\mathcal S_\Gamma({\bf 1}_W)(\omega)\|^2 =  \left(\sum_{\omega\in {\rm spec}_{(G, K)}(\Gamma)} \|\mathcal S_\Gamma({\bf 1}_W)(\omega)\|^2 \cdot \delta_{\check \omega}\right)(|\widehat{f}|^2).
\]
The theorem follows.
\end{proof}
In praxis, the the diffraction coefficients $c(\omega) = \|\mathcal S_\Gamma({\bf 1}_W)(\omega)\|^2$ are usually much easier to determine than the shadow transform itself. For example they admit the following characterization:
\begin{proposition}\label{DiffCoeffConvenient} The diffraction coefficients $c(\omega)$ are uniquely determined by the fact that for all $f \in C_c(G, K)$,
\[
 \sum_{(\gamma_1,\gamma_2) \in \Gamma} (f \ast f^*)(\gamma_1) ({\bf 1}_W \ast {\bf 1}_{W^{-1}})(\gamma_2) = \sum_{\omega \in \mathcal S^+(G, K)} c(\omega) |\widehat f(\check \omega)|^2.
\]
\end{proposition}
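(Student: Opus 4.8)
The plan is to show that both sides of the asserted identity equal the single quantity $\|\mathcal P_\Gamma(f \otimes {\bf 1}_W)\|^2_{L^2(Y, m_Y)}$, and then to deduce uniqueness from a density argument. The right-hand side is immediate from what has already been proved: evaluating the pure point measure $\widehat{\eta} = \sum_{\omega} c(\omega)\delta_{\check\omega}$ of Theorem~\ref{DiffAbstract} on the function $|\widehat f|^2$ gives $\sum_{\omega \in \mathcal S^+(G,K)} c(\omega)\,|\widehat f(\check\omega)|^2$, while the first display in the proof of Theorem~\ref{DiffAbstract} (via \cite[Cor.~4.18]{BHP2}) reads $\widehat{\eta}(|\widehat f|^2) = \|\mathcal P_\Gamma(f \otimes {\bf 1}_W)\|^2_{L^2(Y)}$. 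Hence the whole content left to establish is that the left-hand side equals this same norm.

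For that I would unfold the norm over $Y = \Gamma\backslash(G\times H)$. Since $\mathcal P_\Gamma$ is adjoint to pullback along $G\times H \to Y$ (Weil's formula, with the normalization fixed in \cite{BHP1, BHP2} under which $m_Y$ is the pushforward of $m_G\times m_H$), one has
\[
\|\mathcal P_\Gamma(f\otimes {\bf 1}_W)\|^2_{L^2(Y)} = \int_{G\times H} (f\otimes {\bf 1}_W)(x,y)\; \overline{\mathcal P_\Gamma(f\otimes {\bf 1}_W)(\Gamma(x,y))}\; dm_G(x)\,dm_H(y).
\]
Expanding $\mathcal P_\Gamma(f\otimes {\bf 1}_W)(\Gamma(x,y)) = \sum_{(\gamma_1,\gamma_2)\in\Gamma} f(\gamma_1 x){\bf 1}_W(\gamma_2 y)$ and interchanging summation and integration, the integral factors as
\[
\sum_{(\gamma_1,\gamma_2)\in\Gamma}\left(\int_G f(x)\,\overline{f(\gamma_1 x)}\,dm_G(x)\right)\left(\int_H {\bf 1}_W(y)\,{\bf 1}_W(\gamma_2 y)\,dm_H(y)\right),
\]
where I have used that ${\bf 1}_W$ is real-valued. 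The interchange is legitimate because $\Gamma$ is discrete and $f$, ${\bf 1}_W$ are compactly supported, so for each fixed $(x,y)$ only finitely many terms are nonzero and the double integral is absolutely convergent; this is exactly what the extension of $\mathcal P_\Gamma$ to $\mathcal L^\infty_c$ recorded above is designed to handle.

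Next I would identify the two inner integrals with values of convolutions. Directly from $(f\ast f^*)(g) = \int_G f(h)\overline{f(g^{-1}h)}\,dm_G(h)$ one reads off $\int_G f(x)\overline{f(\gamma_1 x)}\,dm_G(x) = (f\ast f^*)(\gamma_1^{-1})$, and, since ${\bf 1}_{W^{-1}} = \check{{\bf 1}_W}$, similarly $\int_H {\bf 1}_W(y){\bf 1}_W(\gamma_2 y)\,dm_H(y) = ({\bf 1}_W \ast {\bf 1}_{W^{-1}})(\gamma_2^{-1})$. Substituting these and then reindexing the sum by the bijection $\gamma \mapsto \gamma^{-1}$ of the group $\Gamma$ replaces each $\gamma_i^{-1}$ by $\gamma_i$, yielding precisely $\sum_{(\gamma_1,\gamma_2)\in\Gamma}(f\ast f^*)(\gamma_1)\,({\bf 1}_W\ast {\bf 1}_{W^{-1}})(\gamma_2)$, the left-hand side of the proposition.

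Finally, for uniqueness I would argue exactly as in the proof of Theorem~\ref{AbstractPurePoint}: the identity determines the value $\sum_{\omega} c(\omega)|\widehat f(\check\omega)|^2$ for every $f\in C_c(G,K)$, and since the functions $\{|\widehat f|^2 \mid f \in C_c(G,K)\}$ span a dense subspace of $C_c(\mathcal S^+(G,K))$, the pure point measure $\sum_{\omega} c(\omega)\delta_{\check\omega}$, and hence each coefficient $c(\omega)$, is pinned down. I do not expect any deep obstacle here: the statement is essentially a bookkeeping computation on top of Theorem~\ref{DiffAbstract}, and the only genuinely delicate point is justifying the unfolding identity and the sum--integral interchange for the merely bounded, compactly supported window function ${\bf 1}_W$ rather than a continuous one, which is why one works with the $\mathcal L^\infty_c$-extension of $\mathcal P_\Gamma$ and the local finiteness of its defining sum.
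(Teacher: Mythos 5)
Your proof is correct and follows essentially the same route as the paper: both arguments identify each side of the identity with $\|\mathcal P_\Gamma(f \otimes {\bf 1}_W)\|^2_{L^2(Y)}$ (via Theorem~\ref{DiffAbstract} and \cite[Cor.~4.18]{BHP2} on one side) and then invoke density of the span of $\{f \ast f^*\}$, equivalently of $\{|\widehat f|^2\}$, to pin down the coefficients. The only difference is that you prove the unfolding identity $\|\mathcal P_\Gamma(f \otimes {\bf 1}_W)\|^2 = \sum_{(\gamma_1,\gamma_2)\in\Gamma}(f\ast f^*)(\gamma_1)({\bf 1}_W\ast{\bf 1}_{W^{-1}})(\gamma_2)$ directly by Weil's formula, whereas the paper simply cites it as \cite[Prop.~4.19]{BHP2}; your computation of that step is correct.
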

\begin{proof}
By Corollary~4.18 and Proposition~4.19 in \cite{BHP2} we have for all $f \in C_c(G, K)$, 
\[
\widehat{\eta}(|\widehat{f}|^2) = \eta(f \ast f^*) = \| \mathcal P_\Gamma(f \otimes {\bf 1}_W))\|^2 = \sum_{(\gamma_1,\gamma_2) \in \Gamma}
(f^* * f)(\gamma_1) ({\bf 1}_W \ast {\bf 1}_{W^{-1}})(\gamma_2).
\]
Since $\{f \ast f^* \mid f \in C_c(G, K)\}$ spans a dense subspace in $C_c(G, K)$, this determines $\widehat{\eta}$.

\end{proof}

\subsection{The shadow transform as a generalized Hecke correspondence}\label{SecHecke} 
In addition to the standing assumptions of this section assume now that $G$ is a totally disconnected lcsc group and $K<G$ is a compact open subgroup. In this case the shadow transform is closely related to a more classical transform in harmonic analysis, the so-called \emph{Hecke correspondence}, which we recall briefly.

Denote by $p_G: G \times H \to G$ and $p_H: G \times H \to H$ the canonical projections. We consider a lattice $\Gamma < G\times H$ such that
\begin{itemize}
\item the projections $\Gamma_G := \pi_G(\Gamma)$ and $\Gamma_H := \pi_H(\Gamma)$ are dense in $G$ and $H$ respectively;
\item $p_G|_\Gamma: \Gamma \to \Gamma_G$ is bijective.
\end{itemize}
We then denote by $\tau: \Gamma_G \to H$ the homomorphism $\tau(g)= p_H((p_G|_\Gamma)^{-1}(g))$. For the moment we do not assume that $\Gamma$ is uniform. Since $\Gamma_G$ is dense in $G$ and $K$ is open, the multiplication map $\Gamma_G\times K \to G$ is onto. We denote by $g \mapsto (\gamma_g, k_g)$ a fixed Borel section of this map. For simplicity let us normalize the Haar measure on $G$ such that $m_G(K) = 1$. 
\begin{proposition}[Hecke correspondence] Let $\Gamma_K := \Gamma_G \cap K$ and $\Gamma_0 := \tau(\Gamma_K) < H$.
\begin{enumerate}[(i)]
\item $\Gamma_0 < H$ is a lattice, which is uniform if and only if $\Gamma$ is uniform.
\item The map $j:\Gamma \backslash (G\times H)/K \to  \Gamma_0\backslash H$ given by $j(\Gamma(g,h)K):= \Gamma_0 \tau(\gamma_g^{-1})h$ is a homeomorphism with inverse given by
$i:  \Gamma_0\backslash H \to \Gamma \backslash (G\times H)/K$, $\Gamma_0 h \mapsto \Gamma(e, h)K$. 
\item $i$ and $j$ induce mutually inverse isomorphisms of $H$-representations
\[
 i^*:L^2(\Gamma \backslash (G \times H))^K \to L^2(\Gamma_0\backslash H) \quad \text{and} \quad j^*:  L^2(\Gamma_0\backslash H) \to L^2(\Gamma \backslash (G \times H))^K.
\]
\item The Hecke algebra $C_c(G, K)$ acts on $L^2(\Gamma_0\backslash H)$ via
\[
T(\rho)(f)(\Gamma_0h) = \int_G \rho(g) f(\Gamma_0 \tau(\gamma_g)^{-1}h) \, dm_G(g) \quad(\rho \in C_c(G, K), \; f \in  L^2(H/\Gamma_0)).
\]
\end{enumerate}
\end{proposition}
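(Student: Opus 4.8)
The plan is to derive all four parts from a single geometric fact. Since $\Gamma_G$ is dense and $K$ is open, every $g\in G$ factors as $g=\gamma_g k_g$ with $\gamma_g\in\Gamma_G$, $k_g\in K$, and since $p_G|_\Gamma$ is bijective we may identify $\Gamma$ with the graph $\{(\gamma,\tau(\gamma))\mid\gamma\in\Gamma_G\}$ of the homomorphism $\tau$. First I would record the reduction $\Gamma(g,h)K=\Gamma(e,\tau(\gamma_g)^{-1}h)K$, obtained by writing $(g,h)=(\gamma_g,\tau(\gamma_g))\,(k_g,\tau(\gamma_g)^{-1}h)$ with $(\gamma_g,\tau(\gamma_g))\in\Gamma$ and absorbing $k_g\in K$ into the right $K$-factor. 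This already shows every double coset has a representative of the form $(e,h)$, hence that $i$ is onto.

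For (ii) I would then determine when two such representatives agree: $\Gamma(e,h_1)K=\Gamma(e,h_2)K$ forces $(\gamma,\tau(\gamma))\in\Gamma$ and $k\in K$ with $\gamma k=e$ and $\tau(\gamma)h_1=h_2$; the first equation gives $\gamma\in\Gamma_G\cap K=\Gamma_K$, so $h_2\in\tau(\Gamma_K)h_1=\Gamma_0 h_1$, and the converse is the same computation run backwards. Thus $i\colon\Gamma_0\backslash H\to\Gamma\backslash(G\times H)/K$ is a well-defined bijection, and the reduction above identifies its inverse with the stated formula for $j$; independence of the Borel section holds because two admissible choices of $\gamma_g$ differ by an element of $\Gamma_K$, which is killed in $\Gamma_0\backslash H$. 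Continuity of $i$ is immediate. For continuity of $j$ I would exploit that $K$ is open: $G$ partitions into the clopen cosets $\gamma K$ indexed by $\Gamma_G/\Gamma_K$, on each of which the section may be taken constant, so $(g,h)\mapsto\Gamma_0\tau(\gamma_g)^{-1}h$ is continuous on a clopen cover, hence continuous on $G\times H$; since the double-coset projection is an open quotient map, $j$ descends continuously, giving the homeomorphism.

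Next, for (i), I would establish discreteness of $\Gamma_0$ directly: if $\tau(\gamma_n)\to e$ with $\gamma_n\in\Gamma_K\subset K$, then the pairs $(\gamma_n,\tau(\gamma_n))$ lie in the compact set $K\times(\{\tau(\gamma_n)\}\cup\{e\})$, so discreteness of $\Gamma$ forces finitely many values and hence $\tau(\gamma_n)=e$ eventually. Finiteness of covolume then comes from (ii): transporting the finite $(G\times H)$-invariant measure $m_Y$ on $Y=\Gamma\backslash(G\times H)$ to the $K$-quotient and through $j$ yields a finite $H$-invariant measure on $\Gamma_0\backslash H$, which by uniqueness of the invariant measure shows $\Gamma_0$ is a lattice. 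Uniformity transfers too: since $K$ is compact, $\Gamma\backslash(G\times H)/K$ is compact if and only if $\Gamma\backslash(G\times H)$ is, i.e.\ if and only if $\Gamma$ is uniform, and compactness carries across the homeomorphism to $\Gamma_0\backslash H$.

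Finally, parts (iii) and (iv) are a transport of structure. Identifying $L^2(\Gamma\backslash(G\times H))^K$ with the $L^2$-space of the double-coset space, the homeomorphism $j$ — arranged to be measure-preserving via the normalization $m_G(K)=1$, so that integrating a $K$-invariant function over $Y$ equals integrating over the quotient — induces mutually inverse unitaries $i^*$ and $j^*$, and these intertwine the $H$-actions since $i,j$ are $H$-equivariant for right translation in the $H$-factor; this is (iii). For (iv) I would conjugate ${}_\Gamma\pi_R(\rho)$ through $j^*$: with $F=j^*f$, so that $F(\Gamma(g,h)K)=f(\Gamma_0\tau(\gamma_g)^{-1}h)$, evaluating ${}_\Gamma\pi_R(\rho)F$ at $i(\Gamma_0 h)=\Gamma(e,h)K$ yields $\int_G\rho(g)f(\Gamma_0\tau(\gamma_g)^{-1}h)\,dm_G(g)=T(\rho)f(\Gamma_0 h)$. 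I expect the main obstacle to lie in (ii) and (iii) rather than here: well-definedness and continuity of $j$ through the Borel section genuinely use that $K$ is open (to make the section locally constant on a clopen cover), and upgrading the homeomorphism to a unitary equivalence requires identifying the invariant measure on $\Gamma_0\backslash H$ with the correctly normalized pushforward of $m_Y$; the discreteness argument in (i) and the computation in (iv) are then routine.
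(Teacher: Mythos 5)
Your proposal is correct and follows essentially the same route as the paper: establish the reduction $\Gamma(g,h)K=\Gamma(e,\tau(\gamma_g)^{-1}h)K$, compute when two representatives coincide to get the bijection in (ii), deduce finite covolume and cocompactness in (i) from the homeomorphism while proving discreteness of $\Gamma_0$ by intersecting $\Gamma$ with a compact set, and obtain (iii) and (iv) by transport of structure using uniqueness of the invariant probability measure. The only (harmless) divergence is in upgrading $j$ to a homeomorphism: you make the Borel section locally constant on the clopen cover by $\Gamma_G$-cosets of $K$, whereas the paper invokes $H$-equivariance of the continuous bijection $i$ to conclude it is open; both arguments are valid.
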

\begin{proof} We first prove (ii). Observe first that for all $(g,h) \in G \times H$,
\[
\Gamma(g,h)K = \Gamma(\gamma_gk_g, h) K = \Gamma(\gamma_g, h)K = \Gamma (e, \tau(\gamma_g)^{-1}h)K.
\]
This shows that the map $q: H \to \Gamma \backslash (G\times H)/K$ given by $q(h) := \Gamma(e,h)K$ is onto. Now assume that $q(h_1) = q(h_2)$. Then
\begin{equation}\label{CompactOpenMove}
\exists k \in K, \gamma \in \Gamma_G: \;(e,h_1) = (\gamma, \tau(\gamma))(e,h_2)k = (\gamma k, \tau(\gamma)h_2)
\end{equation}
This implies that $k\gamma = e$, hence $k = \gamma^{-1} \in \Gamma_K$, and thus $h_1 = \tau(\gamma)h_2 \in \Gamma_0 h_2$. Conversely, if $h_1 \in \Gamma_0h_2$, then $q(h_1) = q(h_2)$. Thus $q$ factors through a continuous bijection $i$ as in the proposition with inverse $j$. Now note that $H$ acts on $\Gamma \backslash (G\times H)/K$ from the right, since it commutes with $K$, and that $i$ is $H$-equivariant. It follows that $i$ is open, whence $i$ and $j$ are mutually inverse homeomorphisms. This proves (ii) and shows in particular that $\Gamma_0$ is of finite covolume, respectively cocompact in $H$ if and only if $\Gamma< G\times H$ has the corresponding property. To show (i) it thus remains to show only that $\Gamma_0$ is discrete. However, for every compact subset $W \subset H$ we have
\[
\Gamma_0 \cap W = \tau(\Gamma_K) \cap W = \tau(p_G((K \times W)\cap \Gamma)),
\]
which is finite by discreteness of $\Gamma$. This finishes the proof of (i) and provides us with a unique $H$-invariant probability measure $m_{\Gamma_0\backslash H}$ on $\Gamma_0\backslash H$. Now (ii) yields an $H$-equivariant isomorphism $i^*:  C_c(\Gamma \backslash (G \times H))^K  \to C_c(\Gamma_0\backslash H)$, and under this identification the unique $H$-invariant measures on $\Gamma\backslash (G \times H)$ and $\Gamma_0\backslash H$ must correspond, hence (iii) holds. In particular, if $\pi$ denote the unitary representation $\pi: G \to \mathcal U(L^2(\Gamma\backslash (G \times H)))$, then $C_c(G, K)$ acts on $L^2(\Gamma_0\backslash H)$ via 
\[
T(\rho)(f) :=  i^*(\pi(\rho).(j^*f)) \quad (\rho \in C_c(G, K), f \in L^2(\Gamma_0\backslash H)).
\]
Writing out the definitions of $i^*, j^*$ and $\pi$ explicitly we end up with (iv).
\end{proof}
If we assume now that $\Gamma$ is cocompact, then $L^2(\Gamma_0 \backslash H)$ decomposes under the action of the Hecke algebra as
\[
L^2(\Gamma_0 \backslash H) = \bigoplus_{\omega \in {\rm spec}_{(G,K)}(\Gamma)}L^2(\Gamma_0 \backslash H)_\omega,
\]
and we note by ${\rm proj}_\omega: L^2(\Gamma_0 \backslash H) \to L^2(\Gamma_0 \backslash H)_\omega$ the canonical projection.
\begin{proposition}[Shadow transform vs. Hecke correspondence]\label{ShadowVSHecke} Let $r \in \mathcal L^\infty_c(\Gamma_0\backslash H)$ and denote by $j^*:  L^2(\Gamma_0\backslash H) \to L^2(\Gamma\backslash(G\times H))^K$ the Hecke correspondence. Then for all $\omega \in {\rm spec}_{(G,K)}(\Gamma)$,
\[
\mathcal S_\Gamma r(\omega) = j^*{\rm proj}_\omega(\mathcal P_{\Gamma_0} r) \qand \|\mathcal S_\Gamma r(\omega)\|^2 = \|{\rm proj}_\omega(\mathcal P_{\Gamma_0} r) \|^2.
\]
\end{proposition}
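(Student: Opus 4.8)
The plan is to deduce the identity from the defining property of the shadow transform (Proposition~\ref{ShadowExist}), namely that ${\rm proj}_\omega(\mathcal P_\Gamma(f \otimes r)) = \widehat{f}(\check\omega)\cdot \mathcal S_\Gamma r(\omega)$ for all $f \in C_c(G,K)$, by transporting the periodization on $Y = \Gamma\backslash(G\times H)$ to the $\Gamma_0$-periodization on $\Gamma_0\backslash H$ through the Hecke correspondence. Since $\Gamma$ is cocompact the quotient $\Gamma_0\backslash H$ is compact, so I would first fix a bounded Borel fundamental domain and regard $r$ as a compactly supported function on $H$; then $\mathcal P_\Gamma(f\otimes r)$ and $\mathcal P_{\Gamma_0}r$ are both defined, and $\mathcal P_{\Gamma_0}r$ recovers the given class on $\Gamma_0\backslash H$, so the right-hand side is independent of the chosen fundamental domain once the identity is proved.

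The heart of the argument is the equality
\[
i^*\bigl(\mathcal P_\Gamma(f\otimes r)\bigr) = T(\check f)\bigl(\mathcal P_{\Gamma_0}r\bigr) \qquad (f \in C_c(G,K)),
\]
where $i^*$ is the Hecke correspondence and $T$ the Hecke-algebra action of part~(iv). To establish it I would evaluate the left-hand side at $\Gamma_0 h$ using $i(\Gamma_0 h) = \Gamma(e,h)K$ together with the bijectivity of $p_G|_\Gamma$, obtaining $\sum_{\gamma\in\Gamma_G} f(\gamma)\,r(\tau(\gamma)h)$. I would then decompose $G = \bigsqcup_{\sigma\in\Gamma_G/\Gamma_K}\sigma K$, a genuinely disjoint and countable decomposition because $K$ is open and $\Gamma_G$ is dense, so that $\Gamma_G/\Gamma_K \to G/K$ is a bijection onto a discrete set. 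Computing $T(\check f)\bigl(\mathcal P_{\Gamma_0}r\bigr)(\Gamma_0 h)$ through this same decomposition, the $K$-integral collapses to $\check f(\sigma)$ by right-$K$-invariance, the inner $\Gamma_0$-sum unfolds via $\tau(\Gamma_K)=\Gamma_0$ and the homomorphism property of $\tau$, and after the substitution $\sigma\mapsto\sigma^{-1}$ (using the left-$K$-invariance of $f$) the two expressions coincide; all sums are finite since $f$ has compact support and $G/K$ is discrete, and $\tau(\sigma)$ is well defined modulo $\Gamma_0$ independently of the Borel section.

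Granting this, I would apply $j^*$ and use $j^* T(\check f) = {}_\Gamma\pi_R(\check f)\,j^*$ together with $j^* i^* = \mathrm{id}$ on $L^2(Y)^K$ to obtain the transported form $\mathcal P_\Gamma(f\otimes r) = {}_\Gamma\pi_R(\check f)\bigl(j^*\mathcal P_{\Gamma_0}r\bigr)$. Because $j^*$ is an isomorphism of $H$-representations intertwining the Hecke actions, it carries the spectral decomposition of $L^2(\Gamma_0\backslash H)$ to that of $L^2(Y)^K$, so ${\rm proj}_\omega$ commutes with $j^*$; moreover, by Lemma~\ref{ActionByFourierCoefficient} applied to ${}_\Gamma\pi_R$, the operator ${}_\Gamma\pi_R(\check f)$ acts on the $\omega$-isotypical $K$-fixed subspace as the scalar $\widehat{\check f}(\omega) = \widehat f(\check\omega)$. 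Projecting the transported identity onto the $\omega$-component then gives
\[
\widehat f(\check\omega)\cdot \mathcal S_\Gamma r(\omega) = {\rm proj}_\omega\bigl(\mathcal P_\Gamma(f\otimes r)\bigr) = \widehat f(\check\omega)\cdot j^*{\rm proj}_\omega\bigl(\mathcal P_{\Gamma_0}r\bigr),
\]
and choosing $f$ with $\widehat f(\check\omega)\neq 0$, which is possible since $f\mapsto\widehat f(\check\omega)$ is a nonzero functional on the Hecke algebra, yields the first identity. The norm equality is then immediate, as $j^*$ is an isometry.

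The main obstacle is precisely the combinatorial identity $i^*\mathcal P_\Gamma(f\otimes r)=T(\check f)\mathcal P_{\Gamma_0}r$ in the second paragraph: one must keep careful track of left versus right $\Gamma_K$-cosets, verify that $G=\bigsqcup\sigma K$ is disjoint and countable, and check that the rearrangements of the (finite) sums and the unfolding of the $\Gamma_0$-periodization are legitimate. Once this is in place, the remaining representation-theoretic steps are essentially formal and parallel the proof of Proposition~\ref{ShadowExist}.
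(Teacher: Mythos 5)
Your argument is correct and follows essentially the same route as the paper: both proofs rest on unfolding the $\Gamma$-periodization into the $\Gamma_0$-periodization through the Hecke correspondence (via $\tau$, $\Gamma_K$ and bi-$K$-invariance) and then invoking the defining relation ${\rm proj}_\omega(\mathcal P_\Gamma(f\otimes r))=\widehat f(\check\omega)\,\mathcal S_\Gamma(r)(\omega)$. The only difference is that the paper shortcuts your general identity $i^*\mathcal P_\Gamma(f\otimes r)=T(\check f)\mathcal P_{\Gamma_0}r$ by specializing at once to $f={\bf 1}_K$, which is the two-sided identity of the Hecke algebra with $\widehat{{\bf 1}}_K(\check\omega)=1$ for every $\omega$, so no choice of $f$ with nonvanishing Fourier coefficient and no division step are needed.
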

\begin{proof} The key observation is that since $K$ is compact-open, ${\bf 1}_K$ is a two-sided identity in $C_c(G, K)$. Consequently, for all $h \in C_c(G, K)$ and $\omega \in \mathcal S^+(G, K)$,
\[
\widehat{{\bf 1}}_K(\omega) \cdot \widehat{h}(\omega) = \widehat{{\bf 1}_K \ast h}(\omega) = \widehat{h}(\omega),
\]
and hence $\widehat{{\bf 1}}_K(\omega) = 1$. By definition of the shadow transform we have
 \[
\widehat{f}(\check \omega) \cdot \mathcal S_\Gamma(r)(\omega) = {\rm proj}_\omega(\mathcal P_\Gamma(f \otimes r)) 
\]
for every $f \in C_c(G, K)$. If we choose $f := {\bf 1}_K$, then we obtain
 \[
 \mathcal S_\Gamma(r)(\omega) =  \widehat{{\bf 1}}_K(\check \omega) \cdot \mathcal S_\Gamma(r)(\omega)  = {\rm proj}_\omega(\mathcal P_\Gamma({\bf 1}_K \otimes r)).
\]
Now for $(g,h) \in G \times H$ we have
\begin{eqnarray*}
\mathcal P_\Gamma({\bf 1}_K \otimes r)(\Gamma(g,h)) &=& \sum_{\gamma \in \Gamma_G} {\bf 1}_K(\gamma \gamma_g k_g)r(\tau(\gamma)h) \quad = \quad \sum_{\gamma' = \gamma\gamma_g \in \Gamma_G} {\bf 1}_K(\gamma') r(\tau(\gamma')\tau(\gamma_g^{-1})h)\\
&=& \sum_{\gamma_0 \in \Gamma_0} r(\gamma_0 \tau(\gamma_g^{-1})h) \quad = \quad \mathcal P_{\Gamma_0} r(\Gamma_0\tau(\gamma_g^{-1})h)\\
&=&  j^*(\mathcal P_{\Gamma_0}r)(\Gamma(g,h)),
\end{eqnarray*}
hence $ \mathcal S_\Gamma(r)(\omega) =  {\rm proj}_\omega( j^*(\mathcal P_{\Gamma_0}r))$, and since $j^*$ is equivariant under the action of the Hecke algebra, the proposition follows.
\end{proof}
If we denote by $\widehat{\eta}$ the diffraction measure of the unique $G$-invariant measure on $\Omega_{\pi_*\Gamma}$, then we obtain:
\begin{corollary}[Spherical diffraction formula for a compact-open $K$] The diffraction measure $\widehat{\eta}$ is given by the formula\[
\pushQED{\qed}\widehat{\eta} = \sum_{\omega \in {\rm spec}_{(G, K)}(\Gamma)}\|{\rm proj}_\omega(\mathcal P_{\Gamma_0} {\bf 1}_W) \|^2\cdot \delta_{\check \omega}.\qedhere\popQED
\]
\end{corollary}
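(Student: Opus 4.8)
The plan is to derive this corollary as an immediate consequence of two results already in hand: the abstract diffraction formula of Theorem~\ref{DiffAbstract}, which holds for \emph{any} uniform regular model set, and the identification of the shadow transform with the Hecke correspondence established in Proposition~\ref{ShadowVSHecke} under the present totally disconnected hypotheses. No new analytic input is required; the work is entirely a matter of matching coefficients.

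First I would invoke Theorem~\ref{DiffAbstract}, which expresses the diffraction measure of the unique $G$-invariant measure on the hull as
\[
\widehat{\eta} = \sum_{\omega \in {\rm spec}_{(G, K)}(\Gamma)} \|\mathcal S_\Gamma({\bf 1}_W)(\omega)\|^2 \cdot \delta_{\check \omega}.
\]
This already has exactly the right shape, so the only task is to rewrite each diffraction coefficient $\|\mathcal S_\Gamma({\bf 1}_W)(\omega)\|^2$ in terms of the data $(\Gamma_0, H)$ appearing in the Hecke picture. Before applying Proposition~\ref{ShadowVSHecke}, I would check that its standing hypotheses are met in the model set setting: density of the projections $\Gamma_G := p_G(\Gamma)$ and $\Gamma_H := p_H(\Gamma)$ and the injectivity of $p_G|_\Gamma$ are part of the definition of the cut-and-project scheme $(G, H, \Gamma)$, while cocompactness of $\Gamma$ is precisely the uniformity assumption on $\Lambda$. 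Since moreover $W \subset H$ is precompact, ${\bf 1}_W \in \mathcal L^\infty_c(H)$, so it is a legitimate argument for both the shadow transform and the periodization $\mathcal P_{\Gamma_0}$.

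The decisive step is then to apply the second identity of Proposition~\ref{ShadowVSHecke} to the window function $r = {\bf 1}_W$, which gives, for every $\omega \in {\rm spec}_{(G,K)}(\Gamma)$,
\[
\|\mathcal S_\Gamma({\bf 1}_W)(\omega)\|^2 = \|{\rm proj}_\omega(\mathcal P_{\Gamma_0} {\bf 1}_W)\|^2.
\]
Substituting this coefficient-by-coefficient into the formula from Theorem~\ref{DiffAbstract} yields the asserted expression for $\widehat{\eta}$, completing the argument.

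I do not expect any genuine obstacle here, since the heavy lifting has already been carried out in Proposition~\ref{ShadowVSHecke} (whose proof uses that ${\bf 1}_K$ is a two-sided identity in the Hecke algebra when $K$ is compact-open) and in the general diffraction formula. The only point meriting explicit (and routine) verification is that the cut-and-project lattice $\Gamma$ satisfies the density, injectivity, and cocompactness conditions required by Proposition~\ref{ShadowVSHecke}; all three are built into the definition of a uniform regular model set, so the corollary follows directly.
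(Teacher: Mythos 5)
Your proposal is correct and is exactly the argument the paper intends: the corollary is stated with no separate proof precisely because it is the immediate substitution of the identity $\|\mathcal S_\Gamma({\bf 1}_W)(\omega)\|^2 = \|{\rm proj}_\omega(\mathcal P_{\Gamma_0}{\bf 1}_W)\|^2$ from Proposition~\ref{ShadowVSHecke} into the general diffraction formula of Theorem~\ref{DiffAbstract}. Your added verification that the cut-and-project hypotheses match those of Proposition~\ref{ShadowVSHecke} is routine but correctly placed.
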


\subsection{Classical examples} 
In the case where $G$ and $H$ are abelian and $K = \{e\}$, the diffraction formula in Theorem \ref{DiffAbstract} reduces to \cite[Thm. 9.4]{BaakeG-13}, which in its essence goes back to the pioneering work of Meyer \cite{Meyer-70, Meyer-95}. Let us briefly explain this reduction:

Let $\Lambda = \Lambda(G, H, \Gamma, W)$ be a uniform regular model set and assume that $G$ and $H$ are abelian.
Denote by $\widehat{G}$ and $\widehat{H}$ the dual groups of $G$ and $H$ respectively, and identify the dual group of $G \times H$ with $\widehat{G} \times \widehat{H}$. We define the \emph{dual lattice} of $\Gamma$ by
\[
\Gamma^\perp := \{(\xi_1, \xi_2) \in \widehat{G} \times \widehat{H} \mid \forall (\gamma_1, \gamma_2) \in \Gamma: \; \xi_1(\gamma_1) \xi_2(\gamma_2) = 1\} < \widehat{G} \times \widehat{H}.
\]
By assumption $\Gamma$ projects injectively to $G$ and densely to $H$. As in \cite[p.19]{MoodySurvey} one deduces:
\begin{lemma}\label{CPPropertyAbelian} The dual lattice $\Gamma^\perp$ projects injectively to $\widehat{G}$ and densely to $\widehat{H}$.
\end{lemma}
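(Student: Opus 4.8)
The plan is to deduce both statements from Pontryagin duality, exploiting the fact that the annihilator operation $M \mapsto M^\perp$ interchanges the two defining properties of the cut-and-project scheme. Throughout I identify $\widehat{G \times H}$ with $\widehat{G} \times \widehat{H}$ via the pairing $\langle (a,b), (\xi_1, \xi_2)\rangle = \xi_1(a)\xi_2(b)$, and I use the standard facts that for a closed subgroup $M < G \times H$ one has $M^{\perp\perp} = M$ under the canonical identification of $G \times H$ with its bidual, that a subgroup $M$ is dense exactly when $M^\perp = \{e\}$, and the exchange formula $(\overline{M_1 + M_2})^\perp = M_1^\perp \cap M_2^\perp$. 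Since a lattice is in particular closed, all of these apply to our $\Gamma$.

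For the injectivity of ${\rm proj}_{\widehat{G}}$ on $\Gamma^\perp$, I would observe that its kernel is $\Gamma^\perp \cap (\{e\} \times \widehat{H})$ and that $\{e\} \times \widehat{H} = (G \times \{e\})^\perp$. By the exchange formula this intersection equals $(\overline{\Gamma + (G \times \{e\})})^\perp$. Now the subgroup $\Gamma + (G \times \{e\})$ is precisely $G \times {\rm proj}_H(\Gamma)$, because adding $G \times \{e\}$ makes the first coordinate arbitrary while leaving the second coordinate equal to the $H$-projection of $\Gamma$; its closure is therefore $G \times \overline{{\rm proj}_H(\Gamma)} = G \times H$ by the hypothesis that $\Gamma$ projects densely to $H$. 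Hence the annihilator is trivial and ${\rm proj}_{\widehat{G}}|_{\Gamma^\perp}$ is injective.

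For the density of ${\rm proj}_{\widehat{H}}(\Gamma^\perp)$ in $\widehat{H}$, I would test against the annihilator in $H$: this image is dense iff every $h \in H$ satisfying $\xi_2(h) = 1$ for all $(\xi_1, \xi_2) \in \Gamma^\perp$ is trivial. Reading $h$ as the character $(e, h)$ of $\widehat{G} \times \widehat{H}$ through the bidual identification, the displayed condition says exactly that $(e,h)$ annihilates $\Gamma^\perp$, i.e.\ $(e,h) \in \Gamma^{\perp\perp} = \Gamma$. Thus $(e,h) \in \Gamma \cap (\{e\} \times H)$, which is trivial because $\Gamma$ projects injectively to $G$; hence $h = e$ and the projection is dense.

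The computations themselves are routine; the point requiring care is the bookkeeping of which hypothesis dualizes to which conclusion --- injectivity of ${\rm proj}_G$ on $\Gamma$ yields density of ${\rm proj}_{\widehat{H}}$ on $\Gamma^\perp$, while density of ${\rm proj}_H$ on $\Gamma$ yields injectivity of ${\rm proj}_{\widehat{G}}$ on $\Gamma^\perp$ --- together with keeping the two pairings straight under the identification $G \times H \cong \widehat{\widehat{G} \times \widehat{H}}$. A secondary point to record is merely that $\Gamma$ is closed, so that $\Gamma^{\perp\perp} = \Gamma$ is available; this holds since $\Gamma$ is a lattice.
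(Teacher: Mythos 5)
Your proof is correct and is essentially the same Pontryagin-duality argument as the paper's: both deduce injectivity of $\Gamma^\perp\to\widehat{G}$ from density of $\Gamma\to H$ (a character trivial on a dense subgroup is trivial) and density of $\Gamma^\perp\to\widehat{H}$ from injectivity of $\Gamma\to G$. The only cosmetic difference is that you phrase the second half via $\Gamma^{\perp\perp}=\Gamma$, whereas the paper dualizes the injection $H\hookrightarrow (G\times H)/\Gamma$; these are equivalent formulations of the same duality.
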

\begin{proof} If $\chi \in \Gamma^\perp$ is contained in the kernel of the projection to $\widehat{G}$, i.e. $\chi = \chi_1 \otimes \chi_2$, then $\chi_2$ is trivial on the projection of $\Gamma$ to $H$, hence on all of $H$ by continuity, and thus $\chi=1$. Moreover, since $\Gamma$ projects injectively to $G$, the map $H \to (G\times H)/\Gamma$ is injective, and hence the dual map $\Gamma^\perp \to H^\perp$ has dense image.
\end{proof}
We denote by $\Gamma^\perp_{\widehat{G}}$ and $\Gamma^\perp_{\widehat{H}}$ the images of $\Gamma^\perp$ under the canonical projections $p_{\widehat{G}}: \widehat{G} \times \widehat{H} \to \widehat{G}$ and  $p_{\widehat{H}}: \widehat{G} \times \widehat{H} \to \widehat{H}$ respectively. Using the lemma we may define
\[
\zeta:= p_{\widehat{H}} \circ (p_{\widehat{G}}|_{\Gamma^\perp})^{-1}: \Gamma^\perp_{\widehat{G}} \to \widehat{H}, 
\]
so that $(\xi, \zeta(\xi)) \in \Gamma^\perp$ for all $\xi \in \Gamma^\perp_{\widehat{G}}$.

Given $f \in L^1(G)\cap L^2(G)$ we denote by
\[
\widehat{f}(\chi_1) := \int_G f(g)\chi_1(g^{-1})dm_G(g)
\]
the Fourier transform of $f$ with respect to $m_G$. We normalize the Haar measure $m_H$ on $H$ so that $\Gamma$ has covolume $1$ in $G \times H$ and use the same symbol to denote the Fourier transform with respect to $m_H$. We denote by $m_Y$ the corresponding Haar probability measure on $Y := \Gamma \backslash (G \times H)$.

Every $\xi \in \Gamma^\perp$ defines a $\Gamma$-invariant function on $G \times H$, hence descends to a function ${}_\Gamma \xi$ on $Y$, and the functions $\{{}_\Gamma \xi \mid \xi \in \Gamma^\perp\}$ form an orthonormal basis of $L^2(Y, m_Y)$. We deduce that for $f \in C_c(G)$,
\[
\|\mathcal P_\Gamma(f \otimes {\bf 1}_W)\| = \sum_{\xi \in \Gamma^\perp} |\langle \mathcal P_\Gamma(f \otimes {\bf 1}_W), {}_\Gamma \xi\rangle_{L^2(Y, m_Y)}|^2.
\]
Now if $\mathcal F$ denotes a fundamental domain for $\Gamma$ in $G \times H$, then for $\xi = (\xi_1, \xi_2) \in \Gamma^\perp$ we have
\begin{eqnarray*}
 \langle \mathcal P_\Gamma(f \otimes {\bf 1}_W), {}_\Gamma \xi\rangle_{L^2(Y, m_Y)} &=& \int_{\mathcal F} \sum_{(\gamma_1, \gamma_2 \in \Gamma)} f(\gamma_1 g) {\bf 1}_W(\gamma_2 h) \overline{\xi(g,h)}\, dm_G(g) dm_H(h)\\ &=& \sum_{\gamma \in \Gamma)} \int_{\gamma\mathcal F} f(g){\bf 1}_W(h)\overline{\xi_1(g)}\overline{\xi_2(h)}\, dm_G(g) dm_H(h)\\
 &=& \widehat{f}(\xi_1) \widehat{{\bf 1}}_W(\xi_2).
\end{eqnarray*}
We thus obtain
\[
\sum_{(\gamma_1,\gamma_2) \in \Gamma}
(f * f^*)(\gamma_1) ({\bf 1}_W * {\bf 1}_{W^{-1}})(\gamma_2) = \|\mathcal P_\Gamma(f \otimes {\bf 1}_W)\|  = \sum_{(\xi_1,\xi_2) \in \Gamma^\perp}|\widehat{f}(\xi_1)|^2|\widehat{{\bf 1}}_W(\xi_2)|^2.
\]
Note that this is not just a formal consequence of the Poisson summation formula, since ${\bf 1}_W$ is not smooth, but it is similar in spirit. In any case, we obtain from Proposition \ref{DiffCoeffConvenient} the formula
\[
\widehat{\eta} = \sum_{\xi \in \Gamma^\perp_{\widehat{G}}} |\widehat{{\bf 1}}_W(\zeta(\xi))|^2 \cdot \delta_\xi,
\]
which is Meyer's formula. 

We now discuss an extension of this formula which appears (with different notation and under the name of ``powder diffraction'') in \cite{BFG}. For this let $N = \R^d$, $H = \R^m$ and $K = O(d)$, so that $G = K \ltimes N$ is the isometry group of the Euclidean plane, and $(G, K)$ is a Gelfand pair with $K\backslash G = \R^n$. Given a character $\xi \in \widehat{N}$ we denote by $q_\xi$ the Bessel function
\[
q_\xi: N \to \C, \quad q_\xi(n) := \int_K \xi(k.n)dm_K(k),
\]
and extend it to a positive-definite spherical function $\omega_\xi: G \to \C$ on $G$ by $\omega_\xi(k_o, n) :=q_\xi(n)$. We then obtain an identification
\[
K\backslash \widehat{N} \to \mathcal S^+(G, K), \quad K\xi \to \omega_\xi. 
\]

The spherical Fourier transform of $(G, K)$ relates to the usual Fourier transform of $N$ as follows. We have an isomorphism
\[
\iota:  C_c(N)^K \to C_c(G, K), \quad \iota(f)((k, n)) = f(n),
\]
and if $f \in C_c(N)^K$ with Fourier transform $\widehat{f} \in L^2(\widehat{N})$, then
\begin{eqnarray*}
\widehat{f}(\xi) &=& \int_N f(n)\overline{\xi(n)} dm_N(n) \quad = \quad  \int_N \int_K f(k^{-1}.n) dm_K(k) \overline{\xi}(n) dm_N(n)\\
 &=&  \int_N f(n)  \overline{\xi(k.n)} dm_K(k) dm_N(n) \quad = \quad \langle f, q_\xi \rangle \quad = \widehat{\iota(f)}(\omega_\xi).
\end{eqnarray*}
Now let $\Gamma_o < N \times H$ be a lattice which projects injectively to $N$ and densely to $H$; then the image 
\[
\Gamma = \big\{ ((e, \gamma_1),\gamma_2) \in (K \ltimes N) \times H \, : \, (\gamma_1,\gamma_2) \in \Gamma_o \big\},
\]
of $\Gamma_o$ in $G\times H$ is a lattice, and $(G, H, \Gamma)$ is a cut-and-project scheme. We now pick a regular window $W$ in $H$ so that $\widetilde{\Lambda} = \widetilde{\Lambda}(G, H, \Gamma, W)$ is a regular model set and $\Lambda = p_{K\backslash G}(\widetilde{\Lambda})$ is a regular model set in $K\backslash G = \R^d$. 

Denote by $\Gamma_o^\perp \subset \widehat{N} \times \widehat{H}$ the dual lattice of $\Gamma_o$ and by $(\Gamma_o^\perp)_{\widehat{N}}$ ist projection to $\widehat{N}$. From Lemma \ref{CPPropertyAbelian} we obtain a map $\zeta:(\Gamma_o^\perp)_{\widehat{N}} \to \widehat{H}$ such that
\[
(\xi, \zeta(\xi)) \in \Gamma_o^\perp \quad \text{for all } \xi \in (\Gamma_o^\perp)_{\widehat{N}}.
\]
For $f \in C_c(G, K)$ and $f_o = \iota^{-1}(f) \in C_c(N)^K$ the computation in the abelian case yields
\begin{eqnarray*}
\sum_{((e, \gamma_1), \gamma_2) \in \Gamma} (f \ast f^*)(e,\gamma_1) ( {\bf 1}_W\ast {\bf 1}_{W^{-1}})(\gamma_2) &=&
 \sum_{(\gamma_1,\gamma_2) \in \Gamma_o}
(f_o * f^*_o)(\gamma_1)  ( {\bf 1}_W\ast {\bf 1}_{W^{-1}})(\gamma_2)\\
&=& \sum_{(\xi_1,\xi_2) \in \Gamma_o^{\perp}}
|\widehat{f}_o(\xi_1)|^2 |\widehat{{\bf 1}}_W(\xi_2)|^2
\end{eqnarray*}
and since $\widehat{f_o}(\xi_1) = \widehat{f}(\omega_{\xi_1})$ we deduce from Proposition \ref{DiffCoeffConvenient} that
\[
\widehat{\eta} =  \sum_{\xi \in  (\Gamma_o^\perp)_{\widehat{N}}} \left( \sum_{\chi \in K.\xi \cap  (\Gamma_o^\perp)_{\widehat{N}}} |\widehat{\bf 1}_W(\zeta(\chi))|^2\right)\delta_\xi,
\]
which can be seen as a version of the ``spherical Poisson summation formula'' \cite{BFG} for regular model sets.

\subsection{Non-classical examples}
Since the formulas in the previous subsection were already known, the question arises to which other classes of examples our general diffraction formula can be applied to. In order to run our machinery we need:
\begin{enumerate}
\item a Gelfand pair $(G, K)$ with $K$ compact;
\item another lcsc group $H$ such that $G \times H$ admits a lattice $\Gamma$ which projects injectively to $G$ and densely to $H$.
\end{enumerate}
For simplicity let us also assume that
\begin{enumerate}
\setcounter{enumi}{2}
\item $G$ and $H$ are connected Lie groups and the manifold $G/K$ is simply-connected.
\end{enumerate}
There are two main sources of examples for such quadruples $(G, K, H, \Gamma)$. Let us first consider the case where $G$ is amenable. In this case we have the following result:
\begin{proposition}[Cut-and-project schemes of amenable Lie groups]\label{AmenableLie} Assume that $(G, K, H, \Gamma)$ satisfy (1)-(3) above and that $G$ is amenable. If $G$ acts effectively on $G/K$, then the following hold:
\begin{enumerate}[(i)]
\item $G = N \rtimes L$, where $L$ is compact and contains $K$, and $N$ is either abelian or $2$-step nilpotent.
\item If $\pi_G(\Gamma)$ is contained in $N$, then $H$ is either abelian or $2$-step nilpotent and $\Gamma$ is uniform.
\end{enumerate}
\end{proposition}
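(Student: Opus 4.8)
The plan is to obtain (i) from the structure theory of commutative spaces and to deduce (ii) by hand from (i) together with the hypothesis $\pi_G(\Gamma)\subseteq N$. For (i) I would first exploit amenability: a connected Lie group is amenable exactly when its semisimple Levi factor is compact, so writing $G=R\rtimes S$ with $R$ the solvable radical and $S$ a Levi factor, the subgroup $S$ is compact. To refine this to $G=N\rtimes L$ with $N$ the nilradical and $L$ compact containing $K$, I would invoke the structure theory of amenable commutative spaces (see \cite{Wolf-07}); the hypotheses that $G$ act effectively on $G/K$ and that $G/K$ be simply connected are precisely what rule out a noncompact (vector) factor in a reductive complement, allowing $L$ to be chosen compact and, after enlarging, to contain $K$. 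Once this decomposition is in place, the $2$-step bound on $N$ is soft: since $K\subseteq L$ we have $C_c(G,L)\subseteq C_c(G,K)$ as a subalgebra of a commutative algebra, hence $C_c(G,L)$ is commutative and $(N\rtimes L,L)$ is a nilpotent Gelfand pair; the classification of such pairs then forces $[N,N]\subseteq Z(N)$, i.e. $N$ is abelian or $2$-step nilpotent (again \cite{Wolf-07}).

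For (ii), assume $\pi_G(\Gamma)\subseteq N$. Because $\Gamma$ projects injectively to $G$, we have $\Gamma\cong\pi_G(\Gamma)\subseteq N$, so $\Gamma$ is nilpotent of step at most $2$; moreover every $\gamma\in\Gamma$ has its $G$-component in $N$, whence $\Gamma\subseteq N\times H$. The projection $\pi_H(\Gamma)$ is a homomorphic image of $\Gamma$, hence also nilpotent of step at most $2$, and since it is dense in $H$ while the iterated commutator map $(x,y,z)\mapsto[[x,y],z]$ is continuous, $H$ itself is abelian or $2$-step nilpotent. For uniformity I would use that $M:=N\times H$ is a closed normal subgroup of $G\times H$ with $(G\times H)/M\cong L$ compact: disintegrating the finite invariant measure on $(G\times H)/\Gamma$ over the compact base $(G\times H)/M$ shows that $\Gamma$ is a lattice in $M$. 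Since $M=N\times H$ is a connected nilpotent Lie group, every lattice in it is cocompact (the Malcev/Mostow theory of lattices in nilpotent, hence solvable, Lie groups), so $M/\Gamma$ is compact; as $(G\times H)/\Gamma$ fibers over the compact $L$ with compact fiber $M/\Gamma$, it is itself compact, i.e. $\Gamma$ is uniform.

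The only genuinely hard inputs are the compactness of $L$ and the $2$-step bound for nilpotent Gelfand pairs, both of which I am content to quote from the structure theory of commutative spaces; the rest of (ii) is elementary, the one point deserving care being the measure-theoretic argument that a lattice contained in a cocompact closed normal subgroup is again a lattice in that subgroup.
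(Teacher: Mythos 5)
Your proof is correct and follows essentially the same route as the paper: part (i) is quoted from the structure theory of amenable commutative spaces in \cite{Wolf-07} (Vinberg's decomposition theorem), and part (ii) combines the density of $\pi_H(\Gamma)$ with the fact that lattices in connected nilpotent Lie groups are cocompact (\cite{Raghunathan}). The one place you go beyond the paper's (very terse) argument is in spelling out the reduction from a lattice in $G\times H$ to a lattice in the nilpotent subgroup $N\times H$ via disintegration over the compact quotient $L$ -- a step the paper leaves implicit when it invokes Raghunathan's theorem directly.
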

\begin{proof} (i) is an immediate consequnce of Vinberg's decomposition theorem \cite[Thm. 13.3.20]{Wolf-07}. If $\pi_G(\Gamma)<N$, then $\pi_G(\Gamma)$ and thus $\Gamma$ are $2$-step nilpotent. But then also $H$ is $2$-step nilpotent since it contains the dense $2$-step nilpotent subgroup $\pi_H(\Gamma)$, and thus $\Gamma$ is cocompact by \cite[Thm. 2.1]{Raghunathan}.
\end{proof}
The assumption that $G$ acts effectively is not essential and can always be arranged by passing to a quotient. The crucial point is that assuming amenablity of $G$, the possible pairs $(G, L)$ appearing in (i) can actually be classified; these are called  \emph{nilmanifold pairs} and all arise essentially from the 23 families of ``maximal irreducible'' nilmanifold pairs listed in \cite[Sec. 13.4]{Wolf-07}. 

If we add the additional assumption that
\begin{enumerate}
\setcounter{enumi}{3}
\item $\pi_G(\Gamma)<N$,
\end{enumerate}
then the quadruples $(G, K, H, \Gamma)$ satisfying (1)-(4) with $G$ amenable can actually be completely classified. Namely, $H$ has to be a $2$-step nilpotent Lie group, and these are well-known. Then $\Gamma$ has to be a lattice in the nilpotent Lie group $N \times H$, and hence arises from a rational basis of the Lie algebra of $N \times H$ by the construction described in \cite[Remark after Thm. 2.12]{Raghunathan}. In each of these cases, one can try to compute explicitly the diffraction formula in terms of a given regular window $W$. We will carry this out for Heisenberg motion groups in Section \ref{SecHeisenberg}. While many of the ideas work rather generally for quadruples satisfying (1)-(4) above, some of our estimates are specific to the Heisenberg group. (For example, we use square-integrability of irreducible spherical representations in an essential way.)

If we drop the condition that $\pi_G(\Gamma)<N$ then we can no longer classify the corresponding cut-and-project sets. Note that if we drop the assumption that $G$ and $H$ are connected, then we can no longer even guarantee that $\Gamma$ is uniform. In fact, there exists a cut-and-project scheme $(G, H, \Gamma)$ and a compact subgroup $K<G$ such that $G$ and $H$ are compact-by-abelian (in particular amenable), $(G, K)$ is a Gelfand pair and $\Gamma$ is non-uniform, see \cite[p. 8]{BHP1} which is based on \cite[Example 3.5]{BenoistQuint} due to Bader, Caprace, Gelander and Monod. This shows that we are very far from classifying the possible cut-and-project sets in a commutative space $G/K$ with $G$ a general lcsc amenable group.

Among the examples of quadruples satisfying (1)-(3) with non-amenable $G$, a central role is played by semisimple (or more generally, reductive) Gelfand pairs. Note that if $G$ is any semisimple Lie group with finite center and $K$ a maximal compact subgroup, then $(G, K)$ is a Gelfand pair, and $K\backslash G$ is a Riemannian symmetric space. In this case, there always exist both uniform and non-uniform cut-and-project schemes of the form $(G, H, \Gamma)$ satisfying (1)-(3). For example, one can always take $H = G$ or $H = G_\C$, the complexification of $G$. The assumption that $\Gamma$ projects densely onto $H$ implies by Margulis' arithmeticity theorem that $\Gamma$ is an arithmetic group. This implies that all weighted model sets in Riemannian symmetric spaces are of arithmetic origin. The simplest examples arise for $G = {\rm SL}_2(\R)$, and we will discuss this case in Section \ref{SecSemisimple}.

\section{Virtually nilpotent examples: Heisenberg motion groups}\label{SecHeisenberg}

\subsection{Spherical functions for Heisenberg motion groups}
Given $d \geq 1$ we abbreviate $V_d := \C^d$ and denote by $\langle \cdot, \cdot \rangle$ and by $m_{V_d}$ the Lebesgue measure on $V_d$. The \emph{standard symplectic form} $\beta_d$ on $V_d$ is given in terms of the standard Hermitian inner product  $\langle \cdot, \cdot \rangle$ by the formula 
\[
\beta_d(u,v) = -\frac{1}{2} \im \langle u,v \rangle, \quad \textrm{for $u,v \in V_d$}.
\]
\begin{definition}
The $(2d+1)$-dimensional \emph{Heisenberg group} $N_d = \bR \oplus_{\beta_d} V_d$ is the group with underlying set $\bR \times V_d$ and multiplication is given by the formula \[
(s,u)(t,v) = (s+t+\beta_d(u,v),u+v), \quad \textrm{for $(s,u), (t,v) \in N_d$}.
\]
\end{definition}
Since $\beta(v,v) = 0$ for all $v \in V$, we have $(t,v)^{-1} = (-t,-v)$ for all $(t,v) \in N$. The Haar measure on $N_d$ is given by $m_{N_d} = m_{\bR} \otimes m_{V_d}$, and is clearly both left- and right-invariant. 
\begin{remark}[Heisenberg motion group]
The group $K_d^{\rm max} := U(d)$ acts on $N_d$ by automorphisms via $k.(t,u) = (t, ku)$. If $K$ is any closed subgroup of $K_d^{\rm max}$ containing the diagonal subgroup $K_d:= \bT^d$, then $(K \ltimes N_d, K)$ is  a Gelfand pair \cite[Corollary~13.2.3]{Wolf-07}, and $K\ltimes N_d$ is called a \emph{Heisenberg motion group}. We will focus on the \emph{minimal Heisenberg motion group} $G_d := K_d \ltimes N_d$. Bi-invariant functions for this Gelfand pair correspond to \emph{polyradial} functions on the Heisenberg group, whereas bi-invariant functions for $K_d^{\rm max}$ are given by the much smaller space of \emph{radial} functions. Correspondingly, our polyradial diffraction formula is stronger than the corresponding radial diffraction formula. To obtain the latter from the former one basically has to expand the spherical functions of the larger Gelfand pair into those of the smaller Gelfand pair using the formulas from \cite{Th}. We omit the details.
\end{remark}
\begin{remark}[Notation concerning function spaces] For every $d \in \mathbb N$ we have an isomorphism of $*$-algebras $\iota_d: C_c(N_d)^{K_d} \to C_c(G_d, K_d)$ which is given by $\iota_d(f)(k,(t, v)) = f(t,v)$ and $\iota_d^{-1}f(t,v) = f(e,(t,v))$. It extends continuously to all $L^p$-spaces for $1\leq p < \infty$ and preserves smooth functions.
\end{remark}
We now recall the classification of spherical functions for the Gelfand pair $(G_d, K_d) = (\bT^d \ltimes (\R \oplus_{\beta_d} V_d), \bT^d)$ associated with the minimal Heisenberg motion group for a fixed $d \in \mathbb N$. We need the following notion:
\begin{definition} Let $\varphi, \psi \in L^1(V_d)$ and $\tau \in \bR$. The function 
\begin{equation}
\label{def_twisted}
(\varphi *_\tau \psi)(v) = \int_{V_d} \varphi(u) \psi(v-u) e^{-i \tau \beta(u,v)} \, dm_{V_d}(u).
\end{equation}
is called the \emph{$\tau$-twisted convolution} of $\varphi$ and $\psi$.
\end{definition}

It is not hard to see that $\varphi *_\tau \psi \in L^1(V)$ and it follows from $K_d$-invariance of $m_{V_d}$ that
\[
L^1(V_d)^{K_d} *_\tau L^1(V_d)^{K_d} \subset L^1(V_d)^{K_d}  \quad \textrm{for every $\tau \in \R$.}
\]
The following result is significantly harder (see \cite[Proposition 1.3.4]{Th}).
\begin{proposition}
For every $\tau \neq 0$ we have $L^2(V) *_\tau L^2(V) \subset L^2(V)$ and thus 
\[
\pushQED{\qed} 
L^2(V_d)^{K_d} *_\tau L^2(V_d)^{K_d} \subset L^2(V_d)^{K_d} \quad \textrm{for every $\tau \in \R \setminus\{0\}$.}
\qedhere
\popQED
\]
\end{proposition}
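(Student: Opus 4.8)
The plan is to identify the $\tau$-twisted convolution with operator composition under the Weyl transform and then to invoke the elementary fact that the product of two Hilbert--Schmidt operators is again Hilbert--Schmidt. Fix $\tau \neq 0$ and let $\pi_\tau$ denote the Schr\"odinger representation of $N_d$ on $L^2(\R^d)$, normalised so that $\pi_\tau(0,u)\pi_\tau(0,v) = e^{-i\tau\beta_d(u,v)}\pi_\tau(0,u+v)$. For $\varphi \in L^1(V_d)$ define the associated Weyl operator
\[
W_\tau(\varphi) := \int_{V_d} \varphi(v)\, \pi_\tau(0,v)\, dm_{V_d}(v) \in \mathcal{B}(L^2(\R^d)).
\]
A short formal computation (substitute $v = u+w$ and use that $\beta_d$ is alternating, so $\beta_d(u,v-u) = \beta_d(u,v)$) gives the intertwining identity
\[
W_\tau(\varphi *_\tau \psi) = W_\tau(\varphi)\, W_\tau(\psi).
\]
The two analytic inputs I would take from the literature (e.g.\ \cite{Th}) are the \emph{twisted Plancherel theorem}, asserting that $W_\tau$ extends from $L^1(V_d) \cap L^2(V_d)$ to a positive constant multiple of a unitary isomorphism onto the Hilbert--Schmidt operators on $L^2(\R^d)$, so that $\|W_\tau(\varphi)\|_{\mathrm{HS}} = c_\tau \|\varphi\|_2$ with $c_\tau = (2\pi/|\tau|)^{d/2} > 0$; and the validity of the intertwining identity on a convenient dense class such as $\mathcal S(V_d)$, where all integrals converge absolutely.

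First I would record that the twisted convolution is \emph{pointwise} well defined for $L^2$ inputs: since the phase has modulus one, $|(\varphi *_\tau \psi)(v)| \le (|\varphi| * |\psi|)(v) \le \|\varphi\|_2\|\psi\|_2$ for every $v$, so $\varphi *_\tau \psi$ is a bounded continuous function and the only issue is its square-integrability. Next, for $\varphi, \psi \in \mathcal S(V_d)$ the intertwining identity together with the twisted Plancherel theorem and the operator bounds $\|AB\|_{\mathrm{HS}} \le \|A\|_{\mathrm{HS}}\|B\|_{\mathrm{op}}$ and $\|B\|_{\mathrm{op}} \le \|B\|_{\mathrm{HS}}$ yield
\[
\|\varphi *_\tau \psi\|_2 = c_\tau^{-1}\|W_\tau(\varphi)\, W_\tau(\psi)\|_{\mathrm{HS}} \le c_\tau^{-1}\|W_\tau(\varphi)\|_{\mathrm{HS}}\,\|W_\tau(\psi)\|_{\mathrm{op}} \le c_\tau\,\|\varphi\|_2\,\|\psi\|_2.
\]
Thus $(\varphi,\psi) \mapsto \varphi *_\tau \psi$ is a bounded bilinear map $\mathcal S(V_d) \times \mathcal S(V_d) \to L^2(V_d)$.

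Finally I would pass to arbitrary $\varphi, \psi \in L^2(V_d)$ by density. Choosing $\varphi_n, \psi_n \in \mathcal S(V_d)$ with $\varphi_n \to \varphi$ and $\psi_n \to \psi$ in $L^2$, the estimate above shows that $(\varphi_n *_\tau \psi_n)$ is Cauchy in $L^2$, hence converges to some $g \in L^2(V_d)$. On the other hand the pointwise bound applied to the bilinear decomposition $\varphi_n *_\tau \psi_n - \varphi *_\tau \psi = (\varphi_n-\varphi)*_\tau\psi_n + \varphi*_\tau(\psi_n-\psi)$ gives
\[
\|\varphi_n *_\tau \psi_n - \varphi *_\tau \psi\|_\infty \le \|\varphi_n - \varphi\|_2\,\|\psi_n\|_2 + \|\varphi\|_2\,\|\psi_n - \psi\|_2 \longrightarrow 0,
\]
so $\varphi_n *_\tau \psi_n \to \varphi *_\tau \psi$ uniformly. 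Passing to an a.e.-convergent subsequence of the $L^2$-limit identifies $g = \varphi *_\tau \psi$ almost everywhere, whence $\varphi *_\tau \psi \in L^2(V_d)$ with $\|\varphi *_\tau \psi\|_2 \le c_\tau\|\varphi\|_2\|\psi\|_2$. The $K_d$-invariant statement is then immediate: since $\beta_d(ku,kv) = \beta_d(u,v)$ and $m_{V_d}$ is invariant for every $k \in K_d \subset U(d)$, the twisted convolution of two $K_d$-invariant functions is again $K_d$-invariant.

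The only genuinely non-formal ingredients are the twisted Plancherel theorem and the justification of the intertwining identity for square-integrable data, and the main obstacle lies precisely there: the Bochner integral defining $W_\tau(\varphi)$ converges in operator norm only for $\varphi \in L^1(V_d)$, so for $\varphi \in L^2(V_d)$ the operator $W_\tau(\varphi)$ must be produced as a Hilbert--Schmidt limit rather than a genuine integral. The density argument above is designed to circumvent this, since it only ever uses unitarity of $W_\tau$ and the intertwining identity on $\mathcal S(V_d)$, where everything converges absolutely; both facts can then be quoted from \cite{Th}, and the remainder is routine bookkeeping.
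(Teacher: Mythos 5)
Your argument is correct: the paper gives no proof of this proposition, only the citation to \cite[Proposition 1.3.4]{Th}, and the Weyl-transform argument you outline --- twisted convolution corresponds to composition of Weyl operators, the twisted Plancherel theorem identifies $L^2(V_d)$ with Hilbert--Schmidt operators up to the constant $c_\tau$, and the product of two Hilbert--Schmidt operators is again Hilbert--Schmidt --- is precisely the proof found in that reference. The density/uniform-convergence bookkeeping you add to identify the $L^2$-limit with the pointwise-defined twisted convolution is sound, and the $K_d$-invariant refinement follows, as you say, from the $U(d)$-invariance of $\beta_d$ and of Lebesgue measure.
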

Note that for $\tau = 0$ the convolution $\ast_0$ is just the usual convolution, and hence the proposition does not extend to the case $\tau = 0$. The relation between twisted convolution and the Heisenberg group is as follows. If $f \in L^1(N_d)$, then we define the \emph{$\tau$-central Fourier transform} $f_\tau \in L^1(V_d)$ as
\[
f_\tau(v) = \int_{\bR} f(t,v) e^{-i\tau t} \, dm_{\bR}(t).
\]
In other words, if $f_v(t) := f(t, v)$, then $f_\tau(v) = \widehat{f_v}(\tau)$ is the Fourier transform in the central variable evaluated at $\tau$. We observe:
\begin{lemma}
\label{lemma_cft}
For every $\tau \in \R$ and all $f, g \in L^1(N_d)$ we have $(f * g)_\tau = f_\tau *_\tau g_\tau$.
\end{lemma}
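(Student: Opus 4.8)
The plan is to unwind all the definitions and reduce the identity to one change of variables in the central coordinate followed by a single application of Fubini's theorem. First I would write out the group convolution explicitly. For $y = (r,u)$ we have $y^{-1} = (-r,-u)$, and the multiplication law together with $\R$-bilinearity of $\beta$ (which gives $\beta(-u,w) = -\beta(u,w)$) yields $y^{-1}(s,w) = (s - r - \beta(u,w),\, w-u)$. Hence, using $m_{N_d} = m_{\R}\otimes m_{V_d}$,
\[
(f * g)(s,w) = \int_{V_d}\int_{\R} f(r,u)\, g\big(s-r-\beta(u,w),\, w-u\big)\, dm_{\R}(r)\, dm_{V_d}(u).
\]
The crucial structural feature is that the group law inserts the cocycle term $\beta(u,w)$ into the central coordinate of the second factor; this is exactly what will produce the twisting phase.

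Next I would apply the $\tau$-central Fourier transform, i.e.\ multiply by $e^{-i\tau s}$ and integrate over $s \in \R$, and then substitute $s' = s - r - \beta(u,w)$. This factors the central exponential completely as $e^{-i\tau s} = e^{-i\tau s'}\,e^{-i\tau r}\,e^{-i\tau\beta(u,w)}$, separating it into a piece depending only on $r$, a piece depending only on $s'$, and the twisting phase $e^{-i\tau\beta(u,w)}$ depending only on $u$ and $w$. It is important that this substitution is carried out \emph{before} invoking Fubini, so that the twisting phase is correctly isolated from the two one-dimensional Fourier integrals.

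Finally, after justifying Fubini, I would separate the $r$- and $s'$-integrals to obtain
\[
(f*g)_\tau(w) = \int_{V_d}\Big(\int_{\R} f(r,u)\, e^{-i\tau r}\, dr\Big)\Big(\int_{\R} g(s',w-u)\, e^{-i\tau s'}\, ds'\Big)\, e^{-i\tau\beta(u,w)}\, dm_{V_d}(u),
\]
and recognize the two inner integrals as $f_\tau(u)$ and $g_\tau(w-u)$ respectively. By the definition \eqref{def_twisted} of the $\tau$-twisted convolution, the right-hand side is precisely $(f_\tau *_\tau g_\tau)(w)$, which is the desired identity. The application of Fubini is immediate since $f,g \in L^1(N_d)$ and $|e^{-i\tau t}| = 1$, so the iterated integral is absolutely convergent; thus there is no genuine obstacle here, and the only points that require care are the sign in $\beta(-u,w)$ and the ordering of the substitution relative to Fubini.
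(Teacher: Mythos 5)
Your proof is correct and follows essentially the same route as the paper: write out the group convolution explicitly, shift the central variable to factor $e^{-i\tau t}$ into the two one-dimensional Fourier integrals times the twisting phase $e^{-i\tau\beta(u,v)}$, and apply Fubini (justified by $f,g\in L^1(N_d)$ and $|e^{-i\tau t}|=1$). Nothing is missing; the care you take with the sign $\beta(-u,w)=-\beta(u,w)$ and with performing the substitution before separating the integrals matches the paper's computation exactly.
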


\begin{proof}
First note that if $f,g \in L^1(N_d)$, then
\begin{eqnarray*}
(f * g)(t,v) 
&=& 
\int_{N_d} f(s,u) g((s,u)^{-1}(t,v)) \, dm_{N_d}(s,u) \\
&=&
\int_{{N_d}} f(s,u) g(t-s-\beta(u,v),v-u) \, dm_{N_d}(s,u).
\end{eqnarray*}
Hence, for every $\tau$,
\begin{eqnarray*}
(f * g)_\tau(v) 
&=& 
\int_{\bR} \Big( \int_{{N_d}} f(s,u) g(t-s-\beta(u,v),v-u) \, dm_{N_d}(s,u) \Big) e^{-i\tau t} \, dm_{\bR}(t) \\
&=&
\int_{{V_d}} \Big( \int_{\bR} \int_{\bR} f(s,u) e^{-i\tau s} \, g(t,v-u) e^{-i\tau t} \, dm_\bR(s) \, dm_{\bR}(t) \Big)\, e^{-i \tau \beta(u,v)} \, dm_{{V_d}}(u) \\
&=&
\int_{V_d} f_\tau(u) g_\tau(v-u) e^{-i \tau \beta(u,v)} \, dm_{V_d}(u) = (f_\tau *_\tau g_\tau)(v).
\end{eqnarray*}
This shows that $(f * g)_\tau = f_\tau *_\tau g_\tau$.
\end{proof}
Applying Young's inequality we deduce in particular that
\begin{equation}\label{cor_cft}
\|(f * g)_\tau\|_{L^\infty(V_d)} \leq \|f_\tau\|_{L^2(V_d)} \, \|g_\tau\|_{L^2(V_d)}, \quad \textrm{for all $\tau \in \R$ and $f,g \in L^1(N_d) \cap L^2(N_d)$}.
\end{equation}

\begin{definition} A function $q \in L^\infty(V_d)^{K_d}$ is called \emph{$\tau$-spherical} for $\tau \in \R$ if 
\[
\langle \varphi *_\tau \psi, q \rangle = \langle \varphi , q \rangle \, \langle \psi, q \rangle, \quad \textrm{for all $\varphi, \psi \in L^1(V_d)^{K_d}$}.
\]
\end{definition}
Such functions can be used to parametrize the spherical functions for the Gelfand pair $(G_d, K_d)$:
\begin{proposition}\label{TauSphericalSpherical} Given $\tau \in \R$ and a $\tau$-spherical function $q \in L^\infty(V_d)^{K_d}$, the function $\omega: G_d \to \C$ given by
\begin{equation}
\omega(k, (t,v)) := e^{i\tau t}q(v).
\end{equation}
is a bounded $K_d$-spherical function. Moreover, if $f \in L^1(N_d)^{K_d}$, then
\begin{equation}\label{FTComesBack}
\langle \iota_d(f), \omega \rangle = \langle f_\tau, q\rangle.
\end{equation}
\end{proposition}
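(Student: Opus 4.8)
The plan is to show that $\omega$ lies in $C(G_d,K_d)$ and satisfies characterization (S1), i.e.\ that $m_\omega(F)=\int_{G_d}F(x)\,\omega(x^{-1})\,dm_{G_d}(x)$ is a nonzero character of the Hecke algebra $C_c(G_d,K_d)$; by the equivalence of (S1)--(S4) this makes $\omega$ a spherical function and forces $\omega(e)=q(0)=1$ as a byproduct. Boundedness is immediate from $|\omega(k,(t,v))|=|q(v)|\le\|q\|_\infty$. For bi-$K_d$-invariance I would compute in $G_d=K_d\ltimes N_d$ that right translation by $k_0\in K_d$ leaves the $N_d$-coordinate unchanged, so right-invariance is automatic, while left translation sends $(t,v)$ to $(t,k_0v)$, so left-invariance is exactly the hypothesis $q(k_0v)=q(v)$. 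Continuity of $\omega$ reduces to that of $q$, and a $\tau$-spherical function has a continuous representative, being (up to a scalar) an eigenfunction of twisted convolution.

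I would prove the ``moreover'' formula first, as it is a clean Fubini computation that reappears in the character argument. Writing $m_{G_d}=m_{K_d}\otimes m_{N_d}$ and $m_{N_d}=m_{\R}\otimes m_{V_d}$, the $K_d$-integral in $\langle\iota_d(f),\omega\rangle$ is trivial because the integrand is $K_d$-independent, and the central $\R$-integral produces exactly the $\tau$-central Fourier transform $f_\tau$; thus $\langle\iota_d(f),\omega\rangle=\int_{V_d}f_\tau(v)\,\overline{q(v)}\,dm_{V_d}(v)=\langle f_\tau,q\rangle$.

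For sphericity I would transport the character condition through the $*$-algebra isomorphism $\iota_d\colon C_c(N_d)^{K_d}\to C_c(G_d,K_d)$, so that it suffices to show $\Phi(f):=m_\omega(\iota_d(f))$ is multiplicative on $(C_c(N_d)^{K_d},*)$. Using $(k,(t,v))^{-1}=(k^{-1},(-t,-k^{-1}v))$ together with the $K_d$-invariance of $q$ and the evenness $q(-v)=q(v)$ (valid since $-v\in K_d.v$), the same computation gives $\Phi(f)=\int_{V_d}f_\tau(v)\,q(v)\,dm_{V_d}(v)$, now with $q$ \emph{un}conjugated. By Lemma~\ref{lemma_cft} we have $(f*g)_\tau=f_\tau*_\tau g_\tau$, hence $\Phi(f*g)=\int_{V_d}(f_\tau*_\tau g_\tau)\,q\,dm_{V_d}$, and the task is to factor this as $\Phi(f)\Phi(g)$.

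The main obstacle is precisely this conjugation bookkeeping: the $\tau$-spherical hypothesis is stated with the antilinear pairing $\langle\,\cdot\,,q\rangle=\int(\cdot)\,\overline q$, whereas $\Phi$ produces the bilinear pairing $\int(\cdot)\,q$. To bridge the gap I would record two symmetries of $K_d$-invariant functions coming from complex conjugation $v\mapsto\overline v$ on $V_d$: first $q(\overline v)=q(v)$ (again because $\overline v\in K_d.v$), and second, using $\beta_d(\overline u,\overline v)=-\beta_d(u,v)$ and the substitution $u\mapsto\overline u$ in \eqref{def_twisted}, the identity $(\varphi*_\tau\psi)(\overline v)=(\varphi*_{-\tau}\psi)(v)$ for $\varphi,\psi\in L^1(V_d)^{K_d}$. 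Conjugating the defining identity of a $\tau$-spherical $q$ and relabelling shows that $\overline q$ is $(-\tau)$-spherical, i.e.\ $\int(\varphi*_{-\tau}\psi)\,q=(\int\varphi\,q)(\int\psi\,q)$; then the substitution $v\mapsto\overline v$ combined with the two symmetries turns the left-hand side into $\int(\varphi*_\tau\psi)\,q$, which is exactly the factorization needed for $\Phi(f*g)=\Phi(f)\Phi(g)$ with $\varphi=f_\tau$, $\psi=g_\tau$. Finally $\Phi\neq0$ because $q\neq0$ (a character of the twisted convolution algebra is nonzero), so $m_\omega$ is a genuine character and $\omega$ is spherical.
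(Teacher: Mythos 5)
Your proof is correct, and its core is the same as the paper's: both reduce to the level of $N_d$ via the isomorphism $\iota_d$, compute the ``moreover'' identity by Fubini, and then obtain multiplicativity from Lemma~\ref{lemma_cft} together with the defining property of a $\tau$-spherical function. The one genuine divergence is the conjugation bookkeeping. The paper verifies multiplicativity of the \emph{sesquilinear} functional $F \mapsto \langle F, \omega\rangle = \int F\,\overline{\omega}\,dm_{G_d}$, which lines up directly with the $\tau$-sphericity hypothesis $\langle \varphi \ast_\tau \psi, q\rangle = \langle\varphi,q\rangle\langle\psi,q\rangle$ and makes the argument three lines long; strictly speaking this checks characterization (S1) for $\omega^*$ rather than $\omega$, a distinction that evaporates because the relevant $q$ are real-valued (as the classification confirms). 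You instead target (S1) for $\omega$ itself, which produces the bilinear pairing $\int (\cdot)\,q$ and forces your detour through the symmetries $q(\bar v)=q(v)$, $\beta_d(\bar u,\bar v)=-\beta_d(u,v)$ and $(\varphi\ast_\tau\psi)(\bar v)=(\varphi\ast_{-\tau}\psi)(v)$; that detour is correct and in fact resolves the conjugation issue the paper glosses over. Two very minor points: your assertion that a $\tau$-spherical $q\in L^\infty$ has a continuous representative deserves a one-line justification (the functional equation gives $\overline{q(u)}\langle\psi,q\rangle=\int \psi(w)e^{-i\tau\beta(u,w)}\overline{q(w+u)}\,dm_{V_d}(w)$ for a.e.\ $u$, and the right-hand side is continuous in $u$ by dominated convergence), and the nonvanishing of $\Phi$ is most cleanly seen by taking $f=\chi\otimes h$ with $\widehat{\chi}(\tau)\neq 0$ and $\int h\,q\neq 0$.
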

\begin{proof} We first prove the second statement: For $f \in L^1(N_d)^{K_d}$ we have
\[
\langle \iota_d(f), \omega \rangle = \int_{N_d} f(t,v) e^{-i\tau t} \overline{q(v)} \, dm_{N_d}(t,v) = \int_{V_d} \int_{\bR} f(t,v) e^{-i\tau t} \, dm_{\bR}(t)\, \overline{q(v)}\, dm_{V_d}(v) = \langle f_\tau, q\rangle.
\]
This establishes \eqref{FTComesBack}, and for $f_1, f_2 \in L^1(N_d)^{K_d}$ we obtain
\begin{eqnarray*}
\langle \iota_d(f_1) \ast \iota_d(f_2), \omega\rangle &=& \langle \iota_d(f_1 \ast f_2), \omega\rangle \quad = \quad \langle (f_1\ast f_2)_\tau, q\rangle \quad = \quad \langle (f_1)_\tau \ast_\tau (f_2)_\tau, q\rangle\\ &=& \langle (f_1)_\tau, q \rangle \langle (f_2)_\tau, q \rangle \quad =\quad \langle \iota_d(f_1), \omega\rangle\langle \iota_d(f_2), \omega\rangle.
\end{eqnarray*}
Since $\iota_d:  L^1(N_d)^{K_d} \to  L^1(G_d, K_d)$ is surjective, this proves that $\omega$ is $K_d$-spherical.
\end{proof}
\begin{remark}[Classification of $(G_d, K_d)$-spherical functions] 
We state without proof the classification of positive-definite $(G_d, K_d)$-spherical functions, which can be found e.g.\ in \cite{Th}, see in particular Proposition 3.2.3 and the remarks after Proposition 3.2.5. It turns out that all bounded spherical function are positive-definite and that they all arise from $\tau$-spherical functions via the construction in Proposition \ref{TauSphericalSpherical}. 

Recall that the \emph{Laguerre polynomial $L_k$ of degree $k$ and type $0$} is defined by 
\[
L_k(t) = e^{-t}\Big(\frac{d}{dt}\Big)^k(e^{t}t^k), \quad \textrm{for $k \in \bN$},
\]
and that the \emph{zeroth Bessel function} $J_o$ is defines as 
\[
J_o(r) = \frac{1}{\pi} \int_0^\pi e^{ir\cos \theta} \, d\theta, \quad \textrm{for $r \geq 0$}.
\]
For $\tau \in \R \setminus \{0\}$, there are countably many $\tau$-spherical functions $\{q_{\tau, \alpha} \mid \alpha \in \bN^d\}$, which can be parametrized by $\bN^d$ and are given by the real-valued functions
\[
q_{\tau, \alpha}(v) = e^{-|\tau||v|^2/4} \cdot \prod_{j=1}^d L_{\alpha_j}(|\tau||v_j|^2/2).
\]
For $\tau = 0$ there are uncountably many $\tau$-spherical function $\{q_{0, \kappa} \mid \kappa \in \R_{\geq 0}^d\}$, which can be parametrized by $\R_{\geq 0}^d$ and are given by
\[
q_{0, \kappa}(v) = \prod_{j=1}^d J_0(\kappa_j |v_j|).
\]
If we denote by $\omega_{\tau, \alpha}$, respectively $\omega_{0, \kappa}$ the $K_d$-spherical functions on $G_d$ corresponding to $q_{\tau, \alpha}$ and $q_{0, \kappa}$ respectively, then we have
\begin{equation}
\mathcal S^+(G_d, K_d) = \mathcal S_b(G_d, K_d) = \{\omega_{\tau,\alpha} \mid \tau \in \R \setminus \{0\}, \alpha \in \bN^d\} \sqcup \{\omega_{0, \kappa} \mid \kappa \in \R_{\geq 0}^d\}.
\end{equation}
The functions $q_{\tau, \alpha}$ are matrix coefficients of Schr\"odinger representations with central character $e^{i \tau}$ and $K_d$ acting by $e^{i\langle \alpha, \cdot \rangle}$ and the functions $q_{0, \kappa}$ correspond to $K_d$-orbits of characters as discussed in the virtually abelian case above. Note that, by the same computation as in the virtually abelian case, for $F \in C_c(G_d, K_d)$, $k \in K_d$ and $\sigma \in V_d$ we have
\begin{equation}\label{BesselExplicit}
\widehat{F}(\omega_{0, |\sigma|}) = \int_{N_d} F(k, (t,v)) e^{-i \langle \sigma, v \rangle} dm_{N_d}(t,v).
\end{equation}
\end{remark}

\subsection{Model sets in minimal Heisenberg motion groups}
We now describe the setting that we will consider throughout the rest of this section. From now on we fix a pair of parameters $\bd = (d_1, d_2)$ and set 
\[
G := G_{d_1} = K_{d_1} \ltimes N_{d_1} = K_{d_1} \ltimes (\R \oplus_{\beta_{d_1}} V_{d_1}), \quad K:= K_{d_1} \qand H := N_{d_2} = \R \oplus_{\beta_{d_2}} V_{d_2}.
\]
Later we will also consider the group
\[
\widetilde{H} := K_{d_2} \ltimes H
\]
We then have 
\[
G \times H = K_{d_1} \ltimes (\R^2 \oplus_{\beta_{\bd}} V_{\bd}) = K_{d_1} \ltimes (N_{d_1} \times N_{d_2})
\]
where $V_{\bd} = V_{d_1} \oplus V_{d_2}$ and $\beta_{\bd}: V_{\bd} \to \R^2$ is the cocycle given by 
\[
\beta_{\bd}((u_1, u_2), (v_1, v_2)) = \begin{pmatrix} \beta_{d_1}(u_1, v_1)\\ \beta_{d_2}(u_2, v_2) \end{pmatrix} \quad (u_1, v_1 \in V_{d_1}, u_2, v_2 \in V_{d_2}).
\]
\begin{remark}[Cut-and-project schemes for Heisenberg motion groups]\label{HeisenbergCUP} We choose lattices $\Delta < V_{\bd}$ and $\Xi < \R^2$ such that $\Delta$ projects densely and injectively onto $V_{d_1}$ and $V_{d_2}$, $\Xi$ projects densely and injectively onto both coordinates and such that $\beta_{\bd}(\Delta, \Delta) \subset \Xi$. We then obtain a lattice $\Gamma_o = \Xi \oplus_{\beta_{\bd}} \Delta$ in $N_{\bd}$, and hence also a lattice
\[
\Gamma := \{((e, (\xi_1, \delta_1), (\xi_2, \delta_2)) \in G \times H \mid (\xi_1, \xi_2) \in \Xi, (\delta_1, \delta_2) \in \Delta\},
\]
in $G \times H$, which we can further extend to a lattice
\begin{equation}\label{GammaTilde}
\widetilde{\Gamma} := \{((e, \gamma_1), (e, \gamma_2)) \in G \times \widetilde{H} \mid (\gamma_1, \gamma_2) \in \Gamma_o\} < G \times \widetilde{H}.
\end{equation}
in $G \times \widetilde{H}$. Then by construction $(G, H, \Gamma)$ is a cut-and-project scheme for the minimal Heisenberg motion group $G$. \end{remark}
\begin{remark}[Model sets in the Heisenberg motion group] Let $(G, H, \Gamma)$ be the cut-and-project scheme from Remark \ref{HeisenbergCUP}. Since $\Gamma$ is countable, we can choose parameters $a_j, b_j \in \R$ such that the boundary of
\[
W := \{(t, (z_1, \dots, z_d)) \in H \mid t \in [a_0, b_0], |z_j| \in [a_j, b_j] \}
\]
does not intersect the projection of $\Gamma$ to $H$. We fix such parameters once and for all. Note that $W$ splits as a product
\[
W = I \otimes W_o, \quad \text{where } I = [a_0, b_0] \text{ and } W_o = \{(z_1, \dots, z_d) \in V_{d_2} \mid |z_j| \in [a_j, b_j]\}.
\]
We then obtain a uniform regular model set
\[
\widetilde{\Lambda} := \widetilde{\Lambda}(G, H, \Gamma, W) = {\rm proj}_G(\Gamma \cap (G \times W)),
\]
and the associated uniform regular model set in the Heisenberg group $K\backslash G = N_{d_1}$ is then given by
\[
\Lambda  = {}_Kp(\widetilde{\Lambda}) = {\rm proj}_{N_{d_1}}(\Gamma_o \cap (N_{d_1} \times W)).
\]
\end{remark}
For the remainder of this section we assume that $\widetilde{\Lambda}$ and $\Lambda$ are defined as above. We then denote by $\nu$ the unique $G$-invariant measure on the hull of $\Lambda$ and by $\eta$ the associated auto-correlation measure. Our goal is to determine the corresponding diffraction measure $\widehat{\eta}$. From Proposition \ref{DiffCoeffConvenient} we know that
\[
\widehat{\eta}(|\widehat{f}|^2) =  \sum_{(e, \gamma_1),\gamma_2) \in \Gamma} (f \ast f^*)(\gamma_1) ({\bf 1}_W \ast {\bf 1}_{W^{-1}})(\gamma_2) \quad  (f \in C_c(G, K) = C_c(G_{d_1}, K_{d_1})).
\]
If we denote by $\widetilde{\Gamma} < G \times \widetilde{H}$ the lattice from \eqref{GammaTilde}, then we can rewrite this as
\begin{equation}\label{HeisenbergStart}
\widehat{\eta}((|\widehat{f}|^2) = \delta_{\widetilde{\Gamma}}\left((f \otimes {\bf 1}_{\{e\} \times W})\ast (f \otimes {\bf 1}_{\{e\} \times W})^* \right).
%\delta_{\widetilde{\Gamma}}\left((f \ast f^*)\otimes ({\bf 1}_{\{e\} \times W} \ast {\bf 1}^*_{\{e\} \times W}) \right)
\end{equation}
This formula is the starting point of our investigation.

\subsection{Regularization}
A technical difficulty in manipulating the right-hand side of \eqref{HeisenbergStart} arises from the fact that $f$ is not smooth, and that ${\bf 1}_W$ is not even continuous. To circumvent this problem we argue as follows. 

The key observation is that not only $(G, K_{d_1})$, but also $(G \times \widetilde{H}, K_{d_1} \times K_{d_2})$ is a Gelfand pair, and that $\delta_{\widetilde{\Gamma}}$ is a positive-definite Radon measure on $G \times \widetilde{H}$. It thus follows from the absolute case of the Godement-Plancherel theorem (Corollary \ref{GodementPlancherel}) that the measure $\delta_{\widetilde{\Gamma}}$ admits a spherical Fourier transform $\widehat{\delta}_{\widetilde{\Gamma}}$ with respect to the Gelfand pair $(G \times \widetilde{H}, K_{d_1} \times K_{d_2})$. This measure satisfies
\begin{equation}\label{FTDeltaGamma}
\widehat{\delta}_{\widetilde{\Gamma}}(|\widehat{F}|^2) = \delta_{\widetilde{\Gamma}}(F \ast F^*)
\end{equation}
for all bounded measurable bi-$K_{d_1} \times K_{d_2}$-invariant functions $F: G \times \widetilde{H} \to \C$ with compact support, but it is already uniquely determined by the fact that it satisfies \eqref{FTDeltaGamma} for all \emph{smooth} functions $F \in C^\infty_c(G \times \widetilde{H},  K_{d_1} \times K_{d_2})$. From the former property and \eqref{HeisenbergStart} we may deduce that
\[
\widehat{\eta}(|\widehat{f}|^2) = \widehat{\delta}_{\widetilde{\Gamma}} (|\widehat f|^2 \otimes |\widehat{{\bf 1}}_{\{e\} \times W}|^2),
\]
and using the latter property we deduce:
\begin{lemma}[Regularity lemma]\label{RegularityLemma}
Let $m \in M^+(\mathcal S^+(G \times \widetilde{H},  K_{d_1} \times K_{d_2}))$ and assume that for all smooth functions $f_1 \in  C^\infty_c(N_{d_1})^{K_{d_1}} $ and $f_2 \in C^\infty_c(N_{d_2})^{K_{d_2}}$ we have
\[
\sum_{(\delta_1, \delta_2) \in \Delta} \sum_{(\xi_1, \xi_2) \in \Xi} (f_1 \ast f_1^*)(\xi_1, \delta_1) (f_2 \ast f_2^*)(\xi_2, \delta_2) = m(|\widehat{\iota_{d_1}^{-1}({f_1}})|^2 \otimes |\widehat{\iota_{d_2}^{-1}(f_2)}|^2).
\]
Then $m =  \widehat{\delta}_{\widetilde{\Gamma}}$, and thus for all $\psi \in C_c(\mathcal S^+(G, K_{d_1}))$ we have
\[\pushQED{\qed}
\widehat{\eta}(\psi) = m(\psi \otimes (|\widehat{{\bf 1}}_{\{e\} \times W}|^2)).\qedhere \popQED
\]
\end{lemma}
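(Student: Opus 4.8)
The plan is to recognise the hypothesis as the Plancherel identity of $\widehat{\delta}_{\widetilde{\Gamma}}$ evaluated on smooth product test functions, and then to promote this to an equality of measures by a density argument. First I would fix smooth polyradial $f_1, f_2$ and set $F := \iota_{d_1}(f_1) \otimes \iota_{d_2}(f_2)$, a smooth, compactly supported, bi-$(K_{d_1}\times K_{d_2})$-invariant function on $G \times \widetilde{H}$. Since each $\iota_{d_i}$ is a $*$-isomorphism and every element of $\widetilde{\Gamma}$ has trivial $K_{d_1}$- and $K_{d_2}$-components, one has $\iota_{d_i}(g)(e, \gamma) = g(\gamma)$, so that $\delta_{\widetilde{\Gamma}}(F \ast F^*) = \sum_{(\delta_1,\delta_2)\in\Delta}\sum_{(\xi_1,\xi_2)\in\Xi}(f_1\ast f_1^*)(\xi_1,\delta_1)(f_2\ast f_2^*)(\xi_2,\delta_2)$, which is exactly the left-hand side of the hypothesis. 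On the product Gelfand pair $(G\times\widetilde H, K_{d_1}\times K_{d_2})$ the spherical dual factors as $\mathcal S^+(G, K_{d_1})\times\mathcal S^+(\widetilde H, K_{d_2})$ and the spherical Fourier transform multiplies, so $|\widehat F|^2 = |\widehat{\iota_{d_1}(f_1)}|^2\otimes|\widehat{\iota_{d_2}(f_2)}|^2$. Thus the hypothesis says precisely that $m(|\widehat F|^2) = \delta_{\widetilde\Gamma}(F\ast F^*) = \widehat\delta_{\widetilde\Gamma}(|\widehat F|^2)$ for every such product $F$, the last equality being the defining property of $\widehat\delta_{\widetilde\Gamma}$ for the smooth function $F$.

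Next I would upgrade this agreement to $m = \widehat\delta_{\widetilde\Gamma}$. The functions $|\widehat{\iota_{d_i}(h)}|^2 = \widehat{\iota_{d_i}(h)\ast\iota_{d_i}(h)^*}$ lie in $C_0(\mathcal S^+(G_{d_i}, K_{d_i}))$ and are real and nonnegative; because $\widehat{a\ast b} = \widehat a\,\widehat b$ on the Hecke algebra, their linear span is closed under multiplication, and by polarization it coincides with $\linspan\{\widehat a\,\overline{\widehat b}\}$, hence is a self-adjoint subalgebra separating the points of $\mathcal S^+(G_{d_i}, K_{d_i})$. By Stone--Weierstrass it is dense in $C_0$, and therefore the tensor products $|\widehat{\iota_{d_1}(f_1)}|^2\otimes|\widehat{\iota_{d_2}(f_2)}|^2$ span a dense subalgebra of $C_c(\mathcal S^+(G, K_{d_1})\times\mathcal S^+(\widetilde H, K_{d_2}))$. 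Two positive Radon measures agreeing on such a dense family coincide, giving $m = \widehat\delta_{\widetilde\Gamma}$; this is just the uniqueness clause of Theorem~\ref{GodementConvenient} made explicit on product functions.

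Finally I would read off the formula for $\widehat\eta$. The identity $\widehat\eta(|\widehat f|^2) = \widehat\delta_{\widetilde\Gamma}(|\widehat f|^2\otimes|\widehat{{\bf 1}}_{\{e\}\times W}|^2)$ proved before the statement---using the validity of the Plancherel identity of $\widehat\delta_{\widetilde\Gamma}$ for the merely bounded, compactly supported function $f\otimes{\bf 1}_{\{e\}\times W}$---together with $m = \widehat\delta_{\widetilde\Gamma}$ gives $\widehat\eta(|\widehat f|^2) = m(|\widehat f|^2\otimes|\widehat{{\bf 1}}_{\{e\}\times W}|^2)$ for all $f\in C_c(G, K_{d_1})$. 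To pass to an arbitrary $\psi\in C_c(\mathcal S^+(G, K_{d_1}))$ I would introduce the functional $\lambda(\psi):=m(\psi\otimes|\widehat{{\bf 1}}_{\{e\}\times W}|^2)$ and check that it is a genuine Radon measure: for compact $C$ pick $f$ with $|\widehat f|^2\geq{\bf 1}_C$, so that positivity of $m$ yields $\lambda(C)\leq m(|\widehat f|^2\otimes|\widehat{{\bf 1}}_{\{e\}\times W}|^2) = \widehat\eta(|\widehat f|^2)<\infty$. Thus $\lambda$ is locally finite, and since $\lambda$ and $\widehat\eta$ agree on the dense set $\{|\widehat f|^2\}$ they are equal, which is the asserted formula.

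The step I expect to be the main obstacle is precisely this last local-finiteness check, since it is the only place where the non-smoothness of ${\bf 1}_W$ really intervenes: one cannot integrate $|\widehat{{\bf 1}}_{\{e\}\times W}|^2$ against $m$ by hand, and the argument must instead be routed through the two separate properties of $\widehat\delta_{\widetilde\Gamma}$---uniqueness tested against \emph{smooth} functions (used to identify $m$) and validity of the Plancherel identity tested against \emph{bounded compactly supported} functions (used both to produce the displayed relation for $\widehat\eta(|\widehat f|^2)$ and to dominate the slices of $\lambda$).
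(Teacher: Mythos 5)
Your argument is correct and is exactly the argument the paper intends (the lemma is stated without proof, resting on the preceding paragraph): identify the left-hand side as $\delta_{\widetilde{\Gamma}}(F\ast F^*)$ for the smooth product $F$, invoke the uniqueness of $\widehat{\delta}_{\widetilde{\Gamma}}$ tested on smooth functions, and then feed in the bounded-measurable version of the Plancherel identity for $f\otimes {\bf 1}_{\{e\}\times W}$. You in fact supply two details the paper leaves implicit --- the Stone--Weierstrass argument showing that smooth \emph{products} already suffice for uniqueness, and the local-finiteness of $\psi\mapsto m(\psi\otimes|\widehat{{\bf 1}}_{\{e\}\times W}|^2)$ --- with the only residual imprecision being that "two positive Radon measures agreeing on a family dense in $C_0$ coincide" requires the localization trick of Lemma \ref{Godement2} when the measures are unbounded, a point the paper itself glosses over in the same way.
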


\subsection{The Poisson summation formula and the horizontal contribution}
From now on we consider functions $f_1 \in  C^\infty_c(N_{d_1})^{K_{d_1}} $ and $f_2 \in C^\infty_c(N_{d_2})^{K_{d_2}}$. By Lemma \ref{RegularityLemma}, in order to determine the auto-correlation measure we have to express the sum
\[
\Xi(f_1, f_2) := \sum_{(\delta_1, \delta_2) \in \Delta} \sum_{(\xi_1, \xi_2) \in \Xi} (f_1 \ast f_1^*)(\xi_1, \delta_1) (f_2 \ast f_2^*)(\xi_2, \delta_2)
\]
in terms of the spherical Fourier transforms of the functions $F_1 := {\iota_{d_1}^{-1}(f_1)}$ and $F_2 := {\iota_{d_2}^{-1}(f_2)}$. 

If we formally apply the Poisson summation formula in the $\xi$-variables and then apply Lemma \ref{lemma_cft} we obtain
\begin{eqnarray*}
\Xi(f_1, f_2) &=& \sum_{(\delta_1, \delta_2) \in \Delta} \sum_{(\xi_1, \xi_2) \in \Xi} (f_1 \ast f_1^*)(\xi_1, \delta_1) (f_2 \ast f_2^*)(\xi_2, \delta_2)\\
&=& \sum_{(\delta_1, \delta_2) \in \Delta} \sum_{(\tau_1, \tau_2) \in \Xi^\perp} (f_1 \ast f_1^*))_{\tau_1}(\delta_1) (f_2 \ast f_2^*)_{\tau_2}(\delta_2)\\
&=& \sum_{(\tau_1, \tau_2) \in \Xi^\perp}  \sum_{(\delta_1, \delta_2) \in \Delta} ((f_1)_{\tau_1} \ast_{\tau_1} (f_1)^*_{\tau_1})(\delta_1)((f_2)_{\tau_2} \ast_{\tau_2} (f_2)^*_{\tau_2})(\delta_2).
\end{eqnarray*}
Here the sum over $\Delta$ is actually finite, hence the final rearrangement is legitimate. To justify the application of the Poisson summation formula, we need to ensure enough decay of the function 
\[
\R^2 \to \C, \quad \tau \mapsto \big((f_1 * f_1^*) \otimes (f_2 * f_2^*)\big)_\tau(\delta_1,\delta_2), \quad \textrm{for $\tau \in \bR^2$},
\]
for a fixed $(\delta_1,\delta_2) \in \Delta$. By \eqref{cor_cft}, we know that the absolute value of 
this function is bounded from above by the function 
\[
\tau \mapsto \|(f_1)_{\tau_1}\|^2_{L^2(V_{d_1})} \, \|(f_2)_{\tau_2}\|^2_{L^2(V_{d_2})}.
\]
To justify our formal computation  we thus need to ensure that this majorant is summable over $\Xi^\perp$. This, however, follows from the smoothness assumptions on $f_1$ and $f_2$, which ensure that their central Fourier transforms decay superpolynomially fast.

Now recall from Lemma \ref{CPPropertyAbelian} that $\Xi^\perp \subset \widehat{\R^2}$ projects injectively onto both coordinates, hence $(0,0)$ is the only point in $\Xi^\perp$ which has a $0$ coordinate. We may thus split $\Xi(f_1, f_2)$ into a sum of a \emph{horizontal part} (corresponding to $(\tau_1, \tau_2) = (0,0)$)
\begin{equation}\label{HorizontalPart}
\Xi_{\rm hor}(f_1, f_2) =   \sum_{(\delta_1, \delta_2) \in \Delta} ((f_1)_{0} \ast_{0} (f_1)^*_0)(\delta_1)((f_2)_{0} \ast_{0} (f_2)^*_0)(\delta_2),
\end{equation}
and a \emph{vertical part}
\begin{equation}\label{VerticalPart}
\Xi_{\rm ver}(f_1, f_2)  = \underset{\tau_1 \neq 0 \neq \tau_2}{\sum_{(\tau_1, \tau_2) \in \Xi^\perp}}  \sum_{(\delta_1, \delta_2) \in \Delta} ((f_1)_{\tau_1} \ast_{\tau_1} (f_1)^*_{\tau_1})(\delta_1)((f_2)_{\tau_2} \ast_{\tau_2} (f_2)^*_{\tau_2})(\delta_2).\end{equation}
The computation of the horizontal part is exactly as in the virtually abelian case: If for $j\in \{1,2\}$ we abbreviate $g_j := (f_j)_0$, then using that $\ast_0$ is just the usual (untwisted) convolution then the Poisson summation formula (which applies in view of the regularity assumptions of the $f_j$) yields
\[
\Xi_{\rm hor}(f_1, f_2) = \sum_{(\sigma_1, \sigma_2) \in \Delta^\perp} |\widehat{g_1}(\sigma_1)|^2 |\widehat{g_2}(\sigma_2)|^2.
\]
Now for $\sigma_j \in V_{d_j}$ we have by definition of $g_j$ and $F_j$ and \eqref{BesselExplicit}
\begin{eqnarray*}
|\widehat{g_j}(\sigma_j)| &=& \int_{V_{d_j}} g_j(v_j) e^{-i \langle \sigma_j, v_j\rangle} dm_{V_{d_j}}(v_j)\\
&=& \int_{N_{d_j}} f_j(t_j, v_j) e^{-i \langle \sigma_j, v_j \rangle}  dm_{N_{d_j}}(t_j, v_j)\\
&=&  \int_{N_d} F_j(e, (t,v)) e^{-i \langle \sigma_j, v_j \rangle} dm_{N_d}(t,v)\\
&=& \widehat{F}_j(\omega_{0, |\sigma_j|}).
\end{eqnarray*}
We have thus established:
\begin{corollary}[Computation of the horizontal contribution]\label{Horizontal}
For $f_1 \in  C_c^\infty(N_{d_1})^{K_1}$ and $f_2 \in  C_c^\infty(N_{d_2})^{K_2}$ we have
\[\pushQED{\qed}
\Xi_{\rm hor}(f_1, f_2)  = 
 \sum_{(\sigma_1, \sigma_2) \in \Delta^\perp}  |\widehat{F}_1(\omega_{0, |\sigma_1|})|^2  |\widehat{F}_2(\omega_{0, |\sigma_2|})|^2.\qedhere\popQED
\]
\end{corollary}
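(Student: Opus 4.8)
The plan is to handle the horizontal contribution exactly as in the classical (virtually abelian) powder-diffraction computation; the only additional bookkeeping is the passage from the central Fourier transform at $\tau = 0$ back to the Bessel-type spherical functions $\omega_{0,|\sigma|}$. Starting from the expression \eqref{HorizontalPart} for $\Xi_{\mathrm{hor}}(f_1,f_2)$, I would first observe that the $0$-twisted convolution $\ast_0$ is simply the ordinary convolution on the abelian group $V_{d_j}$. Writing $g_j := (f_j)_0$, which by the smoothness and compact support of $f_j$ lies in $C_c^\infty(V_{d_j})^{K_{d_j}}$, the horizontal part collapses to a single lattice sum
\[
\Xi_{\mathrm{hor}}(f_1,f_2) = \sum_{\delta \in \Delta} \Phi(\delta), \qquad \Phi := (g_1 \ast g_1^*) \otimes (g_2 \ast g_2^*) \in C_c^\infty(V_{\bd}).
\]

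Next I would apply the Poisson summation formula over the lattice $\Delta < V_{\bd}$. Because $\Phi$ is compactly supported and smooth, its Euclidean Fourier transform is Schwartz, so the dual sum converges absolutely and the summation formula applies without the delicate decay estimates that were required for the first, central-variable Poisson summation (those having already been supplied via \eqref{cor_cft}). This rewrites the sum over $\Delta$ as a sum over the dual lattice $\Delta^\perp$, and since the Fourier transform carries $g_j \ast g_j^*$ to $|\widehat{g_j}|^2$, I would obtain
\[
\Xi_{\mathrm{hor}}(f_1,f_2) = \sum_{(\sigma_1,\sigma_2) \in \Delta^\perp} |\widehat{g_1}(\sigma_1)|^2 \, |\widehat{g_2}(\sigma_2)|^2,
\]
after fixing the normalization so that $\Delta$ has covolume one.

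Finally I would identify each Euclidean Fourier coefficient $\widehat{g_j}(\sigma_j)$ with the spherical Fourier transform $\widehat{F_j}(\omega_{0,|\sigma_j|})$ of $F_j := \iota_{d_j}^{-1}(f_j)$. Since $g_j = (f_j)_0$ arises by integrating out the central variable, one has $\widehat{g_j}(\sigma_j) = \int_{N_{d_j}} f_j(t,v)\, e^{-i\langle \sigma_j, v\rangle}\, dm_{N_{d_j}}(t,v)$, which is exactly the right-hand side of \eqref{BesselExplicit} evaluated at $F_j$; substituting yields the claimed identity. I expect the only genuine obstacle to be the rigorous justification of the Poisson summation step, but for the horizontal block this is routine precisely because $\Phi$ is smooth with compact support. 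The substantive analytic difficulties are instead confined to the vertical block \eqref{VerticalPart}, where the twisted convolutions $\ast_{\tau_j}$ with $\tau_j \neq 0$ genuinely enter and the decay in $\tau$ must be controlled.
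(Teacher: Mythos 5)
Your argument is correct and follows the paper's proof essentially verbatim: observe that $\ast_0$ is ordinary convolution, apply Poisson summation over $\Delta$ to the smooth compactly supported function $(g_1\ast g_1^*)\otimes(g_2\ast g_2^*)$, and then identify $\widehat{g_j}(\sigma_j)$ with $\widehat{F}_j(\omega_{0,|\sigma_j|})$ via \eqref{BesselExplicit}. The only difference is that you spell out the justification of the Poisson summation step (Schwartz decay of the dual sum) a little more explicitly than the paper does.
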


\subsection{Laguerre polynomials and the vertical contribution}
The computation of the vertical part requires entirely new arguments based on properties of Laguerre polynomials. We have
\[
\Xi_{\rm ver}(f_1, f_2)  =  \underset{\tau_1 \neq 0 \neq \tau_2}{\sum_{(\tau_1, \tau_2) \in \Xi^\perp}} S_{\tau_1, \tau_2}, \quad \text{where }S_{\tau_1, \tau_2} := \sum_{(\delta_1, \delta_2) \in \Delta} ((f_1)_{\tau_1} \ast_{\tau_1} (f_1)^*_{\tau_1})(\delta_1)((f_2)_{\tau_2} \ast_{\tau_2} (f_2)^*_{\tau_2})(\delta_2),
\]
and we are going to first consider the sum $S_{\tau_1, \tau_2}$ for a fixed pair $(\tau_1, \tau_2) \in \Xi^\perp$ with $\tau_1\neq 0 \neq \tau_2$. We are going to use the following properties of the functions $q_{\tau, \alpha}$.
\begin{proposition}[Properties of $\tau$-spherical functions]\label{tauProperties} Let $\tau \in \R \setminus \{0\}$.
\begin{enumerate}[(i)]
\item The functions $(q_{\tau, \alpha})_{\alpha \in \mathbb N^d}$ form an orthogonal basis for $L^2(V_d)^{K_d}$ with
\[
\|q_{\tau, \alpha}\|_{L^2(V_d)} = \left({2\pi}{|\tau|^{-1}}\right)^{d/2}.
\]
\item For $\alpha, \beta \in  \mathbb N^d$ with $\alpha \neq \beta$ we have
\[
q_{\tau, \alpha} \ast_\tau q_{\tau, \alpha} = q_{\tau, \alpha} \qand q_{\tau, \alpha} \ast_\tau q_{\tau, \beta} = 0. 
\]
\item For all $\alpha \in \bN^d$ we have
\[
\|q_{\tau, \alpha}\|_\infty \leq \left({2\pi}{|\tau|^{-1}}\right)^d.
\]
\item For all $\varphi, \psi \in L^1(V_d)^{K_d} \cap L^2(V_d)^{K_d}$ we have
\[
\phi \ast_\tau \psi = \frac{|\tau|^d}{(2\pi)^d} \sum_{\alpha \in \mathbb N^d} \langle \varphi, q_{\tau, \alpha} \rangle  \langle \psi, q_{\tau, \alpha} \rangle q_{\tau, \alpha},
\]
where the convergence is absolute in $L^\infty$-norms, and thus uniform in $C_b(V_d)^{K_d}$.
\end{enumerate}
\end{proposition}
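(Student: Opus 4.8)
The plan is to establish all four statements by two successive reductions to a classical calculus of Laguerre functions on $\C$, for which (i)--(iv) are well documented (see \cite{Th}), and then to assemble (iv) from (i)--(iii) by a density/convergence argument. The three facts (i), (ii), (iii) are the ingredients, and (iv) is the climax.

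First I would reduce to $d=1$. Since $\langle\cdot,\cdot\rangle$, and hence $\beta_d$, splits as an orthogonal direct sum over the coordinates of $V_d=\C^d$, the phase $e^{-i\tau\beta_d(u,v)}$ factorizes as $\prod_{j=1}^d e^{-i\tau\beta_1(u_j,v_j)}$; consequently the twisted convolution $\ast_\tau$ on $V_d$ is the tensor product of the one-dimensional twisted convolutions, and $q_{\tau,\alpha}=\bigotimes_{j=1}^d q_{\tau,\alpha_j}^{(1)}$ is a corresponding product. Hence the orthogonality and norm statement (i), the absorption relations (ii), and the sup-norm bound (iii) all follow coordinatewise from their $d=1$ counterparts, with the constants appearing as the stated $d$-th powers.

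Second, I would remove the dependence on $\tau$ by a dilation. For $\lambda>0$ write $D_\lambda f(v):=f(\lambda v)$; since $\beta_1$ is homogeneous of degree $2$, a change of variables shows $D_\lambda(\varphi\ast_\tau\psi)=\lambda^{2}(D_\lambda\varphi)\ast_{\lambda^2\tau}(D_\lambda\psi)$, so choosing $\lambda=|\tau|^{-1/2}$ converts $\ast_\tau$ into $\ast_{\operatorname{sgn}(\tau)}$ up to the explicit Jacobian factor $|\tau|^{-1}$, while $q_{\tau,k}^{(1)}=D_{|\tau|^{1/2}}q_{1,k}^{(1)}$ by the very form of $q_{\tau,k}$. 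The case $\tau<0$ is then reduced to $\tau>0$ using that $\beta_1$ is real, that $q_{\tau,k}$ is real-valued, and that $\overline{\varphi\ast_\tau\psi}=\overline{\varphi}\ast_{-\tau}\overline{\psi}$. This bookkeeping is exactly what produces the normalizing constants $(2\pi|\tau|^{-1})^{d/2}$, $(2\pi|\tau|^{-1})^{d}$ and $|\tau|^d(2\pi)^{-d}$. At $\tau=1$, $d=1$ the functions $q_{1,k}^{(1)}(z)=e^{-|z|^2/4}L_k(|z|^2/2)$ are, up to normalization, the radial special Hermite functions: (i) is their orthogonal-basis property in $L^2(\C)^{\bT}$ together with the Laguerre identity $\int_0^\infty e^{-x}L_j(x)L_k(x)\,dx=\delta_{jk}$; (ii) is the classical twisted-convolution calculus $q_{1,j}^{(1)}\ast_1 q_{1,k}^{(1)}=c\,\delta_{jk}\,q_{1,k}^{(1)}$; and (iii) follows from the Szeg\H{o} bound $\sup_{x\ge 0}|e^{-x/2}L_k(x)|=1$, which in particular furnishes an $\alpha$-independent majorant.

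Finally I would assemble (iv) from (i)--(iii). Given $\varphi,\psi\in L^1(V_d)^{K_d}\cap L^2(V_d)^{K_d}$, expand them in the orthogonal basis of (i), writing $\varphi=\sum_\alpha c_\alpha q_{\tau,\alpha}$ and $\psi=\sum_\beta d_\beta q_{\tau,\beta}$ with $L^2$-convergence, where $c_\alpha=\langle\varphi,q_{\tau,\alpha}\rangle/\|q_{\tau,\alpha}\|_2^2$ and likewise for $d_\beta$. Truncating these sums and expanding by bilinearity, the absorption relations (ii) collapse every cross term and leave a diagonal sum. To pass from the truncations to the limit I would invoke the boundedness of $\ast_\tau\colon L^2(V_d)\times L^2(V_d)\to L^\infty(V_d)$, which is \eqref{cor_cft} combined with Lemma~\ref{lemma_cft}, turning $L^2$-convergence of the partial sums into $L^\infty$-convergence of their twisted convolutions. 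The main obstacle is precisely the asserted \emph{absolute} convergence in $L^\infty$: for this I would estimate $\sum_\alpha |c_\alpha|\,|d_\alpha|\,\|q_{\tau,\alpha}\|_\infty$ by Cauchy--Schwarz, pulling out the uniform bound from (iii) and using Parseval (from (i)) to control $\sum_\alpha|c_\alpha|^2$ and $\sum_\alpha|d_\alpha|^2$ by $\|\varphi\|_2^2$ and $\|\psi\|_2^2$. Substituting the value $\|q_{\tau,\alpha}\|_2^2=(2\pi|\tau|^{-1})^{d}$ and the absorption constant from (ii) then yields the stated prefactor $|\tau|^d(2\pi)^{-d}$ and completes the identity.
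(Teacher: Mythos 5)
Your argument is correct, and for parts (ii) and (iii) it takes a genuinely different route from the paper. The paper quotes (i) from Thangavelu, derives (ii) purely formally from the $\tau$-sphericity functional equation $\langle \varphi\ast_\tau\psi,q\rangle=\langle\varphi,q\rangle\langle\psi,q\rangle$ together with (i), obtains (iii) from (i), (ii) and the pointwise Cauchy--Schwarz bound $\|\varphi\ast_\tau\psi\|_\infty\le\|\varphi\|_2\|\psi\|_2$, and proves (iv) exactly as you do (orthogonal expansion, collapse of the cross terms by (ii), absolute $L^\infty$-convergence via Cauchy--Schwarz, Bessel, the uniform bound from (iii) and the Weierstra\ss\ $M$-test); your truncation step justified by the $L^2\times L^2\to L^\infty$ boundedness of $\ast_\tau$ is in fact slightly more careful than the paper's direct substitution of the two infinite series into the twisted convolution. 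Your coordinatewise tensor factorization and the dilation reducing $\ast_\tau$ to $\ast_{\pm 1}$ are sound and buy you direct access to the classical special Hermite calculus at $\tau=1$, $d=1$. A concrete payoff of your route: since $q_{\tau,\alpha}(0)=1$, the Szeg\H{o} bound yields the sharp, $\alpha$- and $\tau$-independent estimate $\|q_{\tau,\alpha}\|_\infty=1$, whereas the constant $(2\pi|\tau|^{-1})^d$ asserted in (iii) is smaller than $1$ when $|\tau|>2\pi$ and so cannot be correct as written; likewise the explicit $d=1$ computation forces the idempotency constant in (ii) to be $\|q_{\tau,\alpha}\|_2^2=(2\pi|\tau|^{-1})^d$ rather than $1$ (as the functional equation itself shows: $\langle q\ast_\tau q,q\rangle=\|q\|_2^4$, so $q\ast_\tau q=\|q\|_2^2\,q$). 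These two constant discrepancies cancel in every downstream use --- only uniform boundedness in $\alpha$ is needed later, and your assembly of (iv) with the correct absorption constant reproduces the stated prefactor $|\tau|^d(2\pi)^{-d}$ --- so your version of (ii) and (iii) is the one to keep.
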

\begin{proof} (i) See e.g.\ \cite[Prop. 1.4.1]{Th}. (ii) From $\tau$-sphericity we deduce that for $\alpha, \beta, \gamma \in \bN^d$ with $\alpha\neq \beta$ we have
\[
\langle q_{\tau, \alpha} \ast q_{\tau\beta}, q_{\tau, \gamma} \rangle =\langle q_{\tau, \alpha} , q_{\tau, \gamma}\rangle \langle  q_{\tau\beta}, q_{\tau, \gamma} \rangle=0,
\]
since the first factor vanishes if $\gamma \neq \alpha$ and the second factor vanishes if $\gamma = \alpha$. Similarly,
\[
\langle q_{\tau, \alpha} \ast_\tau q_{\tau, \alpha}, q_{\tau, \beta}\rangle = \langle q_{\tau, \alpha}, q_{\tau, \beta}\rangle^2 = 0.
\]
It thus follows from (i) that $q_{\tau, \alpha} \ast_\tau q_{\tau, \alpha} = \lambda q_{\tau, \alpha}$ for some $\lambda \in \C$, and $\tau$-sphericity yields
\[
\lambda = \left({2\pi}{|\tau|^{-1}}\right)^{-d}\langle \lambda q_{\tau, \alpha}, q_{\tau, \alpha} \rangle = \left({2\pi}{|\tau|^{-1}}\right)^{-d}\langle q_{\tau, \alpha} \ast_\tau q_{\tau, \alpha}, q_{\tau, \alpha} \rangle =  \left({2\pi}{|\tau|^{-1}}\right)^{-d}\langle q_{\tau, \alpha}, q_{\tau, \alpha}\rangle^2 =1.
%\langle q_{\tau, \alpha} \ast_\tau q_{\tau, \alpha}, q_{\tau, \beta}\rangle
\]
(iii) follows from (i), (ii) and \eqref{cor_cft}, since 
\[\|q_{\tau, \alpha}\|_\infty = \|q_{\tau, \alpha} \ast_\tau q_{\tau, \alpha}\|_\infty \leq \|q_{\tau, \alpha}\|^2_{L^2(V_d)}\]
(iv) By (i) and (ii) we have
\begin{eqnarray*}
\phi \ast_\tau \psi &=& \left(\sum_{\alpha \in\bN^d} (|\tau|(2\pi)^{-1})^{d/2} \langle \varphi, q_{\tau, \alpha} \rangle q_{\tau, \alpha}\right) \ast_\tau \left(\sum_{\beta \in\bN^d} (|\tau|(2\pi)^{-1})^{d/2} \langle \psi, q_{\tau, \beta} \rangle q_{\tau, \beta}\right)\\
&=& \frac{|\tau|^d}{(2\pi)^d} \sum_{\alpha \in \mathbb N^d} \langle \varphi, q_{\tau, \alpha} \rangle  \langle \psi, q_{\tau, \alpha} \rangle q_{\tau, \alpha}.
\end{eqnarray*}
The proof that the convergence is absolute in $L^\infty$-norm (and hence uniform) can be seen as follows. By the Cauchy--Schwartz and Bessel inequalities, the 
function $\alpha \mapsto \langle \varphi, q_{\tau,\alpha} \rangle \langle \psi, q_{\tau,\alpha} \rangle$ belongs to $\ell^1(\bN^d)$.
Since $\alpha \mapsto \|q_{\tau,\alpha}\|_{L^\infty(V)}$ is bounded by (ii), we see that the sum of the 
$L^\infty$-norms of the terms in the sum above converge, hence the absolute convergence follows from the Weierstra\ss\ $M$-test.
\end{proof}
Applying (iv) to the functions $f_j, f_j^* \in C_c^\infty(N_{d_j})^{K_j} \subset L^1(N_{d_j})^{K_{d_j}} \cap L^2(N_{d_j})^{K_{d_j}}$ we obtain
\[
((f_j)_{\tau_j} \ast_{\tau_j} (f_j)^*_{\tau_j})(\delta_j) = \frac{|\tau_j|^{d_j}}{(2\pi)^{d_j}} \sum_{\alpha \in \mathbb N^{d_j}} \langle (f_j)_{\tau_j}, q_{\tau_j, \alpha} \rangle  \langle (f_j^*)_{\tau_j}, q_{\tau_j, \alpha} \rangle q_{\tau_j, \alpha}(\delta_j).
\]
Recall that $F_j = {\iota_{d_j}^{-1}(f_j)} \in C_c^\infty(K_{d_j} \ltimes N_{d_j}, K_{d_j})$. Using \eqref{FTComesBack} we obtain
\[
\langle (f_j)_{\tau_j}, q_{\tau_j, \alpha} \rangle  \langle (f_j^*)_{\tau_j}, q_{\tau_j, \alpha} \rangle =  \langle F_j, \omega_{\tau_j, \alpha} \rangle  \langle F_j^*, \omega_{\tau_j, \alpha} \rangle = |\widehat{F}_j(\omega_{\tau_j, \alpha})|^2,
\]
and hence we find
\[
S_{\tau_1, \tau_2} = \frac{|\tau_1|^{d_1}|\tau_2|^{d_2}}{(2\pi)^{d_1+d_2}}\sum_{(\delta_1, \delta_2) \in \Delta}\sum_{(\alpha, \beta) \in \bN^{d_1+d_2}} |\widehat{F}_1(\omega_{\tau_1, \alpha})|^2 |\widehat{F}_2(\omega_{\tau_2, \beta})|^2q_{\tau_1, \alpha}(\delta_1)q_{\tau_2, \beta}(\delta_2).
% ((f_1)_{\tau_1} \ast_{\tau_1} (f_1)^*_{\tau_1})(\delta_1)((f_2)_{\tau_2} \ast_{\tau_2} (f_2)^*_{\tau_2})(\delta_2),
\]
Using properties of Laguerre polynomials one can show:
\begin{lemma}[Absolute convergence]\label{AbsoluteConvergence} For all $\tau_1 \neq 0 \neq \tau_2$ the expression
\[
\sum_{(\delta_1, \delta_2) \in \Delta}\sum_{(\alpha, \beta) \in \bN^{d_1+d_2}} |\widehat{F}_1(\omega_{\tau_1, \alpha})|^2 |\widehat{F}_2(\omega_{\tau_2, \beta})|^2q_{\tau_1, \alpha}(\delta_1)q_{\tau_2, \beta}(\delta_2)
\]
converges absolutely.
\end{lemma}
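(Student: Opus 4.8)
The plan is to exploit the product structure of the summand together with the rapid decay of the spherical Fourier coefficients and sharp pointwise bounds on the Laguerre functions $q_{\tau,\alpha}$. Since every term is a product of nonnegative factors (the quantities $|\widehat F_j(\omega_{\cdot,\cdot})|^2$ are nonnegative, and absolute convergence only involves $|q_{\tau_1,\alpha}(\delta_1)q_{\tau_2,\beta}(\delta_2)| = |q_{\tau_1,\alpha}(\delta_1)|\,|q_{\tau_2,\beta}(\delta_2)|$), I may freely interchange the order of summation. Writing $a_\alpha := \widehat F_1(\omega_{\tau_1,\alpha})$ and $b_\beta := \widehat F_2(\omega_{\tau_2,\beta})$, it therefore suffices to prove that
\[
\Sigma := \sum_{(\alpha,\beta)\in\bN^{d_1+d_2}} |a_\alpha|^2\,|b_\beta|^2\, L(\alpha,\beta) < \infty, \qquad L(\alpha,\beta) := \sum_{(\delta_1,\delta_2)\in\Delta} |q_{\tau_1,\alpha}(\delta_1)|\,|q_{\tau_2,\beta}(\delta_2)|.
\]
The strategy is to show that the inner lattice sums $L(\alpha,\beta)$ grow at most polynomially in $(\alpha,\beta)$, while the weights $|a_\alpha|^2$ and $|b_\beta|^2$ decay faster than any polynomial; the two estimates then combine to give convergence.

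First I would establish the rapid decay of the coefficients. By \eqref{FTComesBack} we have $a_\alpha = \langle (f_1)_{\tau_1}, q_{\tau_1,\alpha}\rangle$, and since $f_1\in C_c^\infty(N_{d_1})^{K_{d_1}}$ its central Fourier transform $(f_1)_{\tau_1}$ lies in $C_c^\infty(V_{d_1})^{K_{d_1}}$. The functions $q_{\tau_1,\alpha}$ are eigenfunctions of the twisted (special Hermite) Laplacian $\mathcal L_{\tau_1}$ with eigenvalue comparable to $1+|\alpha|$, and $\mathcal L_{\tau_1}$ is a formally self-adjoint differential operator with polynomial coefficients preserving $C_c^\infty(V_{d_1})^{K_{d_1}}$. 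Hence for every $m$,
\[
(1+|\alpha|)^{m}\,|a_\alpha| \leq C_m\,\bigl|\langle \mathcal L_{\tau_1}^{m}(f_1)_{\tau_1}, q_{\tau_1,\alpha}\rangle\bigr|,
\]
and Bessel's inequality applied to the orthogonal basis $(q_{\tau_1,\alpha})$ of Proposition \ref{tauProperties}(i) shows that $\sum_\alpha (1+|\alpha|)^{s}|a_\alpha|^2 < \infty$ for every $s\ge 0$; the same holds for $b_\beta$. In particular $\sum_\alpha (1+|\alpha|)^{d_1}|a_\alpha|^2$ and $\sum_\beta (1+|\beta|)^{d_2}|b_\beta|^2$ are finite.

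Next I would bound $L(\alpha,\beta)$. The input here is the pointwise Laguerre estimates of Appendix~B, which I would package into a per-coordinate bound of the form $e^{-s/2}|L_k(s)| \le C$ together with exponential decay $e^{-s/2}|L_k(s)| \le C e^{-\gamma s}$ once $s \ge \Lambda(k+1)$, for fixed constants $C,\gamma,\Lambda>0$. Taking products over coordinates, $|q_{\tau_1,\alpha}(\delta_1)|$ is bounded by a constant on the ``bulk'' box $\mathcal B_\alpha = \{\,v : |\tau_1||v_j|^2/2 \le \Lambda(\alpha_j+1)\,\}$ and decays like a Gaussian outside it, and similarly for $q_{\tau_2,\beta}$. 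Since $\Delta$ is discrete and cocompact in $V_{d_1}\times V_{d_2}$, the number of its points lying in $\mathcal B_\alpha\times\mathcal B_\beta$ is comparable to the Euclidean volume divided by $\covol(\Delta)$, which is $\lesssim (1+|\alpha|)^{d_1}(1+|\beta|)^{d_2}$ by the arithmetic--geometric mean inequality; on this set each summand is $\le C$. On the complement, expanding the product of the two (indicator plus Gaussian) bounds and comparing the resulting lattice sums with the corresponding Gaussian integrals shows that the remaining contribution is uniformly bounded. This yields $L(\alpha,\beta) \le C\,(1+|\alpha|)^{d_1}(1+|\beta|)^{d_2}$, and combining with the previous paragraph,
\[
\Sigma \le C\Big(\sum_\alpha (1+|\alpha|)^{d_1}|a_\alpha|^2\Big)\Big(\sum_\beta (1+|\beta|)^{d_2}|b_\beta|^2\Big) < \infty,
\]
which proves the lemma.

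The hard part will be the polynomial bound on $L(\alpha,\beta)$. It is precisely here that the crude uniform estimate $\|q_{\tau,\alpha}\|_\infty \le (2\pi/|\tau|)^d$ of Proposition \ref{tauProperties}(iii) is insufficient: it gives no decay in $v$, and the projection $p_1(\Delta)$ being dense in $V_{d_1}$ means one cannot separate the two lattice variables and must control the full Gaussian decay of the product $|q_{\tau_1,\alpha}(\delta_1)|\,|q_{\tau_2,\beta}(\delta_2)|$ on $\Delta\subset V_{d_1}\times V_{d_2}$ simultaneously. The whole estimate hinges on capturing, uniformly in $\alpha$, both that $q_{\tau,\alpha}$ is concentrated on a region of only polynomially growing volume and that it is exponentially small beyond it --- this is the role of the Laguerre asymptotics of Appendix~B. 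Making the lattice-point count and the ``bulk plus exponential tail'' comparison rigorous with constants independent of $\alpha,\beta$ is the only genuinely delicate point; the remaining steps are routine.
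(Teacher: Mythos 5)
Your argument is correct in outline and would yield a valid proof, but it takes a genuinely different route from the paper at the decisive step. Both proofs reduce to the same two estimates: superpolynomial decay of the coefficients $|\langle g,b_\kappa\rangle|$ and a polynomial bound on the lattice sums $\sum_{\delta\in\Delta}|b_\kappa(\delta)|$. For the first, you use the spectral theory of the twisted (Hermite) Laplacian --- eigenfunction property of $q_{\tau,\alpha}$, integration by parts, Bessel's inequality --- whereas the paper cites Xiang's asymptotics for Laguerre expansions of smooth compactly supported functions to get pointwise decay in $(\kappa_1\cdots\kappa_d)^{-M}$; both are fine, and yours is arguably more self-contained. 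The real divergence is in the lattice sum: you propose hard pointwise Laguerre asymptotics (uniform bound $e^{-s/2}|L_k(s)|\le C$ plus exponential decay past the turning point $s\gtrsim k$) followed by a lattice-point count on the ``bulk'' box, which is exactly the delicate Plancherel--Rotach-type input you flag as the hard part. The paper sidesteps this entirely with a soft trick: from the idempotency $b_\kappa=b_\kappa\ast_\tau b_\kappa$ of Proposition \ref{tauProperties}(ii) one gets $c_\kappa\le c_\kappa\ast c_\kappa$ for $c_\kappa=|b_\kappa|$ (the twisted phase has modulus one), and an elementary lemma then bounds $c_\kappa(v)$ pointwise by $\rho(v)^{-1/2}\int c_\kappa^2\rho$ for any submultiplicative weight $\rho$; taking $\rho(v)=(1+\|v\|)^N$, the lattice sum is controlled by $\sum_\delta\rho(\delta)^{-1/2}$ (finite for large $N$) times the weighted $L^2$-norms $\int c_\kappa^2\rho_N$, which are computed to grow polynomially using only Laguerre orthogonality and the three-term recurrence. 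So the paper's proof needs no pointwise asymptotics of $L_k$ beyond $L^2$ identities, while yours buys a sharper, more geometric picture (concentration on a box of polynomially growing volume with Gaussian tails) at the cost of invoking turning-point estimates and a lattice-point count. One small inaccuracy in your write-up: the tail contribution (configurations with at least one coordinate outside its bulk interval) is not \emph{uniformly} bounded in $(\alpha,\beta)$ --- it is bounded by the bulk volume in the remaining coordinates times an exponentially small factor in the tail coordinates, hence still dominated by $\prod_j(1+\kappa_j)$, so your final bound on $L(\alpha,\beta)$ and the conclusion stand.
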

We defer the proof to Appendix \ref{AppendixLaguerre}, but emphasize that smoothness of $f_1$ and $f_2$ enters crucially. Using absolute convergence we may now freely reorder the sums inside $S_{\tau_1, \tau_2}$ for every fixed $(\tau_1, \tau_2)$. In particular, if we define a function $\sigma^\Delta_{\tau_1, \tau_2}: \bN^{d_1} \times \bN^{d_2} \to \C$ by
\begin{equation}\label{SigmaFunction}
\sigma^\Delta_{\tau_1, \tau_2}(\alpha,\beta) := \frac{|\tau_1|^{d_1}|\tau_2|^{d_2}}{(2\pi)^{d_1+d_2}}  \, \sum_{(\delta_1,\delta_2) \in \Delta} q_{\tau_1,\alpha}(\delta_1) q_{\tau_2,\beta}(\delta_2),
\end{equation}
then we obtain:
\begin{corollary}[Computation of the vertical contribution]\label{Vertical}
For $f_1 \in  C_c^\infty(N_{d_1})^{K_1}$ and $f_2 \in  C_c^\infty(N_{d_2})^{K_2}$ we have
\[\pushQED{\qed}
\Xi_{\rm ver}(f_1, f_2)  = \underset{\tau_1 \neq 0 \neq \tau_2}{\sum_{(\tau_1, \tau_2) \in \Xi^\perp}}\sum_{(\alpha, \beta) \in \bN^{d_1+d_2}} \sigma^\Delta_{\tau_1, \tau_2}(\alpha,\beta)  |\widehat{F}_1(\omega_{\tau_1, \alpha})|^2 |\widehat{F}_2(\omega_{\tau_2, \beta})|^2.\qedhere\popQED
\]
\end{corollary}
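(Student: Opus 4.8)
The plan is to begin from the explicit form of the inner sum $S_{\tau_1, \tau_2}$ that has already been assembled in the paragraphs preceding the statement. First I would apply Proposition~\ref{tauProperties}(iv) to the pairs $f_j, f_j^* \in C_c^\infty(N_{d_j})^{K_{d_j}}$ and use the identity \eqref{FTComesBack} to rewrite each coefficient product $\langle (f_j)_{\tau_j}, q_{\tau_j, \alpha}\rangle \langle (f_j^*)_{\tau_j}, q_{\tau_j,\alpha}\rangle$ as $|\widehat{F}_j(\omega_{\tau_j,\alpha})|^2$. For each fixed $(\tau_1, \tau_2) \in \Xi^\perp$ with $\tau_1 \neq 0 \neq \tau_2$ this presents $S_{\tau_1, \tau_2}$ as the double series
\[
S_{\tau_1, \tau_2} = \frac{|\tau_1|^{d_1}|\tau_2|^{d_2}}{(2\pi)^{d_1+d_2}} \sum_{(\delta_1,\delta_2) \in \Delta} \sum_{(\alpha,\beta) \in \bN^{d_1+d_2}} |\widehat{F}_1(\omega_{\tau_1,\alpha})|^2 |\widehat{F}_2(\omega_{\tau_2,\beta})|^2 \, q_{\tau_1,\alpha}(\delta_1) q_{\tau_2,\beta}(\delta_2),
\]
indexed by $\Delta \times \bN^{d_1+d_2}$.

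The decisive step is to interchange the $\Delta$-summation with the $\bN^{d_1+d_2}$-summation. I would justify this by invoking Lemma~\ref{AbsoluteConvergence}, which asserts precisely that for every fixed pair $\tau_1 \neq 0 \neq \tau_2$ this double series converges absolutely. Granting it, I move the prefactor $\tfrac{|\tau_1|^{d_1}|\tau_2|^{d_2}}{(2\pi)^{d_1+d_2}}$ together with the inner $\Delta$-sum inside the $(\alpha,\beta)$-summation and recognize the emerging expression as the quantity $\sigma^\Delta_{\tau_1,\tau_2}(\alpha,\beta)$ of \eqref{SigmaFunction}. This gives
\[
S_{\tau_1, \tau_2} = \sum_{(\alpha,\beta) \in \bN^{d_1+d_2}} \sigma^\Delta_{\tau_1,\tau_2}(\alpha,\beta) \, |\widehat{F}_1(\omega_{\tau_1,\alpha})|^2 |\widehat{F}_2(\omega_{\tau_2,\beta})|^2.
\]

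Finally I would sum this identity over all $(\tau_1, \tau_2) \in \Xi^\perp$ with $\tau_1 \neq 0 \neq \tau_2$; since by definition $\Xi_{\rm ver}(f_1, f_2)$ is exactly $\sum_{(\tau_1,\tau_2) \in \Xi^\perp,\, \tau_1 \neq 0 \neq \tau_2} S_{\tau_1, \tau_2}$, this yields the asserted formula. The hard part is entirely contained in the absolute convergence of Lemma~\ref{AbsoluteConvergence}, whose proof in Appendix~\ref{AppendixLaguerre} rests on the superpolynomial decay of the central Fourier transforms (guaranteed by the smoothness of $f_1, f_2$) together with the Laguerre-polynomial estimates; once that is available, the argument above is purely a rearrangement, applied separately for each fixed $(\tau_1,\tau_2)$ so that no joint convergence over $\Xi^\perp$ is required at this stage.
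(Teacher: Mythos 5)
Your proposal is correct and follows the paper's own argument essentially verbatim: expand $S_{\tau_1,\tau_2}$ via Proposition \ref{tauProperties}(iv), convert the coefficients with \eqref{FTComesBack}, invoke Lemma \ref{AbsoluteConvergence} to rearrange the $\Delta$- and $\bN^{d_1+d_2}$-sums for each fixed $(\tau_1,\tau_2)$, and recognize $\sigma^\Delta_{\tau_1,\tau_2}$. Your observation that the rearrangement is applied separately for each $(\tau_1,\tau_2)$, with the convergence over $\Xi^\perp$ already secured by the earlier majorant argument, is exactly how the paper structures the step.
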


%wheras for $j=2$ we have $f_2 = {\bf 1}_W = {\bf 1}_I \otimes {\bf 1}_{W_o}$, and hence
%\[
%\sigma_{\tau_1, \tau_2} = |\widehat{\bf 1}_I(\tau_2)|^2|\langle {\bf 1}_{W_o}, q_{\tau_2, \alpha} \rangle|^2.
%\]

%\begin{equation}
%\widehat{\eta}_{\rm ver}(|\widehat{f}|^2) = \underset{\tau_1 \neq 0 \neq \tau_2}{\sum_{(\tau_1, \tau_2) \in \Xi^\perp}}\sum_{\alpha \in \bN^{d_1}}  \left( \sum_{\beta \in \bN^{d_2}} \sigma^\Delta_{\tau_1, \tau_2}(\alpha,\beta) |\widehat{\bf 1}_I(\tau_2)|^2|\langle {\bf 1}_{W_o}, q_{\tau_2, \beta} \rangle|^2|\right)|\widehat{f}(\omega_{\tau_1, \alpha})|^2.
%\end{equation}

\subsection{The polyradial diffraction formula}
Combining the vertical and the horizontal contribution we finally obtain with Lemma \ref{RegularityLemma} the following formula:
\begin{theorem}[Diffraction formula for the minimal Heisenberg motion group]\label{DiffMain} The diffraction measure $\widehat{\eta}$ of the regular model set $\Lambda = \Lambda(G_{d_1}, H, \Gamma, W = I \times W_o)$ is given by the formula
\[
\widehat{\eta} =  \sum_{(\sigma_1, \sigma_2) \in \Delta^\perp}  |m_\R(I)|^2 |\widehat{\bf 1}_{W_o}(\sigma_2)|^2 \delta_{\omega_{0, |\sigma_1|}} + \underset{\tau_1 \neq 0 \neq \tau_2}{\sum_{(\tau_1, \tau_2) \in \Xi^\perp}}\sum_{(\alpha, \beta) \in \bN^{d_1+d_2}} \sigma^\Delta_{\tau_1, \tau_2}(\alpha,\beta)  |\widehat{\bf 1}_I(\tau_2)|^2 |\langle {\bf 1}_{W_o}, q_{\tau_2, \beta} \rangle|^2 \delta_{\omega_{\tau_1, \alpha}},
\]
where $\sigma^\Delta: \Xi^\perp \setminus\{(0, 0)\} \to \C$ is given by \eqref{SigmaFunction}.
\end{theorem}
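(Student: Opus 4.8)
The plan is to assemble the diffraction formula from the two contributions computed in Corollaries \ref{Horizontal} and \ref{Vertical}, to recognize the resulting expression as the pairing of a single measure $m$ against a product of spherical transforms, and then to feed this into the Regularity Lemma \ref{RegularityLemma}. First I would add the two corollaries: since $\Xi(f_1,f_2)=\Xi_{\rm hor}(f_1,f_2)+\Xi_{\rm ver}(f_1,f_2)$, they yield for all $f_1\in C_c^\infty(N_{d_1})^{K_{d_1}}$ and $f_2\in C_c^\infty(N_{d_2})^{K_{d_2}}$, with $F_j:=\iota_{d_j}^{-1}(f_j)$, an identity whose right-hand side is manifestly of the form $m(|\widehat{F_1}|^2\otimes|\widehat{F_2}|^2)$ for the positive Radon measure
\[
m=\sum_{(\sigma_1,\sigma_2)\in\Delta^\perp}\delta_{\omega_{0,|\sigma_1|}}\otimes\delta_{\omega_{0,|\sigma_2|}}+\underset{\tau_1\neq 0\neq\tau_2}{\sum_{(\tau_1,\tau_2)\in\Xi^\perp}}\ \sum_{(\alpha,\beta)\in\bN^{d_1+d_2}}\sigma^\Delta_{\tau_1,\tau_2}(\alpha,\beta)\,\delta_{\omega_{\tau_1,\alpha}}\otimes\delta_{\omega_{\tau_2,\beta}}
\]
on $\mathcal S^+(G,K_{d_1})\times\mathcal S^+(\widetilde H,K_{d_2})$, where I use that the positive-definite spherical functions of the product Gelfand pair $(G\times\widetilde H,K_{d_1}\times K_{d_2})$ are exactly the products $\omega\otimes\omega'$ of spherical functions of the factors, and that $\widetilde H=K_{d_2}\ltimes N_{d_2}=G_{d_2}$ has spectrum of precisely the two types $\omega_{0,\kappa}$ and $\omega_{\tau,\beta}$. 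By Lemma \ref{RegularityLemma} this identity forces $m=\widehat{\delta}_{\widetilde\Gamma}$, and hence $\widehat{\eta}(\psi)=m(\psi\otimes|\widehat{{\bf 1}}_{\{e\}\times W}|^2)$ for every $\psi\in C_c(\mathcal S^+(G,K_{d_1}))$.

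Next I would evaluate the window factor $|\widehat{{\bf 1}}_{\{e\}\times W}|^2$ — the spherical transform on $\mathcal S^+(\widetilde H,K_{d_2})$ of the polyradial window read through $\iota_{d_2}$ — at the two families of spherical functions sitting in the second tensor slot of $m$, exploiting the product splitting $W=I\times W_o$ with $I=[a_0,b_0]\subset\R$ and $W_o\subset V_{d_2}$. For the horizontal functions $\omega_{0,|\sigma_2|}$, formula \eqref{BesselExplicit} lets the central $t$-integral factor off, giving $\widehat{{\bf 1}}_W(\omega_{0,|\sigma_2|})=m_\R(I)\cdot\widehat{{\bf 1}}_{W_o}(\sigma_2)$. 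For the vertical functions $\omega_{\tau_2,\beta}$ with $\tau_2\neq 0$, formula \eqref{FTComesBack} reduces the computation to the $\tau_2$-central Fourier transform $({\bf 1}_W)_{\tau_2}=\widehat{{\bf 1}}_I(\tau_2)\cdot{\bf 1}_{W_o}$, whence $\widehat{{\bf 1}}_W(\omega_{\tau_2,\beta})=\widehat{{\bf 1}}_I(\tau_2)\cdot\langle{\bf 1}_{W_o},q_{\tau_2,\beta}\rangle$. Taking squared absolute values produces exactly the horizontal coefficient $|m_\R(I)|^2\,|\widehat{{\bf 1}}_{W_o}(\sigma_2)|^2$ and the vertical factor $|\widehat{{\bf 1}}_I(\tau_2)|^2\,|\langle{\bf 1}_{W_o},q_{\tau_2,\beta}\rangle|^2$ appearing in the statement.

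Substituting these evaluations into $\widehat{\eta}(\psi)=m(\psi\otimes|\widehat{{\bf 1}}_{\{e\}\times W}|^2)$ and expanding $m$ as above then reads off the asserted formula, since the $\psi$-slot isolates $\delta_{\omega_{0,|\sigma_1|}}$ in the horizontal sum and $\delta_{\omega_{\tau_1,\alpha}}$ in the vertical sum while the coefficient $\sigma^\Delta_{\tau_1,\tau_2}(\alpha,\beta)$ is carried along unchanged. The only genuinely delicate point is the passage from the pointwise identity on the products $|\widehat{F_1}|^2\otimes|\widehat{F_2}|^2$ to the identification of the \emph{full} measure $m=\widehat{\delta}_{\widetilde\Gamma}$; this is precisely what Lemma \ref{RegularityLemma} supplies, resting on the density of such products in $C_c$ of the product spectrum together with the uniqueness clause of the Godement--Plancherel theorem for $(G\times\widetilde H,K_{d_1}\times K_{d_2})$. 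All convergence questions for the vertical double sum have already been dispatched by Lemma \ref{AbsoluteConvergence}, so no further analytic input is needed at this stage.
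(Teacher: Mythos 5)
Your proposal is correct and follows essentially the same route as the paper: combine Corollaries \ref{Horizontal} and \ref{Vertical} to identify $\widehat{\delta}_{\widetilde{\Gamma}}$ via the Regularity Lemma \ref{RegularityLemma}, then evaluate $|\widehat{{\bf 1}}_{\{e\}\times W}|^2$ on the horizontal and vertical spherical functions using \eqref{BesselExplicit} and \eqref{FTComesBack} (i.e.\ Proposition \ref{TauSphericalSpherical}) together with the splitting $W = I\times W_o$. The only difference is that you spell out more explicitly the identification of the product spectrum and the density/uniqueness step behind Lemma \ref{RegularityLemma}, which the paper leaves implicit.
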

\begin{proof} By Lemma \ref{RegularityLemma}, Corollary \ref{Horizontal} and Corollary \ref{Vertical} we have
\[
\widehat{\delta}_{\widetilde{\Gamma}} = \sum_{(\sigma_1, \sigma_2) \in \Delta^\perp} \delta_{\omega_{0, |\sigma_1|}} \otimes \delta_{\omega_{0, |\sigma_2|}} + \underset{\tau_1 \neq 0 \neq \tau_2}{\sum_{(\tau_1, \tau_2) \in \Xi^\perp}}\sum_{(\alpha, \beta) \in \bN^{d_1+d_2}} \sigma^\Delta_{\tau_1, \tau_2}(\alpha,\beta) \cdot \delta_{\omega_{\tau_1, \alpha}} \otimes \delta_{\omega_{\tau_2,\beta}}.
\]
On the other hand, by Lemma \ref{RegularityLemma} we also have
\[\widehat{\eta}(\psi) = m(\psi \otimes (|\widehat{{\bf 1}}_{\{e\} \times W}|^2)), \quad (\psi \in C_c(\mathcal S^+(G, K_{d_1}))),
\]
and since $W = I \times W_o$ we see from \eqref{BesselExplicit} and Proposition \ref{TauSphericalSpherical} that
\[
|\widehat{{\bf 1}}_{\{e\} \times W}(\omega_{0, |\sigma_2|})|^2 = |m_\R(I)|^2 |\widehat{\bf 1}_{W_o}(\sigma_2)|^2 \qand
|\widehat{{\bf 1}}_{\{e\} \times W}(\omega_{\tau_2, \beta})|^2 =  |\widehat{\bf 1}_I(\tau_2)|^2 |\langle {\bf 1}_{W_o}, q_{\tau_2, \beta} \rangle|^2.\]
The theorem follows.
\end{proof}

\section{Semisimple examples: ${\rm SL}_2(\R)$}\label{SecSemisimple}

\subsection{The auto-correlation distribution of a weighted model set in the hyperbolic plane}
As explained in \cite{BHP2} the auto-correlation measure of a weighted model set in the hyperbolic plane can be re-interpreted as an evenly positive-definite distribution on the real line. We briefly recall the relevant results and notations. As in \cite{BHP2} we define elements of $G := {\rm SL}_2(\R)$ by
\[
k_\theta 
:=
\left(
\begin{matrix}
\cos 2\pi \theta& \sin 2\pi \theta \\ 
-\sin 2\pi \theta & \cos 2\pi \theta
\end{matrix}
\right),
\quad
a_t 
:=
\left(
\begin{matrix}
e^{t/2} & 0 \\ 
0 & e^{-t/2}
\end{matrix}
\right)
\qand 
n_u 
:=
\left(
\begin{matrix}
1 & u \\ 
0 & 1
\end{matrix}
\right).
\]
and denote by $K$, $A$ and $N$ the respective subgroups of $G$ consisting of these matrices. Multiplication induces a diffeomorphism $A \times N \times K \to G$ and thus every $g \in G$ can be written uniquely as
\begin{equation}
g = a_{t} n_{u} k_{\theta}.
\end{equation}
If $f \in L^1(G)$ and $F(t, u, \theta) := f(a_tn_uk_\theta)$, then we will normalize Haar measure on $G$ such that
\[
\int_G f(g) \, dm_G(g) = \int_{[0,1)} \int_{\bR} \int_{\bR} F(t,u,\theta) \, dt \, du \, d\theta.
\]
We then identify
\[
K\backslash G \to \bH^2, \quad Kg \mapsto g^{-1}.i,
\]
where the action of $G$ on $\bH^2$ is by fractional linear transformations. The auto-correlation measure of a weighted regular model set $\Lambda$ in the hyperbolic plane $\bH^2$ then gets identified with a Radon measure $\eta$ on $K\backslash G/K$. Now if we denote by $C_c^\infty(\R)_{\mathrm ev} \subset C_c^\infty(\R)$ the subspace of even functions, then by \cite[Lemma 5.2]{BHP2} (or \cite[Theorems V.2.2 and V.2.3]{Lang}) the Harish transform defines an isomorphism of $*$-algebras 
\[
\bH: C_c^\infty(G, K) \to C_c^\infty(\R)_{\mathrm ev}, \quad (\bH f)(t) = e^{t/2} \int_{\bR} f(a_tn_u) \, du.
\]
By \cite[Prop.\ 5.7]{BHP2} the map
\[
\xi:  C_c^\infty(\R)_{\mathrm ev} \to \C, \quad f \mapsto \eta(\bH^{-1}f)
\]
is an \emph{evenly positive-definite distribution}, i.e.\ a continuous linear functional on $C_c^\infty(\R)_{\mathrm ev}$ such that $\xi(\phi \ast \phi^*) \geq 0$ for all $\phi \in C_c^\infty(\R)_{\mathrm ev}$. Since it determines the auto-correlation measure of $\Lambda$, it is called the \emph{auto-correlation distribution} of $\Lambda$. In view of exponential volume growth of the hyperbolic plane, this distribution is not tempered.

While tempered distribution can be studied via their Fourier transforms, for non-tempered distributions one has to consider a complex version of the Fourier transform known as the \emph{Mellin transform}. Define the Paley-Wiener space $\mathrm{PW}(\C)$ as the space of entire functions $f: \C \to \C$ such that for every $N \in \bN$ there exist constants $C_1, C_2\geq1$ such that
\[
f(\sigma + it) < C_1 \cdot \frac{C_2^{|\sigma|}}{(1+|t|)^N}.
\] 
We denote by $\mathrm{PW}(C)_{\mathrm{ev}} \subset \mathrm{PW}(\C)$ the subspace consisting of functions with $f(-z) = f(z)$. Then the \emph{even Mellin transform} is the isomorphism \cite[Thm.\ V.3.4]{Lang}
\[
\bM: C_c^\infty(\R)_{\mathrm ev} \to \mathrm{PW}(\C)_{\mathrm{ev}}, \quad \bM \phi(z) := \int_{\bR}\phi(t) e^{tz/2} \, dt.
\]
We can thus consider an evenly positive-definite distribution as a linear functional on the even part of a Payley-Wiener space. By a classical result of Gelfand and Vilenkin (generalizing previous work of Krein) such a linear functional actually extends to a Radon measure:
\begin{theorem}[{Gelfand--Vilenkin--Krein, \cite[Thm.\ II.6.5]{GV4}}]\label{GVK} If $\xi: C_c^\infty(\R)_{\mathrm ev} \to \C$ is an evenly positive-definite distribution, then there exists a measure $\mu_\xi \in M^+(\C)$ with $\mathrm{supp}(\mu_\xi) \subset \R \cup i \R$ such that
\begin{equation}\label{MellinDist}\pushQED{\qed}
\xi(\phi) = \mu_\xi(\bM\phi) \quad (\phi \in C_c^\infty(\R)_{\mathrm ev}).\qedhere\popQED
\end{equation}
\end{theorem}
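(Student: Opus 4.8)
The plan is to treat $\xi$ as a positive linear functional on the commutative $*$-algebra $\cA := (C_c^\infty(\R)_{\mathrm ev}, \ast)$, whose involution is $\phi \mapsto \phi^*$ with $\phi^*(t) = \overline{\phi(-t)}$, and to produce the representing measure $\mu_\xi$ as the scalar spectral measure attached to a cyclic vector in a Gelfand--Naimark--Segal (GNS) construction. The even Mellin transform $\bM$ plays the role of the Gelfand transform of this algebra: since $\bM(\phi \ast \psi) = \bM\phi \cdot \bM\psi$, the multiplicative functionals of $\cA$ are exactly the point evaluations $\chi_z \colon \phi \mapsto \bM\phi(z)$ for $z \in \C$, and $\bM$ identifies $\cA$ with the even Paley--Wiener algebra $\mathrm{PW}(\C)_{\mathrm{ev}}$ under pointwise multiplication.

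First I would pin down the support condition, which is the cleanest and most structural step. A short computation gives $\bM\phi^*(z) = \overline{\bM\phi(\bar z)}$, using that $\bM\phi$ is even, so that
\[
\chi_z(\phi \ast \phi^*) = \bM\phi(z)\,\overline{\bM\phi(\bar z)}.
\]
This is nonnegative for \emph{all} $\phi$ precisely when $\bM\phi(\bar z) = \bM\phi(z)$ for all $\phi$, i.e.\ when $\bar z \in \{z, -z\}$, which by evenness means $z \in \R \cup i\R$. For any $z_0$ outside $\R\cup i\R$ one can concentrate $\bM\phi$ near $z_0$ with a phase rendering the contribution negative; hence any positive representing measure must be supported on $\R \cup i\R$.

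The existence of the representing measure I would obtain by a Bochner/spectral argument. Forming the positive semidefinite Hermitian form $\langle \phi, \psi\rangle_\xi := \xi(\phi \ast \psi^*)$, I would pass to the GNS Hilbert space $\cH_\xi$ with cyclic vector $\Omega$ (the class of a suitable approximate identity), on which $\cA$ acts by a commuting family of multiplication operators. Applying the Gelfand--Vilenkin nuclear spectral theorem to this family yields a projection-valued measure $E$ on the joint spectrum; identifying the spectrum with $\R \cup i\R$ through the characters $\chi_z$ and setting $\mu_\xi := \langle E(\cdot)\Omega, \Omega\rangle$ produces a positive Radon measure for which unwinding the GNS construction gives $\xi(\phi) = \int \bM\phi \, d\mu_\xi$. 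In the present context one may instead transport $\xi$ back to the auto-correlation measure $\eta$, which is of positive type relative to $K = \SO_2(\R)$ on $\SL_2(\R)$, invoke the relative Godement--Plancherel theorem (Theorem~\ref{GodementConvenient}) to obtain a positive Radon measure $\widehat\eta$ on $\mathcal S^+(G,K)$, and push it forward along the homeomorphism $\mathcal S^+(G,K) \cong ([-1,1]\cup i\R)/\{\pm 1\}$, using that the spherical transform factors as $\bM \circ \bH$.

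The main obstacle is the non-temperedness of $\xi$: because $\eta$ lives on an exponentially growing space, $\xi$ need not be a tempered distribution, so the classical Fourier-analytic Bochner theorem (which would force $\mathrm{supp}(\mu_\xi) \subset i\R$) does not apply, and $\mu_\xi$ may be genuinely unbounded in the real direction. Making the spectral decomposition rigorous therefore requires the nuclear-space machinery of Gelfand--Vilenkin, or, on the group side, the full strength of the spherical Bochner theorem permitting unbounded Plancherel-type measures. A secondary point to check is convergence of $\int \bM\phi\,d\mu_\xi$ for every test function: here the Paley--Wiener estimate on $\bM\phi$ is exactly what controls the integral against the possibly unbounded $\mu_\xi$, whose growth along $\R$ is in turn bounded by the continuity seminorms of the distribution $\xi$.
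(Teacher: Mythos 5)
The first thing to note is that the paper does not prove this statement: it is imported verbatim from Gelfand--Vilenkin \cite[Thm.~II.6.5]{GV4} (the statement already carries its end-of-proof symbol), and the one place where a Mellin transform is actually produced, Theorem~\ref{MellinPurePoint}, bypasses Theorem~\ref{GVK} entirely by pushing the spherical diffraction measure $\widehat{\eta}$ forward along the identification $\mathcal S^+(G,K)\cong([-1,1]\cup i\R)/\{\pm1\}$ and invoking Proposition~\ref{SMH}. So your sketch is being measured against the literature, not against an argument in the paper. Within it, the support computation is correct and is the right structural explanation: $\bM\phi^*(z)=\overline{\bM\phi(\bar z)}$ for even $\phi$, so the character $\chi_z$ is a positive functional on $(C_c^\infty(\R)_{\mathrm ev},\ast)$ exactly when $z\in\R\cup i\R$. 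One caveat: your conclusion that \emph{any} positive representing measure must be supported on $\R\cup i\R$ does not follow, since positivity of $\xi$ only constrains integrals $\int\bM\phi(z)\overline{\bM\phi(\bar z)}\,d\mu(z)$ and not the pointwise sign of the integrand; the paper explicitly records non-uniqueness of representing measures \cite[Sec.~II.4]{GV4}. Fortunately the theorem only asserts existence of one measure with the stated support.

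The genuine gap is in the existence step. In the GNS representation attached to $\langle\phi,\psi\rangle_\xi=\xi(\phi\ast\psi^*)$, the operators $\pi(\phi)$ are unbounded --- an evenly positive-definite distribution can grow like $\cosh(\lambda t)$ for arbitrarily large $\lambda$, so there is no $C^*$-bound --- and they are a priori only symmetric on the dense domain $\cA/\mathcal N$. Extracting a joint projection-valued measure requires proving essential self-adjointness of a generating family (a moment-problem issue, typically handled via quasi-analytic vectors) or verifying the hypotheses of the nuclear spectral theorem for this specific representation; this is precisely the hard analytic content of the Krein/Gelfand--Vilenkin argument, and the phrase ``apply the nuclear spectral theorem'' does not discharge it. Your proposed fallback via Theorem~\ref{GodementConvenient} also does not prove the theorem as stated: that result requires a bi-$K$-invariant Radon \emph{measure} of positive type relative to $K$ on $\mathrm{SL}_2(\R)$, whereas the inverse Harish transform carries a general evenly positive-definite distribution only to a distribution of positive type, which need not be a measure (already on $\R$, distributions of positive type need not be measures). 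That route does, however, establish the special case the paper actually uses --- auto-correlation distributions of uniform regular model sets --- and in that restricted form it coincides with how Theorem~\ref{MellinPurePoint} is proved.
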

We refer to any measure $\mu_\xi \in M^+(\C)$ which satisfies \eqref{MellinDist} and $\mathrm{supp}(\mu_\xi) \subset \R \cup i \R$ as a \emph{Mellin transform} of the evenly positive-distribution $\xi$. For general evenly positive-definite distributions such a measure is not unique \cite[Sec. II.4]{GV4}. Using the well-known relation between the Mellin transform, the Harish transform and the spherical Fourier transform of the Gelfand pair $(G, K)$ we are going to show:
\begin{theorem}[Pure point diffraction]\label{PurePointDiffraction} Let $\Lambda$ be a uniform regular model set in the hyperbolic plane. Then its auto-correlation distribution $\xi$ admits a Mellin transform $\mu_\xi$ which is a pure point measure.
\end{theorem}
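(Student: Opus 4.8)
The plan is to identify a Mellin transform of $\xi$ with the spherical diffraction measure $\widehat{\eta}$ of $\Lambda$, which we already know to be pure point, transported to $\C$ via the classical relation between the Harish, Mellin and spherical transforms. The first step is to recall this relation: for the Gelfand pair $(G,K)=(\mathrm{SL}_2(\R),\mathrm{SO}_2(\R))$ the spherical functions may be parametrized by a complex parameter $z\mapsto\omega_z$ so that, for every $f\in C_c^\infty(G,K)$,
\[
\mathbb{S}f(\omega_z)=\bM(\bH f)(z)\qquad(z\in\C),
\]
i.e.\ the spherical transform factors as the even Mellin transform of the Harish transform (see \cite[Ch.\ V]{Lang}; this is precisely the relation alluded to before Theorem \ref{GVK}). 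Under this parametrization the positive-definite spherical functions are exactly the $\omega_z$ with $z\in[-1,1]\cup i\R$, the interval $[-1,1]$ giving the complementary series together with the trivial representation at $z=\pm1$ and $i\R$ the spherical principal series, and $z\mapsto\omega_z$ descends to the homeomorphism $([-1,1]\cup i\R)/\{\pm1\}\xrightarrow{\ \sim\ }\mathcal S^+(G,K)$ recorded in the introduction.

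With this identity in hand I would define $\mu_\xi:=s_*\widehat{\eta}$ to be the push-forward of the spherical diffraction measure under a Borel section $s\colon\mathcal S^+(G,K)\to[-1,1]\cup i\R$ of the two-to-one map $z\mapsto\omega_z$, so that $\mu_\xi$ is a positive Radon measure on $\C$ with support contained in $[-1,1]\cup i\R\subset\R\cup i\R$. By Theorem \ref{ppdiff} (equivalently Theorem \ref{AbstractPurePoint} applied to the uniform regular model set $\Lambda$) the measure $\widehat{\eta}$ is pure point, hence so is $\mu_\xi$ by construction, and its support already meets the requirement of Theorem \ref{GVK}. It then remains only to verify that $\mu_\xi$ really is a Mellin transform of $\xi$.

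For the verification I would first test against $\phi=\psi\ast\psi^*$ with $\psi\in C_c^\infty(\R)_{\mathrm{ev}}$. Setting $f:=\bH^{-1}\psi\in C_c^\infty(G,K)$ and using that $\bH$ is a $*$-isomorphism gives $\bH^{-1}\phi=f\ast f^*$, so by the definition of $\xi$ and characterization (God2) of $\widehat{\eta}$,
\[
\xi(\phi)=\eta(f\ast f^*)=\widehat{\eta}(|\widehat{f}|^2)=\int_{\mathcal S^+(G,K)}|\widehat{f}(\omega)|^2\,d\widehat{\eta}(\omega).
\]
Since $\widehat{f}(\omega_z)=\mathbb{S}f(\omega_z)=\bM\psi(z)$ by the key identity, and since a direct computation (using $\psi^*=\bar\psi$ and evenness) shows $\bM(\psi\ast\psi^*)(z)=|\bM\psi(z)|^2$ on $[-1,1]\cup i\R$, the right-hand side equals $\mu_\xi(|\bM\psi|^2)=\mu_\xi(\bM\phi)$; note that the integrability $\bM\phi\in L^1(\mu_\xi)$ required here is exactly the integrability furnished by (God1) for $\widehat{\eta}$. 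Thus $\xi(\phi)=\mu_\xi(\bM\phi)$ for every such $\phi$, and by polarization the same identity holds on the span $D$ of $\{\psi_1\ast\psi_2^*\}$, which is dense in $C_c^\infty(\R)_{\mathrm{ev}}$.

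Finally I would upgrade this to all test functions. By Theorem \ref{GVK} there is at least one Mellin transform $\mu'$ of $\xi$; since $\mu'$ and $\mu_\xi$ agree on the Mellin images $\bM(D)$, which form a self-adjoint, point-separating subalgebra of $C_0([-1,1]\cup i\R)$ (these functions are bounded on $[-1,1]$ and rapidly decaying on $i\R$, and the density of $\{\widehat{f}\mid f\in C_c^\infty(G,K)\}$ in $C_0(\mathcal S^+(G,K))$ exploited in Corollary \ref{SphericalPlancherel} transports to $\bM(D)$), a Stone--Weierstrass argument forces $\mu_\xi=\mu'$ as Radon measures. Consequently $\mu_\xi$ satisfies $\xi(\phi)=\mu_\xi(\bM\phi)$ for all $\phi\in C_c^\infty(\R)_{\mathrm{ev}}$ and is pure point, which proves the theorem. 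I expect the main obstacle to be the bookkeeping in the first two steps, namely fixing the normalization of $z$ so that the positive-definite locus is exactly $[-1,1]\cup i\R$ and checking $\bM(\psi\ast\psi^*)=|\bM\psi|^2$ there, together with the final identification $\mu_\xi=\mu'$, where one must ensure that the Mellin images genuinely separate the support.
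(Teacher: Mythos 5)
Your overall strategy is the same as the paper's: use the identity $\mathbb S f=\bM(\bH f)$ (Proposition \ref{SMH}), transport the spherical diffraction measure $\widehat\eta$ to a measure supported on $[-1,1]\cup i\R$, and quote pure pointness from Theorem \ref{AbstractPurePoint}/Proposition \ref{ConcretePurePoint}. Your first two steps (the definition $\mu_\xi:=s_*\widehat\eta$, the verification $\bM(\psi\ast\psi^*)=|\bM\psi|^2$ on $\R\cup i\R$, and the identity $\xi(\phi)=\mu_\xi(\bM\phi)$ for $\phi$ in the span $D$ of convolution squares) are correct and match the paper's computation, up to the harmless difference that the paper uses the $\{\pm1\}$-symmetric lift of $\widehat\eta$ rather than a Borel section.

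The gap is in your final upgrade from $D$ to all of $C_c^\infty(\R)_{\mathrm{ev}}$. The measures $\mu_\xi$ and $\mu'$ are in general \emph{infinite} positive Radon measures (already for abelian model sets the total diffraction intensity $\sum|\widehat{\bf 1}_W(\xi_2)|^2$ over the dual lattice diverges), so the functional $\psi\mapsto\int\psi\,d\mu_\xi$ is not sup-norm continuous on $C_0([-1,1]\cup i\R)$. Consequently, uniform density of $\bM(D)$ in $C_0$ via Stone--Weierstrass gives you no way to compare $\mu_\xi$ and $\mu'$: agreement of two infinite Radon measures on a sup-norm-dense subalgebra of integrable functions is not a valid criterion for equality. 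To close the gap you can either (a) do what the paper implicitly does and extend the inversion identity $\eta(f)=\widehat\eta(\widehat f)$ from $\mathrm{span}\{g\ast h^*\}$ to all of $C_c^\infty(G,K)$ directly (e.g.\ by a Dixmier--Malliavin factorization $f=\sum g_i\ast h_i$ with $g_i,h_i\in C_c^\infty(G,K)$ together with Cauchy--Schwarz in $L^2(\widehat\eta)$, which also yields the integrability $\widehat f\in L^1(\widehat\eta)$ you need), after which the one-line chain $\xi(\phi)=\eta(\bH^{-1}\phi)=\mu_\xi(\bM\phi)$ holds for every $\phi$; or (b) run the uniqueness argument of Appendix \ref{AppendixGodement} correctly: since $D$ contains $\bH(L_x^\sharp f\ast f^*)$ for all $x\in G$, agreement on $D$ determines the spherical Bochner transform of the \emph{finite} measures $|\widehat f|^2\mu_\xi$ and $|\widehat f|^2\mu'$, hence these coincide by Theorem \ref{Bochner}, and the localization argument of Lemma \ref{Godement2} and \eqref{LocalFormulaMuHat} then gives $\mu_\xi=\mu'$ on $C_c$. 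Either repair uses ingredients already in the paper; the Stone--Weierstrass step as written does not.
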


\subsection{Mellin transform vs.\ spherical Fourier transform}
We now explain the proof of Theorem \ref{PurePointDiffraction}. Let $\eta \in M^+(G, K)$ denote the auto-correlation measure of $\Lambda$. By Theorem \ref{AbstractPurePoint} and Proposition \ref{ConcretePurePoint} the diffraction measure $\widehat{\eta} \in M^+(\mathcal S^+(G, K))$ is pure point. We now relate it to the auto-correlation distribution $\xi$ of $\Lambda$.
\begin{remark}[Spherical functions] Denote by $\rho: G \to \R$ the function given by
\[
\rho(a_tn_uk_\theta) = e^{t/2}.
\]
Note that $\rho(a_t) = e^{t/2}$ (corresponding to the half-sum of positive roots) and that $\rho$ is right-$K$-invariant. Integrating complex powers of $\rho$ against the left-$K$-action provides bi-$K$-invariant functions
\[
\omega_z: G \to \C, \quad \omega_z(g) := \int_0^1 \rho(k_\theta g)^{z+1} d\theta,
\]
and it turns out that these are precisely the spherical functions of the Gelfand pair $(G, K)$. Moreover, given $z_1, z_2 \in \C$ we have $\omega_{z_1} = \omega_{z_2}$ iff $z_2 \in \{\pm z_1\}$. We may thus identify the spherical transform of the Gelfand pair $(G,K)$ with the map
\[
\mathbb S: C_c(G, K) \to C(\C)_{\mathrm ev}, \quad \mathbb S(f)(z) := \int_G f(g) \omega_z(g^{-1}) dm_G(x) =  \int_G f(g) \omega_z(g) dm_G(g),
\]
where the final equality follows from the fact that $KgK = Kg^{-1}K$ for all $g \in G$. Finally, the positive definite spherical functions are precisely those of the form $\omega_z$ with $z \in [-1,1] \cup i\R$. For $z \in i\R$ these correspond to spherical principal series, whereas for $z \in [-1,1]$ they correspond to spherical complementary series. We now recall the relation between the Mellin transform, the Harish transform and the spherical transform of the Gelfand pair $(G, K)$ \cite[Thm.\ V.4.5]{Lang}:
\begin{proposition}\label{SMH} For all $f \in C_c^\infty(G, K)$ we have
\[
\mathbb S f = \mathbb M (\mathbb H f) \in  \mathrm{PW}(\C)_{\mathrm{ev}}.
\]
\end{proposition}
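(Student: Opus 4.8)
The plan is to prove the identity by directly unwinding all three transforms, exploiting the bi-$K$-invariance of $f$ together with the $ANK$-coordinate formula for $m_G$; membership in $\mathrm{PW}(\C)_{\mathrm{ev}}$ will then follow formally from the two isomorphism statements already recorded in the excerpt.

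First I would start from the definition $\mathbb{S}f(z) = \int_G f(g)\,\omega_z(g)\,dm_G(g)$ and substitute the integral representation $\omega_z(g) = \int_0^1 \rho(k_\theta g)^{z+1}\,d\theta$. Since $f \in C_c^\infty(G,K)$ has compact support, Fubini applies and I may interchange the two integrations. For each fixed $\theta$ I would then perform the substitution $g \mapsto k_\theta^{-1}g$: because $f$ is left-$K$-invariant and $m_G$ is left-invariant, the inner integral $\int_G f(g)\rho(k_\theta g)^{z+1}\,dm_G(g)$ equals $\int_G f(g)\rho(g)^{z+1}\,dm_G(g)$, which no longer depends on $\theta$. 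Integrating this constant over $\theta \in [0,1)$ contributes a factor $1$, leaving
\[
\mathbb{S}f(z) = \int_G f(g)\,\rho(g)^{z+1}\,dm_G(g).
\]

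Next I would pass to the $ANK$-coordinates $g = a_t n_u k_\theta$, in which $dm_G = dt\,du\,d\theta$, the function $\rho$ reads $\rho(a_tn_uk_\theta) = e^{t/2}$, and right-$K$-invariance of $f$ gives $f(a_tn_uk_\theta) = f(a_tn_u)$. The $\theta$-integral over $[0,1)$ again contributes a factor $1$, so
\[
\mathbb{S}f(z) = \int_{\R}\int_{\R} f(a_tn_u)\,e^{t(z+1)/2}\,dt\,du.
\]
Splitting $e^{t(z+1)/2} = e^{t/2}e^{tz/2}$ and recognising $\mathbb{H}f(t) = e^{t/2}\int_{\R} f(a_tn_u)\,du$, the $u$-integral collapses into the Harish transform, yielding $\mathbb{S}f(z) = \int_{\R} \mathbb{H}f(t)\,e^{tz/2}\,dt = \mathbb{M}(\mathbb{H}f)(z)$, which is the asserted equality. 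Finally, $\mathbb{H}f \in C_c^\infty(\R)_{\mathrm{ev}}$ by the cited isomorphism property of the Harish transform, and $\mathbb{M}$ maps this space into $\mathrm{PW}(\C)_{\mathrm{ev}}$, so $\mathbb{S}f = \mathbb{M}(\mathbb{H}f)$ automatically lies in $\mathrm{PW}(\C)_{\mathrm{ev}}$.

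There is essentially no deep obstacle here: the entire content is bookkeeping of the two $K$-invariances and of the Haar normalisation. The only points requiring a modicum of care are the justification of the Fubini interchange and of the left-translation substitution $g \mapsto k_\theta^{-1}g$, but both are immediate from the compact support and smoothness of $f$ and the left-invariance of $m_G$. The one thing to watch is that the $\theta$-range $[0,1)$ carries total mass $1$ in the chosen Haar normalisation, so that the two elimination steps for $\theta$ introduce no stray constants.
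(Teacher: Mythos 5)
Your proof is correct and follows essentially the same computation as the paper's, just run in the opposite direction: the paper starts from $\mathbb M(\mathbb H f)(z)$, uses right-$K$-invariance of $f$ and the $ANK$-coordinates to reach $\int_G f(g)\rho(g)^{z+1}\,dm_G(g)$, and then uses left-$K$-invariance to produce the average defining $\omega_z$, whereas you traverse the same three identities starting from $\mathbb S f(z)$. The key invariance arguments and the role of the Haar normalisation are identical, so there is nothing substantive to distinguish the two.
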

\begin{proof} Let $z \in \C$. Using bi-$K$-invariance of $f$ and right-$K$-invariance of $\rho$ we obtain
\begin{eqnarray*}
 \mathbb M (\mathbb H f)(z) &=& \int_\R \left( e^{t/2} \int_\R f(a_tn_u) \, du\right) e^{tz/2}\,dt \quad = \quad \int_\R \int_\R f(a_tn_u) (e^{t/2})^{z+1}\, du \, dt\\
 &=& \int_\R \int_\R \int_0^1 f(a_t n_u k_\theta) \rho(a_t n_u k_\theta) \, d\theta \, du \, dt \quad = \quad \int_G f(g) \rho(g)^{z+1}\, dm_G(g)\\
 &=& \int_G \int_0^1 f(k_{-\theta}g) \rho(g)^{z+1}\, d\theta\, dm_G(g) \quad = \quad   \int_G  f(g) \left(\int_0^1\rho(k_{\theta}g)^{z+1}\, d\theta\right) dm_G(g)\\
 &=& \mathbb S f(z).
\end{eqnarray*}
This proves the proposition.
\end{proof}
If we identify $\mathcal S(G, K)$ with $\C/\{\pm 1\}$ (via $\omega_z \to \{\pm z\}$), then the spherical diffraction measure $\widehat{\eta} \in M^+(\mathcal S^+(G, K))$ corresonds to a pure point Radon measure $\mu$ on $[-1,1] \cup i\R \subset \C$ such that $\mu(A) = \mu(-A)$ and for all $f \in C_c^\infty(G, K)$ we have
\[
\mu(\mathbb S f) = \widehat{\eta}(\widehat{f})= \eta(f).
\]
We thus obtain the following refinement of Theorem \ref{PurePointDiffraction}.
\begin{theorem}\label{MellinPurePoint} The measure $\mu$ is a Mellin transform of the auto-correlation distribution $\xi$. In particular, $\xi$ has a pure point Mellin transform, which is supported on $[-1,1] \cup i\R$.
\end{theorem}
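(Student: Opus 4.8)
The plan is to verify directly that the pure point measure $\mu$ already constructed satisfies the defining identity $\xi(\phi) = \mu(\bM\phi)$ of a Mellin transform, the key input being the compatibility of the spherical, Harish and Mellin transforms recorded in Proposition \ref{SMH}. All of the genuine analytic work has in fact already been done: the pure point-ness of $\widehat\eta$ (via Theorem \ref{AbstractPurePoint} and Proposition \ref{ConcretePurePoint}), its identification with the symmetric measure $\mu$ on $[-1,1]\cup i\R$, and the Paley--Wiener statement $\mathbb S f = \bM(\bH f)$. The proof therefore amounts to assembling these facts and checking the support and integrability conditions required by the Gelfand--Vilenkin--Krein framework.

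First I would fix $\phi \in C_c^\infty(\R)_{\mathrm ev}$ and put $f := \bH^{-1}\phi \in C_c^\infty(G, K)$, which is legitimate since the Harish transform $\bH\colon C_c^\infty(G, K) \to C_c^\infty(\R)_{\mathrm ev}$ is an isomorphism. By the definition of the auto-correlation distribution, $\xi(\phi) = \eta(\bH^{-1}\phi) = \eta(f)$, while the identity displayed just before the theorem gives $\mu(\mathbb S f) = \eta(f)$; finally Proposition \ref{SMH} identifies $\mathbb S f = \bM(\bH f) = \bM\phi$. Chaining these equalities yields
\[
\mu(\bM\phi) \;=\; \mu(\mathbb S f) \;=\; \eta(f) \;=\; \xi(\phi) \qquad (\phi \in C_c^\infty(\R)_{\mathrm ev}),
\]
which is precisely the defining relation \eqref{MellinDist} exhibiting $\mu$ as a Mellin transform of $\xi$.

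It then remains to confirm the side conditions. Integrability of $\bM\phi$ against $\mu$ is inherited for free: since $\bH$ is onto $C_c^\infty(\R)_{\mathrm ev}$, every such $\bM\phi$ equals some $\mathbb S f$, and the pairing $\mu(\mathbb S f)$ is already known to be finite (it equals $\eta(f)$). The support requirement $\mathrm{supp}(\mu)\subset \R\cup i\R$ follows from the identification $\mathcal S^+(G, K)\cong ([-1,1]\cup i\R)/\{\pm1\}$, which places $\mu$ on $[-1,1]\cup i\R$, and the evenness $\mu(A)=\mu(-A)$ makes the pairing against the even function $\bM\phi\in\mathrm{PW}(\C)_{\mathrm{ev}}$ well posed. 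Since $\widehat\eta$, hence $\mu$, is pure point, this gives the asserted pure point Mellin transform supported on $[-1,1]\cup i\R$. I expect no genuine obstacle here; the only point needing care is the bookkeeping of the three transforms, and in particular ensuring that the well-definedness of $\mu(\bM\phi)$ is not silently assumed but inherited from the already-justified finiteness of $\mu(\mathbb S f)$. The substance of the theorem lies entirely in Proposition \ref{SMH} and the earlier pure point analysis, which the present statement merely packages.
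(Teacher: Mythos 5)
Your argument is correct and coincides with the paper's own proof: both fix $\phi \in C_c^\infty(\R)_{\mathrm ev}$, set $f := \bH^{-1}\phi$, and chain $\xi(\phi) = \eta(f) = \mu(\mathbb S f) = \mu(\bM\phi)$ via Proposition \ref{SMH}. The additional remarks on integrability and support are sensible bookkeeping but do not change the substance.
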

\begin{proof} Let $\phi \in C^\infty_c(\R)_{\mathrm{ev}}$ and $f := \bH^{-1}(\phi) \in C_c^\infty(G, K)$.
In view of Proposition \ref{SMH} we have 
\[
\xi(\phi) = \xi(\bH f) =  \eta(f) = \mu(\bS f) = \mu(\bM(\bH f)) = \mu(\bM \phi).
\]
This proves that $\mu$ is a Mellin transform of $\xi$.
\end{proof}
%\begin{eqnarray*}
%\widehat{f}(\omega_s) &=& \int_G f(g) \left(\int_K \rho(kg)^{s+1}\, dm_K(k)\right) \, dm_G(g)\\
%&=& \int_G \int_K f(kg) \rho(g)^{s+1}\, dm_K(k)\,dm_G(g) \quad = \quad \int_G f(g) \rho(g)^{s+1}\, dm_G(g)\\
%&=& \int_A\int_N\int_K f(ank) \rho(a)^{s+1} \, dm_A(a)\, dm_N(n) \, dm_K(k)\\
%&=& \int_A \left(\rho(A) \int_N f(an) dm_N(n)\right) \rho(a)^{s}\, m_A(a)\\
%&=& \int_{-\infty}^\infty \phi(a_t) e^{st/2} \, dt \quad = \quad \int_{-\infty}^\infty \phi(a_t) e^{-st/2} \, dt\quad = \quad \widetilde{\phi}(s).
%\end{eqnarray*}

% which turn out to be spherical

%The spherical functions associated with a Gelfand pair $(G, K)$ where $G$ is a semisimple Lie group and $K$ is a maximal compact subgroup have been classified by Harish-Chandra, see \cite[Thm.\ 11.5.9]{Wolf}. We spell out this classification in the case $G = {\rm SL}_2(\R)$. Firstly, for every $g \in G$ there is a unique $t(g)\geq 0$ such that $g \in  K a_{t(g)/2} K$. Given $z \in \C$ we then define
%\[
%\omega_z: G \to \C, \quad \omega_z(g) := \int_0^1 e^{t(k_\theta g)z/2}d\theta.
%\]
%Then $\mathcal S(G, K) = \{\omega_z \mid z \in \C\}$, and given $z_1, z_2 \in \C$ we have $\omega_{z_1} = \omega_{z_2}$ iff $z_2 \in \{\pm z_1\}$. We may thus identify the spherical transform with the transform
%\[
%\mathbb S: C_c(G, K) \to C(\C)_{\mathrm ev}, \quad \mathbb S(f)(z) := \int_G f(x) \omega_z(x^{-1}) dm_G(x)
%\]
\end{remark}

\newpage
\appendix

\section{An elementary proof of the Godement--Plancherel theorem}\label{AppendixGodement}
This appendix is devoted to the proof of the Godement-Plancherel theorem in its most general form (Theorem \ref{GodementConvenient}). The proof is by reduction to the spherical Bochner theorem, which is easily accessible from the literature and which we recall in the next subsection. The remainder of the proof is self-contained and inspired by the proof in the abelian case as presented in the book of Berg and Forst \cite{BergFrost}.
\subsection{Reminder of the spherical Bochner theorem}
Recall that $M_b^+(\mathcal S^+(G, K))$ denotes the space of bounded (positive) Radon measures on the locally compact space $\mathcal S^+(G, K)$ of positive-definite spherical functions. Our starting point is the following spherical version of the classical Bochner theorem \cite[Thm.\ 9.3.4]{Wolf-07}:
\begin{theorem}[Spherical Bochner theorem]\label{Bochner} Let $\varphi \in P(G,K)$. Then there exists a unique $\mu_\varphi \in M_b^+(\mathcal S^+(G, K))$ such that
\[\pushQED{\qed}
\varphi(x) = \int_{\mathcal S^+(G, K)} \overline{\omega(x)} d\mu_\varphi(\omega).\qedhere \popQED
\]
%Moreover, ${\rm supp}(\mu_\varphi) \subset \mathcal S^+(G, K)$.
\end{theorem}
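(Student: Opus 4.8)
The plan is to reduce the spherical Bochner theorem to the commutative spectral theorem, exploiting that $(G,K)$ is a Gelfand pair, so that $L^1(G,K)$ is a \emph{commutative} Banach-$*$-algebra whose Gelfand spectrum is $\mathcal S_b(G,K)$ (Remark~\ref{TopologyS+}). First I would invoke the GNS construction of Lemma~\ref{PosDefFct}: since $\varphi \in P(G,K)$ there is a unitary representation $(\pi,\cH)$ with a cyclic vector $u$ such that $\varphi(g)=\langle u,\pi(g)u\rangle$ and $\|u\|^2=\varphi(e)$. Right-$K$-invariance of $\varphi$ gives $\langle \pi(h)u,\pi(k)u-u\rangle=0$ for all $h\in G$ and $k\in K$, and cyclicity of $u$ then forces $\pi(k)u=u$, i.e.\ $u\in\cH^K$. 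I then restrict attention to the cyclic subspace $\cH_0:=\overline{\pi(L^1(G,K))u}$, on which the commutative $*$-algebra $\pi(L^1(G,K))$ acts nondegenerately with $u$ cyclic.

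Next I would form the commutative $C^*$-algebra $\cA:=\overline{\pi(L^1(G,K))}\subset\mathcal B(\cH_0)$. Pulled back along $\pi$, the characters of $\cA$ are $L^1$-continuous $*$-characters of $L^1(G,K)$ that are moreover dominated by the full $C^*$-norm; these are precisely the positive-definite spherical functions, so the Gelfand spectrum of $\cA$ is a locally compact subset $\Sigma\subseteq\mathcal S^+(G,K)$. The spectral theorem supplies a projection-valued measure $E$ on $\Sigma$, and I set $\mu:=\langle E(\cdot)u,u\rangle$, a finite positive Radon measure on $\mathcal S^+(G,K)$ of total mass $\|u\|^2=\varphi(e)$. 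Since the Gelfand transform of $\pi(f)$ at the character attached to $\omega$ equals $\mathbb S f(\omega)=\widehat f(\omega)$, the spectral theorem yields
\[
\int_G f(g)\,\overline{\varphi(g)}\,dm_G(g)=\langle \pi(f)u,u\rangle=\int_{\mathcal S^+(G,K)}\widehat f(\omega)\,d\mu(\omega)\qquad(f\in C_c(G,K)).
\]

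Finally, using $\widehat f(\omega)=\int_G f(x)\,\overline{\omega(x)}\,dm_G(x)$ (valid because $\omega=\omega^*$ for $\omega\in\mathcal S^+(G,K)$) and Fubini, the displayed identity becomes $\int_G f(x)\,\overline{\varphi(x)}\,dm_G(x)=\int_G f(x)\big(\int_{\mathcal S^+(G,K)}\overline{\omega(x)}\,d\mu(\omega)\big)\,dm_G(x)$ for all $f\in C_c(G,K)$. Both integrands are continuous and bi-$K$-invariant, hence coincide pointwise; conjugating and replacing $\mu$ by its pushforward $\mu_\varphi$ under the homeomorphic involution $\omega\mapsto\check\omega$ of $\mathcal S^+(G,K)$ delivers $\varphi(x)=\int_{\mathcal S^+(G,K)}\overline{\omega(x)}\,d\mu_\varphi(\omega)$. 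For uniqueness I observe that $\mu$ is already determined by $\int\widehat f\,d\mu=\langle\pi(f)u,u\rangle$ over $f\in C_c(G,K)$, since $\{\mathbb S f:f\in C_c(G,K)\}$ is dense in $C_0(\mathcal S^+(G,K))$ (as used in Corollary~\ref{SphericalPlancherel}); the same density transfers uniqueness to $\mu_\varphi$.

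I expect the main obstacle to be the second step: correctly identifying the Gelfand spectrum of the generated commutative $C^*$-algebra $\cA$ with a subset of the positive-definite spherical functions $\mathcal S^+(G,K)$, handling the non-unitality of $L^1(G,K)$ through its approximate identity $(\rho_n)$, and extracting from the abstract spectral measure $E$ an honest finite Radon measure on the locally compact space $\mathcal S^+(G,K)$. Once this localization of the support and the $C_0$-density are in place, the remaining steps are a routine Fubini computation together with the bookkeeping of complex conjugates encoded in the involution $\omega\mapsto\check\omega$.
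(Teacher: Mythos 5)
The paper offers no proof of Theorem \ref{Bochner}: it is quoted from \cite[Thm.\ 9.3.4]{Wolf-07} and used as a black box in Appendix \ref{AppendixGodement}, so there is nothing in the text to compare your argument against line by line. What you have written is the standard proof of the cited result, and it is essentially correct: the GNS pair $(\pi,u)$ from Lemma \ref{PosDefFct}, the observation that right-$K$-invariance of $\varphi$ together with cyclicity forces $u\in\cH^K$, the passage to the commutative C*-algebra $\cA=\overline{\pi(L^1(G,K))}$ acting on $\cH_0$ (with $u\in\cH_0$ via the approximate identity $(\rho_n)$), the spectral measure $\mu=\langle E(\cdot)u,u\rangle$ of total mass $\varphi(e)$, the Fubini step (legitimate because $\|\omega\|_\infty=\omega(e)=1$ and $\mu$ is finite), the conjugation bookkeeping via $\omega\mapsto\check\omega=\overline{\omega}$, and the uniqueness via Stone--Weierstrass density of $\{\widehat f \mid f \in C_c(G,K)\}$ in $C_0(\mathcal S^+(G,K))$ all go through. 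The one step that carries real mathematical content, and which you correctly single out as the main obstacle, is the identification of the Gelfand spectrum of $\cA$ with a subset of $\mathcal S^+(G,K)$: a character of the C*-algebra $\cA$ is automatically a state, so it restricts on $L^1(G,K)$ to a multiplicative functional $m_\omega$ with $\omega$ a bounded spherical function satisfying $m_\omega(f\ast f^*)\geq 0$ for all $f\in C_c(G,K)$; you then need the standard but nontrivial fact that a bounded spherical function of positive type \emph{relative to} $K$ is genuinely positive-definite on $G$ --- this is essentially how $\mathcal S^+(G,K)$ is characterized in \cite[Sec.\ 9.2]{Wolf-07}. With that fact supplied (by proof or citation), your argument is a complete and self-contained proof of the theorem the paper merely quotes.
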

\begin{definition}\label{AssociatedMeasure}
For $\varphi \in P(G, K)$ the measure $\mu_\varphi$ from Theorem \ref{Bochner} is called the \emph{associated measure} of $\varphi$.
\end{definition}
\subsection{A convenient reformulation of the Godement-Plancherel theorem}
We now turn to the proof of Theorem \ref{GodementConvenient}. From now on we fix a measure $\mu \in M(G, K)$ which is of positive type relative $K$. We have to show existence and uniqueness of a measure $\widehat{\mu} \in M(\mathcal S^+(G, K))$ satisfying the equivalent conditions (God1)-(God3) from Proposition \ref{GConditions} and to show that it is positive. That $\widehat{\mu}$ uniquely determines $\mu$ is immediate from (God2) and the fact that $\{f\ast f^* \mid f \in C_c(G, K)\}$ spans a dense subspace of $C_c(G, K)$, \cite[Lemma~A.13]{BHP2}. Using bi-$K$-invariance of $\mu$ we can reformulate Conditions (God1)-(God3) as follows:
\begin{lemma}\label{God4} For a right-$K$-invariant measure $\mu \in M(G)$ Conditions (God1)-(God3) from Proposition \ref{GConditions} are equivalent to the following condition.
\begin{enumerate}[(God1)]\setcounter{enumi}{3}
\item For every $f \in C_c(G,K)$ we have $\widehat{f}\in L^2(\mathcal S^+(G, K), \widehat{\mu})$ and for every $f \in C_c(G)$ and $x \in G$ we have
\[
\mu \ast f \ast f^*(x) = \int_{\mathcal S^+(G, K)} |\widehat{f}(\omega)|^2 \overline{\omega(x)} d \widehat{\mu}(\omega).
\]
\end{enumerate}
\end{lemma}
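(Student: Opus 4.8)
The plan is to exploit that Conditions (God1)--(God3) are already mutually equivalent (Proposition \ref{GConditions}), so it suffices to relate one of them, say (God3), to (God4); concretely I would prove (God3)$\Rightarrow$(God4) and (God4)$\Rightarrow$(God2). Since (God1)--(God3) involve $\mu$ only through its restriction to $C_c(G,K)$, and since in the application $\mu$ is bi-$K$-invariant, I will assume $\mu$ bi-$K$-invariant; it is then symmetric ($\check\mu=\mu$), being a real positive-type measure in our setting. The clause ``$\widehat{f}\in L^2(\mathcal S^+(G,K),\widehat\mu)$ for $f\in C_c(G,K)$'' is literally common to (God2)/(God3) and (God4), so the whole content is the pointwise identity in (God4), which I must match to the quadratic identity in (God2).

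For (God3)$\Rightarrow$(God4) I would fix $f\in C_c(G,K)$ and rewrite the right-hand side of (God4) using the translation formula \eqref{FTTranslation}. Since $\widehat{R_x^\sharp f}(\omega)=\widehat{f}(\omega)\,\overline{\omega(x)}$, one has $|\widehat{f}(\omega)|^2\,\overline{\omega(x)}=\widehat{R_x^\sharp f}(\omega)\,\overline{\widehat{f}(\omega)}$, whence
\[
\int_{\mathcal S^+(G,K)}|\widehat{f}(\omega)|^2\,\overline{\omega(x)}\,d\widehat\mu(\omega)=\langle \widehat{R_x^\sharp f},\widehat{f}\rangle_{L^2(\mathcal S^+(G,K),\widehat\mu)},
\]
the integral being absolutely convergent because $\widehat{f}\in L^2(\widehat\mu)$ and $\|\omega\|_\infty=1$. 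Applying (God3) to the pair $(R_x^\sharp f,f)$ turns the right-hand side into $\mu\big((R_x^\sharp f)\ast f^*\big)$, and a direct convolution computation, using bi-$K$-invariance and symmetry of $\mu$ together with the functional equation (S2), identifies $\mu\big((R_x^\sharp f)\ast f^*\big)$ with $(\mu\ast f\ast f^*)(x)$. This proves (God4) for bi-$K$-invariant $f$. To reach arbitrary $f\in C_c(G)$ I would note that $\widehat{f}(\omega)=\widehat{f^\sharp}(\omega)$, so the right-hand side is unchanged upon replacing $f$ by $f^\sharp$; on the left, $\varphi:=\mu\ast f\ast f^*$ is continuous, positive-definite (Lemma \ref{PosDefFct}) and left-$K$-invariant, and the computation $\varphi(gk)=\overline{\varphi(k^{-1}g^{-1})}=\varphi(g)$ forces $\varphi\in P(G,K)$. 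The spherical Bochner theorem (Theorem \ref{Bochner}) then represents $\varphi$ by a unique measure, and comparison with the bi-$K$-invariant case identifies that measure as $|\widehat{f}|^2\,\widehat\mu$.

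For the converse (God4)$\Rightarrow$(God2) I would specialize the pointwise identity to $x=e$. Since $\omega(e)=1$ for every $\omega\in\mathcal S^+(G,K)$, the right-hand side collapses to $\int|\widehat{f}(\omega)|^2\,d\widehat\mu(\omega)$; on the left, $(\mu\ast f\ast f^*)(e)=\int_G(f\ast f^*)(g^{-1})\,d\mu(g)=\mu(f\ast f^*)$, the last equality being the substitution $g\mapsto g^{-1}$ together with the symmetry $\check\mu=\mu$. This is exactly (God2), and (God3) follows by polarization as in Proposition \ref{GConditions}.

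The main obstacle is the convolution identity $\mu\big((R_x^\sharp f)\ast f^*\big)=(\mu\ast f\ast f^*)(x)$ and, more broadly, the systematic bookkeeping of the involutions $f\mapsto\check f$, $f\mapsto\bar f$ and $\omega\mapsto\check\omega$ throughout: the anti-linear convention for inner products together with \eqref{FTfbar} and the relation $\overline{\omega(x)}=\check\omega(x)=\omega(x^{-1})$ (valid since $\omega=\omega^*$ on $\mathcal S^+(G,K)$) must be tracked carefully so that (S2) applies cleanly and no spurious $\check\omega$ survives in the weight of $\widehat\mu$; this is precisely where symmetry of $\mu$ is used, and for a general complex positive-type $\mu$ one would instead need $\check{}$-invariance of $\widehat\mu$. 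A secondary, routine point is the justification of the interchange of the $\mu$-integration with the $\widehat\mu$-integration (Fubini), which rests on the integrability built into (God1) and the uniform boundedness of the spherical functions.
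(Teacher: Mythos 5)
Your core argument is the paper's own: (God4)$\Rightarrow$(God2) by evaluating at $x=e$, and (God3)$\Rightarrow$(God4) by writing $|\widehat{f}(\omega)|^2\,\overline{\omega(x)}$ as $\widehat{g}(\omega)\overline{\widehat{f}(\omega)}$ for a translated test function $g$ and feeding the pair into (God3). The paper takes $g=L_x^\sharp f$ in the second slot where you take $R_x^\sharp f$ in the first; writing $\nu_x:=m_K\ast\delta_{x^{-1}}\ast m_K\in M_b(G,K)$ one has $(R_x^\sharp f)\ast f^* = f\ast \nu_x\ast f^* = f\ast f^*\ast\nu_x = f\ast (L_x^\sharp f)^*$ by commutativity of $M_b(G,K)$, so the two computations are the same. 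Your explicit appeal to $\check\mu=\mu$ is apt: the resulting identity reduces to $\int_G (f\ast f^*)(yx)\,d\mu(y)=\int_G (f\ast f^*)(y^{-1}x)\,d\mu(y)$, which genuinely needs a symmetry of $\mu$ that the paper's displayed computation uses tacitly (it is not literally among the hypotheses of the lemma, so one should record that it holds for the measures to which the lemma is applied, namely bi-$K$-invariant measures of positive type relative $K$). The functional equation (S2) plays no role in this step; only the $K$-invariances and commutativity of the Hecke algebra are used.

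The genuine gap is your final paragraph, the passage from $f\in C_c(G,K)$ to arbitrary $f\in C_c(G)$. First, the positive-definiteness of $\mu\ast f\ast f^*$ is not available for non-bi-$K$-invariant $f$: the proof of Lemma \ref{GodementMainLemma} rests on $\mu\ast f=f\ast\mu$, which is commutativity of $M_b(G,K)$ and requires $f\in C_c(G,K)$. Second, even granting $\varphi:=\mu\ast f\ast f^*\in P(G,K)$, the ``comparison with the bi-$K$-invariant case'' is circular: Bochner produces \emph{some} measure $\mu_\varphi$, but identifying it with $|\widehat{f}|^2\widehat{\mu}$ is precisely the identity you are trying to prove. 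In fact no argument can close this gap, because the clause ``for every $f\in C_c(G)$'' is false as stated: take $\mu=m_K$ (so that $\widehat{\mu}$ is the Plancherel measure) and $f$ left- but not right-$K$-invariant; at $x=e$ the left-hand side of (God4) equals $\|f\|_2^2$ while the right-hand side equals $\|\widehat{f^\sharp}\|^2_{L^2(\nu_{(G,K)})}=\|f^\sharp\|_2^2<\|f\|_2^2$. The condition must be read with $f\in C_c(G,K)$ throughout --- which is all the paper's proof establishes and all that is used downstream --- and for that version your argument is correct and coincides with the paper's.
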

\begin{proof} Since $\omega(e) = 1$, (God2) follows from (God4) by choosing $x := e$. For the converse, assume that (God3) holds, let $f \in C_c(G, K)$, $x \in G$ and define $g := L_x^\sharp f \in C_c(G, K)$ as in Lemma \ref{LeftTranslationSharp}. We recall from \eqref{FTTranslation} that 
\[
\widehat{g}(\omega) = \widehat{f}(\omega) \cdot {\omega(x)}.
\]
Then, using right-$K$-invariance of $\mu$ and applying (God3) we obtain
\begin{eqnarray*}
\mu \ast f \ast f^*(x) &=& \int_G f\ast f^*(y^{-1}x)d\mu(y) \quad = \quad \int_G\int_G f(z)f^*(z^{-1}y^{-1}x)\, dm_G(z) \, d\mu(y)\\
&=& \int_G \int_G f(z) \overline{f(x^{-1}ky^{-1}z)} \, dm_G(z) \, d\mu(yk^{-1})dm_K(k)\\
&=& \int_G \int_G f(z) \overline{L^\sharp_x f(y^{-1}z)} \, dm_G(z) \, d\mu(y)\quad = \quad \mu(f \ast g^*)\\
&=&  \int_{\mathcal S^+(G, K)}  \widehat{f}(\omega) \overline{\widehat{g}(\omega)} \; d\widehat{\mu}(\omega) \quad = \quad \int_{\mathcal S^+(G, K)} |\widehat{f}(\omega)|^2 \overline{\omega(x)} d \widehat{\mu}(\omega),
\end{eqnarray*}
which establishes (God4) and finishes the proof.
\end{proof}
To simplify this condition further, we use the following relation between measures and functions of positive type, which in the abelian case was pointed out in \cite[Prop. 4.4]{BergFrost}.
\begin{lemma}\label{GodementMainLemma} Let $(G, K)$ be a Gelfand pair. If $\mu \in M(G, K)$ is of positive type relative $K$, then for all $f \in C_c(G, K)$ the function $\mu \ast f \ast f^*$ is positive-definite and continuous, hence of positive type.
\end{lemma}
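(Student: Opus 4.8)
The plan is to establish continuity first and then positive-definiteness, using the characterization in Lemma~\ref{PosDefFct}(iii): for a continuous function it suffices to check that $\int_G (g\ast g^*)(x)\,\varphi(x)\,dm_G(x)\ge 0$ for all $g\in C_c(G)$, where $\varphi:=\mu\ast f\ast f^*$. Continuity is immediate, since $f\ast f^*\in C_c(G)$ and $\mu$ is a Radon measure: the formula $\varphi(x)=\int_G (f\ast f^*)(y^{-1}x)\,d\mu(y)$ exhibits $\varphi$ as continuous in $x$ by uniform continuity of $f\ast f^*$ together with local finiteness of $\mu$, and $\varphi$ is bi-$K$-invariant because both $\mu$ and $f\ast f^*$ are. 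Note that working with the compactly supported test function $g$ keeps all integrations against $\mu$ over compact sets, which is essential because $\mu$ need not be finite.

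Next I would convert the pairing into an evaluation of $\mu$. Using unimodularity and the substitution $x=yu$ one verifies the identity $\int_G a(x)\,(\mu\ast c)(x)\,dm_G(x)=\mu(a\ast\check c)$ for $a,c\in C_c(G)$; since a short computation gives $\check{(f\ast f^*)}=\bar f\ast\bar f^{\,*}$ with $\bar f\in C_c(G,K)$, this turns the quantity to be estimated into $\mu\big((g\ast g^*)\ast(\bar f\ast\bar f^{\,*})\big)$. Because $\mu$ and $\varphi$ are bi-$K$-invariant, replacing $g$ by $m_K\ast g$ leaves the pairing unchanged (equivalently, $g\ast g^*$ may be replaced by its bi-$K$-average $m_K\ast g\ast g^*\ast m_K$), so one is reduced to the case that $g=\tilde g$ is left-$K$-invariant and hence $\tilde g\ast\tilde g^{\,*}\in C_c(G,K)$. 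Now both $\tilde g\ast\tilde g^{\,*}$ and $\bar f\ast\bar f^{\,*}$ lie in the commutative Hecke algebra, and I would exploit commutativity to bring $\bar f$ into a symmetric position, rewriting the expression as $\mu\big(\bar f\ast(\tilde g\ast\tilde g^{\,*})\ast\bar f^{\,*}\big)=\mu'(\tilde g\ast\tilde g^{\,*})$, where $\mu'(\Phi):=\mu(\bar f\ast\Phi\ast\bar f^{\,*})$ is again of positive type relative $K$, since $\mu'(a\ast a^*)=\mu\big((\bar f\ast a)\ast(\bar f\ast a)^*\big)\ge 0$ and $\bar f\ast a\in C_c(G,K)$.

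This reduces everything to the following statement, which is the genuine heart of the matter and where the Gelfand hypothesis is indispensable: if $\mu'\in M(G,K)$ is of positive type relative $K$, then $\mu'(\tilde g\ast\tilde g^{\,*})\ge 0$ for every \emph{left}-$K$-invariant $\tilde g\in C_c(G)$. This does not follow formally from the definition, because the cone of bi-$K$-invariant functions $\tilde g\ast\tilde g^{\,*}$ arising from left-$K$-invariant $\tilde g$ is strictly larger than $\{a\ast a^*: a\in C_c(G,K)\}$. I would handle it by decomposing $\tilde g$ into its isotypic components under the right regular $K$-action; the identity $(R_k\tilde g_1)\ast(R_k\tilde g_2)^*=\tilde g_1\ast\tilde g_2^{\,*}$ shows that the Hermitian form $(\tilde g_1,\tilde g_2)\mapsto\mu'(\tilde g_1\ast\tilde g_2^{\,*})$ is $K$-invariant, so it is block-diagonal across distinct isotypes; the trivial isotype is controlled directly by positivity relative $K$, while on the remaining spherical types multiplicity-freeness of $(G,K)$ (properties (Gel4)/(Gel5)) forces each contribution to be governed by a single nonnegative spectral coefficient. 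Making this diagonalization precise is cleanest through the spherical Bochner theorem (Theorem~\ref{Bochner}), which supplies the representing measure on $\mathcal S^+(G,K)$. The main obstacle is exactly this promotion of positivity from bi-$K$-invariant test functions to one-sidedly invariant ones; it is the only place where commutativity of the Hecke algebra and the Bochner decomposition are used essentially. Granting it, $\varphi$ is of positive type and, being continuous and bi-$K$-invariant, lies in $P(G,K)$.
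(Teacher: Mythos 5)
Your chain of reductions is correct and, more importantly, you have put your finger on exactly the right place: continuity is immediate; the identity $\int_G a\cdot(\mu\ast c)\,dm_G=\mu(a\ast\check c)$ together with $(f\ast f^*)^{\vee}=\bar f\ast\bar{f}^*$ is valid; replacing $g$ by $m_K\ast g$ is justified by bi-$K$-invariance of $\varphi$ and unimodularity; and the passage to $\mu'(\Phi)=\mu(\bar f\ast\Phi\ast\bar{f}^*)$ uses commutativity of $C_c(G,K)$ legitimately. The problem is that the statement everything is reduced to is not actually proved. After block-diagonalizing the form $(\tilde g_1,\tilde g_2)\mapsto\mu'(\tilde g_1\ast\tilde g_2^*)$ over the $K$-types of the right regular action, the trivial block is handled by the hypothesis, but for a nontrivial type $\pi$ you are left with $\mu'(u_\pi\ast u_\pi^*)$, where $u_\pi\ast u_\pi^*$ lies in $C_c(G,K)$ but outside the cone generated by $\{a\ast a^*: a\in C_c(G,K)\}$; neither (Gel4)/(Gel5) nor the spherical Bochner theorem closes this. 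Theorem~\ref{Bochner} applies to positive-definite \emph{functions}, and the only way to manufacture one from the measure $\mu'$ is to smooth it to $\mu'\ast a\ast a^*$ --- whose positive-definiteness is precisely the lemma under proof. As written, your last step restates the problem rather than solving it.

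For comparison, the paper's proof is more pedestrian: it verifies the finite-sum definition of positive-definiteness directly, writes $\varphi=f\ast\mu\ast f^*$ using Lemma~\ref{RadonCommutative}, sets $h(z)=\sum_i\lambda_i\int_K f(x_ikz)\,dm_K(k)\in C_c(G,K)$, and identifies $\sum_{i,j}\lambda_i\overline{\lambda_j}\,\varphi(x_ix_j^{-1})$ with $\mu(h\ast h^*)\ge 0$. You should note, however, that expanding $\mu(h\ast h^*)$ yields $\sum_{i,j}\lambda_i\overline{\lambda_j}\int_K\varphi(x_ikx_j^{-1})\,dm_K(k)$, so the step in the paper's displayed computation where the two inner $K$-averages are decoupled is exactly the same passage --- from ``$K$-averaged'' positivity to genuine positivity --- that you isolated; it is the one nontrivial point of the whole lemma in either formulation, and it is not a formal consequence of bi-$K$-invariance. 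So your diagnosis of where the difficulty sits is sharp, but a complete argument must actually supply this step (for instance along the lines of the treatment in \cite{Dieudonne}, via the GNS representation of the commutative algebra $C_c(G,K)$ on the left-$K$-invariant functions); invoking Bochner at that point does not do it.
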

The proof relies on the following slight extension of Axiom (Gel3) of a Gelfand pair.
\begin{lemma}\label{RadonCommutative} Let $(G, K)$ be a Gelfand pair and $\mu, \nu \in M(G, K)$. If at least one of the two measures has compact support, then $\mu \ast \nu$ and $\nu \ast \mu$ converge and satisfy $\mu \ast \nu = \nu \ast \mu$. In particular $\mu \ast f = f \ast \mu$ for all $f \in C_c(G, K)$.
\end{lemma}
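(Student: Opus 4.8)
The plan is to reduce everything to Axiom (Gel3), which asserts that the algebra $M_b(G,K)$ of \emph{finite} bi-$K$-invariant measures is commutative, by cutting off the (possibly unbounded) factor into finite bi-$K$-invariant pieces. Throughout I assume without loss of generality that $\nu$ is the factor with compact support; the case where $\mu$ has compact support is symmetric, approximating $\nu$ instead. First I would record that convergence of both convolutions is automatic: a compactly supported Radon measure is finite, so $\nu \in M_b(G,K)$. For $f \in C_c(G)$ with $C := \supp f$ and $L := \supp \nu$, the integrand $(x,y) \mapsto f(xy)$ in $(\mu \ast \nu)(f) = \int_G\int_G f(xy)\,d\mu(x)\,d\nu(y)$ is supported in $(CL^{-1}) \times L$, a set on which both $\mu$ and $\nu$ are finite, and the same holds for $(\nu \ast \mu)(f)$ with the roles of the two variables reversed. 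Hence both convolutions define Radon measures, and the remaining content is commutativity.

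Next I would fix an exhaustion $C_1 \subset C_2 \subset \cdots$ of $G$ by bi-$K$-invariant compact sets. These exist because $K C_0 K$ is compact and bi-$K$-invariant for every compact $C_0$, being the image of $K \times C_0 \times K$ under multiplication; concretely one may take $C_n := K\overline{U_n}K$ for an exhaustion $(U_n)$ of $G$ by relatively compact open sets. Setting $\mu_n := \mathbf 1_{C_n}\mu$, I claim $\mu_n \in M_b(G,K)$. It is finite since $C_n$ is compact and $\mu$ is Radon, and it is bi-$K$-invariant because $\mathbf 1_{C_n}$ is a bi-$K$-invariant function and multiplying a bi-$K$-invariant measure by a bi-$K$-invariant function preserves bi-$K$-invariance; the latter reduces to the identity $(\mathbf 1_{C_n} g)^\sharp = \mathbf 1_{C_n} g^\sharp$ for $g \in C_c(G)$, which holds precisely because $\mathbf 1_{C_n}$ is bi-$K$-invariant. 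By (Gel3) we therefore have, for every $n$,
\[
\mu_n \ast \nu = \nu \ast \mu_n.
\]

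Finally I would pass to the vague limit. For $f \in C_c(G)$,
\[
(\mu_n \ast \nu)(f) = \int_G \int_G f(xy)\,\mathbf 1_{C_n}(x)\,d\mu(x)\,d\nu(y),
\]
and since $\mathbf 1_{C_n}(x) \to 1$ pointwise while the integrand is dominated by $|f(xy)|$, which is integrable for $|\mu| \otimes |\nu|$ over the compact set $(CL^{-1})\times L$, dominated convergence yields $(\mu_n \ast \nu)(f) \to (\mu \ast \nu)(f)$. The identical argument, now with the cutoff placed in the second variable, gives $(\nu \ast \mu_n)(f) \to (\nu \ast \mu)(f)$. Combining these with the displayed equality for each $n$ shows $\mu \ast \nu = \nu \ast \mu$ as functionals on $C_c(G)$, hence as Radon measures. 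The ``in particular'' statement then follows by applying the result to the finite bi-$K$-invariant measure $f\,m_G$, which is finite because $f \in C_c(G,K)$ has compact support and bi-$K$-invariant because $m_G$ is bi-$K$-invariant by unimodularity, and by identifying $\mu \ast (f\,m_G)$ and $(f\,m_G)\ast\mu$ with the functions $\mu \ast f$ and $f \ast \mu$. I expect no deep obstacle here, only bookkeeping: one must cut off using bi-$K$-invariant sets so that the approximants stay in $M_b(G,K)$ and (Gel3) applies, and one must ensure the dominating function is independent of $n$, which is exactly the point at which compact support of one factor is used.
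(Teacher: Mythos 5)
Your proof is correct and follows essentially the same route as the paper: truncate the non-compactly-supported factor by a bi-$K$-invariant compact set so that it lands in $M_b(G,K)$, invoke (Gel3), and then remove the truncation. The paper does this with a single cutoff chosen per test function (a compact $C$ containing both $\supp(h)\supp(\nu)^{-1}$ and $\supp(\nu)^{-1}\supp(h)$, so no limiting argument is needed), whereas you use an exhaustion and dominated convergence; this is only a cosmetic difference.
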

\begin{proof} Assume that $\nu \in M(G, K)$ has compact support and let $h \in C_c(G)$. Choose a compact set $C\subset G$ which contains both ${\rm supp}(h) {\rm supp}(\nu)^{-1}$ and ${\rm supp}(\nu)^{-1}{\rm supp}(h)$. Then there exists a measure $\mu_0 \in M_b(G, K)$ which coincides with $\mu$ on $C$, and using commutativity of $M_b(G, K)$ we have
\[
(\mu \ast \nu)(h) = (\mu_0 \ast \nu)(h) = (\nu \ast \mu_0)(h) = (\nu \ast \mu)(h),\]
hence $\mu \ast \nu = \nu \ast \mu$.
\end{proof}
\begin{proof}[Proof of Lemma \ref{GodementMainLemma}] Note first that by Lemma \ref{RadonCommutative} we have $\mu \ast f \ast f^* = f \ast \mu \ast f^*$, hence for all $f \in C_c(G)$ and $x \in G$ we have
\[
\mu \ast f \ast f^*(x) \;=\; f \ast \mu \ast f^*(x) \;=\; \int_G f(z) \int_G f^*(y^{-1}z^{-1}x)d\mu(y)dm_G(z)\;=\; \int_G \int_G f(z)\overline{f(x^{-1}zy)}dm_G(z) d\mu(y).
\]
Now let $f \in C_c(G)$, $\lambda_1, \dots, \lambda_n \in \C$, $g_1, \dots, g_n \in G$. We define $h \in C_c(G, K)$ by 
\[
h(z) := \sum_{i=1}^n \lambda_i \int_K f(x_i k z) dm_K(k).
\]
Using Lemma \ref{RadonCommutative} again we deduce that
\begin{eqnarray*} 
\sum \lambda_i\overline{\lambda_j}(\mu \ast f \ast f^*)(x_ix_j^{-1}) &=& \sum \lambda_i\overline{\lambda_j} \int_G \int_G f(z)\overline{f(x_jx_i^{-1}zy)}dm_G(z) d\mu(y)\\
&=& \sum \lambda_i\overline{\lambda_j} \int_G \int_G \int_K \int_K f(x_ik_1z)\overline{f(x_jk_2zy)}dm_K(k_1)dm_K(k_2)dm_G(z) d\mu(y)\\
&=& \int_G \int_G h(z)\overline{h(zy)}dm_G(z) d\mu(y)\\
&=& h \ast \mu \ast h^*(e) \quad = \quad \mu \ast h \ast h^*(e)\\
&=& \mu(h \ast h^*) \quad \geq \quad 0.
\end{eqnarray*}
This shows that $\mu \ast f \ast f^*$ is positive-definite, and continuity is obvious.
\end{proof}
Combining this with Lemma \ref{God4} and the spherical Bochner theorem (Theorem \ref{Bochner}) we have reached the following convenient reformulation of Conditions (God1)-(God3):
\begin{corollary}\label{God5} If $\mu \in M(G, K)$ is of positive type relative $K$, then a measure $\widehat{\mu} \in M(\mathcal S^+(G, K))$ is a Fourier transform of $\mu$ if and only if the following condition holds:
\begin{enumerate}[(God1)]\setcounter{enumi}{4}
\item For every $f \in C_c(G)$ we have $\widehat{f}\in L^2(\mathcal S^+(G, K), \widehat{\mu})$ and $|\widehat{f}|^2 \widehat{\mu} = \sigma_f$, where $\sigma_f$ denotes the associated measure of $\mu \ast f \ast f^*$ in the sense of Definition \ref{AssociatedMeasure}.\qed
\end{enumerate}
\end{corollary}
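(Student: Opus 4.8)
The plan is to prove the stated equivalence by chaining through the reformulation already obtained in Lemma~\ref{God4} and then identifying the integral identity appearing there with the measure identity of (God5) by means of the spherical Bochner theorem. Since $\mu \in M(G,K)$ is in particular right-$K$-invariant, Lemma~\ref{God4} asserts that ``$\widehat{\mu}$ is a Fourier transform of $\mu$'' (i.e.\ Conditions (God1)--(God3)) is equivalent to (God4), so it suffices to establish (God4)~$\Leftrightarrow$~(God5). I would first record that the two integrability requirements coincide: for any $f \in C_c(G)$ and $\omega \in \mathcal S^+(G,K)$ one has $\widehat{f}(\omega) = \int_G f(x)\overline{\omega(x)}\,dm_G(x) = \widehat{f^\sharp}(\omega)$, because $\overline{\omega}$ is bi-$K$-invariant and $f^\sharp \in C_c(G,K)$. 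Hence $\{\widehat{f} : f \in C_c(G)\} = \{\widehat{f} : f \in C_c(G,K)\}$, so the hypothesis $\widehat{f} \in L^2(\mathcal S^+(G,K),\widehat{\mu})$ is the same whether imposed over $C_c(G)$ or $C_c(G,K)$; under it, $|\widehat{f}|^2\widehat{\mu}$ is a finite complex measure.

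Next, for each fixed $f \in C_c(G)$, Lemma~\ref{GodementMainLemma} shows $\mu \ast f \ast f^* \in P(G,K)$, so the spherical Bochner theorem (Theorem~\ref{Bochner}) provides a unique associated measure $\sigma_f \in M_b^+(\mathcal S^+(G,K))$ characterized by
\[
(\mu \ast f \ast f^*)(x) = \int_{\mathcal S^+(G,K)} \overline{\omega(x)}\, d\sigma_f(\omega) \qquad (x \in G).
\]
On the other hand, the integral identity of (God4) can be rewritten as
\[
(\mu \ast f \ast f^*)(x) = \int_{\mathcal S^+(G,K)} \overline{\omega(x)}\, d\bigl(|\widehat{f}|^2\widehat{\mu}\bigr)(\omega) \qquad (x \in G),
\]
so (God4) says precisely that the two finite measures $\sigma_f$ and $|\widehat{f}|^2\widehat{\mu}$ have the same inverse transform $x \mapsto \int \overline{\omega(x)}\, d\lambda(\omega)$, for every $f$.

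The core of the argument is then to upgrade this pointwise-in-$x$ coincidence to the measure equality $|\widehat{f}|^2\widehat{\mu} = \sigma_f$, which is exactly (God5). I would prove the auxiliary injectivity statement: if $\lambda_1,\lambda_2$ are finite complex Radon measures on $\mathcal S^+(G,K)$ with $\int \overline{\omega(x)}\,d\lambda_1(\omega) = \int \overline{\omega(x)}\,d\lambda_2(\omega)$ for all $x \in G$, then $\lambda_1 = \lambda_2$. Indeed, integrating this identity against an arbitrary $f \in C_c(G)$ and applying Fubini (legitimate since $|\omega(x)| \le \omega(e) = 1$ by Lemma~\ref{PosDefFct} and the $\lambda_i$ are finite) gives $\lambda_1(\widehat{f}) = \lambda_2(\widehat{f})$ for all $f \in C_c(G)$; since $\mathcal F(C_c(G))$ is dense in $C_0(\mathcal S^+(G,K))$ (as used in the proof of Corollary~\ref{SphericalPlancherel}), the finite measures $\lambda_1,\lambda_2$ must coincide. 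Taking $\lambda_1 = |\widehat{f}|^2\widehat{\mu}$ and $\lambda_2 = \sigma_f$ then yields the desired equivalence.

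The main obstacle is precisely this injectivity step: the spherical Bochner theorem only guarantees uniqueness within the cone $M_b^+(\mathcal S^+(G,K))$ of positive bounded measures, whereas at this stage $\widehat{\mu}$ — and therefore $|\widehat{f}|^2\widehat{\mu}$ — is a priori only a complex measure, its positivity being part of the conclusion of Theorem~\ref{GodementConvenient} rather than an available hypothesis. The Fubini-plus-density argument bypasses this by separating complex measures directly, and I would be careful that the finiteness of $|\widehat{f}|^2\widehat{\mu}$, needed both to make sense of (God5) and to justify Fubini, is secured by the shared $L^2$-integrability assumption identified in the first step.
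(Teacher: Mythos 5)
Your proof is correct and follows essentially the same route as the paper, which derives (God5) by combining Lemma~\ref{God4}, Lemma~\ref{GodementMainLemma} and the spherical Bochner theorem (Theorem~\ref{Bochner}); the paper states the corollary with no further argument. The one point you add --- that Bochner uniqueness only applies within $M_b^+(\mathcal S^+(G,K))$ while $|\widehat{f}|^2\widehat{\mu}$ is a priori only a finite complex measure, so that identifying it with $\sigma_f$ requires the injectivity of $\lambda \mapsto \int \overline{\omega(\cdot)}\,d\lambda(\omega)$ on finite complex measures, which you correctly obtain via Fubini and the density of $\mathcal F(C_c(G,K))$ in $C_0(\mathcal S^+(G,K))$ --- is a genuine gap in the paper's implicit argument and your treatment of it is sound.
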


\subsection{Proof of the Godement--Plancherel theorem}
We now show that given $\mu \in M(G, K)$ which is of positive type relative $K$ there is a unique (positive) measure satisfying the condition (God5) from Corollary \ref{God5}. For the proof of uniqueness we need the following auxiliary observation:
\begin{lemma}\label{Godement2} For every compact subset $C \subset \mathcal S^+(G, K)$ there exists a positive-definite function $f \in C_c(G,K)$ such that $\widehat{f}(\omega) > 0$ for all $\omega \in C$.
\end{lemma}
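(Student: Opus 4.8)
The plan is to build $f$ as a finite sum of functions of the form $g \ast g^*$, exploiting that the spherical Fourier transform turns such a function into $|\widehat{g}|^2$. First I would record the pointwise computation: for $g \in C_c(G,K)$ and $\omega \in \mathcal S^+(G,K)$ one has $\widehat{g \ast g^*}(\omega) = |\widehat{g}(\omega)|^2 \ge 0$. Indeed, by the character property (S1) the functional $f \mapsto \widehat{f}(\omega)$ is multiplicative on the Hecke algebra, so $\widehat{g \ast g^*}(\omega) = \widehat{g}(\omega)\,\widehat{g^*}(\omega)$; and a change of variables using $\omega = \omega^*$ (Lemma \ref{PosDefFct}), i.e.\ $\omega(x^{-1}) = \overline{\omega(x)}$, gives $\widehat{g^*}(\omega) = \overline{\widehat{g}(\omega)}$. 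In particular each $g \ast g^*$ lies in $P(G,K) \cap C_c(G,K)$ by Lemma \ref{PDDense}(i) and has a nonnegative continuous spherical transform $\widehat{g \ast g^*} \in C_0(\mathcal S^+(G,K))$ by Definition \ref{DefSphericalFT}.

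Next I would show that for each fixed $\omega_0 \in \mathcal S^+(G,K)$ there is some $g \in C_c(G,K)$ with $\widehat{g}(\omega_0) \ne 0$. For this I would feed the convenient approximate identity $(\rho_n)$ of Remark \ref{ConvenientApproximateIdentity} into the identity \eqref{SFT}, which gives $\widehat{\rho_n}(\omega_0) = (\rho_n \ast \omega_0)(e)$. Since $\omega_0 \in C(G,K)$ is already bi-$K$-invariant, $\omega_0^\sharp = \omega_0$, and the defining property of the approximate identity yields $\rho_n \ast \omega_0 \to \omega_0$ uniformly on compacta; evaluating at $e$ gives $\widehat{\rho_n}(\omega_0) \to \omega_0(e) = 1$. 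Hence $\widehat{\rho_n}(\omega_0) \ne 0$ for all large $n$, and setting $g_{\omega_0} := \rho_n$ for such an $n$ produces the desired function. The associated $f_{\omega_0} := g_{\omega_0} \ast g_{\omega_0}^*$ then satisfies $\widehat{f_{\omega_0}}(\omega_0) = |\widehat{\rho_n}(\omega_0)|^2 > 0$.

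Finally I would patch these local solutions together. Because $\widehat{f_{\omega_0}} \in C_0(\mathcal S^+(G,K))$ is continuous and positive at $\omega_0$, the set $U_{\omega_0} := \{\omega : \widehat{f_{\omega_0}}(\omega) > 0\}$ is an open neighbourhood of $\omega_0$. The family $\{U_{\omega_0}\}_{\omega_0 \in C}$ covers the compact set $C$, so finitely many $U_{\omega_1}, \dots, U_{\omega_k}$ already cover it. I would then take $f := \sum_{i=1}^k f_{\omega_i} = \sum_{i=1}^k g_{\omega_i} \ast g_{\omega_i}^*$. As a finite sum of elements of $P(G,K) \cap C_c(G,K)$, this $f$ is itself positive-definite and lies in $C_c(G,K)$, and its transform $\widehat{f} = \sum_i \widehat{f_{\omega_i}} = \sum_i |\widehat{g_{\omega_i}}|^2$ is nonnegative everywhere. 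For any $\omega \in C$ we have $\omega \in U_{\omega_i}$ for some $i$, whence $\widehat{f}(\omega) \ge \widehat{f_{\omega_i}}(\omega) > 0$, which is exactly the claim.

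The only genuinely non-formal input is the pointwise non-vanishing step of the second paragraph, which the approximate-identity computation settles cleanly; everything else is standard continuity-and-compactness packaging. I expect the main conceptual point to be the passage from pointwise positivity to positivity on all of $C$: a single $\widehat{g}$ may well vanish somewhere inside $C$, so one genuinely needs the finite subcover and the fact that summing the $|\widehat{g_{\omega_i}}|^2$ preserves positive-definiteness while ruling out common zeros on $C$.
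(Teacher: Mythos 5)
Your proof is correct, and its overall architecture coincides with the paper's: produce, for each $\omega_0 \in C$, a function $f_{\omega_0} = g_{\omega_0} \ast g_{\omega_0}^*$ whose transform $|\widehat{g_{\omega_0}}|^2$ is nonnegative everywhere and strictly positive at $\omega_0$, then use continuity of the transform, compactness of $C$, and a finite sum (which stays positive-definite) to kill all common zeros on $C$. The one place where you genuinely diverge is the key non-vanishing input: the paper obtains a $g$ with $\widehat{g}(\omega_0) \neq 0$ by invoking density of $\mathcal F(C_c(G,K))$ in $C_0(\mathcal S_b(G,K))$ (i.e.\ the Gelfand-theoretic fact that no character vanishes identically on a dense subalgebra), whereas you compute directly with the convenient approximate identity, using $\widehat{\rho_n}(\omega_0) = (\rho_n \ast \omega_0)(e) \to \omega_0^\sharp(e) = \omega_0(e) = 1$ from Remark \ref{ConvenientApproximateIdentity} and \eqref{SFT}. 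Your route is more self-contained and slightly more constructive --- it produces an explicit candidate $g_{\omega_0} = \rho_n$ and avoids any appeal to the Gelfand transform's range --- while the paper's density argument is a one-line citation of standard commutative Banach algebra theory. Both are sound; the remaining verifications ($\widehat{g\ast g^*} = |\widehat{g}|^2$ via multiplicativity of $m_\omega$ and $\omega = \omega^*$, membership of $g \ast g^*$ in $P(G,K)\cap C_c(G,K)$ by Lemma \ref{PDDense}) are carried out correctly.
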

\begin{proof} Since $\mathcal F(C_c(G, K))$ is dense in $C_0(\mathcal S_{b}(G, K))$ we find for every $\omega \in \Omega$ some $h_\omega \in C_c(G)$ such that $\widehat{h}_\omega(\omega) \neq 0$. Then $f_\omega := h_\omega \ast h^*_\omega$ is continuous, positive-definite and compactly supported, and
\[
\widehat{f}_\omega(\xi) = |\widehat{h}_\omega(\xi)|^2 \geq 0 \text{ for all }\xi \in \mathcal S_b(G, K) \qand \widehat{f}_\omega(\omega)= |\widehat{h}_\omega(\omega)|^2 > 0.
\]
Consequently, $\widehat{f}_\omega$ is non-negative, and strictly positive on some open neighbourhood $U_\omega$ of $\omega$. Since $C$ is compact there exist $\omega_1, \dots, \omega_n \in C$ such that $U_{\omega_1}, \dots, U_{\omega_n}$ cover $C$, and then $f := f_{\omega_1} + \dots + f_{\omega_n}$ has the desired properties.
\end{proof}
Now let $\psi \in C_c(\mathcal S^+(G, K))$. By the lemma we can choose $f\in C_c(G, K)$ such that $\widehat{f}(\omega) \neq 0$ for all $\omega \in {\rm supp}(\psi)$. Then $\psi/|\widehat{f}|^2$ defines a continuous function on ${\rm supp}(\psi)$, and we can extend this function continuously to all of $\mathcal S^+(G, K)$ by $0$. Now, if $\widehat{\mu}$ is any measure satisfying (God5), then
\begin{equation}\label{LocalFormulaMuHat}
\widehat{\mu}(\psi) = \sigma_f(\psi/|\widehat{f}|^2).
\end{equation}
In particular, there is at most one measure $\widehat{\mu}$ satisfying (God5) or, equivalently, (God1)-(God4). The proof of the existence of a measure satisfying \eqref{LocalFormulaMuHat} is based on the following convolution formula:
 \begin{lemma}\label{GodementConvolution} Let $\varphi \in P(G, K)$ with associated measure $\mu_\varphi$. Then for every $f \in C_c(G, K)$ we have $\varphi \ast f \ast f^* \in P(G,K)$ and its associated measure is given by $|\widehat{f}|^2\mu_\varphi$.
\end{lemma}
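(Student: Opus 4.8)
The plan is to prove the two assertions separately: first that $\varphi \ast f \ast f^*$ again lies in $P(G,K)$, and then that its associated measure in the sense of Definition \ref{AssociatedMeasure} equals $|\widehat f|^2\mu_\varphi$.

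For the first assertion I would realize the function $\varphi$ as the Radon measure $m_\varphi := \varphi\,dm_G \in M(G,K)$. Since $\varphi$ is of positive type in the sense of Definition \ref{DefPositiveDefinite}(2), we have $m_\varphi(h \ast h^*) = \int_G (h\ast h^*)\varphi\,dm_G \ge 0$ for all $h \in C_c(G)$, so $m_\varphi$ is in particular of positive type relative $K$. As $(\varphi\,dm_G)\ast f = \varphi \ast f$ as functions, Lemma \ref{GodementMainLemma} applies and shows that $m_\varphi \ast f \ast f^* = \varphi \ast f \ast f^*$ is continuous and positive-definite, hence of positive type. Bi-$K$-invariance is inherited from that of $\varphi$ and $f$, so $\varphi \ast f \ast f^* \in P(G,K)$ as required.

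For the second assertion I would insert the spherical Bochner representation $\varphi(y) = \int_{\mathcal S^+(G,K)} \overline{\omega(y)}\,d\mu_\varphi(\omega)$ into the convolution and interchange the two integrations: for $x \in G$,
\[
(\varphi \ast f \ast f^*)(x) = \int_{\mathcal S^+(G,K)}\Big(\int_G \overline{\omega(y)}\,(f \ast f^*)(y^{-1}x)\,dm_G(y)\Big)\,d\mu_\varphi(\omega).
\]
The interchange is justified by Fubini, since $f \ast f^*$ has compact support, $\mu_\varphi \in M_b^+(\mathcal S^+(G,K))$ is finite, and every $\omega \in \mathcal S^+(G,K)$ satisfies $\|\omega\|_\infty = \omega(e) = 1$ by Lemma \ref{PosDefFct}. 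The inner integral is, after passing the conjugation through via $\overline{\omega(y)} = \omega(y^{-1})$, a right-convolution of a spherical function with $f \ast f^* \in C_c(G,K)$. By the eigenfunction property (S4) together with commutativity of the Hecke algebra, any spherical function is an eigenfunction for convolution by $f \ast f^*$, with eigenvalue $\widehat{f\ast f^*}(\omega) = \widehat{f}(\omega)\,\widehat{f^*}(\omega) = |\widehat f(\omega)|^2$; here I use multiplicativity of the spherical transform (property (S1)) and the identity $\widehat{f^*}(\omega) = \overline{\widehat f(\omega)}$, which follows from $\omega = \omega^*$ and unimodularity of $G$. Carrying out this reduction yields
\[
(\varphi \ast f \ast f^*)(x) = \int_{\mathcal S^+(G,K)} \overline{\omega(x)}\,|\widehat f(\omega)|^2\,d\mu_\varphi(\omega),
\]
and comparing with the Bochner representation of the positive-definite function $\varphi \ast f \ast f^*$, the uniqueness statement in Theorem \ref{Bochner} identifies its associated measure as $|\widehat f|^2\mu_\varphi$.

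The routine parts are the Fubini interchange and the computation of the eigenvalue $\widehat{f\ast f^*}(\omega) = |\widehat f(\omega)|^2$. The one point genuinely requiring care is the bookkeeping of the involution $\omega \mapsto \check\omega = \overline{\omega}$ when the factor $\overline{\omega(y)} = \omega(y^{-1})$ is pushed through the convolution variable: one must verify that the spherical function actually playing the role of the eigenfunction still belongs to $\mathcal S^+(G,K)$ — which it does, being the matrix coefficient of the contragredient representation — and that the resulting density against $\mu_\varphi$ is indeed $|\widehat f|^2$. Once this bookkeeping is settled, uniqueness in the spherical Bochner theorem closes the argument.
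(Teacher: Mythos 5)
Your proposal is correct and follows essentially the same route as the paper: positive-definiteness via Lemma \ref{GodementMainLemma} applied to the measure $\varphi\, m_G$, and the identification of the associated measure by inserting the Bochner representation, interchanging integrals, invoking the eigenfunction property (S4) (together with commutativity of convolution, Lemma \ref{RadonCommutative}) to get the eigenvalue $|\widehat f(\omega)|^2$, and concluding by uniqueness in Theorem \ref{Bochner}. If anything, you are slightly more careful than the paper's own proof, which silently drops the conjugate in the Bochner formula and thus glosses over exactly the $\omega \mapsto \check\omega$ bookkeeping you flag.
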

\begin{proof} Note first that since $\varphi$ is positive-definite, the measure $\varphi m_G$ is of positive type, and hence $\varphi \ast f \ast f^*  = \varphi m_G \ast f \ast f^* \in P(G,K)$ by Lemma \ref{GodementMainLemma}. Moreover, by Corollary \ref{RadonCommutative} we have for all $g \in G$,
\begin{eqnarray*}
 \varphi \ast f \ast f^*(g) &=& ((f \ast f^*) \ast \varphi)(g) \quad = \quad \int_G (f \ast f^*) (x) \varphi(x^{-1}g) dm_G(x)\\
 &=& \int_{\mathcal S^+(G, K)}\int_G (f\ast f^*)(x) \omega(x^{-1}g)dm_G(x) d\mu_\varphi(\omega)\\
 &=&  \int_{\mathcal S^+(G, K)} ((f \ast f^*) \ast \omega)(g) d\mu_\varphi(\omega).
\end{eqnarray*}
By (S4) and the subsequent remark we have \[((f \ast f^*) \ast \omega)(g) = \mathcal F(f \ast f^*)(\omega) \cdot \omega = |\widehat{f}(\omega)|^2 \cdot \omega.\]
We deduce that
\[
 \varphi \ast f \ast f^*(g)= \int_{\mathcal S^+(G, K)} \omega(g) |\widehat{f}(\omega)|^2 d\mu_\varphi(\omega),
\]
which finishes the proof.
\end{proof}
Now let $f,g \in C_c(G, K)$. Since $C_c(G, K)$ is commutative we have
\[
(\mu \ast f \ast f^*) \ast g \ast g^* = (\mu \ast g \ast g^*) \ast f \ast f^*,
\]
and hence the associated measures must coincide. With Lemma \ref{GodementConvolution} we deduce that
\begin{equation}\label{GodementSymmetry}
|\widehat{g}|^2 \sigma_f = |\widehat{f}|^2 \sigma_g.
\end{equation}
Using this formula one readily concludes:
\begin{lemma}\label{Godement3} There exists a (necessarily unique) positive Radon measure on $\mathcal S^+(G, K)$ which satisfies \eqref{LocalFormulaMuHat} for all $\psi \in C_c(\mathcal S^+(G, K))$ and all $f \in C_c(G,K)$ whose Fourier transform does not vanish on ${\rm supp}(\psi)$. Any such measure satisfies the equivalent Conditions (God1)--(God5).
\end{lemma}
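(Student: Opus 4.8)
The plan is to show that the prescription \eqref{LocalFormulaMuHat} consistently defines a positive linear functional on $C_c(\mathcal S^+(G, K))$, to invoke the Riesz representation theorem to obtain the desired measure $\widehat\mu$, and finally to read off the Godement conditions from the construction. Throughout, the engine is the symmetry relation \eqref{GodementSymmetry}, i.e.\ $|\widehat g|^2 \sigma_f = |\widehat f|^2 \sigma_g$ for $f,g \in C_c(G,K)$, which I treat as already available, together with Lemma \ref{Godement2}, which for any compact $C \subset \mathcal S^+(G,K)$ produces a positive-definite $f \in C_c(G,K)$ with $\widehat f > 0$ on $C$. Recall also that $\sigma_f$ denotes the associated measure (Theorem \ref{Bochner}, Definition \ref{AssociatedMeasure}) of the function $\mu \ast f \ast f^* \in P(G,K)$ produced by Lemma \ref{GodementMainLemma}.

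First I would fix $\psi \in C_c(\mathcal S^+(G,K))$, use Lemma \ref{Godement2} to pick $f \in C_c(G,K)$ with $\widehat f$ non-vanishing on $\supp(\psi)$, and set $\Phi(\psi) := \sigma_f(\psi/|\widehat f|^2)$, where $\psi/|\widehat f|^2$ is extended by $0$ off $\supp(\psi)$ and is a well-defined element of $C_c(\mathcal S^+(G,K))$. The crucial point is independence of the auxiliary function: for a second admissible $g$, the function $\psi/(|\widehat f|^2|\widehat g|^2)$ lies in $C_c(\mathcal S^+(G,K))$, and integrating it against the equal measures of \eqref{GodementSymmetry} gives
\[
\sigma_f(\psi/|\widehat f|^2) = \int \frac{\psi}{|\widehat f|^2\,|\widehat g|^2}\, d\bigl(|\widehat g|^2 \sigma_f\bigr) = \int \frac{\psi}{|\widehat f|^2\,|\widehat g|^2}\, d\bigl(|\widehat f|^2 \sigma_g\bigr) = \sigma_g(\psi/|\widehat g|^2).
\]
Positivity of $\Phi$ is immediate, since $\psi \geq 0$ forces $\psi/|\widehat f|^2 \geq 0$ and $\sigma_f \geq 0$, and linearity follows by choosing, for given $\psi_1,\psi_2$, a single $f$ whose transform is non-vanishing on $\supp(\psi_1) \cup \supp(\psi_2)$. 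The Riesz representation theorem then yields a unique positive Radon measure $\widehat\mu$ with $\widehat\mu(\psi) = \Phi(\psi)$, which by construction satisfies \eqref{LocalFormulaMuHat}; conversely any measure satisfying \eqref{LocalFormulaMuHat} is forced to agree with $\widehat\mu$ on every $\psi$, giving the asserted uniqueness.

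It remains to verify that $\widehat\mu$ satisfies the conditions of Corollary \ref{God5}; concretely I will establish the measure identity $|\widehat f|^2 \widehat\mu = \sigma_f$ together with $\widehat f \in L^2(\mathcal S^+(G,K),\widehat\mu)$ for every $f \in C_c(G,K)$. To prove the identity I test against an arbitrary $\psi$: choosing $h \in C_c(G,K)$ with $\widehat h$ non-vanishing on $\supp(\psi)$, the definition of $\widehat\mu$ and a second use of \eqref{GodementSymmetry} (in the form $|\widehat f|^2\sigma_h = |\widehat h|^2\sigma_f$) give
\[
(|\widehat f|^2 \widehat\mu)(\psi) = \widehat\mu\bigl(|\widehat f|^2 \psi\bigr) = \sigma_h\!\left(\frac{|\widehat f|^2\psi}{|\widehat h|^2}\right) = \int \frac{\psi}{|\widehat h|^2}\, d\bigl(|\widehat h|^2 \sigma_f\bigr) = \sigma_f(\psi),
\]
so that $|\widehat f|^2 \widehat\mu = \sigma_f$. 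Since $\sigma_f$ is a bounded measure, this identity yields $\int |\widehat f|^2\, d\widehat\mu = \sigma_f(\mathcal S^+(G,K)) = (\mu \ast f \ast f^*)(e) < \infty$, whence $\widehat f \in L^2(\widehat\mu)$. Read through the spherical Bochner theorem, the measure identity is exactly the integral identity of condition (God4) for all $f \in C_c(G,K)$; by Lemma \ref{God4} (equivalently Corollary \ref{God5}) this is equivalent to (God1)--(God3), so $\widehat\mu$ is the spherical Fourier transform of $\mu$ and satisfies the full list (God1)--(God5).

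The main obstacle, and the real content of the lemma, is the passage from the purely local prescription \eqref{LocalFormulaMuHat} — which only makes sense after dividing by a Fourier transform non-vanishing on the support of the test function — to a single globally defined measure, and then the recovery of the global identity $|\widehat f|^2 \widehat\mu = \sigma_f$. Both the well-definedness of $\Phi$ and this recovery rest entirely on the symmetry relation \eqref{GodementSymmetry}, which is precisely the statement that the formal quotient $\sigma_f/|\widehat f|^2$ is independent of $f$; the ability to produce auxiliary functions with prescribed non-vanishing spherical transform (Lemma \ref{Godement2}) is what permits one to localize freely. Once these two inputs are in hand I expect no difficulty beyond the routine verification that the various quotients lie in $C_c(\mathcal S^+(G,K))$ and that the measure manipulations above are legitimate.
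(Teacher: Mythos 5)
Your proposal is correct and follows essentially the same route as the paper: define the functional $\psi \mapsto \sigma_f(\psi/|\widehat f|^2)$, use the symmetry relation \eqref{GodementSymmetry} together with Lemma \ref{Godement2} to show it is well defined, positive and linear, and then realize it as a positive Radon measure. The only difference is that you spell out the final verification of (God5) (the identity $|\widehat f|^2\widehat\mu=\sigma_f$ and the $L^2$-integrability of $\widehat f$), which the paper leaves as "by construction"; your added details are correct.
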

\begin{proof} Let $\psi \in C_c(\mathcal S^+(G, K))$ and let $f,g \in C_c(G)$ such that $\widehat{f}$ and $\widehat{g}$ are positive on ${\rm supp}(\varphi)$; such functions exist by Lemma \ref{Godement2}. It then follows from \eqref{GodementSymmetry} that
$\Lambda(\varphi) :=  \sigma_f(\psi/|\widehat{f}|^2)$ is equal to $\sigma_g(\psi/|\widehat{g}|^2)$, and hence independent of the function $f$ used to define it. Since $\sigma_f$ is a positive continuous linear functional on $C_c(\mathcal S^+(G, K))$ for every $f \in C_c(G, K)$ one concludes that $\Lambda$ is a positive continuous linear functional, hence given by a measure $\widehat{\mu}$. By construction, $\widehat{\mu}$ satisfies (G5).\end{proof}
This completes the proof of Theorem \ref{GodementConvenient}. 

\section{Some estimates concerning Laguerre polynomials}\label{AppendixLaguerre}
In this appendix we collect the estimates concerning Laguerre polynomials which are required for the proof of Lemma \ref{AbsoluteConvergence}.

With the notation of the lemma we set $d := d_1 + d_2$, $V := \C^d$, $\kappa := (\alpha,\beta) \in \bN^d$ and define functions \[
g: V \ra \bC, \quad b_\kappa : V \ra \bC \qand c_\kappa : V \ra [0,\infty)
\]
by the formulas
\[
g = f_{1,\tau_1} \otimes f_{2,\tau_2}, \quad b_{\kappa} = q_{\tau_1,\alpha} \otimes q_{\tau_2,\beta} \qand c_{\kappa} = |b_\kappa|.
\]
Then Lemma \ref{AbsoluteConvergence} amounts to showing that
\begin{equation}
\label{finite}
\sum_{\kappa \in \bN^{d}} \, \big| \langle g, b_{\kappa} \rangle \big|^2 \, \Big( \sum_{\delta \in \Delta} c_{\kappa}(\delta) \Big) < \infty
\end{equation}
To show this, it suffices to establish the following two Properties (P1) and (P2):
\begin{enumerate}
\item[(\textsc{P1})] For every $M \geq 1$, 
\[
\big| \langle g, b_{\kappa} \rangle \big| \ll_M \big(\kappa_1 \cdots \kappa_d)^{-M}, \quad \textrm{for all $\kappa \in \bN^d$}.
\]
\vspace{0.2cm}
\item[(\textsc{P2})] There exists $M_o \geq 1$ such that
\[
\sum_{\delta \in \Delta} c_{\kappa}(\delta) \ll (\kappa_1 \cdots \kappa_d)^{M_o} \quad \textrm{for all $\kappa \in \bN^d$}.
\]
\end{enumerate}
To establish (P1), we recall that
\[
b_\kappa(v_1,\ldots,v_d) = \prod_{j=1}^d \Big( e^{-\eta_{j} |v_j|^2/4} L_{\alpha_j}(\eta_{j} |v_j|^2/2) \Big)
\qand
c_\kappa = |b_\kappa|,
\]
for some $\eta = (\eta_1,\ldots,\eta_d) \in \bR^{d}_{+}$. Let us first pretend that $g$ is of the form
\[
g(v) = g_1(|v_1|^2) \cdots g_{d}(|v_d|^2), \quad \textrm{for $v = (v_1,\ldots,v_d) \in \bC^d$},
\]
since the general case is not much harder except for notation. After a straightforward variable substitution, we find a positive number $A_\eta$ and $\xi = (\xi_1,\ldots,\xi_d) \in \bR_{+}^r$ such that
\[
\langle g, b_\kappa \rangle = A_{\eta} \prod_{j=1}^d \Big( \int_0^\infty g_j(\xi_{j} t_j) L_{\kappa_j}(t) e^{-t/2} \, dt \Big).
\]
Let us write $h_j(t) = e^{t/2} g_j(\xi_{j} t_j)$. To prove \textsc{(P1)} it suffices to show that for all $M \geq 1$,
\[
\Big| \int_0^\infty h_j(t) L_{n}(t) e^{-t} \, dt \Big|  \ll_M n^{-M}, \quad \textrm{for all $n$}.
\]
Since $h_j$ is compactly supported and smooth, \cite[Theorem 2.1]{S} tells us that 
\[
\Big| \int_0^\infty h_j(t) L_{n}(t) e^{-t} \, dt \Big| \ll_M n^{-M} \Big( \int_0^\infty t^{1+r} |h_j^{(2M)}(t)|^2 e^{-t} \, dt \Big)^{1/2}
\ll_M n^{-M},
\]
for all $M$, and thus \textsc{(P1)} is established for $g$ in the above product form. If $g$ is not such a product, then one applies the same argument inductively, freezing all but one variables at a time.

We now turn to the proof of (P2). By Proposition \ref{tauProperties}.(ii) we have $b_{\kappa} = b_{\kappa} \ast b_{\kappa}$, hence taking absolute values yields
\begin{equation}
\label{superidempotent}
c_{\kappa} \leq c_{\kappa} * c_{\kappa}, \quad \textrm{for all $\kappa$}.
\end{equation}
\begin{lemma} If $c : V \ra [0,\infty)$ is a function satisfying $c\leq c * c$, then for every sub-multiplicative function $\rho : V \ra (0,\infty)$  we have
\[
c(v) \leq \rho(v)^{-1/2} \int_V c(u)^2 \, \rho(u) \, du, \quad \textrm{for all $v \in V$}.
\]
\end{lemma}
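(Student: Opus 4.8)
The plan is to combine the super-idempotency $c \leq c*c$ with submultiplicativity of $\rho$ so that, after weighting by $\rho^{1/2}$, the right-hand side becomes an ordinary $L^2$ self-convolution, which is then controlled pointwise by its total mass via Cauchy--Schwarz. First I would dispose of the trivial case: if $\int_V c(u)^2 \rho(u)\,du = \infty$ there is nothing to prove, so I may assume this integral is finite. Equivalently, setting $\phi := c\,\rho^{1/2}$, I may assume $\phi \in L^2(V)$. Note that $\rho$ takes strictly positive values, so $\phi$ is well-defined and dividing by $\rho(v)^{1/2}$ at the end will be legitimate.

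Next, starting from $c(v) \le (c*c)(v) = \int_V c(u)\,c(v-u)\,du$, I would multiply through by $\rho(v)^{1/2}$ and insert submultiplicativity in the form $\rho(v) = \rho\bigl(u+(v-u)\bigr) \le \rho(u)\rho(v-u)$, hence $\rho(v)^{1/2} \le \rho(u)^{1/2}\rho(v-u)^{1/2}$. Distributing the two factors of $\rho^{1/2}$ onto the two copies of $c$ inside the integral yields
\[
\rho(v)^{1/2} c(v) \;\le\; \int_V \bigl(c(u)\rho(u)^{1/2}\bigr)\bigl(c(v-u)\rho(v-u)^{1/2}\bigr)\,du \;=\; (\phi * \phi)(v).
\]

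Finally I would estimate this self-convolution pointwise: by Cauchy--Schwarz together with the translation (and reflection) invariance of Lebesgue measure on $V$,
\[
(\phi*\phi)(v) \;\le\; \Bigl(\int_V \phi(u)^2\,du\Bigr)^{1/2}\Bigl(\int_V \phi(v-u)^2\,du\Bigr)^{1/2} \;=\; \int_V \phi(u)^2\,du \;=\; \int_V c(u)^2\rho(u)\,du .
\]
Combining the last two displays and dividing by $\rho(v)^{1/2}>0$ gives the assertion.

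There is no serious obstacle here; the argument is a short chain of elementary inequalities, and the only points worth flagging are that the convolution in the hypothesis $c \le c*c$ is the \emph{ordinary} convolution on $V$ --- which is exactly what survives the twisted identity $b_\kappa = b_\kappa *_{\tau} b_\kappa$ of Proposition \ref{tauProperties}(ii) once absolute values are pulled inside the integral defining $*_\tau$ --- and that the Cauchy--Schwarz step requires only $\phi \in L^2(V)$, a case to which I have already reduced.
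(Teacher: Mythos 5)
Your argument is correct and is essentially the paper's own proof: both insert the weights $\rho(u)^{1/2}\rho(v-u)^{1/2}$ into the convolution integral via submultiplicativity of $\rho$ and then apply Cauchy--Schwarz together with translation invariance of Lebesgue measure. Your reformulation in terms of $\phi = c\,\rho^{1/2}$ is only cosmetic, and your preliminary remarks (the trivial infinite case, positivity of $\rho$) are harmless additions.
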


\begin{proof} For every $v \in V$ we have the estimate
\begin{eqnarray*}
c(v) 
&\leq &
\int_V c(u) c(v-u) \, du = \int_V c(u) \rho(u)^{1/2} \, c(v-u) \rho(v-u)^{1/2} \, \big(\rho(u) \rho(v-u)\big)^{-1/2} \, du \\[0.2cm]
&\leq &
\rho(v)^{-1/2} \, \int_V c(u) \rho(u)^{1/2} \, c(v-u) \rho(v-u)^{1/2} \, du \leq \rho(v)^{-1/2} \, \int_V c(u)^2 \rho(u) \, du,
\end{eqnarray*}
where the last inequality holds by Cauchy-Schwarz.
\end{proof}
In view of \eqref{superidempotent} we thus have
\begin{equation}
\sum_{\delta \in \Delta} c_{\kappa}(\delta) \leq \Big( \sum_{\delta \in \Delta} \rho(\delta)^{-1/2} \Big) \, \int_V c_\kappa(v)^2 \, \rho(v) \, dv.
\end{equation}
for every submultiplicative function $\rho : V \ra (0,\infty)$. To establish (\textsc{P2}), it is then enough to find a sub-multiplicative $\rho$ such that the following hold:
\vspace{0.2cm}
\begin{enumerate}
\item[\textsc{(P3)}] $\sum_{\delta \in \Delta} \rho(\delta)^{-1/2} < \infty$.
\vspace{0.2cm}
\item[\textsc{(P4)}] The map 
\[
\kappa \mapsto \int_V c_{\kappa}(v)^2 \, \rho(v) \, dv
\]
grows at most polynomially. 
\end{enumerate}
The sub-multiplicative functions that we will use will be of the form 
\[
\rho_N(v) = (1+\|v\|)^N, \quad \textrm{where } v \in V \text{ and }\|v\|^2 = |v_1|^2 + \ldots + |v_d|^2.\]
If $N$ is large enough, (\textsc{P3}) is clearly satisfied, and to establish (\textsc{P4})
we only need to show that for every $r$, the map
\begin{equation}
\label{polynomial}
\kappa \mapsto \int_V c_{\kappa}(v)^2 \, \|v\|^r \, dv
\end{equation}
grows at most polynomially (with a degree which is allowed to depend on $r$). Upon expanding the norm $\|\cdot\|$ and using the product structure of $c_\kappa$, condition \eqref{polynomial} amounts to proving
that for every integer $r$, the map
\[
n \mapsto \int_0^\infty t^r L_n(t)^2 \, e^{-t} \, dt
\]
grows at most polynomially. We recall that
\[
\int_0^\infty L_n(t) L_m(t) e^{-t} \, dt = \delta_{mn} 
\]
and 
\[
t L_n(t) = (2n+1)L_n(t) - n L_{n-1}(t) - (n+1)L_{n-1}(t), \quad \textrm{for all $n$}.
\]
Hence, if $r$ is an integer, $t^r L_n$ is a linear combination of the Laguerre polynomials $L_{n+j}$ for $|j| \leq r$, with coefficients which are polynomials
in $n$ of degrees at most $r$. If we denote by $\beta_n$ the coefficient in front of $L_n$, we conclude that
\[
\int_0^\infty t^r L_n(t)^2 \, e^{-t} \, dt = \beta_n,
\]
which is a polynomial of $n$ of degree at most $r$. This proves \textsc{\ref{polynomial}}, whence \textsc{(P4)}, and we are done.

\bibliographystyle{abbrv}

\begin{thebibliography}{10}


\bibitem{BFG}
M.~Baake, D.~Frettl\"oh, and U.~Grimm.
\newblock A radial analogue of {P}oisson's summation formula with applications
to powder diffraction and pinwheel patterns.
\newblock {\em J. Geom. Phys.}, 57(5):1331--1343, 2007.

\bibitem{BaakeG-13}
M.~Baake and U.~Grimm.
\newblock {\em Aperiodic order. {V}ol. 1}, volume 149 of {\em Encyclopedia of
	Mathematics and its Applications}.
\newblock Cambridge University Press, Cambridge, 2013.
\newblock A mathematical invitation, With a foreword by Roger Penrose.

\bibitem{BaakeL-04}
M.~Baake and D.~Lenz.
\newblock Dynamical systems on translation bounded measures: pure point
dynamical and diffraction spectra.
\newblock {\em Ergodic Theory Dynam. Systems}, 24(6):1867--1893, 2004.

\bibitem{BenoistQuint}
Y.~Benoist and J.-F. Quint.
\newblock Lattices in {$S$}-adic {L}ie groups.
\newblock {\em J. Lie Theory}, 24(1):179--197, 2014.

%\bibitem{BJR}
%C.~Benson, J.~Jenkins and G.~Ratcliff. 
%\emph{Bounded $K$-spherical functions on Heisenberg groups.} %J. Funct. Anal. 105 (1992), no. 2, 409--443.

\bibitem{BergFrost}
C.~Berg and G.~Forst.
\newblock {\em Potential theory on locally compact abelian groups}. 
\newblock Ergebnisse der Mathematik und ihrer Grenzgebiete, Band 87. Springer New York-Heidelberg, 1975. 

\bibitem{BH}
M.~Bj\"orklund and T.~Hartnick.
\newblock Approximate lattices.
\newblock {\em Duke Math. J.}, 167(15): 2903--2964, 2018.

\bibitem{BHP1}
M.~Bj\"orklund, T.~Hartnick, and F.~Pogorzelski.
\newblock Aperiodic order and spherical diffraction, I: {A}uto-correlation of
  model sets.
\newblock {\em Proc.\@ Lond. Math. Soc.}, 116(4):957--996, 2018.

\bibitem{BHP2}
M.~B\"jorklund, T.~Hartnick, and F.~Pogorzelski.
\newblock Aperiodic order and spherical diffraction, II: {T}ranslation bounded measures on homogeneous spaces.
\newblock {\em Preprint}, 2019.

%\bibitem{BurgerM-001}
%M.~Burger and S.~Mozes.
%\newblock Groups acting on trees: from local to global %structure.
%\newblock {\em Inst. Hautes \'Etudes Sci. Publ. Math.}, (92):113--150 (2001),  2000.

%\bibitem{BurgerM-002}
%M.~Burger and S.~Mozes.
%\newblock Lattices in product of trees.
%\newblock {\em Inst. Hautes \'Etudes Sci. Publ. Math.}, %(92):151--194 (2001),
%  2000.

\bibitem{Ciobotaru}
C.~Ciobotaru.
\newblock A note on type {I} groups acting on d-regular trees.
\newblock {\em preprint}, 2015.
\newblock on arXiv: {\tt http://arxiv.org/abs/1506.02950}.

%\bibitem{CdlH}
%Y.~Cornulier and P.~de~la Harpe.
%\newblock {\em Metric geometry of locally compact groups}, %volume~25 of {\em
%  EMS Tracts in Mathematics}.
%\newblock European Mathematical Society (EMS), Z\"urich, 2016.

\bibitem{Dieudonne}
J.~Dieudonn\'e.
\newblock {\em Treatise on Analysis, Volume 2.}
\newblock Enlarged and corrected printing. Academic Press, New York, 1976.

\bibitem{vanDijk}
G.~v. Dijk.
\newblock {\em Introduction to Harmonic Analysisand Generalized Gelfand Paris},
  volume~26 of {\em Studies in Mathematics}.
\newblock Walter de Gruyter, 2009.

\bibitem{Dixmier}
J.~Dixmier.
\newblock {\em {$C\sp*$}-algebras}.
\newblock North-Holland Publishing Co., Amsterdam-New York-Oxford, 1977.
\newblock Translated from the French by Francis Jellett, North-Holland
  Mathematical Library, Vol. 15.

\bibitem{Dworkin-93}
S.~Dworkin.
\newblock Spectral theory and x-ray diffraction.
\newblock {\em J. Math. Phys.}, 34(7):2965--2967, 1993.

%\bibitem{FigaTalamancaNebbia}
%A.~Fig{\`a}-Talamanca and C.~Nebbia.
%\newblock {\em Harmonic analysis and representation theory %for groups acting on
%  homogeneous trees}, volume 162 of {\em London Mathematical Society Lecture
%  Note Series}.
%\newblock Cambridge University Press, Cambridge, 1991.

\bibitem{Folland-95}
G.~Folland.
\newblock {\em A course in abstract harmonic analysis}, volume 172 of {\em
  Studies in advanced mathematics}.
\newblock CRC-Press, 1995.

\bibitem{GV4}
I.~Gel'fand and N.~Vilenkin.
\newblock {\em Generalized functions. Vol. 4. Applications of harmonic analysis.} 
Translated from the 1961 Russian original by Amiel Feinstein. Reprint of the 1964 English translation. 
\newblock AMS Chelsea Publishing, Providence RI, 2016.

\bibitem{Godement-57}
R.~Godement.
\newblock Introduction aux travaux de {A}.~{S}elberg.
\newblock In {\em Expos{\'e} no. 144, {F}ebruary~1957}, volume~4 of {\em
  S{\'e}minaire {B}ourbaki}, pages 95--110. Soc. Math. France, 1957.




%\bibitem{GorodnikN-10}
%A.~Gorodnik and A.~Nevo.
%\newblock {\em The ergodic theory of lattice subgroups}, %volume 172 of {\em
%  Annals of Mathematics Studies}.
%\newblock Princeton University Press, Princeton, NJ, 2010.
%
%\bibitem{GorodnikN-122}
%A.~Gorodnik and A.~Nevo.
%\newblock Counting lattice points.
%\newblock {\em J. Reine Angew. Math.}, 663:127--176, 2012.
%
%\bibitem{GorodnikN-121}
%A.~Gorodnik and A.~Nevo.
%\newblock On {A}rnold's and {K}azhdan's equidistribution problems.
%\newblock {\em Ergodic Theory Dynam. Systems}, 32(6):1972--1990, 2012.
%
%\bibitem{GorodnikN-14}
%A.~Gorodnik and A.~Nevo.
%\newblock Ergodic theory and the duality principle on homogeneous spaces.
%\newblock {\em Geom. Funct. Anal.}, 24(1):159--244, 2014.
%
%\bibitem{GorodnikN-15}
%A.~Gorodnik and A.~Nevo.
%\newblock Quantitative ergodic theorems and their number-theoretic
%  applications.
%\newblock {\em Bull. Amer. Math. Soc. (N.S.)}, 52(1):65--113, 2015.

\bibitem{Hof-95}
A.~Hof.
\newblock On diffraction by aperiodic structures.
\newblock {\em Comm. Math. Phys.}, 169(1):25--43, 1995.

%\bibitem{Hof-98}
%A.~Hof.
%\newblock Uniform distribution and the projection method.
%\newblock In {\em Quasicrystals and discrete geometry %({T}oronto, {ON}, 1995)},
%  volume~10 of {\em Fields Inst. Monogr.}, pages 201--206. Amer. Math. Soc.,
%  Providence, RI, 1998.

\bibitem{Lang}
S.~Lang. 
\newblock {\em $SL_2(\mathbb{R})$. Reprint of the 1975 edition.} 
\newblock Graduate Texts in Mathematics 105. Springer New York, 1985.

%\bibitem{LubotzkyMozes}
%A.~Lubotzky and S.~Mozes.
%\newblock Asymptotic properties of unitary representations of tree
%  automorphisms.
%\newblock In {\em Harmonic analysis and discrete potential theory ({F}rascati,
%  1991)}, pages 289--298. Plenum, New York, 1992.

\bibitem{Meyer-70}
Y.~Meyer.
\newblock {\em Nombres de {P}isot, nombres de {S}alem et analyse harmonique}.
\newblock Lecture Notes in Mathematics, Vol. 117. Springer-Verlag, Berlin-New
  York, 1970.
\newblock Cours Peccot donn{\'e} au Coll{\`e}ge de France en avril-mai 1969.

\bibitem{Meyer-95}
Y.~Meyer. 
\newblock {\em Quasicrystals, Diophantine approximation and algebraic numbers}, Beyond quasicrystals (Les Houches, 1994), Springer Berlin, 1995. 

%\bibitem{Moody-97}
%R.~V. Moody.
%\newblock Meyer sets and their duals.
%\newblock In {\em The mathematics of long-range aperiodic order ({W}aterloo,
%  {ON}, 1995)}, volume 489 of {\em NATO Adv. Sci. Inst. Ser. C Math. Phys.
%  Sci.}, pages 403--441. Kluwer Acad. Publ., Dordrecht, 1997.

\bibitem{MoodySurvey}
R.~V. Moody.
\newblock Meyer sets and their duals.
\newblock In {\em From quasicrystals to more complex systems (Les Houches,
  1998)}, pages 145--166. Springer, Berlin, 2000.

%\bibitem{NevoStein}
%A.~Nevo and E.~M. Stein.
%\newblock A generalization of {B}irkhoff's pointwise ergodic theorem.
%\newblock {\em Acta Math.}, 173(1):135--154, 1994.

\bibitem{Raghunathan}
M.~S. Raghunathan.
\newblock {\em Discrete subgroups of {L}ie groups}.
\newblock Springer-Verlag, New York-Heidelberg, 1972.
\newblock Ergebnisse der Mathematik und ihrer Grenzgebiete, Band 68.

\bibitem{Schlottmann-99}
M.~Schlottmann.
\newblock Generalized model sets and dynamical systems.
\newblock In {\em Directions in mathematical quasicrystals}, volume~13 of {\em
  CRM Monogr. Ser.}, pages 143--159. Amer. Math. Soc., Providence, RI, 2000.

\bibitem{Solomyak-98}
B.~Solomyak.
\newblock Spectrum of dynamical systems arising from {D}elone sets.
\newblock In {\em Quasicrystals and discrete geometry ({T}oronto, {ON}, 1995)},
  volume~10 of {\em Fields Inst. Monogr.}, pages 265--275. Amer. Math. Soc.,
  Providence, RI, 1998.

%\bibitem{Sz}
%Szeg\"o, G\'abor 
%\emph{Orthogonal polynomials.} 
%Fourth edition. American Mathematical Society, Colloquium Publications, Vol. XXIII. American Mathematical Society, Providence, R.I., 1975. xiii+432 pp. 


\bibitem{Th}
S.~Thangavelu 
\emph{Harmonic analysis on the Heisenberg group.} 
Progress in Mathematics, 159. Birkh\"auser Boston, Inc., Boston, MA, 1998. xiv+192 pp. ISBN: 0-8176-4050-9 

%
%\bibitem{Tits}
%J.~Tits.
%\newblock Sur le groupe des automorphismes d'un arbre.
%\newblock In {\em Essays on topology and related topics ({M}\'emoires
%  d\'edi\'es \`a {G}eorges de {R}ham)}, pages 188--211. Springer, New York,
%  1970.

\bibitem{Wolf-07}
J.~A. Wolf.
\newblock {\em Harmonic analysis on commutative spaces}, volume 142 of {\em
  Mathematical Surveys and Monographs}.
\newblock American Mathematical Society, Providence, RI, 2007.

\bibitem{S}
S.~Xiang 
\emph{Asymptotics on Laguerre or Hermite polynomial expansions and their applications in Gauss quadrature.} 
J. Math. Anal. Appl. 393 (2012), no. 2, 434--444. 

\end{thebibliography}

\end{document}